\documentclass[12pt]{amsart}

\usepackage{geometry}
\usepackage{amsfonts, adjustbox, amssymb, amscd}
\numberwithin{equation}{section}

\usepackage{bm}
\usepackage{verbatim}
\usepackage{mathrsfs}
\usepackage{graphicx}
\usepackage{tikz-cd}
\usepackage{subcaption}
\usepackage{listings}
\usepackage{subfiles}
\usepackage[toc,page]{appendix}
\usepackage{mathtools}
\usepackage{comment}
\usepackage{enumerate}
\usepackage{enumitem}
\usepackage[all]{xy}
\usepackage{calligra}
\usepackage{hyperref}

\hypersetup{
    colorlinks=true,
    citecolor=red,
    linkcolor=blue,
    filecolor=magenta,      
    urlcolor=red,
}
\newcommand{\Exc}[0]{{\operatorname{Exc}}}
\newcommand{\Supp}{\operatorname{Supp}}
\newcommand{\Rr}{\mathbb{R}}
\newcommand{\Qq}{\mathbb{Q}}
\newcommand{\Ff}{\mathcal{F}}
\newcommand{\Aa}{{\mathfrak{A}}}

\DeclareMathOperator{\Ii}{\Gamma}
\DeclareMathOperator{\Center}{center}
\DeclareMathOperator{\Weil}{Weil}
\DeclareMathOperator{\Nklt}{Nklt}
\DeclareMathOperator{\irr}{irr}

\DeclareMathOperator{\Nlc}{Nlc}
\DeclareMathOperator{\mult}{mult}

\newtheorem{thm}{Theorem}[section]

\newtheorem{lem}[thm]{Lemma}
\newtheorem{cor}[thm]{Corollary}
\newtheorem{prop}[thm]{Proposition}

\theoremstyle{definition}
\newtheorem{defn}[thm]{Definition}
\newtheorem{rem}[thm]{Remark}

\newtheorem{deflem}[thm]{Definition-Lemma}

\newtheorem{nota}[thm]{Notation}
\newtheorem{cons}[thm]{Construction}

\newtheorem*{claim*}{Claim}

\newcommand{\bb}{\bm{b}}

\newcommand{\Dd}{{\bf{D}}}

\newcommand{\Cc}{\mathfrak{C}}
\newcommand{\Mm}{{\bf{M}}}
\newcommand{\Nn}{{\bf{N}}}



\begin{document}

\title{Existence of the minimal model program for log canonical generalized pairs}
\author{Zhengyu Hu and Jihao Liu}

\subjclass[2020]{14E30, 14B05, 14E05, 14E15}
\keywords{Generalized pairs, minimal model program, log canonical singularities, flips}
\date{\today}

\begin{abstract}
We introduce linearly decomposable (LD) generalized pairs, which serve as a workable substitute for rational decompositions in the non-NQC setting. Using LD generalized pairs, together with a refinement of special termination and Koll\'ar-type gluing theory, we prove the existence of flips for log canonical generalized pairs without assuming the klt condition, the NQC condition, or $\mathbb Q$-factoriality. Together with the cone and contraction theorems, this yields the existence of the minimal model program for arbitrary log canonical generalized pairs.
\end{abstract}

\address{Mathematical Sciences Research Center, Chongqing University of Technology, No.69 Hongguang Avenue, Chongqing, 400054, China}
\email{zhengyuhu16@gmail.com}

\address{Department of Mathematics, Peking University, No. 5 Yiheyuan Road, Haidian District, Peking 100871, China}
\email{liujihao@math.pku.edu.cn}

\maketitle

\pagestyle{myheadings}\markboth{\hfill Zhengyu Hu and Jihao Liu \hfill}{\hfill Existence of the minimal model program for log canonical generalized pairs\hfill}

\tableofcontents

\section{Introduction}\label{sec: introduction}

We work over the field of complex numbers $\mathbb C$.

\subsection*{Background}

The theory of generalized pairs, introduced by Birkar and Zhang \cite{BZ16} in the study of the effective Iitaka fibration conjecture, has become a standard framework in birational geometry.  Roughly speaking, a generalized pair $(X,B,\Mm)$ keeps track not only of a boundary $\mathbb R$-divisor $B$ on $X$ but also of a nef part $\Mm$ living on a higher model, and this extra flexibility is well-suited for inductive constructions.  The basic ideas already appear in work of Birkar and the first author \cite{BH14}, and generalized pairs are closely related to adjunction-type constructions, including subadjunction and the canonical bundle formula \cite{Kaw98,FM00}.  Many major developments in the field, such as the proofs of the Borisov--Alexeev--Borisov conjecture \cite{Bir19,Bir21} and the M\textsuperscript{c}Kernan--Shokurov conjecture \cite{Bir23}, depend on this framework.  While the recent proof of Prokhorov--Shokurov's $\bb$-semi-ampleness conjecture \cite{BFMT25} clarifies the behavior of the moduli part in the canonical bundle formula, a systematic minimal model theory for log canonical generalized pairs remains a distinct and natural problem: in many contexts, the nef part is not assumed---and often not expected---to be semi-ample, e.g. termination of flips (cf. \cite{HM20,CT23,Mor25}), geometry of varieties with anti-nef canonical class (cf. \cite{FS23,Bir24}), K\"ahler and analytic MMP (cf. \cite{DH23,DHY23,DH24}), and foliations (cf. \cite{LLM23,Cas+24,LMX24,Cas+25a}).

A central problem in minimal model theory is to determine for which classes of structures one can run a minimal model program.  For klt generalized pairs, the existence of the minimal model program follows formally from the minimal model program for usual klt pairs, as explained in \cite{BZ16}, but the log canonical (lc) case has remained open for a long time.  As usual, the existence of the minimal model program can be divided into three parts: the cone theorem, the contraction theorem, and the existence of flips.  For generalized pairs, the cone theorem was proved in the NQC case (i.e.\ when the nef part is a positive linear combination of nef $\bb$-Cartier $\bb$-divisors) in \cite{HL23}, and in full generality in \cite{CHLX23}.  The contraction theorem was proved in the NQC case in \cite{Xie24} (see also \cite{CLX23}), and in full generality in \cite{CHLX23}.  On the other hand, flips are known to exist in the $\mathbb Q$-factorial NQC case \cite{HL23}, in the general NQC case \cite{LX23}, and in the $\mathbb Q$-factorial (not necessarily NQC) case \cite{CHLX23}.  The remaining case was the non-NQC, non-$\mathbb Q$-factorial, non-klt setting; see also \cite{TX24} for further remarks on the existence of minimal models of NQC log canonical generalized pairs.

\subsection*{Main results}

In this paper, we resolve this remaining case and prove the existence of flips for log canonical generalized pairs in full generality, i.e.\ without assuming $\mathbb Q$-factoriality, the NQC condition, or the klt condition.  In particular, this closes the last open case needed to run the MMP for log canonical generalized pairs.

\begin{thm}\label{thm: flip nonnqc-g}
    Let $(X,B,\Mm)/U$ be a log canonical generalized pair and let
    $f: X\rightarrow Z$
    be a $(K_X+B+\Mm_X)$-flipping contraction$/U$. Then the flip
    $f^+: X^+\rightarrow Z$
    of $f$ exists.
\end{thm}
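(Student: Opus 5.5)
Our plan is to reduce Theorem~\ref{thm: flip nonnqc-g} to the $\mathbb Q$-factorial case of \cite{CHLX23}, following the strategy that \cite{LX23} used for the NQC case. The new ingredient is a replacement for the rational decomposition of $\Mm$, which is not available when $\Mm$ is not NQC.

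\textbf{Step 1: reduction to the pieces of a decomposition.} Since the flip of $f$ is $\operatorname{Proj}_Z$ of the relative log canonical ring, it suffices to show that $K_X+B+\Mm_X$ has a log canonical model over $Z$. Because $\Mm$ need not be NQC, we cannot write $\Mm=\sum r_i\Mm_i$ with $\Mm_i$ nef $\bb$-Cartier. Instead we introduce linearly decomposable (LD) generalized pairs: we write
\[
K_X+B+\Mm_X=\sum_i a_i\,(K_X+B_i+\Mm_{i,X}),
\]
with $a_i>0$ real and $\sum a_i=1$. Each $(X,B_i,\Mm_i)$ is an lc generalized pair, each $\Mm_i$ is nef$/U$ but possibly still $\mathbb R$-valued, and each $K_X+B_i+\Mm_{i,X}$ is $\mathbb R$-Cartier and anti-ample$/Z$. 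We arrange that the $\Mm_i$ are determined linearly by finitely many parameters, and we obtain the $B_i$ by perturbing the coefficients of $B$ inside a rational polytope. It then suffices to construct an lc model$/Z$ that is simultaneously an lc model for every piece.

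\textbf{Step 2: dlt modification and an MMP.} We take a $\mathbb Q$-factorial gdlt modification $g\colon Y\to X$. On $Y$ we run a $(K_Y+B_Y+\Mm_Y)$-MMP$/Z$ with scaling of an ample divisor; this is possible because \cite{CHLX23} provides cone and contraction theorems and $\mathbb Q$-factorial flips. We must show that this MMP terminates with a good minimal model over $Z$, or at least with a model on which the divisor is semi-ample$/Z$ (in the appropriate sense for an LD pair). Away from $\lfloor B_Y\rfloor$ the pair is gklt, so the klt theory of \cite{BZ16} handles that region. The issue is therefore concentrated on the non-klt locus.

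\textbf{Step 3: special termination and gluing (main obstacle).} By a refined special termination, we want the MMP to eventually be an isomorphism near $\lfloor B_Y\rfloor$. This requires induction on dimension. We apply adjunction to each lc center $S$ to get $(S,B_S,\Mm^S)$, which is again an LD generalized pair of lower dimension. Induction then gives termination and a log canonical model on each stratum. Finally we must descend: the lc model of $K_Y+B_Y+\Mm_Y$ restricted to the reduced boundary has to glue to a model over $Z$. Here we would use Koll\'ar-type gluing (quotients by finite equivalence relations, as in Koll\'ar's theory of sources and springs) to show that the restricted ring is finitely generated. We would then lift this using $f$-anti-ampleness and vanishing-type arguments, in the manner of Hacon–Xu's proof of lc flips.

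We expect the hardest point to be controlling the non-NQC moduli parts under adjunction and gluing. The $\Mm^S$ may not be $\bb$-Cartier combinations, so abundance-type statements on strata are unavailable. The LD structure is meant to compensate: linearity lets us compare the models of the finitely many pieces $(X,B_i,\Mm_i)$, and it shows the resulting model is independent of $i$. Once a common lc model $X^+\to Z$ exists, $K_{X^+}+B^++\Mm_{X^+}$ is ample$/Z$, and $X^+$ is the desired flip.
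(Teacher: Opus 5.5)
Your overall architecture (dlt model, special termination, gluing, induction on dimension) resembles the paper's. However, the ingredient you present as new in Step~1 does not do its job. As you state it, with pieces whose nef parts may be $\Rr$-valued and whose canonical classes are merely $\Rr$-Cartier, the trivial decomposition $a_1=1$, $(X,B_1,\Mm_1)=(X,B,\Mm)$ already qualifies, so it provides no rationality.

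Every later step needs a decomposition with a \emph{common} nef part $\Mm$ and each $K_{\Aa_i}$ a $\mathbb Q$-divisor (Lemma~\ref{lem: equivalent definition of LD}). This is what bounds Cartier indices and makes the coefficient set finite in special termination (Lemma~\ref{lem: coefficient ld special termination}). It is also what produces the line bundles $mL$ used in Koll\'ar gluing (Construction~\ref{cons: glue part 2}). An arbitrary lc generalized pair admits no such decomposition: by Lemma~\ref{lem: ld and irrationality}(1) it forces $\Supp\Mm_X^{\irr}\subset\Supp\{B\}$, and perturbing $B$ cannot change this.

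The missing idea is to use the flipping contraction to pass to an LD pair up to $\sim_{\Rr,Z}$:
\begin{itemize}
\item Take a general $0\le A\sim_{\Rr}-K_{\Aa}$, ample over $Z$, with $(\Aa,A)$ lc.
\item Replace $\Mm$ by $\Mm-\overline{K_{\Aa}+A}$. This is still nef over $Z$, leaves all discrepancies unchanged, and transfers (good) minimal models by Lemma~\ref{lem: Cas+25b 3.5}.
\item Now $K_{\Aa}+A=0$, so $(\Aa,A)$ is trivially LD, and $(\Aa,\frac12A)$ is LD by Lemma~\ref{lem: combination of LD with lc g-pair}.
\item Since $K_{\Aa}+\frac12A=\frac12K_{\Aa}$, the ample model of $(\Aa,\frac12A)$ over $Z$ is the flip. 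This is how Theorems~\ref{thm: bir12 1.1 variation1} and~\ref{thm: extremal contraction nonnqc-g} proceed.
\end{itemize}

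Even granting an LD structure, Steps~2--3 are not yet an argument.

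\textbf{Termination.} Termination is not local. After special termination, the remaining flips avoid $\lfloor B_Y\rfloor$, but they are steps of an MMP with scaling for $K_{\Aa_Y}$, not for a klt perturbation. So termination results for klt generalized pairs do not apply to them directly. The paper instead builds special dlt models with $K_{\Aa}\sim_{\Rr,U}E^h+E^v$ and $\Supp E^v=\Supp B^v\subset\lfloor B\rfloor$. It then runs an MMP whose scaling divisor is effectively $-E^v$ (Proposition~\ref{prop: main theorems for special dlt model}). Every further step would be $E^v$-negative, hence not an isomorphism near $\lfloor B\rfloor$. So once special termination applies, the MMP must stop.

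\textbf{Semi-ampleness.} Semi-ampleness of the resulting weak lc model comes from the Birkar-type gluing statement (Theorem~\ref{thm: bir12 1.7 g LD}). It does not come from Hacon--Xu-style lifting of sections by vanishing, which you would have to establish for non-NQC nef parts and do not sketch.

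\textbf{Induction statement.} The induction cannot be on flips, because the strata $S\to Z$ are not birational. Special termination (Theorem~\ref{thm: special termination}) needs, on each lc center $S$, a good minimal model for $\Aa_S$ whenever $(\Aa_S,A_S)$ is lc and $K_{\Aa_S}+A_S\sim_{\Rr}0$ over the base. That is Theorem~\ref{thm: bir12 1.1 3non} in lower dimension, and your proposal never formulates it.
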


Combining this with the cone and contraction theorems \cite[Theorem~B]{CHLX23}, we obtain the existence of the minimal model program for log canonical generalized pairs.

\begin{thm}\label{thm: eommp nonnqc-g}
Let $(X,B,\Mm)/U$ be a log canonical generalized pair. Then we may run a $(K_X+B+\Mm_X)$-MMP$/U$.
\end{thm}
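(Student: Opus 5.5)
The statement is Theorem~\ref{thm: eommp nonnqc-g}, which should follow formally from Theorem~\ref{thm: flip nonnqc-g} together with the cone and contraction theorems of \cite[Theorem~B]{CHLX23}. The plan is to check that each step of the program can be carried out and that the hypotheses are preserved along the way.

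Start with a log canonical generalized pair $(X,B,\Mm)/U$. If $K_X+B+\Mm_X$ is nef$/U$, we stop. Otherwise, the cone theorem of \cite[Theorem~B]{CHLX23} gives a decomposition
\[
\overline{NE}(X/U)=\overline{NE}(X/U)_{K_X+B+\Mm_X\geq 0}+\sum_i R_i
\]
into extremal rays, where each $R_i$ is $(K_X+B+\Mm_X)$-negative and rational. We choose one such ray $R$. The contraction theorem of \cite[Theorem~B]{CHLX23} then gives the contraction $f\colon X\to Z$ of $R$ over $U$. There are three cases.
\begin{itemize}
\item If $\dim Z<\dim X$, $f$ is a Mori fibre space and the program stops.
\item If $f$ is divisorial, we replace $X$ by $Z$ and $B$ by $f_*B$, and keep the same $\bb$-divisor $\Mm$.
\item If $f$ is small, it is a $(K_X+B+\Mm_X)$-flipping contraction$/U$. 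Theorem~\ref{thm: flip nonnqc-g} then gives the flip $X^+\to Z$, and we replace $X$ by $X^+$ and $B$ by its strict transform $B^+$.
\end{itemize}
No $\mathbb Q$-factoriality is assumed, so "divisorial" should be read loosely. The point is that whatever contraction arises is either of fibre type, or birational and handled the same way. If it is birational but not small, one should still verify that $K_Z+f_*B+\Mm_Z$ is $\mathbb{R}$-Cartier. Without $\mathbb Q$-factoriality this may fail, and then one treats $f$ through the flip construction: the relative lc model $\operatorname{Proj}_Z\bigoplus f_*\mathcal O(\lfloor m(K_X+B+\Mm_X)\rfloor)$ serves as the next step. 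I expect Theorem~\ref{thm: flip nonnqc-g} to cover exactly this, as the flip of a non-small contraction in the non-$\mathbb Q$-factorial setting. So the uniform step is to replace $X$ by the lc model of $K_X+B+\Mm_X$ over $Z$, which is what the flip theorem produces.

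Next we check that the new pair is again a log canonical generalized pair$/U$ with the same nef part $\Mm$. Since $\Mm$ is a $\bb$-divisor, it descends to a common resolution. The standard negativity lemma argument shows that discrepancies do not decrease across each step: $K+B+\Mm$ is $f$-negative on the old model and $f^+$-ample on the new one. This preserves the lc property and $\mathbb R$-Cartierness. The new pair is again of the type covered by \cite[Theorem~B]{CHLX23} and Theorem~\ref{thm: flip nonnqc-g}, so the procedure can be iterated.

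Running the MMP only asserts that each step exists; termination is not claimed. With the above, the proof is therefore essentially formal. The only real obstacle is the bookkeeping in the non-$\mathbb Q$-factorial case: making sure every birational step is covered either by a divisorial contraction with $\mathbb R$-Cartier pushforward or by the flip theorem applied to the lc model over $Z$. The substance of the argument is entirely in Theorem~\ref{thm: flip nonnqc-g}.
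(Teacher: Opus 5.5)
Your outline is the paper's: cone and contraction from \cite{CHLX23}, stop at a Mori fibre space when $\dim Z<\dim X$, and otherwise replace $X$ by the ample model of $K_{\Aa}$ over $Z$. The gap is in how you justify that last step. Theorem~\ref{thm: flip nonnqc-g} is stated for \emph{flipping} contractions, i.e.\ small ones. It therefore says nothing when $f$ is birational, contracts a divisor, and $K_Z+f_*B+\Mm_Z$ fails to be $\Rr$-Cartier. That is exactly the non-$\mathbb Q$-factorial case you flag, and ``I expect Theorem~\ref{thm: flip nonnqc-g} to cover exactly this'' is not an argument, because as stated it does not. A smaller point: for $\Rr$-divisors the ring $\bigoplus_m f_*\mathcal O_X(\lfloor m(K_X+B+\Mm_X)\rfloor)$ need not be finitely generated, so the right object is the ample model over $Z$, not that Proj.

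What closes the gap is the statement the paper actually uses, Theorem~\ref{thm: extremal contraction nonnqc-g}. It says that for \emph{any} $K_{\Aa}$-negative extremal contraction $f\colon X\to Z$ with $\dim X=\dim Z$, the ample model$/Z$ of $K_{\Aa}$ exists. Its proof never uses smallness. Since $f$ is birational, $K_{\Aa}$ is pseudo-effective$/Z$. Since $-K_{\Aa}$ is ample$/Z$, you can choose an ample$/Z$ divisor $A\geq 0$ with $A\sim_{\Rr,Z}-K_{\Aa}$ and $(\Aa,A)$ lc, so that $K_{\Aa}+A\sim_{\Rr,Z}0$. Theorem~\ref{thm: bir12 1.1 variation1} then gives a good log minimal model of $\Aa/Z$, and hence the ample model. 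In the paper, Theorem~\ref{thm: flip nonnqc-g} is deduced as a special case of this statement, not the other way round. So cite (or reprove) Theorem~\ref{thm: extremal contraction nonnqc-g} for every birational step; with that substitution your proof coincides with the paper's.
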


As further outputs, we show that after adding an ample divisor, the log canonical class of a $\mathbb Q$-factorial lc generalized pair becomes $\mathbb R$-linearly equivalent to the log canonical class of an lc pair (Theorem~\ref{thm: structure of q-factorial gpair}).  We also describe the bases of Fano contractions of generalized pairs (Corollary~\ref{cor: potentially lc base}).

\subsection*{State of the art}

The following table summarizes previous results on the minimal model program for log canonical generalized pairs. Before the present paper, the only remaining case was the existence of flips in the general (non-klt, non-NQC, non-$\mathbb Q$-factorial) setting.

\begin{table}[!htbp]
   \caption{Minimal model program for lc generalized pairs $(X,B,\Mm)$}
    \label{tab:log structures}
\begin{center}
        \begin{tabular}{|c|c|c|c|}
\hline
& Cone theorem & Contraction theorem & Existence of flips \\
\hline
Klt, $\Mm=\bm{0}$ & cf.\ \cite{KMM87} & cf.\ \cite{KMM87} & \cite{BCHM10} \\
\hline
$\Mm=\bm{0}$ & \cite{Amb03}; \cite{Fuj11} &  \cite{Amb03}; \cite{Fuj11} & \cite{Bir12,HX13}  \\
\hline
Klt & \cite{BZ16} & \cite{BZ16} & \cite{BZ16} \\
\hline
NQC $\mathbb Q$-factorial & \cite{HL23} & \cite{HL23}  & \cite{HL23} \\
\hline
NQC & \cite{HL23} & \cite{Xie24}; \cite{CLX23}  & \cite{LX23} \\
\hline
$\mathbb Q$-factorial & \cite{CHLX23} & \cite{CHLX23} & \cite{CHLX23} \\
\hline
General case & \cite{CHLX23} & \cite{CHLX23} & This paper \\
\hline
\end{tabular}
\end{center}
\end{table}

\subsection*{Strategy and intermediate results}

As in the classical case, to construct flips it is convenient to reduce to the existence of good minimal models in certain special situations.  For lc pairs, two basic criteria of this type are known: the existence of good minimal models for relatively potentially log Calabi--Yau pairs \cite[Theorem~1.1]{Bir12} (see also \cite[Theorem~1.1]{Has19}), and the existence of good minimal models in families \cite[Theorem~1.1]{HX13} (see also \cite[Theorem~1.2]{Has19}).  For NQC generalized pairs, the analogues of these theorems were proved in \cite[Theorems~1.1 and~1.3]{LX23}.  In the non-NQC setting, we establish the following analogues of \cite[Theorem~1.1]{Bir12} and \cite[Theorem~1.1]{HX13}.

\begin{thm}\label{thm: bir12 1.1 nonnqc-g}
Let $(X,B,\Mm)/U$ be a generalized pair and let $A\geq 0$ be an $\Rr$-divisor such that $(X,B+A,\Mm)/U$ is lc and
$K_X+B+A+\Mm_X\sim_{\mathbb R,U}0.$

Then $(X,B,\Mm)/U$ has a good log minimal model or a log Mori fiber space.
\end{thm}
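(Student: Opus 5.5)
The plan follows the strategy of \cite[Theorem~1.1]{Bir12} and its NQC analogue \cite[Theorem~1.1]{LX23}. The NQC condition will be replaced by the linearly decomposable (LD) structure announced in the abstract. Throughout, the cone and contraction theorems of \cite[Theorem~B]{CHLX23} are available for arbitrary lc generalized pairs.

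\emph{Reduction to a $\mathbb Q$-factorial dlt model.} First I take a $\mathbb Q$-factorial gdlt modification $h:(Y,B_Y+A_Y,\Mm)\to (X,B+A,\Mm)$. Existence of good minimal models or Mori fiber spaces transfers between $(X,B,\Mm)$ and a suitable pair $(Y,B_Y,\Mm)$ by standard arguments, since $K_Y+B_Y+A_Y+\Mm_Y\sim_{\Rr,U}0$ still holds. On $Y$, flips exist by the $\mathbb Q$-factorial case \cite{CHLX23}. So we may run a $(K_Y+B_Y+\Mm_Y)$-MMP$/U$ with scaling of an ample divisor. Because $K_Y+B_Y+\Mm_Y\sim_{\Rr,U}-A_Y$, every step is $A_Y$-positive, and the pair $(Y_i,B_i+A_i,\Mm)$ remains lc and log Calabi--Yau over $U$. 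This invariance is what makes the potentially-lc-CY setting tractable.

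\emph{Termination via special termination and induction on dimension.} I induct on $\dim X$. If $A$ is big$/U$, or more generally if some perturbation makes things klt, then the BZ16/BCHM-type argument gives the conclusion. In general, following Birkar I would:
\begin{enumerate}
\item Replace $A$ so that $\Supp A$ avoids the non-klt centers, or otherwise increase coefficients of $B$ toward $\lfloor B+A\rfloor$.
\item Show that the MMP with scaling terminates near $\lfloor B_Y\rfloor$ by special termination. This uses generalized adjunction to lc centers, where the induced pairs on the strata are again lc and log Calabi--Yau after adding the restriction of $A$. Hence they have good minimal models or Mori fiber spaces by induction.
\item Away from $\lfloor B_Y\rfloor$, the pair is klt-like. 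There the MMP terminates or reaches a model on which the scaling number drops to $0$, by Shokurov-type polytope arguments.
\end{enumerate}

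The polytope step is where NQC was used before. Without it, $\Mm$ need not decompose rationally. I would use the LD structure to write $K_Y+B_Y+\Mm_Y=\sum r_j(K_Y+B_j+\Mm^j_Y)$, where each $\Mm^j$ still satisfies the needed nefness, so that each piece is an lc generalized pair on which the same MMP is nonpositive. This yields finiteness of extremal rays in the MMP, namely length bounds and boundedness of scaling values.

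\emph{Semi-ampleness.} Once a minimal model $(Y',B',\Mm)$ is reached, note that $K_{Y'}+B'+\Mm_{Y'}\sim_{\Rr,U}-A'$. Semi-ampleness then reduces to abundance for the potentially CY pair. I would apply the contraction theorem and gluing theory on $\lfloor B'+A'\rfloor$ in Koll\'ar's style, as in \cite{Has19}, to show that the restriction to the non-klt locus is semi-ample and then lift the sections. If instead the scaling reaches a non-pseudo-effective stage, we obtain a Mori fiber space.

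The main obstacle is the special-termination and semi-ampleness step without NQC. The adjunction to lc centers produces non-NQC moduli parts, and the gluing requires compatibility of the induced $\bb$-divisors. I expect the LD decomposition to be the key tool, since it provides finitely many rational directions in which Kollár gluing and Shokurov polytopes work.
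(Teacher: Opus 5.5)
Your outline has two genuine gaps.

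\textbf{The LD structure is invoked but never produced, and it is misdescribed.} The pair $(Y,B_Y,\Mm)$ is in general not LD. By Lemma~\ref{lem: ld and irrationality}(1), LD forces $\Supp\Mm_Y^{\irr}\subset\Supp\{B_Y\}$, and this fails whenever $\Mm_Y$ has an irrational coefficient along a divisor that is not a component of $\{B_Y\}$. Moreover, a decomposition $\sum r_j(K_Y+B_j+\Mm^j_Y)$ with varying nef parts $\Mm^j$ is exactly the NQC rational decomposition that is unavailable here. An LD decomposition keeps $\Mm$ fixed and only perturbs the boundary, so that each canonical class becomes a $\mathbb Q$-divisor.

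The missing idea (Theorem~\ref{thm: bir12 1.1 variation1}) is to absorb the linear equivalence into the nef part. Replace $\Mm$ by $\Mm-\overline{K_{\Aa}+A}$; this is still nef$/U$ because $K_{\Aa}+A\equiv_U0$, and it changes no discrepancies. Now $K_{\Aa}+A=0$ on the nose, and models transfer back by Lemma~\ref{lem: Cas+25b 3.5}. Then:
\begin{itemize}
\item $(\Aa,A)$ is trivially LD;
\item $(\Aa,\frac{1}{2}A)$ is LD by Lemma~\ref{lem: combination of LD with lc g-pair};
\item $(\Aa,\frac{1}{2}A)$ satisfies the hypotheses of Theorem~\ref{thm: bir12 1.1 3non} with $U^0=U$ and boundary $\frac{1}{2}A$;
\item $K_{(\Aa,\frac{1}{2}A)}=\frac{1}{2}K_{\Aa}$, so its good minimal model is one for $\Aa$.
\end{itemize}
The non-pseudo-effective case is split off at the start: an MMP with scaling on a $\mathbb Q$-factorial dlt modification ends with a Mori fiber space.

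\textbf{Your termination mechanism does not work.} Being ``klt-like away from $\lfloor B_Y\rfloor$'' gives no termination. Termination of klt flips is open, and polytope or length arguments only bound extremal rays. What is needed is that every flipping locus meets a reduced divisor where special termination applies. The paper arranges this following Hashizume rather than Birkar:
\begin{itemize}
\item Hashizume-type dlt modifications (Lemmas~\ref{lem: special gdlt modification gpair} and \ref{lem: special dlt model ii}) give $K_{\Aa}\sim_{\mathbb R,U}E^h+E^v$ with $E^h,E^v$ vertical and $\Supp E^v=\Supp B^v\subset\lfloor B\rfloor$.
\item Proposition~\ref{prop: abundant+lcc dominate imply gmm} (relative Iitaka dimension $0$, all lc centers dominant) shows that $(\Aa,-tE^v)/U$ has good minimal models for $0<t\ll 1$.
\item After passing to a model on which two such pairs are semi-ample, Lemma~\ref{lem: construct mmp with scaling to 0} gives an MMP with scaling numbers tending to $0$.
\item This MMP is also an MMP with scaling of $-E^v$, so termination near $\lfloor B\rfloor$ forces termination.
\end{itemize}
That special termination (Theorem~\ref{thm: special termination}) itself needs the new finite coefficient set of Lemma~\ref{lem: coefficient ld special termination}, because the usual difficulty function requires coefficients of the nef part. ``Adjunction plus induction'' does not supply this.

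Likewise, ``semi-ample on the non-klt locus, then lift'' only works when $K_{\Aa}$ or $B+\Mm_X$ is big (Lemma~\ref{lem: reduction to Nlc locus}). The general case needs the LD gluing theorem (Theorem~\ref{thm: bir12 1.7 g LD}), which is proved inductively through the canonical bundle formula.
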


\begin{thm}\label{thm: hx13 1.1 nonnqc-g}
Let $(X,B,\Mm)/U$ be an LD generalized pair (cf.\ Definition \ref{defn: ld gpair}), $U^0\subset U$ a non-empty open subset, and set $(X^0,B^0,\Mm^0):=(X,B,\Mm)\times_UU^0$. Assume that
\begin{enumerate}
\item $(X^0,B^0,\Mm^0)/U^0$ has a good minimal model, and
\item any lc center of $(X,B,\Mm)$ intersects $X^0$.
\end{enumerate}
Then $(X,B,\Mm)/U$ has a good log minimal model.
\end{thm}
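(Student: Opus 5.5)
We prove Theorem \ref{thm: hx13 1.1 nonnqc-g} by following the strategy of \cite[Theorem~1.1]{HX13} and its NQC analogue \cite[Theorem~1.3]{LX23}. The LD condition of Definition \ref{defn: ld gpair} replaces the rational decomposition of the nef part, so that the NQC arguments can be run. The argument splits into three steps:
\begin{enumerate}
\item reduce to a dlt, $\mathbb Q$-factorial model on which we run an MMP with scaling of an ample divisor;
\item show, via special termination, that this MMP terminates near the non-klt locus;
\item conclude by abundance-type gluing along the non-klt locus.
\end{enumerate}

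For the first step, take a $\mathbb Q$-factorial gdlt modification $(Y,B_Y,\Mm)\to (X,B,\Mm)$ and check that the LD property passes to it. Hypothesis (2) says every lc center meets $X^0$, so every stratum of $\lfloor B_Y\rfloor$ meets $Y^0$. Using the LD structure, write $\Mm=\sum r_i\Mm_i$ with $\Mm_i$ nef $\bb$-Cartier over a fixed model, with some extra linear data. Perturbing within this decomposition, we may assume the pair is, up to adding a small multiple of an ample $H$, equivalent to a structure where the MMP with scaling of $H$ exists. This uses the flips already available in the $\mathbb Q$-factorial case \cite{CHLX23}.

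Next run a $(K_Y+B_Y+\Mm_Y)$-MMP$/U$ with scaling of $H$. Over $U^0$, hypothesis (1) and the standard argument of \cite[Theorem~2.6]{HX13}/\cite{Bir12} give termination of the restricted MMP with a good minimal model. Then choose $\epsilon$ so small that all further steps are isomorphisms over $U^0$. The key point is that every subsequent extremal ray $R$ is vertical over $U\setminus U^0$. Since all strata of $\lfloor B_Y\rfloor$ meet $Y^0$, special termination, refined for LD generalized pairs, shows the MMP is eventually an isomorphism near $\lfloor B_Y\rfloor$. Here we do induction on dimension: on each component $S$, generalized adjunction gives an LD generalized pair $(S,B_S,\Mm^S)$ that again satisfies (1) and (2). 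By induction, $(S,B_S,\Mm^S)/U$ has a good minimal model, and this forces termination near $S$.

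Once the MMP is an isomorphism near $\lfloor B_Y\rfloor$, the remaining steps are klt-type steps away from the lc centers. By hypothesis (2) and a perturbation $B_Y-\delta\lfloor B_Y\rfloor$ (which keeps the relevant rays negative by the LD linearity), the problem reduces to the klt generalized case \cite{BZ16}. There, existence of a good minimal model over $U^0$ extends to $U$ by the klt version of \cite{HX13}. This produces a log minimal model $(Y',B_{Y'},\Mm)$ with $K+B+\Mm$ nef$/U$.

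The remaining task, and the main obstacle, is semi-ampleness. On the klt part it follows from the klt theory. Along $\lfloor B_{Y'}\rfloor$, the restrictions are semi-ample by the inductive hypothesis. We then glue via Kollár-type gluing theory on the semi-normal scheme $\lfloor B_{Y'}\rfloor$, as in \cite{HX13}, and lift sections using the klt-type vanishing available after the perturbation.

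The difficulty is that $\Mm$ is not NQC. The restriction $\Mm^S$ is only nef, and finiteness of the gluing relation (the pluricanonical representation step) needs the LD structure: $\Mm$ must be linearly decomposable into pieces each of which is pulled back compatibly on the strata. We expect to spend most of the effort verifying that the LD structure is preserved under adjunction to strata and under the MMP, so that both special termination and the gluing apply.
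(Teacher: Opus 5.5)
\paragraph{The gap: termination after special termination.}
Your argument breaks where you go from ``the MMP is eventually an isomorphism near $\lfloor B_Y\rfloor$'' to actual termination. Special termination (Theorem \ref{thm: special termination}) only controls the MMP near the non-klt locus. Nothing in your set-up forces the flipping loci of a $K_{\Aa}$-MMP with scaling of an ample $H$ into $\lfloor B_Y\rfloor$.

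Your remedy is to perturb to $(Y,B_Y-\delta\lfloor B_Y\rfloor,\Mm)$ and invoke a klt version of \cite{HX13}. This does not work, for three reasons.
\begin{itemize}
\item The late steps are indeed negative for the perturbed class $K_{\Aa_i}-\delta\lfloor B_i\rfloor$. But the sequence is not an MMP with scaling for the perturbed pair, since $K_{\Aa_i}+\lambda_iH_i-\delta\lfloor B_i\rfloor$ need not be nef. So ``existence of a minimal model implies termination with scaling'' cannot be applied to it.
\item More seriously, hypothesis (1) gives no information about the perturbed pair. The class $K_{\Aa^0}-\delta\lfloor B^0\rfloor$ need not be nef, abundant, or even pseudo-effective over $U^0$. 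For instance, take $X=\mathbb P^1\times U$ with $B$ two horizontal sections: then $K_X+B-\delta B$ is not pseudo-effective$/U$.
\item The klt case is not in \cite{BZ16} either. In this paper it is Lemma \ref{lem: canoincal model implies glmm}, whose hypotheses (pseudo-effective and abundant) are exactly what the perturbed class may fail.
\end{itemize}
This is precisely the step where the paper leaves the \cite{HX13,LX23} route and follows \cite{Has19}, in order to avoid ACC-type inputs (obstacle (III) of the introduction).

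\paragraph{The missing idea.}
You need to arrange the MMP so that every contracted ray is negative on a divisor supported in $\lfloor B\rfloor$. The paper does this in the following steps.
\begin{enumerate}
\item Hypothesis (1) makes $K_{\Aa}$ abundant$/U$. The paper takes an invariant Iitaka fibration and a proper LD log toroidal model with respect to it (Definition-Lemma \ref{deflem: eoltm}). It then runs an MMP over the Iitaka base $Z$ to reach $K_{\Aa''}\sim_{\Rr,Z}0$.
\item Hashizume-type dlt modifications (Lemmas \ref{lem: special gdlt modification gpair} and \ref{lem: special dlt model ii}) give $K_{\Aa_W}\sim_{\Rr,U}E^h+E^v$, where $E^h,E^v\geq 0$ are vertical$/Z$, $\Supp E^v=\Supp B^v\subseteq\Supp\lfloor B_W\rfloor$, and all lc centers of $(\Aa_W,-tE^v)$ dominate $Z$. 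By Proposition \ref{prop: abundant+lcc dominate imply gmm}, $(\Aa_W,-tE^v)/U$ then has a good minimal model for $0<t\ll1$.
\item Lemma \ref{lem: termination over open subset} handles $U^0$. Proposition \ref{prop: main theorems for special dlt model} then passes to models on which $K_{\Aa(t)}$ and $K_{\Aa(\mu)}$ are both semi-ample. Using Lemma \ref{lem: construct mmp with scaling to 0}, it runs a $K_{\Aa}$-MMP that either terminates or has scaling numbers tending to $0$. Up to $\Rr$-linear equivalence$/U$, this is an MMP with scaling of $-E^v$.
\item Each contracted ray $R$ therefore satisfies $E^v\cdot R<0$ for the image of $E^v$. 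So all flipping loci lie in $\Supp E^v\subseteq\lfloor B\rfloor$. Special termination now yields genuine termination and a weak lc model. Your induction on lc centers is essentially Step 5.2 of that proof.
\item Semi-ampleness then comes from Theorem \ref{thm: bir12 1.7 g LD}.
\end{enumerate}

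\paragraph{A misreading of LD.}
LD does \emph{not} give $\Mm=\sum r_i\Mm_i$ with nef $\bb$-Cartier $\Mm_i$; that is the NQC condition, which is exactly what is missing here. By Lemma \ref{lem: equivalent definition of LD}, LD writes $\Aa=\sum a_i(X,B_i,\Mm)$ with the \emph{same} nef part. In other words, it decomposes $K_{\Aa}$, not $\Mm$. This is what feeds:
\begin{itemize}
\item Lemma \ref{lem: ld nef impleis nqc};
\item the finite coefficient sets of Lemma \ref{lem: coefficient ld special termination};
\item the reduction to $K_{\Aa}$ being a $\mathbb Q$-divisor in the gluing.
\end{itemize}
The gluing uses LD only through that reduction and its stability under adjunction and the MMP. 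It does not use any compatible decomposition of $\Mm$ on strata.
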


Theorem~\ref{thm: bir12 1.1 nonnqc-g} is a direct analogue of \cite[Theorem~1.1]{Bir12} for generalized pairs.  In Theorem~\ref{thm: hx13 1.1 nonnqc-g} we impose an additional condition ``LD'', standing for ``linearly decomposable''.  Conceptually, the LD condition provides a workable replacement for rational decompositions in the absence of the NQC hypothesis: it requires that the log canonical class varies in a rational affine subspace and the generalized pair remains lc on a neighborhood, so that one can still obtain controlled convex decompositions into $\mathbb Q$-classes. 

Most importantly, any lc generalized pair polarized by an ample $\mathbb R$-divisor admits an LD structure up to $\mathbb R$-linear equivalence (Lemma~\ref{lem: +ample is ld}). Using this input, we obtain the following structural theorem for $\mathbb Q$-factorial lc generalized pairs.

\begin{thm}\label{thm: structure of q-factorial gpair}
    Let $(X,B,\Mm)/U$ be a $\mathbb Q$-factorial  lc generalized pair and $A$ an ample$/U$ $\mathbb R$-divisor on $X$. Then there exists an lc pair $(X,\Delta)$ such that
    $$K_X+\Delta\sim_{\mathbb R,U}K_X+B+\Mm_X+A.$$
\end{thm}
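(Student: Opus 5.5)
We prove Theorem~\ref{thm: structure of q-factorial gpair} by first replacing $(X,B,\Mm)$ with a linearly decomposable model, then running an MMP on a log resolution that makes the nef part semi-ample after adding ample, and finally descending back to $X$ using $\mathbb Q$-factoriality.

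\emph{Step 1: pass to an LD structure.} Write $A\sim_{\mathbb R,U} A_1+A_2$ with $A_1,A_2$ ample$/U$. By Lemma~\ref{lem: +ample is ld}, after replacing $(X,B,\Mm)$ and $A_1$ up to $\mathbb R$-linear equivalence, the generalized pair $(X,B,\Mm)$ polarized by $A_1$ becomes LD. Concretely, we may write
\[
K_X+B+\Mm_X+A_1\sim_{\mathbb R,U}\sum_i r_i\bigl(K_X+B_i+\Mm_{i,X}+A_{1,i}\bigr),
\]
with $r_i>0$, $\sum r_i=1$, and each $(X,B_i,\Mm_i)$ an lc generalized pair with $\Qq$-coefficients, $\Mm_i$ a $\bb$-$\Qq$-Cartier nef $\bb$-divisor, and $A_{1,i}$ ample$/U$ $\Qq$-divisors. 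Thus it suffices to produce, for each $i$, an lc pair $(X,\Delta_i)$ with $K_X+\Delta_i\sim_{\mathbb R,U} K_X+B_i+\Mm_{i,X}+A_{1,i}+\tfrac{1}{?}A_2$, where we split $A_2$ into pieces as well. Since lc is a convex condition on a $\mathbb Q$-factorial $X$ with fixed $K_X$, taking $\Delta=\sum r_i\Delta_i$ will then finish the proof. We are therefore reduced to the case where $B$ and $\Mm$ are $\Qq$-data, though $\Mm$ is still not semi-ample.

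\emph{Step 2: the $\Qq$-case.} Let $g:Y\to X$ be a log resolution on which $\Mm$ descends, and write $K_Y+B_Y+\Mm_Y=g^*(K_X+B+\Mm_X)$. Since $X$ is $\mathbb Q$-factorial, there is an effective $g$-exceptional $E$ with $-E$ ample$/X$. Hence $\Mm_Y+g^*A-\epsilon E$ is nef plus ample$/U$, so it is ample$/U$ for $0<\epsilon\ll1$. By Bertini we choose a general $0\le H\sim_{\Qq,U}\Mm_Y+g^*A-\epsilon E$ such that $(Y,B_Y+H+\epsilon E)$ is sub-lc; here we must shrink $\epsilon$ and perturb using the fact that $B_Y$ has coefficients $\le1$. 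Pushing forward gives $\Delta:=g_*(B_Y+H+\epsilon E)=B+g_*H$, and
\[
K_Y+B_Y+H+\epsilon E\sim_{\Qq,U} g^*(K_X+B+\Mm_X+A).
\]
Hence $K_X+\Delta\sim_{\mathbb R,U}K_X+B+\Mm_X+A$, and $(X,\Delta)$ is lc because its pullback is the sub-lc pair above.

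\emph{Main obstacle.} The subtle point is the discrepancy along lc places: components of $E$ with coefficient $1$ in $B_Y$ get pushed above $1$ by $+\epsilon E$. Our first attempt would be to choose $E$ supported away from those places, which is impossible in general. The fix is to use the nefness of $\Mm_Y$ more carefully: write $g^*A=A'+F$ with $A'$ ample$/U$ and $F\ge0$ exceptional of tiny coefficients, and absorb the $F$-term so that $-\epsilon E+F$ has nonpositive coefficients on lc places. Alternatively, we could invoke the LD neighborhood condition from Step 1, which guarantees that lc remains open under small perturbation of $B$, and absorb the excess into $B$. This handling of non-klt places, and in particular making the $\mathbb R$-linear combination compatible with the perturbations, is where the real work lies.
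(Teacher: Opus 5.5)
There is a genuine gap, and it sits exactly where you place the main obstacle. Neither of your proposed fixes closes it, and Step~1 does not help.

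\emph{Step~1.} This step rests on a false claim. An LD decomposition (Lemmas~\ref{lem: equivalent definition of LD} and~\ref{lem: +ample is ld}) keeps the \emph{same} nef part $\Mm$ in every summand; it only makes the classes $K_{\Aa_i}$ rational. It never produces $\Qq$-$\bb$-Cartier nef parts $\Mm_i$: that would be an NQC decomposition, which is precisely what is unavailable here. The step is also beside the point, because the obstruction in Step~2 has nothing to do with rationality.

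\emph{Step~2.} On a log resolution $g\colon Y\to X$, any effective exceptional $E$ with $\Mm_Y+g^*A-\epsilon E$ ample$/U$ must contain every $g$-exceptional prime divisor $D$ that is covered by $g$-contracted curves $C$ with $\Mm_Y\cdot C=0$. Otherwise a general such $C\not\subset\Supp E$ gives $(\Mm_Y+g^*A-\epsilon E)\cdot C=-\epsilon E\cdot C\le 0$. This happens whenever $(X,B)$ fails to be dlt at a point near which $\Mm$ descends to $X$. A concrete case is $B$ a nodal curve on a smooth surface with $\Mm$ jumping only over a point off $B$. Every log resolution then extracts an lc place $D$ over that point, $\Mm_Y$ is $g$-trivial along $D$, so $D\subset\Supp E$ and $\mult_D(B_Y+\epsilon E)>1$.

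Your two fixes fail for the following reasons.
\begin{itemize}
\item The first makes matters worse: $g^*A-F$ ample$/U$ forces $F$ to contain \emph{every} $g$-exceptional divisor, by the same intersection argument.
\item The second cannot work: by Lemma~\ref{lem: LD preserves lc center}(1), perturbations inside the LD polytope never change discrepancies at lc places.
\end{itemize}

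\emph{The missing idea.} You need to replace the log resolution by a model that extracts \emph{only} divisors along which $\Mm$ jumps. This is how the paper proceeds, via Theorem~\ref{thm: structure of mx rcartier gpair}; there $\Qq$-factoriality is used only to make $\Mm_X$ $\Rr$-Cartier.
\begin{enumerate}
\item Replace $\Mm$ by $(1-\eta)\Mm$ and $A$ by $A+\eta\Mm_X$ for $0<\eta\ll 1$. Then $\Nklt(X,B,\Mm)=\Nklt(X,B)$, and every lc place of $(X,B,\Mm)$ is one at which $\Mm$ does not jump.
\item Apply Lemma~\ref{lem: special extraction}. It gives $f\colon Y\to X$ to which $\Mm$ descends, with $\Supp(f^*\Mm_X-\Mm_Y)=\Exc(f)=\Supp L$ for some $L\ge 0$ with $-L$ ample$/X$. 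Hence $\Supp L$ contains no lc center of $(Y,B_Y)$.
\end{enumerate}

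Producing this $Y$ is the real work, and it needs the paper's machinery:
\begin{itemize}
\item On a log resolution $W$, replace $\Mm$ by $-\overline{G}$, where $G:=h^*\Mm_X-\Mm_W\ge 0$ is the exceptional jump.
\item Consider the LD family $(W,\Delta_W,t\Mm)$, for which $\Supp\Mm_W\subset\Supp\{\Delta_W\}$.
\item Run a $G$-trivial MMP over $X$ at $t=\alpha\gg 0$ (Lemma~\ref{lem: P-trivial}).
\item Obtain good minimal models over $X$ from Theorem~\ref{thm: hx13 1.1 nonnqc-g}.
\item Take the ample model over $X$ for a general $s\gg 0$.
\end{itemize}

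On that $Y$, your Step~2 template (which is exactly the paper's final step) goes through verbatim with $\delta L$ in place of $\epsilon E$. The pair $(Y,B_Y+\delta L)$ is sub-lc, and $\Mm_Y+f^*A-\delta L$ is ample$/U$. A general $0\le H\sim_{\Rr,U}\Mm_Y+f^*A-\delta L$ keeps $(Y,B_Y+\delta L+H)$ sub-lc, and $\Delta:=B+f_*H$ works.
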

See Theorem~\ref{thm: structure of mx rcartier gpair} for a more general statement.  Combining Theorem~\ref{thm: structure of q-factorial gpair} with Prokhorov--Shokurov's $\bb$-semi-ampleness theorem \cite{BFMT25}, we obtain the following corollary, which can be viewed as a generalized pair analogue of \cite[Theorem~1.6]{HH20}.

\begin{cor}\label{cor: potentially lc base}
    Let $(X,B,\Mm)/U$ be a $\mathbb Q$-factorial lc generalized pair and $f: X\rightarrow Z$ a contraction$/U$ such that $-(K_X+B+\Mm_X)$ is ample$/Z$. Then $Z$ is potentially lc, i.e.\ there exists an lc pair $(Z,\Delta_Z)$.
\end{cor}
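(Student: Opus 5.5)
We plan to reduce Corollary~\ref{cor: potentially lc base} to Theorem~\ref{thm: structure of q-factorial gpair} (applied over $Z$) together with the canonical bundle formula for lc-trivial fibrations and the $\bb$-semi-ampleness theorem of \cite{BFMT25}. The idea is to replace the generalized pair by an honest lc pair that is relatively trivial over $Z$. We then push it down via the canonical bundle formula, and finally use $\bb$-semi-ampleness of the moduli part to absorb it into an effective boundary on $Z$.

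\textbf{Step 1: produce an lc pair trivial over $Z$.} Set $A:=-(K_X+B+\Mm_X)$, which is ample$/Z$ by hypothesis. Apply Theorem~\ref{thm: structure of q-factorial gpair} to the $\mathbb Q$-factorial lc generalized pair $(X,B,\Mm)/Z$, regarded over the base $Z$, with this ample$/Z$ divisor $A$. We obtain an lc pair $(X,\Delta)$ with
$$K_X+\Delta\sim_{\mathbb R,Z}K_X+B+\Mm_X+A=0.$$
Thus $K_X+\Delta\sim_{\mathbb R}f^*L$ for some $\mathbb R$-Cartier $\mathbb R$-divisor $L$ on $Z$, so $f:(X,\Delta)\to Z$ is an lc-trivial fibration. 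If $\Delta$ is not a $\mathbb Q$-divisor, we first perturb it within the rational polytope of lc boundaries satisfying $K_X+\Delta'\sim_{\mathbb R,Z}0$, writing $K_X+\Delta=\sum r_i(K_X+\Delta_i)$ as a convex combination with $(X,\Delta_i)$ lc, $\Delta_i$ rational, and $K_X+\Delta_i\sim_{\mathbb Q,Z}0$. This is the standard Shokurov-polytope argument, since the relative triviality is a linear condition defined over $\mathbb Q$. It then suffices to treat one $\Delta_i$, and we may assume $\Delta$ is a $\mathbb Q$-divisor.

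\textbf{Step 2: canonical bundle formula and $\bb$-semi-ampleness.} Since $f$ is a contraction, the canonical bundle formula (Ambro, Fujino--Gongyo; cf.\ \cite{Kaw98,FM00}) gives
$$K_X+\Delta\sim_{\mathbb Q}f^*(K_Z+B_Z+\Mm'_Z).$$
Here $B_Z\geq 0$ is the discriminant, which is effective because $\Delta\geq 0$ on the generic fibre side; the horizontal coefficients of $\Delta$ do not affect this. The moduli part $\Mm'$ is a $\bb$-nef $\bb$-divisor, and $(Z,B_Z,\Mm')$ is an lc generalized pair. By \cite{BFMT25}, $\Mm'$ is $\bb$-semi-ample: on a sufficiently high model $Z'\to Z$ where $\Mm'$ descends, $\Mm'_{Z'}$ is semi-ample. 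We choose a general $G'\sim_{\mathbb Q}\Mm'_{Z'}$ with small coefficients, so that $G'$ meets the lc structure generally. Pushing down gives $G:=G'_Z\sim_{\mathbb Q}\Mm'_Z$ such that $(Z,B_Z+G)$ is lc. This follows because the log discrepancies of $(Z,B_Z+G)$ agree with those of the generalized pair $(Z,B_Z,\Mm')$ computed on $Z'$, plus the effect of a general member, which preserves lc. In particular $K_Z+B_Z+G$ is $\mathbb R$-Cartier, so $\Delta_Z:=B_Z+G$ works.

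\textbf{Main obstacle.} The delicate point is Step 1 over the base $Z$ rather than $U$. Theorem~\ref{thm: structure of q-factorial gpair} is stated relative to an arbitrary base, so taking the base to be $Z$ is legitimate. The subtlety lies instead in the rationality reduction and in verifying that the canonical bundle formula applies to an lc (not klt) pair with $\mathbb R$- versus $\mathbb Q$-coefficients. Once $\Delta$ is made rational as above, the lc-trivial-fibration machinery and \cite{BFMT25} apply directly. The remaining check is that a general member of a semi-ample class on $Z'$ preserves the lc property after pushing forward, which is routine via Bertini.
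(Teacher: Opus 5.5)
Your proposal is correct and follows the paper's proof. You set $A:=-(K_X+B+\Mm_X)$, apply Theorem~\ref{thm: structure of q-factorial gpair} over $Z$ to get an lc pair $(X,\Delta)$ with $K_X+\Delta\sim_{\mathbb R,Z}0$, and conclude via $\bb$-semi-ampleness; the paper simply cites \cite[Theorem~7.6 and 7.2.3]{BFMT25} for the step from $(X,\Delta)$ to an lc pair on $Z$, which you unpack by the $\mathbb Q$-reduction, the canonical bundle formula, and a Bertini argument on a model where the moduli part descends and is semi-ample.
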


\begin{rem}
It is natural to ask whether our main results remain valid for projective morphisms between compact K\"ahler varieties (or complex analytic spaces) when one assumes that $\Mm$ is only a nef $\bb$-$(1,1)$-class instead of a nef $\bb$-divisor.

There is evidence in this direction.  On the one hand, the existence of the minimal model program for compact K\"ahler generalized pairs with $\mathbb Q$-factorial projective klt ambient variety was proved in \cite{DH24}, relying on the Graf--Kirschner decomposition theorem \cite[Proposition~4.2]{GK20}.  On the other hand, the minimal model program for log canonical \emph{pairs} is now available for projective morphisms between complex analytic spaces (cf.\ \cite{EH24,EH25,Fuj24a,Fuj24b,Fuj25,Has25b,EH26}), building on the klt case (cf.\ \cite{Fuj22,LM22,DHP24}). It seems plausible that combining these analytic developments with the methods of the present paper may lead to an existence theorem for the minimal model program of log canonical generalized pairs associated with projective morphisms between compact K\"ahler varieties (or complex analytic spaces). We also note that adjunction and the canonical bundle formula in the K\"ahler generalized pair setting are available; see \cite{HP24}. We do not pursue this direction here.

A more ambitious question is the existence of the (generalized) log canonical minimal model program for compact K\"ahler varieties, i.e.\ the transcendental minimal model program.  There has been substantial progress in the klt case in dimension $\leq 4$ (cf.\ \cite{DH23,DHY23,DHP24}), but the log canonical case appears to remain widely open.  In higher dimensions, the transcendental minimal model program seems very difficult, although the recent breakthrough of Ou \cite{Ou25} suggests new possibilities.
\end{rem}

\subsection*{Sketch of the proof}

We focus on Theorem~\ref{thm: flip nonnqc-g}.  Recall that the NQC case of Theorem~\ref{thm: flip nonnqc-g} was proved in \cite[Theorem~1.2]{LX23}, and our goal is to extend the strategy of \cite{LX23} beyond the NQC and $\mathbb Q$-factorial settings.  The proof of \cite[Theorem~1.2]{LX23} can be organized into three steps:
\begin{enumerate}
    \item Reduce the existence of flips to a question on the existence of good minimal models (in our notation, reduce Theorem~\ref{thm: flip nonnqc-g} to Theorem~\ref{thm: bir12 1.1 nonnqc-g}).
    \item Under the assumptions of Theorem~\ref{thm: bir12 1.1 nonnqc-g}, construct a (log) minimal model.
    \item Show that the (log) minimal model obtained in (2) is in fact good.
\end{enumerate}
The reduction in (1) is standard, so the main input lies in (2) and (3).  In the NQC setting, (2) was carried out in \cite{LX25}, building on the MMP for log abundant generalized pairs developed in \cite{Has22a}, while (3) was achieved in \cite{LX23} using Koll\'ar's gluing theory \cite{Kol13}.  We follow the same outline in the non-NQC case, but several essential obstacles appear.

\smallskip

\noindent\emph{(I) Lack of rational decompositions.}
The arguments of \cite{LX23} use rational decompositions of generalized pairs to apply gluing theory and certain finite generation statements (cf.\ \cite[Theorem~1.5]{LX23}).  Here the NQC condition is crucial: NQC generalized pairs admit rational decompositions
\[
(X,B,\Mm)=\sum a_i (X,B_i,\Mm_i),
\]
where $a_i\in(0,1]$, $\sum a_i=1$, and each $(X,B_i,\Mm_i)$ is a $\mathbb Q$-generalized pair (cf.\ \cite[Proposition~3.20]{HL22}).  Such decompositions fail in general for non-NQC generalized pairs.

\smallskip

\noindent\emph{(II) Special termination.}
Special termination is a key ingredient in \cite{LX25}.  This is because special termination is formulated in terms of difficulty functions, and in the non-NQC setting there is no satisfactory notion of ``coefficients of the nef part'' that would make the usual difficulty function well-defined.

\smallskip

\noindent\emph{(III) ACC-type inputs.}
For non-NQC generalized pairs, we do not have the ACC for lc thresholds or the global ACC (cf.\ \cite{HMX14,BZ16}) in the form needed for the abundant-case arguments.  In the NQC case, neither \cite{LX23} nor \cite{LX25} invokes ACC statements explicitly, but \cite[Theorem~7.1]{LX25} relies on \cite[Theorem~3.5]{Has22a}, whose proof uses ACC and the global ACC for NQC generalized pairs in an essential way.  As a result, the corresponding steps of \cite{LX25} do not directly extend to the non-NQC case.

\smallskip

To overcome these issues, we introduce several new ideas.

\smallskip

\noindent\emph{(a) Linear decompositions of the log canonical class.}
Even when $(X,B,\Mm)$ itself does not admit a rational decomposition, the log canonical $\mathbb R$-divisor $K_X+B+\Mm_X$ can still satisfy useful arithmetic constraints, especially after adding an ample $\Rr$-divisor $A$.  In many places it is enough to control the class $K_X+B+A+\Mm_X$ (rather than decomposing both $B$ and $\Mm$) so as to obtain convex decompositions into $\mathbb Q$-classes.  Motivated by this, we introduce the class of \emph{linearly decomposable} (\emph{LD}) generalized pairs (Section~\ref{sec: ld gpair}) and establish its basic properties.

For obstacle (I), since subadjunction and related adjunction tools for non-NQC generalized pairs are available in \cite{CHLX23}, we can establish a Koll\'ar-type gluing theory for LD generalized pairs (Theorem~\ref{thm: bir12 1.7 g LD}). This linear decomposition serves as a substitute for the NQC decomposition used in \cite[Proposition~3.20]{HL22}.  Moreover, the finite generation arguments in \cite{LX23} are only used in \cite[Proof of Theorem~5.1, Step~2]{LX23} to reduce to the general type case.  The gluing arguments in \cite[Proof of Theorem~5.1, Step~3]{LX23} do not require general type; the only point where ``general type'' is used is \cite[Lemma~3.5]{LX23}, and we replace it with an argument analogous to \cite[Theorem~1.7]{Bir12}.  See Theorem~\ref{thm: bir12 1.7 g} below.

\smallskip

\noindent\emph{(b) Special termination via codimension $2$ control.}
For obstacle (II), the key observation is that the invariants entering special termination are codimension $2$ in nature.  Concretely, if we run an lc generalized pair MMP on a $\mathbb Q$-factorial klt variety $X$, the relevant coefficients arise from surface plt points.  For a surface $\epsilon$-plt pair $(X,S)$, local Cartier indices are bounded by $\left\lceil\frac{1}{\epsilon}\right\rceil$ (cf.\ \cite[16.6 Proposition]{Kol+92}), and the $\epsilon$-plt condition is preserved under the MMP.  On the other hand, when $K_X+B+\Mm_X$ is a $\mathbb Q$-Cartier $\mathbb Q$-divisor (as happens for LD generalized pairs after suitable perturbations), its Weil index is preserved along the MMP, hence Cartier indices at codimension $2$ points are uniformly bounded. 
This allows us to define an appropriate difficulty function for LD generalized pairs and prove special termination (Section~\ref{sec: spe tof}).  Unlike the classical difficulty function \cite{Fuj07}, our difficulty depends on the given generalized pair and on an induction hypothesis. We refer the reader to \cite[Subsection 4.2]{Hu25} for an embryonic form of this type of special termination.

\smallskip

\noindent\emph{(c) Avoiding ACC for lc thresholds.}
For obstacle (III), we avoid ACC-type arguments altogether: instead we follow the approach of \cite{Has19}, where the pair versions of Theorems~\ref{thm: bir12 1.1 nonnqc-g} and~\ref{thm: hx13 1.1 nonnqc-g} are proved without ACC inputs.  Similar methods also appear in an earlier arXiv version of \cite{HL23} and later in \cite{Cas+25a}.  In our setting, together with (a) and (b), this yields Theorem~\ref{thm: hx13 1.1 nonnqc-g} and the LD case of Theorem~\ref{thm: bir12 1.1 nonnqc-g}.

It is also worth comparing the role of ``(log) abundant generalized pairs" in this paper and their roles in literature.  Besides relying on ACC-type results, the approach in \cite{Has22a,LX25} uses detailed properties of the MMP for (log) abundant NQC generalized pairs, with ideas that originate from the pair case, notably Hashizume's work \cite{Has24} (see also \cite{Hu17,HH20}).  In the pair case, log abundance is crucial because nef and log abundant lc pairs admit good minimal models \cite{FG14,HX16}.  However, counterexamples already appear in the NQC generalized setting (cf.\ \cite{JLX25}), and the non-NQC case behaves even worse; accordingly, only a limited portion of these abundant-case techniques applies in our argument.  See Section~\ref{sec: abundant gmm} for further discussion.

\smallskip

Finally, the general case of Theorem~\ref{thm: bir12 1.1 nonnqc-g} follows from the LD case by a standard reduction, after replacing $\Mm$ so that $K_X+B+A+\Mm_X\sim_{\Rr,U}0$.  This yields Theorem~\ref{thm: flip nonnqc-g}; combined with the cone and contraction theorems \cite[Theorem~B]{CHLX23} it gives Theorem~\ref{thm: eommp nonnqc-g}.

We also use Theorem~\ref{thm: hx13 1.1 nonnqc-g} to deduce the non-NQC analogue of \cite[Lemma~5.9]{HL23} and then follow the arguments of \cite{HL23} to obtain Theorem~\ref{thm: structure of q-factorial gpair}.  Finally, Corollary~\ref{cor: potentially lc base} is an immediate consequence of Theorem~\ref{thm: structure of q-factorial gpair} together with \cite[Theorem~7.6 and~7.2.3]{BFMT25}.
\subsection*{Structure of the paper}
In Section~\ref{sec: preliminary} we recall preliminary material.  In Section~\ref{sec: ld gpair} we introduce LD generalized pairs and establish their basic properties.  In Sections~\ref{sec: models} and~\ref{sec: mmp with scaling} we discuss models and the behavior of LD generalized pairs under the MMP with scaling.  In Section~\ref{sec: spe tof} we prove special termination for LD generalized pairs.  In Section~\ref{sec: abundant gmm} we study the MMP for generalized pairs with abundant canonical class.  In Section~\ref{sec: glue} we establish Koll\'ar-type gluing theory for LD generalized pairs.  In Section~\ref{sec: hx13 1.1} we prove Theorem~\ref{thm: hx13 1.1 nonnqc-g}.  In Section~\ref{sec: eommp} we prove Theorems~\ref{thm: flip nonnqc-g}, \ref{thm: eommp nonnqc-g}, and \ref{thm: bir12 1.1 nonnqc-g}.  Finally, in Section~\ref{sec: structure theorem} we prove Theorem~\ref{thm: structure of q-factorial gpair} and Corollary~\ref{cor: potentially lc base}.

\subsection*{Acknowledgments}

This work is supported by the National Key R\&D Program of China \#2024YFA1014400. The first author is supported by Overseas High-Level Young Talent Recruitment Programs and partially supported by Chongqing Natural Science Foundation Innovation and Development Joint Fund CSTB2023NSCQ-LZX0031. The authors would like to thank Professor Caucher Birkar, Paolo Cascini, Christopher D. Hacon, Kenta Hashizume, Chen Jiang, Wenhao Ou, Calum Spicer, and Zheng Xu for useful discussions.
\section{Preliminaries}\label{sec: preliminary}

We adopt the standard notation and terminology for the minimal model program from \cite{Sho92,KM98,BCHM10} and use them freely. For generalized pairs, we follow the notation and definitions in \cite{BZ16,HL23} with minor differences. To simplify notation, we sometimes write a generalized pair in the form $\Aa$. This convention was introduced in \cite{Cas+25a}, and our notation is consistent with \cite{Cas+25a}. See Notation \ref{nota: simple notation} below.

\subsection{Special notation}

\begin{nota}
Let $X\rightarrow U$ be a projective morphism between normal quasi-projective varieties. A \emph{contraction}$/U$ $f: X\rightarrow Y$ is a projective morphism$/U$ such that $f_\ast \mathcal{O}_X=\mathcal{O}_Y$. For any birational map$/U$ $h: X\dashrightarrow X'$, we denote by $\Exc(h)$ the reduced divisor supported on the codimension one part of the exceptional locus of $h$.
\end{nota}

\begin{defn}
    Let $\bm{v}=(v_1,\dots,v_m)\in\mathbb R^m$ be a vector. The \emph{rational envelope} of $\bm{v}$ is the minimal $\mathbb Q$-affine subspace $V$ in $\mathbb R^m$ such that $\bm{v}\in V$. For example, if $\bm{v}=(\sqrt{2},1-\sqrt{2})\in\mathbb R_{xy}^2$, then the rational envelope of $\bm{v}$ is $(x+y=1)$.
\end{defn}

\begin{defn}
    Let $X\rightarrow U$ be a projective morphism from a normal quasi-projective variety to a variety.  Let $D$ be an $\Rr$-Cartier $\Rr$-divisor on $X$ and $\phi: X\dashrightarrow X'$ a birational map$/U$. We say that $\phi$ is $D$-negative (resp. $D$-non-positive, $D$-trivial, $D$-non-negative, $D$-positive) if $\phi$ does not extract any divisor, $D':=\phi_\ast D$ is $\Rr$-Cartier, and for any  resolution of indeterminacy $p: W\rightarrow X$ and $q: W\rightarrow X'$ of $\phi$, we may write
    $$p^\ast D=q^\ast D'+F$$
    where $F\geq 0$  and $\Supp p_\ast F=\Exc(\phi)$ (resp. $F\geq 0$, $F=0$, $F\leq 0$, $F\leq 0$ and $\Supp p_\ast F=\Exc(\phi)$).
\end{defn}

\begin{defn}\label{defn: irr}
    Let $\Ii\subset\mathbb R$ be a set. Let $X$ be a normal variety, and let $D$ be an $\mathbb R$-divisor on $X$. Write $D=\sum d_iD_i$ where $D_i$ are the irreducible components of $D$. We denote by $D^{\irr}:=\sum_{d_i\not\in\mathbb Q}d_iD_i$. We denote by $||D||_{\infty}:=\sup\{0,|d_i|\}$. We denote by $D\in\Ii$ if $d_i\in\Ii$ for any $i$.
\end{defn}

\subsection{Generalized pairs}

\begin{defn}[$\bb$-divisors]
We use the notation as in \cite[Definition 2.4]{HL23}. Let $X$ be a normal quasi-projective variety. A $\bb$-divisor $\Mm$ on $X$ is a (possibly infinite) $\mathbb R$-linear combination of divisorial valuations $\nu_P$ over $X$
$$\Mm=\sum_Pr_P\nu_P$$
such that for any birational map $\phi: X\dashrightarrow Y$, 
$$\{P\mid \Center_{Y}P\text{ is a divisor}, r_P\not=0\}$$
is a finite set. By definition, $\Mm$ is also a $\bb$-divisor on $Y$. We say that $\Mm$ is a $\mathbb Q$-$\bb$-divisor if $r_P\in\mathbb Q$ for any prime divisor $P$ over $X$. For any birational map $\phi: X\dashrightarrow Y$, we denote by
$$\Mm_{Y}:=\sum_{P\mid \Center_{Y}P\text{ is a divisor}}r_P\left(\Center_{Y}P\right).$$
We say that $\Mm$ \emph{descends to} $X$ if $\Mm_{X}$ is $\mathbb R$-Cartier, and for any projective birational morphism $h: X'\rightarrow X$, we have $\Mm_{X'}=h^*\Mm_X$. For any $\mathbb R$-Cartier $\mathbb R$-divisor $D$ on $X$, we denote by $\overline{D}$ the $\bb$-divisor $\Mm$ such that $\Mm$ descends to $X$ and $\Mm_X=D$. We denote by $\bm{0}:=\overline{0}$. We say that $\Mm$ is \emph{$\bb$-Cartier} if there exists a birational map $\phi: X\dashrightarrow Y$ such that $\Mm$ descends to $Y$ and $\Mm_Y$ is Cartier. 

For any projective morphism $\pi: X\rightarrow U$ to a normal quasi-projective variety $U$, we say that $\Mm$ is \emph{nef$/U$} if there exists a birational map$/U$ $\phi: X\dashrightarrow Y$ such that $\Mm$ descends to $Y$ and $\Mm_Y$ is nef$/U$, and if $U$ is a closed point, then we say that $\Mm$ is \emph{nef}. For any non-negative real number $s$, we say that $\Mm$ is \emph{$s$-NQC}$/U$ if $\Mm=\sum \mu_i\Mm_i$, where each $\mu_i\geq s$ and each $\Mm_i$ is a nef$/U$ $\bb$-Cartier $\bb$-divisor. For any $\mathbb R$-Cartier $\mathbb R$-divisor $D$ on $X$, we say that $D$ is $s$-NQC$/U$ if $D=\sum d_iD_i$, where each $d_i\geq s$ and each $D_i$ is a nef$/U$ Cartier divisor. If $\Mm$ (resp. $D$) is $0$-NQC$/U$, then we say that $\Mm$ (resp. $D$) is NQC$/U$.
\end{defn}

\begin{defn}\label{defn: restriction b divisor}
We will use two types of restrictions of $\bb$-divisors in this paper. Let $X$ be a normal quasi-projective variety and $\Dd$ a $\bb$-divisor on $X$.
\begin{enumerate}
    \item Let $V$ be a non-empty open subset of $X$.  We define the \emph{restricted $\bb$-divisor} of $\Dd$ on $V$, which is denoted by $\Dd|_{V}$, in the following way. 

    For any birational morphism $\pi: W\to V$, there exists a birational morphism $\pi': Y\rightarrow X$ such that $W\subset Y$ and $\pi'|_W=\pi$. We let $(\Dd|_V)_{W}=(\Dd_Y)|_W$. It is easy to see that this definition is independent of the choice of $Y$ and defines a $\bb$-divisor.
    \item Suppose that $\Dd$ descends to a birational model of $X$. Let $S$ be a prime divisor on $X$ and $\nu: S^\nu\rightarrow S$ the normalization of $S$. The \emph{restricted $\bb$-divisor} of $\Dd$ on $S^\nu$, which is denoted by $\Dd|_{S^\nu}$, is defined in the following way. 

     Let $f: Y\rightarrow X$ be a log resolution of $(X,S)$ such that $\Dd$ descends to $Y$. Let $S_Y:=f^{-1}_*S$. Then there exists an induced birational morphism $f_S: S_Y\rightarrow S^\nu$ such that $\nu\circ f_S=f|_{S_Y}$.
     We define $$\Dd|_{S^\nu}:=\overline{\Dd_Y|_{S_Y}}.$$
     It is clear that $\Dd|_{S^\nu}$ is well-defined and is independent of the choice of $Y$.
\end{enumerate}
\end{defn}

\begin{defn}[Generalized pairs]
A \emph{generalized pair} $(X,B,\Mm)/U$ is the datum of a normal quasi-projective variety $X$, a projective morphism $X\rightarrow U$ to a normal quasi-projective variety $U$, an $\Rr$-divisor $B\geq 0$ on $X$, and a nef$/U$ $\bb$-divisor $\Mm$, such that $K_X+B+\Mm_X$ is $\Rr$-Cartier.  We say that $(X,B,\Mm)/U$ is \emph{NQC} if $\Mm$ is NQC$/U$. We say that $(X,B)/U$ is a \emph{pair} if $\Mm=\bm{0}$.

If $B=0$, or if $\Mm=\bm{0}$, or if $U$ is clear from context, then we may drop $B$, $\Mm$, or $U$, respectively. If $U=\{pt\}$ then we also drop $U$ and say that $(X,B,\Mm)$ is \emph{projective}. If we allow $B$ to have negative coefficients, then we shall add the prefix ``sub-". If $B$ is a $\Qq$-divisor and $\Mm$ is a $\Qq$-$\bb$-divisor, then we shall add the prefix ``$\Qq$-".
\end{defn}

We adopt the following notation for simplicity of the arguments. Such notation was first introduced in \cite{Cas+25a} due to the lengthy writing of the canonical $\mathbb R$-divisor.

\begin{nota}\label{nota: simple notation}
Let $(X,B,\Mm)/U$ be a sub-generalized pair. In many scenarios, the notation $(X,B,\Mm)/U$ is rather inconvenient and makes arguments hard to follow. At times, we do not need to rely on the full information that the (sub-)generalized pair conveys. In view of this, throughout the paper we will usually denote (sub-)generalized pairs in the form of ``$\Aa:=(X,B,\Mm)$" or simply say that ``$\Aa/U$ is a (sub-)generalized pair" without having to fully recall $X,B,\Mm$. 

In the following, we let $\Aa/U=(X,B,\Mm)/U$ be a sub-generalized pair.

For any $\Rr$-divisor $D$ on $X$ and nef$/U$ $\bb$-divisor $\Nn$ on $X$ such that $D+\Nn_X$ is $\Rr$-Cartier, we define $(\Aa,D,\Nn):=(X,B+D,\Mm+\Nn)$. If $D=0$ then we may drop $D$, and if $\Nn=\bm{0}$ then we may drop $\Nn$. We define
$$K_{\Aa}:=K_{(X,B,\Mm)}:=K_X+B+\Mm_X$$
to be the \emph{canonical $\Rr$-divisor} of $\Aa$. 
Moreover, 
$X,B,\Mm$ are called the \emph{ambient variety}, \emph{boundary part}, and \emph{nef part} (or \emph{moduli part}) of $\Aa$ respectively. The $\mathbb R$-divisor
$B+\Mm_X$ is called the \emph{generalized boundary} of $\Aa$. We say that $\Aa/U$ is \emph{of general type} if $K_{\Aa}$ is big$/U$. 
\end{nota}

\begin{defn}
Let $\Aa/U:=(X,B,\Mm)/U$ be a sub-generalized pair. For any birational map$/U$ $\phi: X\dashrightarrow X'$, we define $\phi_*\Aa:=(X',\phi_*B,\Mm)$ and say that $\phi_*\Aa$ is the \emph{image} of $\Aa$ on $X'$. For any projective birational morphism $h: X'\rightarrow X$, we define 
$h^*\Aa:=(X',B',\Mm)$
where $B'$ is the unique $\Rr$-divisor such that $K_{h^*\Aa}=h^*K_{\Aa}$. For any prime divisor $E$ on $X'$, we denote by
$$a(E,\Aa):=-\mult_EB'$$
the \emph{discrepancy} of $E$ with respect to $\Aa$. We say that $\Aa$ is \emph{sub-lc} (resp. \emph{sub-klt}) if $a(E,\Aa)\geq -1$ (resp. $>-1$) for any prime divisor over $X$. If $B\geq 0$ and $\Aa$ is sub-lc (resp. sub-klt), then we say that $\Aa$ is lc (resp. klt). We say that $X$ is \emph{potentially klt} if $(X,\Delta)$ is klt for some $\Delta$.

An \emph{nklt place} (resp. \emph{lc place}, \emph{nlc place}) of $\Aa$ is a prime divisor $E$ over $X$ such that $a(E,\Aa)\leq -1$ (resp. $a(E,\Aa)=-1$, $a(E,\Aa)<-1$). An \emph{nklt center} (resp. \emph{non-trivial lc center}, \emph{nlc center}) of $\Aa$ is the image of an nklt place (resp. \emph{lc place}, \emph{nlc place}) of $\Aa$ on $X$. An \emph{lc center} of $\Aa$ is either a non-trivial lc center of $\Aa$ or $X$ itself. $X$ is called \emph{the trivial lc center} of $\Aa$. The union of all nklt centers (resp. all nlc centers) is called the \emph{nklt locus} (resp. \emph{nlc locus}) of $\Aa$ and is denoted by $\Nklt(\Aa)$ (resp. $\Nlc(\Aa)$).

We say that $\Aa$ is \emph{dlt} if $\Aa$ is lc, and for any lc center $V$ of $\Aa$ with generic point $\eta_V$, $(X,B)$ is log smooth near $\eta_V$, and $\Mm$ descends to $X$ near $\eta_V$. We say that $\Aa$ is \emph{qdlt} if there exists an open subset $V\subset X$ such that
\begin{itemize}
    \item $(V,\Supp B|_V)$ is $\Qq$-factorial log toroidal (cf. \cite[Definition 2.1]{ACSS21}), and
    \item $V$ contains the generic point of any lc center of $\Aa$, and the generic point of any lc center of $\Aa$ is the generic point of an lc center of $(V,B|_V)$.
\end{itemize}
By \cite[Lemma 7.1.2]{CHLX23}, $\Aa$ is qdlt if and only if for any lc center of $\Aa$ with generic point $\eta$, $\Aa$ is $\Qq$-factorial toroidal near $\eta$.

A \emph{log resolution} $h: Y\rightarrow X$ of $\Aa$ is a log resolution of $(X,\Supp B)$ such that $\Mm$ descends to $Y$.
\end{defn}

\begin{defn}
    Let $\Aa/U:=(X,B,\Mm)/U$ be a sub-generalized pair. For any non-negative real number $s$, we say that $\Aa/U$ is \emph{$s$-KNQC} if $K_{\Aa}$ is $s$-NQC$/U$. We say that $\Aa/U$ is KNQC if $K_{\Aa}$ is NQC$/U$.
\end{defn}

\begin{defn}
Let $\Aa/U:=(X,B,\Mm)/U$ be a sub-generalized pair. For any non-empty open subset $U^0\subset U$ with $X^0:=X\times_UU^0$, we denote by 
$$\Aa\times_UU^0:=\Aa|_{X^0}:=(X^0,B^0,\Mm^0):=(X,B,\Mm)\times_UU^0:=(X^0,B|_{X^0},\Mm|_{X^0}).$$
Further assume that $B\geq 0$ and $\widetilde{S}$ is an irreducible component of $B^{=1}$ with normalization $S$. By \cite[Definition 4.7]{BZ16}, there exists a generalized pair $(S,B_S,\Mm^S)/U$ induced by adjunction
$$K_S+B_S+\Mm^S_S=(K_X+B+\Mm_X)|_S$$
We denote by $\Aa|_S:=(S,B_S,\Mm^S)$. By \cite[Remark 4.8]{BZ16}, if $\Aa$ is lc, then $\Aa|_S$ is lc. By \cite[Lemma 2.9]{HL22}, if $\Aa$ is dlt, then $\Aa|_S$ is dlt. 

If $\Aa$ is dlt and $V$ is a dimension $\geq 1$ non-trivial lc center of $\Aa$, then $V$ is an irreducible component of $\cap_{i=1}^k S_i$ where $S_i$ are irreducible components of $\lfloor B\rfloor$ and each $S_i$ is normal by \cite[Lemma 2.6]{HL22}. Then for any $1\leq j\leq k$, there exists an irreducible component $V_j$ of $X\cap\bigcap_{i=1}^jS_i$ such that
$$V_j\subset V_{j-1},\quad  1\leq j\leq k,\quad  V_k=V,\quad V_0=X.$$
By \cite[Lemma 2.9, Proposition 2.10]{HL22}, we may inductively define $\Aa_{V_j}:=\Aa_{V_{j-1}}|_{V_j}$ so that $\Aa_{V_j}$ is dlt for any $j$. We denote by $\Aa_V:=\Aa_{V_k}:=\Aa|_V$.
\end{defn}

\begin{defn}
For any sub-generalized pairs $\Aa_i/U=(X,B_i,\Mm_i)/U$ and real numbers $a_i\in [0,1]$ such that $\sum a_i=1$, we denote by
$$\sum a_i\Aa_i:=\left(X,\sum a_iB_i,\sum a_i\Mm_i\right).$$  
\end{defn}

\begin{deflem}
Let $\Aa/U$ be an lc generalized pair. A \emph{$\mathbb Q$-factorial (q)dlt modification} of $\Aa$ is a projective birational 
morphism $h: X'\rightarrow X$ such that $\Aa':=h^*\Aa$ is $\mathbb Q$-factorial (q)dlt and $h$ only extracts lc places of $\Aa$. Existence of $\mathbb Q$-factorial dlt modification for lc generalized pairs was shown in \cite[Proposition 3.10]{HL22}.
\end{deflem}

The following lemma is well-known. For the reader's convenience, we provide a proof.

\begin{lem}\label{lem: qdlt is potentially klt}
    Let $(X,B,\Mm)/U$ be a qdlt generalized pair. Then $X$ is potentially klt.
\end{lem}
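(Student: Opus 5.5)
Since $\Aa:=(X,B,\Mm)$ is qdlt, we have a local model near every lc center, and we want to globalize to a boundary $\Delta$ with $(X,\Delta)$ klt. The plan is to first make $X$ klt-type locally, then perturb. Concretely, $X$ is $\Qq$-factorial near the generic point of every lc center, but not globally, so we cannot simply subtract. Instead I would proceed as follows.

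\textbf{Step 1: reduce to the klt case near lc centers.}
Take the open set $V$ from the definition of qdlt. Lower the boundary to $B_t:=B-t\lfloor B\rfloor$, or more precisely replace $B$ by $(1-t)B$ for small $t\in(0,1)$. On $V$ the pair $(V,\Supp B|_V)$ is $\Qq$-factorial log toroidal, hence $(V,(1-t)B|_V)$ is klt. The issue is that $K_X+(1-t)B+\Mm_X$ need not be $\Rr$-Cartier, since $B$ itself need not be $\Rr$-Cartier away from $V$.

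To avoid this, I would instead work on a log resolution $h\colon Y\to X$ on which $\Mm$ descends. Write $K_Y+B_Y+\Mm_Y=h^*K_\Aa$. The qdlt condition implies that every lc place of $\Aa$ has center whose generic point lies in $V$. Hence the divisors with coefficient $1$ in $B_Y$ are controlled.

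\textbf{Step 2: replace the nef part by a boundary.}
The cleaner standard route is to use that $\Mm_Y$ is nef$/U$. Choose an ample$/U$ divisor $H$ on $X$, and fix $0<\epsilon\ll1$. Then $\Mm_Y+\epsilon h^*H$ is nef and big$/U$ but need not be ample. Write $h^*H\sim_\Rr A_Y+E$ with $A_Y$ ample and $E\ge0$ exceptional of small coefficients (negativity lemma), so that $\Mm_Y+\epsilon h^*H\sim_{\Rr,U} G_Y$ with $G_Y\ge0$ general of small coefficients plus $\epsilon E$. Then $\Aa$ is $\Rr$-linearly equivalent$/U$, after adding $\epsilon H$, to the sub-pair $(Y,B_Y+G_Y)$ pushed down: $(X,B+G)$ with $G:=h_*G_Y$. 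This is an honest pair with $K_X+B+G$ $\Rr$-Cartier and lc, having the same lc places as $\Aa$ for general choices (Bertini, with $\epsilon E$ only decreasing discrepancies slightly on non-lc places).

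\textbf{Step 3: kill the lc places.}
Now $(X,B+G)$ is an lc pair whose lc centers all have generic points in $V$, where $X$ is $\Qq$-factorial toroidal, in particular klt. By the standard fact (cf. \cite[Proposition 2.43]{KM98} style argument, or the dlt$\Rightarrow$klt perturbation for pairs), a pair which is lc, with $X$ klt near generic points of all lc centers, admits a klt perturbation. Concretely, take an effective Cartier divisor $D\ge0$ containing no lc center generically but with $\lfloor B\rfloor$ replaced: write $\lfloor B\rfloor\le D'$ where $D'\sim_{\Rr,U}$ an ample-ish class, and set $\Delta:=B+G-t\lfloor B\rfloor+tD''$ with $D''$ general. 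The $\Rr$-Cartier property of $\lfloor B\rfloor$ is only guaranteed on $V$, which is the main obstacle. I would handle it by using the toroidal structure on $V$ to find a small perturbation of the coefficients of $\Supp B$ near lc centers only, and compensating by a general member of a sufficiently ample linear system from $X$ which agrees with $t\lfloor B\rfloor$ up to a Cartier divisor. Alternatively, avoid all this: since klt-ness of $X$ is what we want, and by Step 2 the pair $(X,B+G)$ is lc with all lc centers in the $\Qq$-factorial klt locus $V$, apply \cite[Proposition 3.10]{HL22}-type dlt modification together with the pair version of this lemma to conclude that $X$ is potentially klt.
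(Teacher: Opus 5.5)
There is a genuine gap in Step 2, and the rest of your argument depends on it. Since $A_Y=h^*H-E$ is ample over $U$, the divisor $-E$ is $h$-ample and $h$-exceptional. By the negativity lemma, $\Supp E$ then contains \emph{every} $h$-exceptional prime divisor.

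Your parenthetical ``$\epsilon E$ only decreasing discrepancies slightly on non-lc places'' therefore silently assumes that $h$ extracts no lc place of $\Aa$. For a log resolution of a qdlt (non-dlt) pair this is false. Take $X$ the quadric cone, $B=L_1+L_2$ the two toric lines through the vertex, $\Mm=\bm{0}$, and $U=X$. Every resolution extracts the $(-2)$-curve $E_1$, which is toric, so $a(E_1,X,B)=-1$. Your recipe gives $h^*(K_X+B+G)=K_Y+B_Y+G_Y$ with $\mult_{E_1}(B_Y+G_Y)=1+\epsilon\mult_{E_1}E>1$, so $(X,B+G)$ is not even lc at the vertex.

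More conceptually, the claim ``an lc generalized pair plus a small ample class is $\Rr$-linearly equivalent to an lc pair'' is, in general, Theorem \ref{thm: structure of mx rcartier gpair} of this paper. Its proof has to construct, via Lemma \ref{lem: special extraction} and the main existence theorems, a model on which $\Mm$ descends and whose exceptional divisors contain no lc place. That is exactly the point a Bertini argument on an arbitrary log resolution skips.

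The fix is to reverse your Steps 2 and 3, which is what the paper does. First remove the lc places at the level of the generalized pair:
\begin{enumerate}
\item Set $S:=\lfloor B\rfloor$, take a very ample $H$ with $\mathcal{O}_X(H+S)$ globally generated, and take a general $D'\in|H+S|$.
\item Since $D'-S$ is Cartier, $(X,B-\epsilon S+\epsilon D',\Mm)$ is a generalized pair. This is the concrete form of the ``compensation'' you sketch in Step 3, and it handles the non-$\Rr$-Cartierness of $S$ outside $V$.
\item For $0<\epsilon\ll 1$ this generalized pair is klt. On a fixed log resolution, divisors that are not lc places of $\Aa$ keep discrepancy $>-1$. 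Every lc place of $\Aa$ has as center a stratum of $S$ whose generic point lies in the $\Qq$-factorial locus $V$ and not in $\Supp D'$. So $S$ is $\Qq$-Cartier there, and subtracting $\epsilon S$ strictly raises that discrepancy.
\end{enumerate}
Only then pass to an honest pair, via \cite[Lemma 3.4]{HL22}. For a klt generalized pair your Step 2 construction is harmless: all discrepancies are $>-1$, and a sufficiently small $\epsilon E$ keeps them so.
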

\begin{proof}
Let $V$ be an open subset of $X$ such that
    \begin{itemize}
        \item $(V,\Supp B|_V)$ is $\mathbb Q$-factorial toroidal, and
        \item  $V$ contains the generic point of any lc center of $\Aa$, and the generic point of any lc center of $\Aa$ is the generic point of an lc center of $(V,B|_V)$.
    \end{itemize}
If $V=\emptyset$, then $\Aa$ is klt, hence $X$ is potentially klt by \cite[Lemma 3.4]{HL22}. Therefore, we may assume that $V\not=\emptyset$.
    
    It essentially follows from the same lines of the proof of \cite[Proposition~2.43]{KM98}. Let $S:=\lfloor B\rfloor$. Then there exists a very ample Cartier divisor $H$ on $X$ such that $\mathcal{L}:=\mathcal{O}_X(H+S)$ is generated by global sections. Choose an integer $m\gg 0$ and set $\epsilon:=\frac{1}{m}$. Let $D'$ be a general member of $|\mathcal{L}|$ and set $D:=\frac{1}{m}D'$. Then 
    $$K_X+B+\Mm_X-\epsilon S+\epsilon D$$ 
    is $\mathbb R$-Cartier, hence $(X,B-\epsilon S+\epsilon D,\Mm)$ is a generalized pair. We let $h: X'\rightarrow X$ be a log resolution of $(X,B+\epsilon D,\Mm)$. We let $E_i$ be the $h$-exceptional divisors and let $\eta_i$ be the generic point of $\Center_XE_i$ for each $i$. 
    
    Since there are only finitely many $E_i$, for $0<\epsilon\ll 1$, if $E_i$ is not an lc place of $(X,B,\Mm)$, then $E_i$ is not an nklt place of $(X,B-\epsilon S+\epsilon D,\Mm)$. If $E_i$ is an lc place of $(X,B,\Mm)$, then $\Center_XE_i$ is a stratum of $S$ and $V$ is $\mathbb Q$-factorial klt near $\eta_i$. Since $D'$ is general and $\mathcal{L}$ is globally generated, $\Supp D$ does not contain $\Center_XE_i$. Therefore, 
    $$a(E_i,X,B-\epsilon S+\epsilon D,\Mm)>a(E_i,B,\Mm)=-1.$$
    Therefore, $E_i$ is not an lc place of  $(X,B-\epsilon S+\epsilon D,\Mm)$ for $0<\epsilon\ll 1$. Thus $(X,B-\epsilon S+\epsilon D,\Mm)$ is klt. By \cite[Lemma 3.4]{HL22}, $X$ is potentially klt.
\end{proof}

\subsection{Relative Nakayama-Zariski decomposition}
We need the following definition of the relative Nakayama-Zariski decomposition as in \cite{LX25}. The projective version can be found in \cite{Nak04}.
\begin{defn}
    Let $\pi\colon X\rightarrow U$ be a projective morphism between normal quasi-projective varieties, $D$ a pseudo-effective$/U$ $\Rr$-Cartier $\Rr$-divisor on $X$, and $P$ a prime divisor on $X$. We define $\sigma_{P}(X/U,D)$ as in \cite[Definition 3.1]{LX25} by considering $\sigma_{P}(X/U,D)$ as a number in  $[0,+\infty)\cup\{+\infty\}$. We define $$N_{\sigma}(X/U,D)=\sum_Q\sigma_Q(X/U,D)Q$$
    where the sum runs through all prime divisors on $X$ and consider it as a formal sum of divisors with coefficients in $[0,+\infty)\cup\{+\infty\}$. 
    
    We say that $D$ is \emph{movable$/U$} if $N_{\sigma}(X/U,D)=0$.
\end{defn}

\subsection{Perturb generalized pair to non-lc pair}

We need the following lemma, which allows us to perturb an lc generalized pair to a (possibly) non-lc pair with well-controlled non-klt locus.

\begin{lem}\label{lem: generalized lc to non-lc pair}
Let $\Aa/U=(X,B,\Mm)/U$ be a $\mathbb Q$-factorial qdlt generalized pair. Set $S:=\lfloor B\rfloor$. 
\begin{enumerate}
    \item If $B+\Mm_X$ is big$/U$, then there exists a pair $(X,\Delta)$ and an ample$/U$ $\Rr$-divisor $A$, such that $K_X+\Delta+A\sim_{\mathbb R,U}K_{\Aa}$, and $$\Nlc(X,\Delta)=\Nklt(X,\Delta)=\Nklt(\Aa)=S.$$
    \item If $K_{\Aa}$ is big$/U$, then there exists a pair $(X,\Delta)$, an ample$/U$ $\Rr$-divisor $A$, and a real number $\mu>1$, such that $K_X+\Delta+A\sim_{\mathbb R,U}\mu K_{\Aa}$, and $$\Nlc(X,\Delta)=\Nklt(X,\Delta)=\Nklt(\Aa)=S.$$
\end{enumerate} 
\end{lem}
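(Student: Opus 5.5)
We prove (1) first and then reduce (2) to it. Throughout, $\Aa$ is $\mathbb Q$-factorial qdlt, so $X$ is potentially klt by Lemma~\ref{lem: qdlt is potentially klt}. Moreover, $\Aa$ is $\mathbb Q$-factorial toroidal near the generic point of every lc center. The nklt centers of $\Aa$ are then exactly the strata of $S$, and $\Nklt(\Aa)=S$.

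\textbf{Proof of (1).} Since $B+\Mm_X$ is big$/U$ and $X$ is $\mathbb Q$-factorial, we can write
$$B+\Mm_X\sim_{\mathbb R,U}A_0+E,$$
with $A_0$ ample$/U$ and $E\geq 0$. Fix $0<\delta\ll 1$ and a log resolution $h\colon Y\to X$ of $(X,\Supp B\cup\Supp E)$ to which $\Mm$ descends. Write $\Mm_Y=h^*\Mm_X-F$; by the negativity lemma $F\geq 0$ is $h$-exceptional. On $Y$, $\Mm_Y$ is nef$/U$, so for every $\epsilon>0$ the divisor $\Mm_Y+\epsilon(\text{ample}/U)$ is ample$/U$.

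We choose an ample$/U$ divisor $H_Y$ on $Y$ of the form $h^*A_1-G$, where $A_1$ is ample on $X$ and $G\geq 0$ is $h$-exceptional with small coefficients. Then $\Mm_Y+\delta H_Y$ is ample$/U$, and we take a general member
$$M'\sim_{\mathbb R,U}\Mm_Y+\delta H_Y,\qquad M'\geq 0,$$
with small coefficients and in general position. Pushing forward gives
$$\Mm_X\sim_{\mathbb R,U}h_*M'-\delta A_1.$$

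Now set
$$\Delta:=(1-\delta)B+h_*M'+\delta E+\delta S',\qquad A:=\delta(A_0-A_1)+(\text{small adjustment}),$$
where $S'$ is chosen so that we keep coefficient exactly $1$ along $S$. Concretely, we do not scale $S$: we only scale $B-S$ and use $\delta(B+\Mm_X)\sim\delta A_0+\delta E$ to borrow ampleness. We choose $A_1$ small relative to $A_0$ so that $A$ is ample. Then $K_X+\Delta+A\sim_{\mathbb R,U}K_{\Aa}$.

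It remains to compute discrepancies. Pulling back to $Y$, we have
$$h^*(K_X+\Delta)=K_Y+B_Y+M'+\delta G+(\text{small terms}),$$
where $h^*\Aa=(Y,B_Y,\Mm)$. Divisors that are not lc places of $\Aa$ have discrepancy $>-1$ with respect to $\Aa$, and a $\delta$-small perturbation keeps them $>-1$. An lc place of $\Aa$ has center a stratum of $S$, near whose generic point $\Aa$ is toroidal and $\Mm$ descends (qdlt). There, provided $E$, $M'$ and $G$ are general enough not to contain these centers, the discrepancy stays $-1$.

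This is where we must be careful. The divisor $E$ may contain strata of $S$, and $G$ lives over non-toroidal loci. To handle this, we first replace $E$ using the $\mathbb Q$-factorial toroidal structure, perturbing by $\epsilon S$ exactly as in the proof of Lemma~\ref{lem: qdlt is potentially klt}, so that the centers of lc places are avoided. Alternatively, if $\Mm$ is not trivial near those strata, we add the descended part to $A$.

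Away from $S$, the pair $(X,\Delta)$ is klt. Along $S$ it has coefficient $1$ and is plt-like by the toroidal structure, and this gives
$$\Nlc(X,\Delta)=\Nklt(X,\Delta)=S.$$
Getting $\Nlc=S$ as well as $\Nklt=S$ requires that no divisor over $X$ attains discrepancy $<-1$. This holds because the perturbations are small off $S$ and trivial in a toroidal neighborhood of the generic points of the strata.

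\textbf{Proof of (2).} If $K_{\Aa}$ is big$/U$, then for $\mu>1$ close to $1$ the divisor
$$(\mu-1)K_{\Aa}+B+\Mm_X$$
is big, and
$$\mu K_{\Aa}=K_X+\bigl(B+(\mu-1)K_{\Aa}+\Mm_X\bigr).$$
We write $(\mu-1)K_{\Aa}\sim_{\mathbb R,U}(\mu-1)(A'+E')$ with $A'$ ample and $E'\geq 0$, and then absorb $(\mu-1)E'$ into the boundary and $(\mu-1)A'$ into the ample part. The argument of (1) then applies to $(X,B+(\mu-1)E',\Mm)$ after a small rescaling that keeps $S$ and the qdlt structure. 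We ensure $E'$ avoids the lc centers by the same trick as in (1).

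\textbf{Main obstacle.} The hard step is guaranteeing that the effective parts $E$, $E'$ and the non-descended part of $\Mm$ do not increase the discrepancy of lc places or create new nlc centers along strata of $S$. The plan is to exploit that $\Mm$ descends near the generic points of lc centers (qdlt) and to use general members of globally generated systems of the form $|H+S|$, as in the proof of Lemma~\ref{lem: qdlt is potentially klt}.
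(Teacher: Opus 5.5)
\textbf{There is a genuine gap: your construction aims at the wrong target.} By the paper's definitions, $\Nlc(X,\Delta)$ is the union of the centers of divisors $E$ with $a(E,X,\Delta)<-1$. So the required equality $\Nlc(X,\Delta)=S$ says that $(X,\Delta)$ must fail to be lc at \emph{every} point of $S$.

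You instead keep the coefficient of $S$ equal to $1$, try to keep the lc places of $\Aa$ at discrepancy exactly $-1$, and assert that ``getting $\Nlc=S$ \dots\ requires that no divisor over $X$ attains discrepancy $<-1$''. This inverts the definition. If your construction succeeded, it would give a pair that is lc along $S$. Then $\Nlc(X,\Delta)=\emptyset\neq S=\Nklt(X,\Delta)$ whenever $S\neq 0$.

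The bookkeeping also does not close. With your $\Delta$ and $A=\delta(A_0-A_1)$ one gets $K_X+\Delta+A\sim_{\mathbb R,U}K_{\Aa}+\delta\Mm_X+\delta S'$. This is because you borrow $\delta(B+\Mm_X)$ while keeping all of $\Mm_X$ inside $h_*M'$, and the extra $\delta S'$ is never compensated.

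\textbf{The missing idea.} Since $X$ is $\mathbb Q$-factorial, $S$ is $\mathbb R$-Cartier. You can therefore raise the coefficient of $S$ above $1$ and pay with $-\delta S$ in the ample part. This also makes your ``main obstacle'' disappear. The paper's argument for (1) runs as follows.
\begin{enumerate}
\item Write $B+\Mm_X=2H+E$ with $H$ ample$/U$ and $E\geq 0$.
\item Pass to the generalized pair with boundary $(1-\epsilon)B-S+\epsilon E$ and nef part $(1-\epsilon)\Mm$. This is klt: removing $S$ kills every nklt center of the qdlt pair, and klt survives small perturbations on a $\mathbb Q$-factorial variety.
\item Apply \cite[Lemma 3.4]{HL22} to absorb the nef part together with $\epsilon H$ into a klt pair $(X,\Delta_0)$.
\item Set $\Delta:=\Delta_0+(1+\delta)S$ and $A:=\epsilon H-\delta S$ with $0<\delta\ll\epsilon$.
\end{enumerate}
Off $S$ the pair $(X,\Delta)$ is the klt pair $(X,\Delta_0)$. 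Each component of $S$ has coefficient $>1$. Hence $\Nlc(X,\Delta)=\Nklt(X,\Delta)=S$ at once.

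No control is needed of the lc places of $\Aa$, the toroidal structure, the non-descended part of $\Mm$, or whether $E$ contains strata of $S$. The last could not be arranged in general anyway. A stratum lying in the augmented base locus of $B+\Mm_X$ is contained in $\Supp E$ for every such decomposition, so the avoidance you propose is not available.

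The same remark applies to your reduction of (2) to (1). Adding $(\mu-1)E'$ to $B$ can destroy the qdlt hypothesis you need in order to invoke (1). The paper's proof of (2) is the same computation starting from $K_{\Aa}=2H+E$, and it yields $\mu=1+\epsilon$.
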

\begin{proof}
    (1) Write $B+\Mm_X=2H+E$ where $H$ is ample$/U$ and $E\geq 0$. Then there exists $\epsilon>0$ such that $(X,(1-\epsilon)B-S+\epsilon E,(1-\epsilon)\Mm)$ is klt. By \cite[Lemma 3.4]{HL22} there exists a klt pair $(X,\Delta_0)$ such that
    $$K_X+\Delta_0\sim_{\mathbb R,U}K_X+(1-\epsilon)B-S+\epsilon E+(1-\epsilon)\Mm_X+\epsilon H.$$
    We let $0<\delta\ll\epsilon$ be a real number such that $A:=\epsilon H-\delta S$ is ample and let 
    $$\Delta:=\Delta_0+(1+\delta)S,$$
    then $(X,\Delta)$ and $A$ satisfy our requirements.

    (2) Write $K_{\Aa}=2H+E$ where $H$ is ample$/U$ and $E\geq 0$. Then there exists $\epsilon>0$ such that $(X,B-S+\epsilon E,\Mm)$ is klt. By \cite[Lemma 3.4]{HL22} there exists a klt pair $(X,\Delta_0)$ such that
    $$K_X+\Delta_0\sim_{\mathbb R,U}K_X+B-S+\epsilon E+\Mm_X+\epsilon H.$$
    We let $0<\delta\ll\epsilon$ be a real number such that $A:=\epsilon H-\delta S$ is ample and let 
    $$\Delta:=\Delta_0+(1+\delta)S,$$
    then $\mu:=1+\epsilon$, $(X,\Delta)$, and $A$ satisfy our requirements.
\end{proof}

\begin{lem}\label{lem: reduction to Nlc locus}
Let $\Aa/U=(X,B,\Mm)/U$ be a $\Qq$-factorial dlt generalized pair with $S:=\lfloor B\rfloor$. Assume that $K_{\Aa}$ is nef$/U$, either $K_{\Aa}$ or $B+\Mm_X$ is big$/U$, and $K_{\Aa}|_S$ is semi-ample$/U$. Then $K_{\Aa}$ is semi-ample$/U$.
\end{lem}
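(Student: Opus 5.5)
The plan is to reduce to the basepoint-free theorem for possibly non-lc pairs, namely Fujino's version for quasi-log structures (cf. \cite{Fuj11}). We use Lemma~\ref{lem: generalized lc to non-lc pair} to replace the generalized pair by a usual pair whose non-lc locus is exactly $S$.

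\textbf{Step 1: the pair $(X,\Delta)$.} A $\mathbb Q$-factorial dlt generalized pair is qdlt, so Lemma~\ref{lem: generalized lc to non-lc pair} applies.
\begin{itemize}
\item If $B+\Mm_X$ is big$/U$, part (1) gives a pair $(X,\Delta)$ and an ample$/U$ $\mathbb R$-divisor $A$ with $K_{\Aa}-(K_X+\Delta)\sim_{\mathbb R,U}A$. Here $A$ is ample$/U$.
\item If $K_{\Aa}$ is big$/U$, part (2) gives $K_X+\Delta+A\sim_{\mathbb R,U}\mu K_{\Aa}$ with $\mu>1$. Then
$$K_{\Aa}-(K_X+\Delta)\sim_{\mathbb R,U}A+(\mu-1)\cdot(-K_{\Aa})+\text{(correction)}.$$
This form is not directly ample, so we rearrange. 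Set $L:=\mu K_{\Aa}$, which is nef$/U$. Then $L-(K_X+\Delta)\sim_{\mathbb R,U}A$ is ample$/U$, and semi-ampleness of $L$ is equivalent to that of $K_{\Aa}$.
\end{itemize}
In both cases we obtain a nef$/U$ $\mathbb R$-divisor $L\in\{K_{\Aa},\mu K_{\Aa}\}$ with $L-(K_X+\Delta)$ ample$/U$, and
$$\Nlc(X,\Delta)=\Nklt(X,\Delta)=S.$$

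\textbf{Step 2: reduce to $\mathbb Q$-divisors.} The basepoint-free theorem for quasi-log schemes needs a Cartier (or $\mathbb Q$-Cartier) $L$, so we first treat the $\mathbb R$-coefficients. Since $L|_S$ is semi-ample$/U$, write $L|_S=\sum r_iL_{S,i}$ with $r_i>0$ and each $L_{S,i}$ a semi-ample$/U$ $\mathbb Q$-Cartier divisor.

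$X$ is $\mathbb Q$-factorial and the conditions ``nef$/U$'', ``$L-(K_X+\Delta)$ ample$/U$'' and ``$L|_S$ semi-ample'' are all open or linear. We therefore use the standard Shokurov-polytope argument: vary the coefficients of $L$ in the rational envelope of its coefficient vector, keeping $\Delta$ fixed and letting $A$ absorb small perturbations. This writes $L=\sum a_jL_j$ with $L_j$ nef$/U$ $\mathbb Q$-Cartier, each $L_j-(K_X+\Delta)$ ample$/U$, and each $L_j|_S$ semi-ample$/U$.

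The delicate point is that the restrictions $L_j|_S$ must stay semi-ample. We handle it by choosing the decomposition compatibly with that of $L|_S$, using that semi-ampleness of $L|_S$ gives a contraction of $S$ on whose Néron–Severi space these classes are pulled back. Nefness of the $L_j$ follows from the cone theorem for $(X,\Delta)$ together with ampleness of $L-(K_X+\Delta)$: only finitely many extremal rays are relevant, so nefness is a rational polyhedral condition near $L$.

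\textbf{Step 3: apply basepoint-freeness.} Take the quasi-log structure on $[X,K_X+\Delta]$. Its non-qlc locus is $\Nlc(X,\Delta)=S$, and it has no further qlc centers outside $S$ because $\Nklt=\Nlc=S$. Apply the basepoint-free theorem for quasi-log schemes \cite{Fuj11} in the form ``$L$ nef, $L-\omega$ ample, $L|_{\Nqlc}$ semi-ample $\Rightarrow$ $L$ semi-ample''. This shows each $L_j$ is semi-ample$/U$, hence so is $K_{\Aa}$.

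The main obstacle is Step 2, the passage from $\mathbb R$- to $\mathbb Q$-coefficients while keeping semi-ampleness on $S$. If it fails, the fallback is to run the argument for $\mathbb R$-divisors directly, using Fujino's $\mathbb R$-version of basepoint-freeness for quasi-log schemes with semi-ample restriction to the non-qlc locus.
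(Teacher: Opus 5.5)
Your proposal is correct and follows the paper's proof: the paper also uses Lemma \ref{lem: generalized lc to non-lc pair} to get $K_X+\Delta+A\sim_{\mathbb R,U}\mu K_{\Aa}$ with $A$ ample$/U$ and $\Nlc(X,\Delta)=S$, and then applies a basepoint-free theorem for non-lc pairs with semi-ample restriction to the non-lc locus, \cite[Theorem 2.23]{Has25a}, directly to the $\mathbb R$-divisor $K_X+\Delta+A$. That is exactly your fallback, so your Step 2 is unnecessary; if you keep it, note that preserving semi-ampleness of $L_j|_S$ requires an $\mathbb R$-linear rational-envelope argument (as in Lemma \ref{lem: semi-ampleness ld}), not the Néron--Severi argument you sketch, because numerical triviality over the contraction of $S$ does not give semi-ampleness.
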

\begin{proof}
By Lemma \ref{lem: generalized lc to non-lc pair}, there exists a pair $(X,\Delta)/U$, an ample$/U$ $\mathbb R$-divisor $A$, and a real number $\mu\geq 1$, such that $K_X+\Delta+A\sim_{\mathbb R,U}\mu K_{\Aa}$ and $\Nlc(X,\Delta)=S$. By \cite[Theorem 2.23]{Has25a}, $K_X+\Delta+A$ is semi-ample$/U$, so $L$ is semi-ample$/U$.
\end{proof}

\subsection{Iitaka dimensions}

We refer the reader to \cite[Section 2]{Cho08}, \cite[Definition 2.3]{Hu20}, \cite[Definition 2.6]{HH20} for the definition of invariant Iitaka dimensions. We need the following lemma on basic properties of numerical dimensions and invariant Iitaka dimensions. 

\begin{lem}[cf. {\cite[Lemma 2.3]{LX25}}]\label{lem: property of numerical and Iitaka dimension} Let $\pi: X\rightarrow U$ be a projective morphism between normal quasi-projective varieties and $D$ an $\Rr$-Cartier $\Rr$-divisor on $X$. Then:
\begin{enumerate}
    \item $D$ is big$/U$ if and only if $\kappa_{\sigma}(X/U,D)=\dim X-\dim U$.
    \item Let $D_1,D_2$ be two $\Rr$-Cartier $\Rr$-divisors on $X$. Suppose that $D_1\sim_{\mathbb R,U}E_1\geq 0$ and $D_2\sim_{\mathbb R,U}E_2\geq 0$ for some $\Rr$-divisors $E_1,E_2$ such that $\Supp E_1=\Supp E_2$. Then $\kappa_{\sigma}(X/U,D_1)=\kappa_{\sigma}(X/U,D_2)$ and $\kappa_{\iota}(X/U,D_1)=\kappa_{\iota}(X/U,D_2)$.
    \item Let $f: Y\rightarrow X$ be a surjective birational morphism and $D_Y$ an $\Rr$-Cartier $\Rr$-divisor on $Y$ such that $D_Y=f^*D+E$ for some $f$-exceptional $\Rr$-divisor $E\geq 0$. Then $\kappa_{\sigma}(Y/U,D_Y)=\kappa_{\sigma}(X/U,D)$ and $\kappa_{\iota}(Y/U,D_Y)=\kappa_{\iota}(X/U,D)$. 
    \item Let $g: Z\rightarrow X$ be a surjective morphism from a normal variety such that $Z$ is projective over $U$. Then $\kappa_{\sigma}(Z/U,g^*D)=\kappa_{\sigma}(X/U,D)$ and $\kappa_{\iota}(Z/U,g^*D)=\kappa_{\iota}(X/U,D)$.
    \item  Let $\phi: X\dashrightarrow X'$ be a sequence of steps of a $D$-MMP$/U$ and let $D':=\phi_*D$. Then $\kappa_{\sigma}(X/U,D)=\kappa_{\sigma}(X'/U,D')$ and $\kappa_{\iota}(X/U,D)=\kappa_{\iota}(X'/U,D')$
\end{enumerate}
\end{lem}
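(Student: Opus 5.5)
The lemma is a collection of standard properties of $\kappa_\sigma$ and $\kappa_\iota$ in the relative setting. I will reduce each item to the corresponding absolute or known relative statement, following \cite[Lemma 2.3]{LX25} and the projective treatments in \cite{Nak04}, \cite{Cho08}, \cite{HH20}.

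The basic tool throughout is the characterization of these invariants. Both $\kappa_\iota(X/U,D)$ and $\kappa_\sigma(X/U,D)$ are defined through the restriction to the generic fiber of $X\to U$ (or by growth of $\pi_*\mathcal O_X(\lfloor mD+A\rfloor)$). They depend only on the $\mathbb R$-linear equivalence class$/U$ of $D$. For an effective representative $E\sim_{\mathbb R,U}D$, they depend only on the sequence of sections vanishing along multiples of $E$.

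\emph{Item (2).} Since $\Supp E_1=\Supp E_2$, there are constants $0<c\le C$ with $cE_1\le E_2\le CE_1$. Monotonicity of $\kappa_\iota$ and $\kappa_\sigma$ under adding effective divisors, together with homogeneity, gives both inequalities $\kappa(D_1)\le\kappa(D_2)\le\kappa(D_1)$.

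\emph{Items (3) and (4).} For (3), I use that $H^0(Y,\lfloor m(f^*D+E)\rfloor+\cdot)\cong H^0(X,\lfloor mD\rfloor+\cdot)$ for effective exceptional $E$. This gives the statement for $\kappa_\iota$. For $\kappa_\sigma$, I compare with $f^*A$ plus a small exceptional correction, using that an ample divisor on $Y$ is bounded by $f^*A_X-F$ for some effective exceptional $F$. For (4), I pass to the Stein factorization of $g$ and treat the two cases separately. The birational case is (3) with $E=0$. The finite case uses the trace map, which shows that sections of pullbacks grow at the same rate; for $\kappa_\sigma$ one also uses that the pullback of an ample divisor under a finite map is ample.

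\emph{Item (5).} I compare both $D$ and $D'$ on a common resolution $p\colon W\to X$, $q\colon W\to X'$. Since $\phi$ is $D$-negative, we have $p^*D=q^*D'+F$ with $F\ge0$ and $F$ exceptional over $X'$. Applying (3) to $q$ gives $\kappa(W,p^*D)=\kappa(X',D')$, and applying (4) or (3) to $p$ gives $\kappa(W,p^*D)=\kappa(X,D)$. For a sequence of steps I induct on the number of steps, noting that each step is $D$-negative by definition.

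\emph{Item (1).} After restricting to the generic fiber this is the classical fact that $D$ is big if and only if $\kappa_\sigma=\dim$. In the relative setting, bigness$/U$ is checked on the generic fiber, and $\kappa_\sigma$ is defined there.

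The main obstacle I expect is the $\kappa_\sigma$ part of (3) and (4) in the relative setting: controlling the ample perturbation $A$ under pullback, since $f^*A$ is not ample. I will handle this by replacing $A$ with a sufficiently large ample divisor on the source dominating $f^*A$ and vice versa. This uses that $\kappa_\sigma$ is independent of the choice of ample$/U$ (or merely big) perturbation, and that statement I will quote from \cite{LX25}.
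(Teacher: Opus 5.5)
\paragraph{Comparison with the paper.} The paper gives no proof of this lemma. It is quoted from \cite[Lemma 2.3]{LX25}, which reduces to general fibres and to Nakayama's absolute results \cite{Nak04}. Your self-contained route is a legitimate alternative, and items (1), (2), (3) and (5) go through essentially as you sketch them.

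In (3) you do not need an exceptional $F\ge 0$ with $-F$ $f$-ample; such an $F$ need not exist when $X$ is not $\mathbb Q$-factorial. Since $\kappa_\sigma$ is a maximum over all Cartier perturbations, bigness of $f^*A_X$ gives $A_Y\le kf^*A_X$ up to linear equivalence, and that is enough. So the ``main obstacle'' you single out is not the real difficulty.

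\paragraph{The gap is in (4).} The Stein factorization is $g=\nu\circ\gamma$ with $\gamma\colon Z\to Z'$ a contraction and $\nu\colon Z'\to X$ finite. The map $\gamma$ is birational only if $\dim Z=\dim X$. So ``the birational case is (3) with $E=0$'' omits fibrations with positive-dimensional fibres.

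That is exactly the case the paper relies on. Examples are $K_{\Aa''}\sim_{\mathbb R,U}f''^*G_Z$ with $f''\colon X''\to Z$ a fibration in the proof of Proposition~\ref{prop: induction to hx13 1.1}, and the bigness of $K_{\Aa_Z}$ in Lemma~\ref{lem: abundant+trivial+lcc dominate imply gmm}.

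For $\kappa_\iota$ the fix is easy. Suppose $E_0\ge 0$ and $\operatorname{div}\phi+m\gamma^*E_0\ge 0$. Then $\phi$ has no poles along a general fibre of $\gamma$, so it is constant there and comes from $Z'$. Hence $H^0(Z,\lfloor m\gamma^*E_0\rfloor)=H^0(Z',\lfloor mE_0\rfloor)$.

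For $\kappa_\sigma$, the inequality $\kappa_\sigma(g^*D)\ge\kappa_\sigma(D)$ is immediate by pulling back sections of $\lfloor mD\rfloor+A_X$. The reverse inequality is where the actual difficulty lies, and your proposal does not address it. The sheaf $\gamma_*\mathcal O_Z(\lfloor m\gamma^*D\rfloor+A_Z)$ has large rank, and $\lfloor m\gamma^*D\rfloor\ne\gamma^*\lfloor mD\rfloor$ in general.

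One way to close it is to cut $Z$ by a general member $H$ of a very ample system.
\begin{itemize}
\item Since $\lfloor mg^*D\rfloor$ restricts to $0$ on a general fibre of $g$, the divisor $\lfloor mg^*D\rfloor+A_Z-jH$ has no sections for $j\ge j_0$, with $j_0$ independent of $m$.
\item The restriction sequences then give $h^0(Z,\lfloor mg^*D\rfloor+A_Z)\le\sum_{j<j_0}h^0(H,\lfloor m(g|_H)^*D\rfloor+(A_Z-jH)|_H)$.
\item Induction on $\dim Z-\dim X$ reduces to the generically finite case, which your birational-plus-finite argument does cover.
\end{itemize}
Alternatively, cite Nakayama's result that $\kappa_\sigma(f^*D)=\kappa_\sigma(D)$ for a surjective morphism $f$ of normal projective varieties.
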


\section{Linearly decomposable generalized pair}\label{sec: ld gpair}

In this section, we introduce the class of \emph{linearly decomposable} (\emph{LD}) generalized pairs and study their basic properties.

\subsection{Definition of LD generalized pair}

\begin{defn}[Linearly decomposable generalized pair]\label{defn: ld gpair}
    Let $\Aa/U:=(X,B,\Mm)/U$ be an lc generalized pair. Write $$K_{\Aa}=K_X+B+\Mm_X=:D=\sum_{i=1}^m c_iD_i$$
    where $D_i\geq 0$ are distinct Weil divisors, $c_i\not=0$ and $D_i\not=0$ for any $i$, and $c_i\leq c_j$ if $i\leq j$. Let $\bm{c}:=(c_1,\dots,c_m)\in\mathbb R^m$ and let $V$ be the rational envelope of $\bm{c}$ in $\mathbb R^m$. Let $$D(\bm{v}):=\sum_{i=1}^mv_iD_i,\quad  B(\bm{v}):=B+D(\bm{v})-D,\quad  \text{and}\quad  \Aa(\bm{v}):=(X,B(\bm{v}),\Mm)$$ for any $\bm{v}=(v_1,\dots,v_m)\in V$.
    
    By our construction, for a fixed choice of $K_X$, $m,\bm{c},V,D(\cdot),B(\cdot)$, $\Aa(\cdot)$ are uniquely determined by the $\mathbb R$-divisor $K_{\Aa}$.

    $\{V,\bm{c},\Aa(\cdot)\}$ is called the \emph{polytopal} of $\Aa$. We say that $\Aa/U$ is \emph{linearly decomposable} (\emph{LD} for short) if there exists an open neighborhood $V_0$ of $\bm{c}$ in $V$, such that for any $\bm{v}\in V_0$, $\Aa(\bm{v})$ is lc. In particular, $B(\bm{v})\geq 0$ for any $\bm{v}\in V_0$. It is clear that the polytopal of $\Aa$ does not depend on $U$, hence LD is a property which does not depend on $U$. Therefore, if $\Aa/U$ is LD, then we also say that $\Aa$ is LD.
\end{defn}

The following equivalent definition of LD generalized pair is useful in many places.

\begin{lem}\label{lem: equivalent definition of LD}
     Let $\Aa/U=(X,B,\Mm)/U$ be a generalized pair. Then the following three conditions are equivalent:
     \begin{enumerate}
         \item $\Aa$ is LD.
         \item There exist $a_1,\dots,a_m\in (0,1]$ that are $\mathbb Q$-linearly independent and lc generalized pairs $\Aa_i/U=(X,B_i,\Mm)/U$ such that each $K_{\Aa_i}$ is a $\mathbb Q$-divisor, $\sum a_i=1$, and $\sum a_i\Aa_i=\Aa$.
        \item There exist $a_1,\dots,a_m\in (0,1]$ and lc generalized pairs $\Aa_i/U=(X,B_i,\Mm)/U$ such that each $K_{\Aa_i}$ is a $\mathbb Q$-divisor, $\sum a_i=1$, and $\sum a_i\Aa_i=\Aa$.
     \end{enumerate}
\end{lem}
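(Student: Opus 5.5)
We prove the cycle of implications (1)$\Rightarrow$(2)$\Rightarrow$(3)$\Rightarrow$(1). The step (2)$\Rightarrow$(3) is trivial. The main work is in (1)$\Rightarrow$(2); the converse (3)$\Rightarrow$(1) is a convexity argument.

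\emph{(1)$\Rightarrow$(2).} Let $\{V,\bm{c},\Aa(\cdot)\}$ be the polytopal of $\Aa$, and let $V_0\subset V$ be an open neighborhood of $\bm{c}$ on which every $\Aa(\bm{v})$ is lc.

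1. Since $V$ is a $\mathbb Q$-affine subspace, its rational points are dense. Write $\bm{c}=\bm{q}_0+\sum_{j=1}^r t_j\bm{w}_j$, where $\bm{q}_0\in V\cap\mathbb Q^m$, the $\bm{w}_j$ are rational vectors spanning the direction space of $V$, and $1,t_1,\dots,t_r$ are $\mathbb Q$-linearly independent. Such an expression exists because $V$ is the minimal rational affine space containing $\bm{c}$: otherwise the $t_j$ would satisfy a rational relation and $\bm{c}$ would lie in a smaller rational affine subspace.

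2. Choose rational points $\bm{v}_0,\dots,\bm{v}_r\in V_0$, affinely independent and very close to $\bm{c}$, with $\bm{c}$ in the interior of their simplex. This is possible since $\dim V=r$ and rational points are dense. Write $\bm{c}=\sum_{i=0}^r a_i\bm{v}_i$ with $a_i>0$ and $\sum a_i=1$. The barycentric coordinates $(a_i)$ are unique.

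3. To get $\mathbb Q$-linear independence of the $a_i$: the map from the rational simplex coordinates to $V$ is a rational affine isomorphism. Hence $(a_0,\dots,a_r)$ lies in the rational affine hyperplane $\sum a_i=1$ in $\mathbb R^{r+1}$, and its rational envelope there is the whole hyperplane, of dimension $r$. Suppose $\sum \lambda_i a_i=0$ with $\lambda_i\in\mathbb Q$ not all zero. If $\sum\lambda_i\neq 0$, this gives a rational affine relation cutting the envelope. If $\sum\lambda_i=0$, it is a rational linear relation among $(a_1-a_0,\dots)$ in the direction space. In either case the envelope would drop in dimension. Over-counting against the constraint $\sum a_i=1$ needs care here, and the cleanest route is probably to count $\dim_{\mathbb Q}\operatorname{span}(1,t_1,\dots,t_r)=r+1$. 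This forces $a_0,\dots,a_r$ to be $\mathbb Q$-linearly independent, since their $\mathbb Q$-span contains $1$ and all $t_j$.

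4. Set $\Aa_i:=\Aa(\bm{v}_i)$. Each $\Aa_i$ is lc because $\bm{v}_i\in V_0$. Moreover $K_{\Aa_i}=D(\bm{v}_i)$ is a $\mathbb Q$-divisor, and it is $\mathbb R$-Cartier, being a rational-affine combination. The $\mathbb R$-Cartier property does need checking. Since $K_{\Aa(\bm{v})}=D(\bm v)$ for $\bm v\in V$, I would argue via the standard fact that the set of $\bm{v}$ with $D(\bm{v})$ $\mathbb R$-Cartier is a rational affine subspace containing $\bm c$, hence containing $V$. Finally, $\sum a_iB(\bm{v}_i)=B$ by linearity, so $\sum a_i\Aa_i=\Aa$.

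\emph{(3)$\Rightarrow$(1).} Write $K_{\Aa_i}=\sum_k q_{ik}E_k$ in terms of prime divisors, with $q_{ik}\in\mathbb Q$. Then $D=\sum_i a_iK_{\Aa_i}$.

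1. The coefficient vector of each $K_{\Aa_i}$, regrouped along the decomposition $D=\sum c_jD_j$, is rational. Here one must compare the decomposition by distinct coefficients $c_j$ with prime divisors. Each $D_j$ is the sum of the primes carrying coefficient $c_j$, so a rational relation among the $c_j$ lifts to relations on prime coefficients. I would work in prime coordinates and note that $V$, viewed in prime coordinates, is the rational envelope of $D$'s coefficient vector.

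2. The rational affine span $W$ of the rational points $K_{\Aa_i}$ contains $D$, so $W\supseteq V$. Equivalently, $V$ is contained in the rational affine hull of the vectors $\bm{u}_i$.

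3. Hence points of $V$ near $\bm c$ can be written as affine combinations $\sum b_i\bm{u}_i$ with $b_i$ near $a_i$, so $b_i>0$. The corresponding $\sum b_i\Aa_i=\Aa(\bm v)$ is a convex combination of lc generalized pairs with the same $\Mm$, and is therefore lc by convexity of discrepancies.

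The main obstacle is the bookkeeping in (3)$\Rightarrow$(1). One has to show that every $\bm v\in V$ near $\bm c$ arises from such an affine combination respecting the grouping $D_j$. In particular, the $\Aa_i$ need not have $K_{\Aa_i}$ supported compatibly with the $D_j$, and one must check that $B(\bm{v})$ equals $\sum b_iB_i$. I would handle this by projecting the prime-coordinate affine hull onto the subspace where coefficients are constant on each $D_j$, and using uniqueness of the representation of $K_{\Aa(\bm v)}$.
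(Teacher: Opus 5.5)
Your proposal is correct and follows the paper's proof. For (1)$\Rightarrow$(2), both take a rational simplex in $V_0$ with $\bm c$ in its interior and use its barycentric coordinates. For (3)$\Rightarrow$(1), both observe that the rational affine hull of the $K_{\Aa_i}$ contains $V$, so $\bm c$ is interior to their convex hull within $V$. Your dimension count for the $\mathbb Q$-linear independence of the $a_i$ and your prime-coordinate identification of $V$ fill in details the paper leaves implicit. The compatibility issue you flag at the end is not actually an obstacle: $D(\bm v)=\sum b_iK_{\Aa_i}$ with $\sum b_i=1$ and fixed $K_X$, $\Mm_X$ already forces $B(\bm v)=\sum b_iB_i$.
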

\begin{proof}
Let $\{V,\bm{c},\Aa(\cdot)\}$ be the \emph{polytopal} of $\Aa$.

(1)$\Rightarrow$(2): Let $V_0\ni\bm{c}$ be an open subset of $V$ such that $\Aa(\bm{v})$ is lc for any $\bm{v}\in V_0$. Let $m:=\dim V+1$ and let $\bm{v}_1,\dots,\bm{v}_{m}\in V_0$ be $\mathbb Q$-vectors such that $\bm{c}$ is contained in the interior of the convex hull spanned by $\bm{v}_1,\dots,\bm{v}_{m}$. Then there exist unique positive real numbers $a_1,\dots,a_m\in (0,1]$ that are $\mathbb Q$-linearly independent, such that $\sum_{i=1}^ma_i=1$ and $\sum_{i=1}^ma_i\bm{v}_i=\bm{c}$. We may let $B_i:=B(\bm{v}_i)$ for each $i$.

(2)$\Rightarrow$(3): Obvious.

(3)$\Rightarrow$(1): Let $V'$ be the minimal $\mathbb Q$-affine subspace spanned by $K_{\Aa_i}$ in $\Weil_{\mathbb R}(X)$. Then $K_{\Aa}\in V'$, so $V\subset V'$ by the definition of rational envelope. Let $\mathcal{H}$ be the convex hull spanned by $K_{\Aa_i}$  in $V'$ and let $V_0$ be the interior of $\mathcal{H}\cap V$. Then $V_0\ni\bm{c}$ is an open subset of $V$, and for any $\bm{v}\in V_0$, $\Aa(\bm{v})$ is lc. Thus $\Aa$ is LD.
\end{proof}

\subsection{Lc strata of LD generalized pair}

\begin{lem}\label{lem: LD preserves lc center}
  Let $\Aa/U$ be an LD generalized pair with polytopal $\{V,\bm{c},\Aa(\cdot)\}$. Then:
  \begin{enumerate}
\item For any lc place $E$ of $\Aa$, we have $a(E,\Aa(\bm{v}))=-1$ for any $\bm{v}\in V$.
\item Assume that $\Aa$ is lc (resp. dlt, qdlt, klt). Then there exists an open neighborhood $V'\ni\bm{c}$ in $V$, such that $\Aa(\bm{v})$ is lc (resp. dlt, qdlt, klt) and $\Nklt(\Aa(\bm{v}))=\Nklt(\Aa)$  for any $\bm{v}\in V'$.
  \end{enumerate}
\end{lem}
\begin{proof}
Since $\Aa$ is LD, there exists an open neighborhood $V_0\ni\bm{c}$ in $V$ such that $\Aa(\bm{v})$ is lc for any $\bm{v}\in V_0$.

(1) Suppose (1) does not hold. Then there exists $\bm{v}\in V$ such that $a(E,\Aa(\bm{v}))\not=0$. Possibly replacing $\bm{v}$ with $2\bm{c}-\bm{v}$, we may assume that $a(E,\Aa(\bm{v}))<-1$. Thus
    $$a(E,\Aa(\bm{c}+t(\bm{v}-\bm{c})))<-1$$
    for any $t>0$. Since $V_0\ni\bm{c}$ is an open subset of $V$, there exists $0<t\ll 1$ such that $\bm{c}+t(\bm{v}-\bm{c})\in V_0$. Thus $\Aa(\bm{c}+t(\bm{v}-\bm{c}))$ is lc, a contradiction.
    
(2) By (1), we have that $\Nklt(\Aa)\subset\Nklt(\Aa(\bm{v}))$ for any $\bm{v}\in V_0$. We let
    $$V':=\left\{\bm{v}\middle| \bm{v}=\frac{1}{2}(\bm{c}+\bm{u}),\bm{u}\in V_0\right\}.$$
    By linearity of discrepancies and the definitions of lc, dlt, qdlt and klt, $V_0$ satisfies our requirements.
\end{proof}

The next lemma shows that LD is a property that is preserved under $\mathbb Q$-factorial (q)dlt modifications.

\begin{lem}\label{lem: LD under dlt model}
Let $\Aa/U$ be an lc generalized pair and $h: Y\rightarrow X$ a $\mathbb Q$-factorial qdlt modification of $\Aa$. Then $\Aa$ is LD if and only if $\Aa_Y:=h^*\Aa$ is LD.
\end{lem}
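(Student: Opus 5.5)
We argue through the equivalent characterization in Lemma~\ref{lem: equivalent definition of LD}(3), since pulling back and pushing forward convex combinations of generalized pairs is compatible with $h^*$ and $h_*$. Throughout, $h$ only extracts lc places of $\Aa$, so $\Aa_Y=(Y,B_Y,\Mm)$ with $B_Y=h^{-1}_*B+\Exc(h)$.

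\emph{($\Rightarrow$)} Suppose $\Aa$ is LD. By Lemma~\ref{lem: equivalent definition of LD}, write $\Aa=\sum a_i\Aa_i$ with $a_i\in(0,1]$, $\sum a_i=1$, each $\Aa_i=(X,B_i,\Mm)$ lc, and each $K_{\Aa_i}$ a $\mathbb Q$-divisor.

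Since the $\Aa_i$ come from the polytopal of $\Aa$ (the construction in (1)$\Rightarrow$(2)), we may shrink so that each $\Aa_i=\Aa(\bm v_i)$ with $\bm v_i$ close to $\bm c$. By Lemma~\ref{lem: LD preserves lc center}(1), every lc place of $\Aa$, in particular every $h$-exceptional divisor, is an lc place of each $\Aa_i$. Hence $h^*\Aa_i=(Y,h^{-1}_*B_i+\Exc(h),\Mm)$ has boundary $\ge 0$ and is lc, being a crepant pullback of an lc generalized pair.

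Next, $K_{h^*\Aa_i}=h^*K_{\Aa_i}$. Here $K_{\Aa_i}$ is $\mathbb R$-Cartier, so it is $\mathbb Q$-Cartier: the $\mathbb Q$-Cartier condition is rational linear, and we can arrange it by choosing $\bm v_i$ rational in the rational affine space of $\mathbb R$-Cartier classes containing $\bm c$. The pullback of a $\mathbb Q$-Cartier $\mathbb Q$-divisor is a $\mathbb Q$-divisor. By linearity of pullback, $\sum a_ih^*\Aa_i=h^*\Aa=\Aa_Y$, so (3) holds for $\Aa_Y$.

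The delicate point is ensuring $K_{\Aa_i}$ is $\mathbb Q$-Cartier rather than merely $\mathbb R$-Cartier with rational coefficients. To handle this, I will observe that $K_{\Aa(\bm v)}-K_{\Aa}=D(\bm v)-D$ must be $\mathbb R$-Cartier for every $\bm v\in V_0$, because each $\Aa(\bm v)$ is a generalized pair. The set of $\bm v$ for which $D(\bm v)$ is $\mathbb R$-Cartier is a rational affine subspace containing $\bm c$, hence contains $V$. Its rational points give $\mathbb Q$-Cartier divisors, since an $\mathbb R$-Cartier $\mathbb Q$-divisor is $\mathbb Q$-Cartier.

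\emph{($\Leftarrow$)} Suppose $\Aa_Y=\sum a_i\Aa_{Y,i}$ as in (3), with $\Aa_{Y,i}=(Y,B_{Y,i},\Mm)$. Set $\Aa_i:=h_*\Aa_{Y,i}=(X,h_*B_{Y,i},\Mm)$.

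The main obstacle is that $K_{\Aa_i}=h_*K_{\Aa_{Y,i}}$ need not be $\mathbb R$-Cartier, and $\Aa_i$ need not be lc. The plan is to first replace the decomposition on $Y$ by one taken from the polytopal of $\Aa_Y$, with points near $\bm c_Y$. By Lemma~\ref{lem: LD preserves lc center}(1), the $h$-exceptional divisors, which are lc places of $\Aa_Y$, keep discrepancy $-1$.

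Then I show $K_{\Aa_{Y,i}}\equiv_X 0$ relative to $h$ along this affine family. $K_{\Aa_Y}=h^*K_{\Aa}$ is $h$-trivial, and the $h$-numerical triviality condition is rational linear in $\bm v$; it therefore holds on the whole rational envelope, hence at each $\bm v_i$. $Y$ is potentially klt by Lemma~\ref{lem: qdlt is potentially klt}, so by the relative basepoint-free/descending argument for $h$-numerically trivial $\mathbb R$-Cartier divisors on klt-type varieties, $K_{\Aa_{Y,i}}\sim_{\mathbb R,X}0$. Concretely, $K_{\Aa_{Y,i}}=h^*h_*K_{\Aa_{Y,i}}$ with $h_*K_{\Aa_{Y,i}}$ $\mathbb Q$-Cartier.

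Consequently $\Aa_{Y,i}=h^*\Aa_i$ is crepant, so $\Aa_i$ is lc with $\mathbb Q$-divisor canonical class, and $\sum a_i\Aa_i=h_*\Aa_Y=\Aa$. Lemma~\ref{lem: equivalent definition of LD} then gives that $\Aa$ is LD. I expect this descent step, justifying $\mathbb R$-linear triviality over $X$ via the rationality of the numerical-triviality locus, to be the hardest part.
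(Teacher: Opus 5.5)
Your ($\Rightarrow$) direction is fine. Lemma~\ref{lem: LD preserves lc center}(1) keeps the $h$-exceptional divisors as lc places of each $\Aa(\bm v_i)$. Hence $h^*\Aa(\bm v_i)$ is lc with boundary $h^{-1}_*B_i+\Exc(h)\geq 0$, and its canonical divisor is the pullback of a $\Qq$-Cartier $\Qq$-divisor.

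The gap is in ($\Leftarrow$), at the passage from $K_{\Aa_{Y,i}}\equiv_X 0$ to $K_{\Aa_{Y,i}}\sim_{\Rr,X}0$. There is no ``relative basepoint-free/descending'' principle saying that an $h$-numerically trivial $\Rr$-Cartier divisor on a variety of klt type is $\Rr$-linearly trivial over $X$. The basepoint-free theorem needs $L-(K_Y+\Delta)$ to be $h$-nef for some klt $(Y,\Delta)$ (bigness is automatic here), and nothing in your setting provides this. In fact the principle is false:
\begin{itemize}
\item Let $X$ be the cone over a smooth plane cubic $E$ and $h\colon Y\to X$ the blow-up of the vertex. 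Then $Y$ is smooth and $K_Y+E=h^*K_X$, so $h$ is a $\Qq$-factorial dlt modification of $(X,0)$.
\item Let $\pi\colon Y\to E$ be the bundle projection and $p-q$ a non-torsion degree-zero divisor on $E$. Set $L:=\pi^*(p-q)$.
\item Then $L\cdot E=0$, so $L\equiv_X 0$. But $L|_E$ is nonzero in $\operatorname{Pic}(E)\otimes\Rr$, while every pullback from $X$ restricts trivially to $E$. Hence $L\not\sim_{\Rr,X}0$.
\end{itemize}
Equivalently, $(Y,E,\overline{L})/X$ is an lc generalized pair with $K\equiv_X0$ but $K\not\sim_{\Rr,X}0$. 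So numerical triviality together with the klt type of $Y$ cannot carry the descent.

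What makes the step true is the rational affine family itself, and the repair uses ideas you already have. There are two ways to do it.
\begin{itemize}
\item Run your rational-envelope argument with $\sim_{\Rr,X}$ instead of $\equiv_X$. The divisors that are $\Rr$-linearly trivial over $X$ form the $\Rr$-span of principal divisors and pullbacks of Cartier divisors from $X$, which is a subspace defined over $\Qq$. So $\{\bm v: K_{\Aa_Y(\bm v)}\sim_{\Rr,X}0\}$ is a rational subspace containing $\bm c_Y$, hence contains $V_Y$. This is exactly Lemma~\ref{lem: ld preserves trivialness} applied to the LD pair $\Aa_Y$ and $h$; its proof does not use the present lemma, so there is no circularity.
\item Push forward. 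The rational projection induced by $h_*$ sends $\bm c_Y$ to $\bm c$, so it maps $V_Y$ into $V$. Hence $h_*K_{\Aa_{Y,i}}$ is $\Rr$-Cartier by \cite[Lemma 5.3]{HLS24}. Then $h^*h_*K_{\Aa_{Y,i}}-K_{\Aa_{Y,i}}$ is $h$-exceptional and $\equiv_X0$, hence zero by the negativity lemma.
\end{itemize}

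With this fix your proof is correct, but it takes a different route from the paper. The paper builds an explicit linear isomorphism between the polytopals of $\Aa$ and $h^*\Aa$, inserting the coordinate $1$ for $\Exc(h)$. It then uses \cite[Lemmas 5.3, 5.4]{HLS24} to get $h^*K_{\Aa(\bm v)}=K_{\Aa_Y(\phi(\bm v))}$ for every $\bm v$, so both directions follow at once from crepancy. You instead argue asymmetrically through Lemma~\ref{lem: equivalent definition of LD}(3) and compare only at finitely many rational points.
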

\begin{proof}
We write $\Aa=(X,B,\Mm)$ and write
$$K_X+B+\Mm_X=:D=\sum_{i=1}^mc_iD_i$$
where $D_i\geq 0$ are distinct Weil divisors, $c_i\not=0,D_i\not=0$ for each $i$, and $c_i\leq c_j$ if $i\leq j$. Let $E$ be the reduced $h$-exceptional divisor. Write $h^*\Aa=(Y,B_Y,\Mm)$. If there exists an index $i_0$ such that $c_{i_0}=1$, then we define $m':=m$,  $D_i':=h^{-1}_*D_i$ if $i<i_0$ or $i>i_0$, and $D_{i_0}':=h^{-1}_*D_{i_0}+E$. If $c_i\neq 1$ for any $i$, then we let $i_0$ be the index such that $c_{i_0}<1<c_{i_0+1}$ (here we consider $c_{0}:=-\infty$ and $c_{m+1}:=+\infty)$, let $m':=m+1$, and $D_i':=h^{-1}_*D_i$ if $i\leq i_0$, $D_{i_0+1}':=E$, and $D_i':=h^{-1}_*D_{i-1}$ if $i\geq i_0+2$. Then we have
$$K_Y+B_Y+\Mm_Y=h^*D=\sum_{i=1}^{m'}c_iD_i'.$$
Let $\bm{c}:=(c_1,\dots,c_m),\bm{c}':=(c_1,\dots,c_{m'})$, and $V,V'$ the rational envelopes of $\bm{c},\bm{c}'$ in $\mathbb R^m,\mathbb R^{m'}$ respectively. Then there exists a one-to-one correspondence $V\xleftrightarrow{\phi} V'$:
$$(v_1,\dots,v_m)\xleftrightarrow{\phi} (v_1,\dots,v_m) \text{ if }m'=m$$
and
$$(v_1,\dots,v_m)\xleftrightarrow{\phi} (v_1,\dots,v_{i_0},1,v_{i_0+1},\dots,v_m) \text{ if } m'=m+1.$$ 

For any $\bm{v}=(v_1,\dots,v_m)\in V$, $\bm{v}'=(v_1',\dots,v_{m'}')\in V'$, we define $D(\bm{v}):=\sum_{i=1}^mv_iD_i$, $B(\bm{v}):=B+D(\bm{v})-D$, $D_Y(\bm{v}'):=\sum_{i=1}^{m'}v_i'D_i'$, and $B_Y(\bm{v}'):=B_Y+D_Y(\bm{v}')-h^*D$.

By \cite[Lemmas 5.3, 5.4]{HLS24}, we have
$$K_Y+B_Y(\phi(\bm{v}))+\Mm_Y=h^*(K_X+B(\bm{v})+\Mm_X)$$
for any $\bm{v}\in V$, hence $\Aa(\bm{v}):=(X,B(\bm{v}),\Mm)$ is lc if and only if $\Aa_Y(\bm{v}):=(Y,B_Y(\bm{v}),\Mm)$ is lc. We have that $\{V,\bm{c},\Aa(\cdot)\}$ and $\{V',\bm{c}',\Aa_Y(\cdot)\}$ are the polytopal of $\Aa$ and $h^*\Aa$ respectively. Since $\phi$ is a linear isomorphism between finite dimensional affine spaces, the lemma follows.
\end{proof}

The following lemma is crucial to characterize LD generalized pairs. Roughly speaking, it indicates that LD is a property which only relies on the irrational part of the nef part. Recall that for any $\mathbb R$-divisor $D$, $D^{\irr}$ stands for the irrational part of $D$ (see Definition \ref{defn: irr}).

\begin{lem}\label{lem: ld and irrationality}
  Let $\Aa/U:=(X,B,\Mm)/U$ be an lc generalized pair. Then:
  \begin{enumerate}
      \item If $\Aa$ is LD, then $\Supp\Mm_X^{\irr}\subset\Supp\{B\}$.
      \item If $\Aa$ is qdlt and $\Supp\Mm_X^{\irr}\subset\Supp\{B\}$, then $\Aa$ is LD.
  \end{enumerate}
\end{lem}
\begin{proof}
  Let $\{V,\bm{c},\Aa(\cdot)\}$ be the polytopal of $\Aa$ and write $\Aa(\bm{v})=(X,B(\bm{v}),\Mm)$ for any $\bm{v}\in V$.

(1) Let $V_0\ni\bm{c}$ be an open subset of $V$ such that $\Aa(\bm{v})$ is lc for any $\bm{v}\in V_0$. Let $S$ be an  irreducible component of $\Supp\Mm_X^{\irr}$. If $S\not\subset\Supp\{B\}$, then either $S$ is a component of $\lfloor B\rfloor$, or $S$ is not a component of $B$. In particular, $S$ is a component of $(K_X+B+\Mm_X)^{\irr}$. Therefore, there exists $\bm{v}_1,\bm{v}_2\in V_0$ such that $\mult_S(B(\bm{v}_1)-B)>0$ and $\mult_S(B(\bm{v}_2)-B)<0$. If $S$ is a component of $\lfloor B\rfloor$, then $\mult_SB(\bm{v}_1)>1$ so $\Aa(\bm{v}_1)$ is not lc, which is not possible. If $S$ is not a component of $B$, then $\mult_SB(\bm{v}_2)<0$, so  $\Aa(\bm{v}_2)$ is not lc, which is not possible.

(2) By Lemma \ref{lem: qdlt is potentially klt}, possibly replacing $X$ with a small $\mathbb Q$-factorialization, we may assume that $X$ is $\mathbb Q$-factorial klt.  Write
$$K_X+B+\Mm_X=:D=\sum_{i=1}^mc_iD_i$$
where $\bm{c}=(c_1,\dots,c_m)$ and $D_1,\dots,D_m\geq 0$ are distinct Weil divisors. Let $D(\bm{v}):=\sum_{i=1}^mv_iD_i$ for any $\bm{v}\in\mathbb R^m$, then $B(\bm{v})=B+D(\bm{v})-D$ for any $\bm{v}\in V$. 

By our assumption, 
\begin{align*}
  \Supp(B(\bm{v})-B)&=\Supp(D(\bm{v})-D)\subset\Supp(K_X+B+\Mm_X)^{\irr}\\
  &\subset\Supp B^{\irr}\cup\Supp\Mm_X^{\irr}\subset\Supp\{B\}  
\end{align*}
for any $\bm{v}\in V$. Since $(X,B,\Mm)$ is $\mathbb Q$-factorial qdlt, there exists $\epsilon\in (0,1)$ such that $(X,B+\epsilon\Supp\{B\},\Mm)$ is qdlt and $\{B\}\in\{0\}\cup [\epsilon,1)$. Let $V_0\ni\bm{c}$ be an open subset of $V$ such that $||B(\bm{v})-B||_{\infty}<\epsilon^2$ for any $\bm{v}\in V_0$. Then
$$B+\epsilon\Supp\{B\}\geq B(\bm{v})\geq B-\epsilon\Supp\{B\}\geq 0,$$
hence $(X,B(\bm{v}),\Mm)$ is qdlt for any $\bm{v}\in V_0$. Therefore, $(X,B,\Mm)$ is LD. 
\end{proof}

The following lemma shows that a non-trivial combination of an lc generalized pair and an LD generalized pairs is LD.

\begin{lem}\label{lem: combination of LD with lc g-pair}
    Let $\Aa/U:=(X,B,\Mm)/U$ be an lc generalized pair and $C$ an $\mathbb R$-Cartier $\mathbb R$-divisor on $X$. Assume that $(\Aa,C)$ is LD, then $\Aa(s):=(\Aa,sC)$ is LD for any $s\in (0,1]$.
\end{lem}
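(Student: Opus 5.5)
We will use the characterization in Lemma~\ref{lem: equivalent definition of LD}, specifically (1)$\Leftrightarrow$(3). The idea is to write $(\Aa,sC)$ as a convex combination of lc generalized pairs with $\mathbb Q$-Cartier canonical classes. We take the $\mathbb Q$-pieces of $(\Aa,C)$ together with suitable $\mathbb Q$-pieces coming from $\Aa$ itself, where the latter lie in a small rational neighbourhood.

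First, since $(\Aa,C)$ is LD, Lemma~\ref{lem: equivalent definition of LD}(2) gives $a_1,\dots,a_m\in(0,1]$ with $\sum a_i=1$ and lc generalized pairs $\Aa'_i=(X,B'_i,\Mm)$ with each $K_{\Aa'_i}$ a $\mathbb Q$-divisor and $\sum a_i\Aa'_i=(\Aa,C)$. For $s=1$ there is nothing to prove, so assume $s\in(0,1)$. Note that $(\Aa,sC)=s(\Aa,C)+(1-s)\Aa$. It is therefore enough to show that $\Aa$ itself can be written as a convex combination of lc generalized pairs $(X,B''_j,\Mm)$ with $\mathbb Q$-divisor canonical classes. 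Once that is done, concatenating the two decompositions with weights $sa_i$ and $(1-s)b_j$ gives a decomposition as in (3), and Lemma~\ref{lem: equivalent definition of LD} shows that $(\Aa,sC)$ is LD.

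To decompose $\Aa$, we reuse the polytopal $\{V,\bm{c},(\Aa,C)(\cdot)\}$ of $(\Aa,C)$. Write $K_{(\Aa,C)}=\sum c_iD_i$. For $\bm{v}$ in the open neighbourhood $V_0$ of $\bm{c}$, the pair $(X,B+C+D(\bm{v})-D,\Mm)$ is lc, with canonical class $D(\bm{v})$. By Lemma~\ref{lem: LD preserves lc center}(1), the lc places of $(\Aa,C)$ persist as $\bm{v}$ varies. The natural candidate for $\Aa$ is $\Aa(\bm{v}):=(X,B+D(\bm{v})-D,\Mm)$, whose canonical class $D(\bm{v})-C$ is $\mathbb R$-Cartier, since $C$ is $\mathbb R$-Cartier and $D(\bm{v})$ is $\mathbb Q$-Cartier for rational $\bm{v}$. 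Here we hit a difficulty: $C$ may be irrational, so $D(\bm{v})-C$ need not be a $\mathbb Q$-divisor. We therefore do not decompose $\Aa$ separately. Instead, we decompose $(\Aa,sC)$ directly as $s(\Aa,C)(\bm{v})+(1-s)\Aa(\bm{v})$, and perturb along the rational envelope of the coefficient vector of $K_{(\Aa,sC)}$.

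Concretely, let $W$ be the rational envelope for $K_{(\Aa,sC)}=K_\Aa+sC$. Its coefficients are affine combinations of the coefficients of $K_\Aa$ and $C$. We need to show that for $\bm{w}\in W$ near the base point, the perturbed boundary $B+sC+(\text{perturbation})$ gives an lc generalized pair. The key step is lc-ness, which we expect to be the main obstacle. The plan is to write the perturbation of $K_\Aa+sC$ as $s\cdot(\text{perturbation of }K_\Aa+C)+(1-s)\cdot(\text{perturbation of }K_\Aa)$. Lc-ness of the first summand comes from LD-ness of $(\Aa,C)$. For the second summand we need that perturbing $K_\Aa$ by a small amount supported where it does not destroy lc-ness keeps $\Aa$ lc. Lemma~\ref{lem: ld and irrationality}(1) says the irrational parts are supported in $\Supp\{B+C\}$, and convexity of discrepancies in the boundary then gives the conclusion. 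If this direct route gets stuck on components of $\lfloor B\rfloor$ or non-components, we first pass to a $\mathbb Q$-factorial qdlt modification via Lemma~\ref{lem: LD under dlt model}. There we apply Lemma~\ref{lem: ld and irrationality}(2): it reduces to checking $\Supp\Mm_Y^{\irr}\subset\Supp\{B_Y+sC_Y\}$. This inclusion follows from $\Supp\Mm_Y^{\irr}\subset\Supp\{B_Y+C_Y\}$, given by part (1), together with $0<s<1$, which guarantees that coefficients in $(0,1)$ stay in $(0,1)$ after interpolation with $B_Y$. The remaining risk is components where $B_Y$ has coefficient $1$ and $C_Y$ is negative. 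These must be handled using the observation that a component with coefficient in $(0,1)$ for $B_Y+C_Y$ and coefficient $1$ or $0$ for $B_Y$ interpolates to $(0,1)$.
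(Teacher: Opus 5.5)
Your final fallback is, in outline, the paper's argument. You pass to a $\mathbb Q$-factorial (q)dlt model, get $\Supp\Mm_Y^{\irr}\subset\Supp\{B_Y+C_Y\}$ from Lemma~\ref{lem: ld and irrationality}(1), and interpolate fractional parts. Your observation that $(1-s)b+s(b+c)\in(0,1)$ whenever $b\in[0,1]$ and $b+c\in(0,1)$ settles your ``remaining risk'' completely. You then apply Lemma~\ref{lem: ld and irrationality}(2) and descend with Lemma~\ref{lem: LD under dlt model}.

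What is missing is the one non-formal point: which modification $h\colon Y\to X$ you use, and why a single $h$ serves both $(\Aa,C)$ and $(\Aa,sC)$, with boundaries $B_Y+C_Y$ and $B_Y+sC_Y$. The paper takes $h$ to be a $\mathbb Q$-factorial dlt modification of $(\Aa,tC)$ for a fixed $t\in(0,1)$, and then uses linearity of discrepancies:
\begin{itemize}
\item Since $a(E,(\Aa,s'C))$ is affine in $s'$ and both $\Aa$ and $(\Aa,C)$ are lc, every lc place of $(\Aa,tC)$ is an lc place of $(\Aa,s'C)$ for all $s'\in[0,1]$.
\item Hence $h^*(\Aa,s'C)=(Y,h^{-1}_*(B+s'C)+\Exc(h),\Mm)$ for every $s'\in[0,1]$.
\item In particular $h^*(\Aa,C)$ is lc, and it is LD by Lemma~\ref{lem: LD under dlt model}, whose proof only uses that $h$ extracts lc places. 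So part (1) applies on $Y$.
\item By linearity again, $h^*(\Aa,s'C)$ is $\mathbb Q$-factorial dlt for all $s'\in(0,1)$, so part (2) applies.
\end{itemize}
If instead you take a qdlt modification of $(\Aa,C)$, as ``via Lemma~\ref{lem: LD under dlt model}'' suggests, the extracted divisors need not be lc places of $\Aa$. For example, blow up the point where $n$ snc hyperplanes $H_i$ meet, with $B=0$ and $C=\sum H_i$. Then $h^*(\Aa,sC)$ has exceptional coefficients $<1$, and in that example even negative. It need not be qdlt, so Lemma~\ref{lem: ld and irrationality}(2) cannot be invoked. You need this choice of $h$ and the linearity argument to make the fallback a proof.

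You should also discard the earlier parts of your proposal, and the reason is not that $C$ may be irrational. $\Aa$ is only assumed lc and need not be LD. So neither the reduction to decomposing $\Aa$ alone nor the split $s\cdot(\text{perturbation of }K_{\Aa}+C)+(1-s)\cdot(\text{perturbation of }K_{\Aa})$ can work in general. For instance, on $X=\mathbb P^1$ take $B=0$, $\Mm=\overline{\sqrt2\,p}$ and $C=\frac12p$. Then $\Aa$ violates Lemma~\ref{lem: ld and irrationality}(1), while $(\Aa,C)$ is LD by Lemma~\ref{lem: ld and irrationality}(2).
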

\begin{proof}
We only need to show that $\Aa(s)$ is LD for any $s\in (0,1)$. Fix $t\in (0,1)$. Let $h: X'\rightarrow X$ be a $\mathbb Q$-factorial dlt modification of $\Aa(t)$ and let $\Aa'(s):=(h^{-1}_*\Aa(s),\Exc(h))$ for any $s\in [0,1]$. Then $\Aa'(t)=h^*\Aa(t)$ is $\mathbb Q$-factorial dlt. Since $h$ only extract lc places of $\Aa(t)$ and since $\Aa(0)$ and $\Aa(1)$ are lc, by linearity of discrepancies, we have $\Aa'(s)=h^*\Aa(s)$ is lc for any $s\in [0,1]$. Since $\Aa'(t)$ is $\mathbb Q$-factorial dlt and $\Aa'(0)$ and $\Aa'(1)$ are lc, by linearity of discrepancies again, $\Aa'(s)$ is dlt for any $s\in (0,1)$.

Write $\Aa'(s):=(X',B'(s),\Mm)$ for any $s\in [0,1]$. Since 
$$\Supp B\geq B\geq 0\quad  \text{and}\quad \Supp(B+C)\geq B+C\geq 0,$$ 
we have
$$\Supp\{B+C\}\subset\Supp\{B+sC\}$$
for any $s\in (0,1)$, hence $\Supp\{B'(1)\}\subset\Supp\{B'(s)\}$ for any $s\in (0,1)$.

By Lemma \ref{lem: LD under dlt model}, $\Aa'(1)$ is LD. By Lemma \ref{lem: ld and irrationality}(1), 
$$\Supp\Mm_{X'}^{\irr}\subset\Supp\{B'(1)\}\subset\Supp\{B'(s)\}$$
for any $s\in (0,1)$. By Lemma \ref{lem: ld and irrationality}(2), $\Aa'(s)$ is LD for any $s\in (0,1)$. By Lemma \ref{lem: LD under dlt model}, $\Aa(s)$ is LD for any $s\in (0,1)$.
\end{proof}

\subsection{Nef and semi-ampleness of LD generalized pairs}

Usually, the canonical class of a klt generalized pair may be nef but not NQC (cf. \cite[Example 3.19]{HL22}). However, this cannot happen for LD generalized pairs (see Lemma \ref{lem: ld nef impleis nqc}).

\begin{lem}\label{lem: ld preserves trivialness}
    Let $\Aa/U$ be an LD generalized pair with polytopal $\{V,\bm{c},\Aa(\cdot)\}$. Let $h: X\rightarrow Z$ a projective morphism$/U$ such that $K_{\Aa}\sim_{\mathbb R,Z}0$. 
    
    Then $K_{\Aa(\bm{v})}\sim_{\mathbb R,Z}0$ for any $\bm{v}\in V$.
\end{lem}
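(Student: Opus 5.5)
We argue via the rational envelope. Write $K_{\Aa}=D=\sum_{i=1}^m c_iD_i$ as in Definition~\ref{defn: ld gpair}, so that $K_{\Aa(\bm{v})}=K_{\Aa}+D(\bm{v})-D=D(\bm{v})$ for every $\bm{v}\in V$. Consider the set
$$W:=\{\bm{v}\in\mathbb R^m\mid D(\bm{v})=\textstyle\sum v_iD_i \text{ is }\mathbb R\text{-Cartier and } D(\bm{v})\sim_{\mathbb R,Z}0\}.$$
The plan is to show that $W$ is an affine subspace of $\mathbb R^m$ defined over $\mathbb Q$ and that $\bm{c}\in W$. Then minimality of the rational envelope forces $V\subset W$, which is exactly the claim.

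Clearly $\bm{c}\in W$ by hypothesis. Moreover, $W$ is closed under affine (indeed linear) combinations, since $\mathbb R$-Cartier divisors and relative $\mathbb R$-linear triviality are both preserved under $\mathbb R$-linear combinations. Hence $W$ is a linear (in particular affine) subspace. The main step is rationality.

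For rationality, we describe $W$ as the kernel of a $\mathbb Q$-linear map. Let $\mathcal{K}\subset\Weil_{\mathbb Q}(X)$ be the $\mathbb Q$-span of $D_1,\dots,D_m$. The subspace of $\mathbb Q$-Cartier divisors $P\in\mathcal{K}$ with $P\sim_{\mathbb Q,Z}0$ is a $\mathbb Q$-subspace $L_{\mathbb Q}$. We must show that the real divisors in $\mathcal K\otimes\mathbb R$ that are $\mathbb R$-Cartier and $\sim_{\mathbb R,Z}0$ are exactly $L_{\mathbb Q}\otimes\mathbb R$.

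Suppose $P=\sum v_iD_i\sim_{\mathbb R,Z}0$. By definition,
$$P=\sum_j r_j\,\mathrm{div}(f_j)+h^*G,$$
where $r_j\in\mathbb R$, the $f_j$ are rational functions, and $G$ is an $\mathbb R$-Cartier divisor on $Z$; the latter can be written as $\sum s_kG_k$ with $G_k$ Cartier. This is a linear equation with real coefficients in the unknowns $(v_i,r_j,s_k)$ among finitely many prime divisors. The relevant constraints are:
\begin{itemize}
\item the coefficient equations for the finitely many primes, whose coefficients $\mathrm{mult}(\mathrm{div} f_j)$, $\mathrm{mult}(h^*G_k)$ and the $D_i$-multiplicities are integers, so they have rational coefficients;
\item $\mathbb R$-Cartierness of $P$, which is automatic from this expression.
\end{itemize}
Fix the finitely many $f_j,G_k$ occurring for the real solution. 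The solution set of this rational linear system is a $\mathbb Q$-defined subspace containing the real solution, so its projection to the $\bm v$-coordinates is a $\mathbb Q$-defined subspace $W'\ni\bm c$, and every point of $W'$ lies in $W$. Thus $V\subset W'\subset W$ by minimality of the rational envelope.

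This projection argument is the expected main obstacle. We must choose the finite data from the given relation $K_{\Aa}\sim_{\mathbb R,Z}0$, rather than prove that all of $W$ is rational, and we must handle the pullback $h^*G$ of $\mathbb R$-Cartier divisors by decomposing $G$ into Cartier pieces. Once this is set up, $K_{\Aa(\bm v)}=D(\bm v)\sim_{\mathbb R,Z}0$ for all $\bm v\in V$. Note that we never use the lc or LD hypothesis; we use only the structure of the rational envelope.
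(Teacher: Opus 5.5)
Your proof is correct and is essentially the paper's argument. The paper's proof is a one-line citation of \cite[Lemma 5.3]{HLS24}, which asserts that $\mathbb R$-Cartierness and relative $\mathbb R$-linear triviality of $\sum c_iD_i$ persist on the rational envelope of $\bm{c}$; your projection of the integer-coefficient linear system $\sum v_iD_i=\sum r_j\,\mathrm{div}(f_j)+\sum s_k h^*G_k$ onto the $\bm{v}$-coordinates is exactly the standard proof of that fact. The intermediate assertion that one ``must show'' $W=L_{\mathbb Q}\otimes\mathbb R$ is unnecessary (though true): as you note afterwards, producing a single $\mathbb Q$-subspace $W'\ni\bm{c}$ with $W'\subset W$ already gives $V\subset W$.
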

\begin{proof}
The lemma is an immediate consequence of \cite[Lemma 5.3]{HLS24}.
\end{proof}

The following lemma shows that the ample model of an LD generalized pairs is stable under decomposition.

\begin{lem}\label{lem: semi-ampleness ld}
Let $\Aa/U$ be an LD generalized pair with polytopal $\{V,\bm{c},\Aa(\cdot)\}$. Assume that $K_{\Aa}$ is semi-ample$/U$ with ample model$/U$ $h: X\rightarrow Z$ (cf. \cite[Definition 3.6.5]{BCHM10}). Then there exists an open subset $V_0\ni\bm{c}$ of $V$, such that $K_{\Aa(\bm{v})}$ is semi-ample$/U$ with ample model$/U$ $h: X\rightarrow Z$.
\end{lem}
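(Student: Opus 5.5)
The plan is to write $K_{\Aa}\sim_{\mathbb R,Z}0$ relative to the ample model and use Lemma~\ref{lem: ld preserves trivialness}. Since $K_{\Aa}$ is semi-ample$/U$ with ample model $h\colon X\to Z$, there is an ample$/U$ $\mathbb R$-divisor $H$ on $Z$ such that $K_{\Aa}\sim_{\mathbb R}h^*H$. In particular $K_{\Aa}\sim_{\mathbb R,Z}0$. Lemma~\ref{lem: ld preserves trivialness} then gives $K_{\Aa(\bm{v})}\sim_{\mathbb R,Z}0$ for every $\bm{v}\in V$. Since $h$ is a contraction, each $K_{\Aa(\bm{v})}$ is $\mathbb R$-linearly equivalent to $h^*H(\bm{v})$ for some $\mathbb R$-Cartier $\mathbb R$-divisor $H(\bm{v})$ on $Z$. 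This uses the standard descent for divisors $\mathbb R$-linearly trivial over the base of a contraction; in the LD setting one can also read it off from \cite[Lemma 5.3]{HLS24}, as in the proof of Lemma~\ref{lem: ld preserves trivialness}.

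The key step is to choose the $H(\bm{v})$ affinely in $\bm{v}$. Pick $\mathbb Q$-points $\bm{v}_0,\dots,\bm{v}_k\in V$ affinely spanning $V$ with $\bm{c}$ in the interior of their convex hull, as in the proof of Lemma~\ref{lem: equivalent definition of LD}. For each $j$ fix $H_j:=H(\bm{v}_j)$ with $K_{\Aa(\bm{v}_j)}\sim_{\mathbb R}h^*H_j$. Every $\bm{v}\in V$ is uniquely $\sum_j\lambda_j(\bm{v})\bm{v}_j$ with $\sum_j\lambda_j=1$, and the map $\bm{v}\mapsto K_{\Aa(\bm{v})}=K_X+B(\bm{v})+\Mm_X$ is affine. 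Hence
$$K_{\Aa(\bm{v})}=\sum_j\lambda_j(\bm{v})K_{\Aa(\bm{v}_j)}\sim_{\mathbb R}h^*\Big(\sum_j\lambda_j(\bm{v})H_j\Big)=:h^*H(\bm{v}),$$
so $\bm{v}\mapsto H(\bm{v})$ is affine and continuous into the finite-dimensional space spanned by the $H_j$. At $\bm{c}$ we have $h^*H(\bm{c})\sim_{\mathbb R}K_{\Aa}\sim_{\mathbb R}h^*H$. Since $h_*\mathcal O_X=\mathcal O_Z$, this gives $H(\bm{c})\sim_{\mathbb R}H$, so $H(\bm{c})$ is ample$/U$.

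Ampleness$/U$ is an open condition on a finite-dimensional space of $\mathbb R$-Cartier divisors: amplitude is open in the relative Néron--Severi space, or one can write $H(\bm{v})-H(\bm{c})$ as a small combination of Cartier divisors and absorb it into $H(\bm{c})$. So there is an open neighborhood $V_0\ni\bm{c}$ in $V$ on which $H(\bm{v})$ is ample$/U$. For $\bm{v}\in V_0$, $K_{\Aa(\bm{v})}\sim_{\mathbb R}h^*H(\bm{v})$ with $H(\bm{v})$ ample$/U$ and $h$ a contraction. Therefore $K_{\Aa(\bm{v})}$ is semi-ample$/U$ with ample model$/U$ equal to $h$. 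If one also wants $\Aa(\bm{v})$ to be lc, intersect $V_0$ with the neighborhood from the LD definition or from Lemma~\ref{lem: LD preserves lc center}.

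The main obstacle is the descent step: showing that the relation $K_{\Aa(\bm{v})}\sim_{\mathbb R,Z}0$ coming from Lemma~\ref{lem: ld preserves trivialness} really produces an $\mathbb R$-Cartier $H(\bm{v})$ on $Z$ with $K_{\Aa(\bm{v})}\sim_{\mathbb R}h^*H(\bm{v})$. This is immediate once $\sim_{\mathbb R,Z}$ is unpacked as $\mathbb R$-linear equivalence to a pullback from $Z$, which I would verify from its definition. Everything after that is linear algebra and the openness of the ample cone.
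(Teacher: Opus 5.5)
Your proposal is correct and is essentially the paper's argument. Both use Lemma~\ref{lem: ld preserves trivialness} to get $K_{\Aa(\bm{v})}\sim_{\mathbb R,U}h^*L(\bm{v})$ with $L(\cdot)$ depending continuously on $\bm{v}$, and then use openness of ampleness$/U$ around $L(\bm{c})$; your affine choice of $H(\bm{v})$ from divisors fixed at affinely spanning points of $V$ makes explicit the continuity the paper asserts without comment (rationality of those points is not needed).
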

\begin{proof}
    We have $K_{\Aa}\sim_{\mathbb R,U}h^*L$ for some ample$/U$ $\Rr$-Cartier $\mathbb R$-divisor $L$. By Lemma \ref{lem: ld preserves trivialness}, there exists a continuous function $L(\cdot): V\rightarrow\Weil_{\mathbb R}(Z)$ such that $K_{\Aa(\bm{v})}\sim_{\mathbb R,U}h^*L(\bm{v})$ for any $\bm{v}\in V$. Since $L=L(\bm{c})$ is ample$/U$, there exists an open subset $V_0\ni\bm{c}$ of $V$, such that $L(\bm{v})$ is ample$/U$ for any $\bm{v}\in V_0$. Such $V_0$ satisfies our requirements.
\end{proof}

\begin{lem}\label{lem: ld nef impleis nqc}
Let $\Aa/U$ be an LD generalized pair with polytopal $\{V,\bm{c},\Aa(\cdot)\}$. Assume that $K_{\Aa}$ is nef$/U$. Then there exists an open subset $V_0\ni\bm{c}$ of $V$ such that $\Aa(\bm{v})$ is lc and $K_{\Aa(\bm{v})}$ is nef$/U$ for any $\bm{v}\in V_0$. In particular, $\Aa/U$ is KNQC.
\end{lem}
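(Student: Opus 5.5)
The plan is a Shokurov-polytope style argument, as in the proof that the nef cone is locally rational polyhedral near an lc boundary, with the cone theorem for lc generalized pairs \cite[Theorem~B]{CHLX23} used in place of the classical one.

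\textbf{Step 1 (setup).} First I note that for every $\bm{v}\in V$ the divisor $K_{\Aa(\bm{v})}=K_{\Aa}+D(\bm{v})-D$ is $\Rr$-Cartier, and that $\bm{v}\mapsto K_{\Aa(\bm{v})}$ is affine. This should follow from \cite[Lemma~5.3]{HLS24}, since $V$ is the rational envelope of $\bm{c}$; this is the same input as in Lemma~\ref{lem: ld preserves trivialness}. Next I fix an open neighborhood $V_1\ni\bm{c}$ on which $\Aa(\bm{v})$ is lc, using the LD condition and Lemma~\ref{lem: LD preserves lc center}. As in the proof of Lemma~\ref{lem: equivalent definition of LD}, I choose rational points $\bm{v}_1,\dots,\bm{v}_k\in V_1$ whose convex hull contains $\bm{c}$ in its relative interior. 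Each $K_{\Aa(\bm{v}_j)}$ is then $\Qq$-Cartier, so there is an integer $I>0$ with every $IK_{\Aa(\bm{v}_j)}$ Cartier. Consequently, for every curve $C$ contracted over $U$, each number $K_{\Aa(\bm{v}_j)}\cdot C$ lies in $\frac1I\mathbb Z$.

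\textbf{Step 2 (only boundedly short rays matter).} Suppose $\bm{v}$ is in the convex hull of the $\bm{v}_j$ and $K_{\Aa(\bm{v})}$ is not nef$/U$. By the cone theorem \cite[Theorem~B]{CHLX23} applied to the lc generalized pair $\Aa(\bm{v})$, there is a $K_{\Aa(\bm{v})}$-negative extremal ray $R$ spanned by a rational curve $C$ with $K_{\Aa(\bm{v})}\cdot C\geq -2\dim X$. Since $K_{\Aa}\cdot C\ge 0$ by nefness, writing $\bm{c}$ as a convex combination of the $\bm{v}_j$ and using the affine dependence bounds the values $K_{\Aa(\bm{v}_j)}\cdot C$ from both sides.

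\textbf{Step 3 (finitely many values and the choice of $V_0$).} The main obstacle is making this bound uniform, because the length bound from the cone theorem controls $K_{\Aa(\bm{v})}\cdot C$ and not each $K_{\Aa(\bm{v}_j)}\cdot C$. I will handle this as Birkar does: first apply the cone theorem to each $\Aa(\bm{v}_j)$, so that every $K_{\Aa(\bm{v}_j)}$-negative extremal ray has a generator $C$ with $-2\dim X\le K_{\Aa(\bm{v}_j)}\cdot C<0$. Then take $V_0$ inside a small homothetic shrinking of the hull about $\bm{c}$, with ratio $\delta$ chosen in terms of $I$, $\dim X$ and the finite set of $\bm{v}_j$. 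Suppose $K_{\Aa(\bm{v})}\cdot R<0$ for some $\bm{v}\in V_0$. Writing $\bm{v}=\bm{c}+\delta(\bm{u}-\bm{c})$ with $\bm{u}$ in the hull gives $K_{\Aa(\bm{u})}\cdot R<0$, so some $K_{\Aa(\bm{v}_j)}\cdot R<0$. That ray then has a generator $C$ with $K_{\Aa(\bm{v}_j)}\cdot C\in\frac1I\mathbb Z\cap[-2\dim X,0)$. Combining this with $K_{\Aa}\cdot C\geq0$ and the discreteness of the values in $\frac1I\mathbb Z$, we get $K_{\Aa}\cdot C$ bounded below by a fixed positive constant whenever it is nonzero. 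The case $K_{\Aa}\cdot C=0$ must be treated separately via the affine relation, and this case analysis is where I expect the real work to lie. A sufficiently small $\delta$ then yields $K_{\Aa(\bm{v})}\cdot C\ge0$, which is a contradiction. This proves the first assertion.

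\textbf{Step 4 (KNQC).} Choose rational $\bm{v}'_1,\dots,\bm{v}'_l\in V_0$ with $\bm{c}=\sum a_i\bm{v}'_i$, where $a_i>0$ and $\sum a_i=1$. Then $K_{\Aa}=\sum a_iK_{\Aa(\bm{v}'_i)}$ is a positive combination of nef$/U$ $\Qq$-Cartier divisors. Clearing denominators expresses it as a positive combination of nef$/U$ Cartier divisors, so $\Aa/U$ is KNQC.
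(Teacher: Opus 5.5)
Your strategy is the paper's: pick rational points $\bm v_j$ near $\bm c$ with a common Cartier index $I$, use the length bound $-2\dim X$ from the cone theorem to keep the positive values of $K_{\Aa}\cdot C$ away from $0$, and shrink homothetically about $\bm c$. The argument has a genuine gap, however, exactly at the case you flag and leave open: $K_{\Aa}\cdot C=0$.

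\textbf{The case $K_{\Aa}\cdot C=0$.} Here no choice of $\delta$ helps. Since $K_{\Aa(\bm v)}\cdot C=\delta\, K_{\Aa(\bm u)}\cdot C$, the intersection is negative for every $\delta>0$. This is the step where you must use that $V$ is the \emph{rational envelope} of $\bm c$; on a larger rational affine subspace through $\bm c$ the conclusion fails in general.

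The missing observation is the following. For a fixed curve $C$, the map $\bm v\mapsto K_{\Aa(\bm v)}\cdot C$ is affine on $V$. By \cite[Lemma~5.3]{HLS24}, $K_{\Aa(\bm v)}$ is $\mathbb Q$-Cartier for rational $\bm v\in V$, so this map takes rational values at rational points. Its zero locus in $V$ is therefore a $\mathbb Q$-affine subspace. If that subspace contains $\bm c$, it must be all of $V$. Hence $K_{\Aa}\cdot R=0$ forces $K_{\Aa(\bm v)}\cdot R=0$ for every $\bm v\in V$, which is Case~1 of the paper's proof.

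An equivalent route: take $k=\dim V+1$ affinely independent rational $\bm v_j$. Then the barycentric coordinates $a_j$ of $\bm c$ are $\mathbb Q$-linearly independent. So $\sum_j a_j\,(K_{\Aa(\bm v_j)}\cdot C)=0$ with rational summands forces every term to vanish.

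\textbf{The uniform lower bound.} Your claim that $K_{\Aa}\cdot C$ is bounded below by a fixed positive constant needs $K_{\Aa(\bm v_i)}\cdot C\geq -2\dim X$ for \emph{all} $i$, with the \emph{same} curve $C$. A positive combination $\sum_i a_i x_i$ with $x_i\in\frac1I\mathbb Z$ has positive values bounded away from $0$ only if every $x_i$ is bounded below. You also need this uniform bound again at the end, to get $K_{\Aa(\bm u)}\cdot C\geq -2\dim X$ for $\bm u$ in the hull.

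Applying the cone theorem to a single $\Aa(\bm v_j)$ does not give this. For each $i$ with $K_{\Aa(\bm v_i)}\cdot R<0$, the cone theorem gives a generator $C_i$ with $-2\dim X\leq K_{\Aa(\bm v_i)}\cdot C_i<0$. These generators are positive multiples of one another in $N_1(X/U)$. Take the numerically smallest one, $C$, so that $[C_i]=\lambda_i[C]$ with $\lambda_i\geq 1$. Then $K_{\Aa(\bm v_i)}\cdot C=\lambda_i^{-1}K_{\Aa(\bm v_i)}\cdot C_i\geq -2\dim X$ for every such $i$. The bound holds trivially for the indices with $K_{\Aa(\bm v_i)}\cdot R\geq 0$.

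With these two points added, your Steps~3--4 go through and agree with the paper's proof.
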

\begin{proof}
Since $\Aa$ is LD, there exists an open neighborhood $V_1\ni\bm{c}$ such that $\Aa(\bm{v})$ is lc for any $\bm{v}\in V_1$. Let $n:=\dim V+1$. Pick $\mathbb Q$-vectors $\bm{v}_1,\dots,\bm{v}_{n}\in V_1$ such that $\bm{c}$ is contained in the interior of the convex hull $V_2$ spanned by $\bm{v}_1,\dots,\bm{v}_{n}$.  Then there exist positive real numbers $a_1,\dots,a_{n}$ such that $\sum_{i=1}^{n}a_i\bm{v}_i=\bm{c}$ and $\sum_{i=1}^{n}a_i=1$. We let $I$ be a positive integer such that $IK_{\Aa(\bm{v}_i)}$ is Cartier for each $i$. Let $d:=\dim X$ and $a_0:=\min_{1\leq i\leq n}\{a_i\}$. 

Consider the set
    $$\Ii:=\left\{\sum a_i\gamma_i\middle|\gamma_i\in [-2dI,+\infty)\cap\mathbb Z\right\}\cap (0,+\infty).$$
    We have $\gamma_0:=\inf\{\gamma\mid\gamma\in\Ii\}>0$. We let $V_0$ be the interior of the set
    $$\left\{\frac{1}{2d+\gamma_0}(2d\bm{c}+\gamma_0\bm{v})\Bigg| \bm{v}\in V_2\right\}.$$

    We show that $V_0$ satisfies our requirement. By construction, $\Aa(\bm{v})$ is lc for any $\bm{v}\in V_0$, so it suffices to show that $K_{\Aa(\bm{v})}$ is nef$/U$ for any $\bm{v}\in V_0$. We let $R$ be an extremal ray in $\overline{NE}(X/U)$. There are three cases.

    \medskip

    \noindent\textbf{Case 1}. $K_{\Aa}\cdot R=0$. In this case, by \cite[Lemma 5.3]{HLS24}, $K_{\Aa(\bm{v})}\cdot R=0$ for any $\bm{v}\in V$, so $K_{\Aa(\bm{v})}\cdot R=0$ for any $\bm{v}\in V_1$.

\medskip

    \noindent\textbf{Case 2}. $K_{\Aa(\bm{v}_i)}\cdot R\geq 0$ for any $i$. In this case, $K_{\Aa(\bm{v})}\cdot R\geq 0$ for any $\bm{v}\in V_2$, so $K_{\Aa(\bm{v})}\cdot R\geq 0$ for any $\bm{v}\in V_0$.

    \medskip

     \noindent\textbf{Case 3}.  $K_{\Aa}\cdot R>0$ and $K_{\Aa(\bm{v}_j)}\cdot R<0$ for some $j$. In this case, by the cone theorem \cite[Theorem 2.2.1]{CHLX23}, $R$ is spanned by a curve $C$ such that $K_{\Aa(\bm{v}_i)}\cdot C\geq -2d$ for any $i$. In particular, $K_{\Aa(\bm{v})}\cdot C\geq -2d$ for any $\bm{v}\in V_2$. Moreover, we have
     $$IK_{\Aa(\bm{v}_i)}\cdot C\in [-2dI,+\infty)\cap\mathbb Z,$$
    so
    $$IK_{\Aa(\bm{c})}\cdot C\in\Ii_0.$$
    For any $\bm{v}\in V_0$, there exists $\bm{v}'\in V_2$ such that $(2d+\gamma_0)\bm{v}=2d\bm{c}+\gamma_0\bm{v}'$. We have
$$IK_{\Aa(\bm{v})}\cdot C=\frac{\gamma_0}{2d+\gamma_0}IK_{\Aa(\bm{v}')}\cdot C+\frac{2d}{2d+\gamma_0}IK_{\Aa(\bm{c})}\cdot C\geq\frac{\gamma_0}{2d+\gamma_0}\cdot (-2d)+\frac{\gamma_0}{2d+\gamma_0}\cdot\gamma_0=0,$$
    so $IK_{\Aa(\bm{v})}\cdot R\geq 0$. The lemma follows.
\end{proof}

\subsection{LD generalized pair under standard transformations}

We show that LD is a property that is preserved under adjunction and the minimal model program.

\begin{lem}\label{lem: ld adjunction}
Let $\Aa/U=(X,B,\Mm)/U$ be an LD generalized pair and let $\widetilde{S}$ be an irreducible component of $\lfloor B\rfloor$ with normalization $S$. Then $\Aa|_S$ is RD.
\end{lem}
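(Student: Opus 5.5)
The statement should read ``$\Aa|_S$ is LD''; I will prove that. The plan is to use the equivalent characterization in Lemma~\ref{lem: equivalent definition of LD}(3): it suffices to write $\Aa|_S$ as a convex combination $\sum a_i(\Aa_i|_S)$ of lc generalized pairs on $S$ that share the nef part $\Mm^S$ and whose canonical divisors are $\mathbb Q$-divisors. The natural candidates are the restrictions of a $\mathbb Q$-decomposition of $\Aa$ itself.

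\textbf{Step 1: a decomposition compatible with $S$.} Let $\{V,\bm{c},\Aa(\cdot)\}$ be the polytopal of $\Aa$. By Lemma~\ref{lem: LD preserves lc center}(2) there is an open neighborhood $V'\ni\bm{c}$ in $V$ such that $\Aa(\bm{v})$ is lc and $\Nklt(\Aa(\bm{v}))=\Nklt(\Aa)$ for every $\bm{v}\in V'$. Since $\widetilde S$ is an lc place of $\Aa$, Lemma~\ref{lem: LD preserves lc center}(1) gives $\mult_{\widetilde S}B(\bm{v})=1$ for all $\bm{v}\in V$. As in the proof of (1)$\Rightarrow$(2) of Lemma~\ref{lem: equivalent definition of LD}, I choose rational points $\bm{v}_1,\dots,\bm{v}_n\in V'$ whose convex hull contains $\bm{c}$ in its interior. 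Writing $\bm{c}=\sum a_i\bm{v}_i$ with $a_i\in(0,1]$ and $\sum a_i=1$, I set $\Aa_i:=\Aa(\bm{v}_i)=(X,B_i,\Mm)$. Then each $\Aa_i$ is lc, each $K_{\Aa_i}$ is a $\mathbb Q$-divisor, $\widetilde S$ is a component of $\lfloor B_i\rfloor$, and $\sum a_i\Aa_i=\Aa$.

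\textbf{Step 2: adjunction is affine in the boundary.} Fix one log resolution $f:Y\to X$ of $(X,\Supp B\cup\bigcup_i\Supp B_i\cup\widetilde S)$ to which $\Mm$ descends, and put $S_Y:=f^{-1}_*\widetilde S$. For any boundary $B'$ in the affine span of the $B_i$ with $\mult_{\widetilde S}B'=1$, the divisor $B'_S$ of \cite[Definition~4.7]{BZ16} is the pushforward to $S$ of
$$\bigl(f^*(K_X+B'+\Mm_X)-K_Y-S_Y-\Mm_Y\bigr)|_{S_Y},$$
with $K_S$ fixed once and for all as the pushforward of $(K_Y+S_Y)|_{S_Y}$. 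The nef part $\Mm^S=\overline{\Mm_Y|_{S_Y}}$ does not depend on $B'$. The only $B'$-dependence in this formula is through $f^*B'$. The differences $B_i-B$ have no $\widetilde S$-component and are $\mathbb R$-Cartier, being $K_{\Aa_i}-K_{\Aa}$. Hence $B'\mapsto B'_S$ is affine, and $\sum a_i(B_i)_S=B_S$, i.e.\ $\sum a_i\Aa_i|_S=\Aa|_S$. Each $\Aa_i|_S$ is lc by \cite[Remark~4.8]{BZ16}.

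\textbf{Step 3: rationality, the main obstacle.} It remains to show that each $K_{\Aa_i|_S}=K_S+(B_i)_S+\Mm^S_S$ is a $\mathbb Q$-divisor at the level of divisors, not merely of classes. Since $K_{\Aa_i}$ is an $\mathbb R$-Cartier $\mathbb Q$-divisor, it is $\mathbb Q$-Cartier. Thus $f^*K_{\Aa_i}$ is a $\mathbb Q$-divisor, and $K_{S_Y}+(B_i)_{S_Y}+\Mm_Y|_{S_Y}$ is its restriction to $S_Y$ with the above choice of $K_{S_Y}$.

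The subtle point is that $\Supp K_{\Aa_i}$ contains $\widetilde S$, so this restriction is a priori only defined up to $\mathbb Q$-linear equivalence. I would handle this by comparing with a fixed $\mathbb Q$-Cartier $\mathbb Q$-divisor. The differences $K_{\Aa_i}-K_{\Aa_1}=B_i-B_1$ are $\mathbb Q$-Cartier $\mathbb Q$-divisors not containing $\widetilde S$, so their restrictions are honest $\mathbb Q$-divisors. It therefore suffices to treat $K_{\Aa_1}|_S$, which is a $\mathbb Q$-divisor because the different of a $\mathbb Q$-Cartier $\mathbb Q$-divisor $K_X+\widetilde S+(\text{rest})$ has rational coefficients by the usual computation on $Y$. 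If the divisor-level statement turns out to need a genuinely different choice of $K_S$, I would instead verify the LD condition directly for $\Aa|_S$. In that approach one shows that the rational envelope of $K_{\Aa|_S}$ is the affine image of $V$ under restriction, and that the restricted pairs $\Aa(\bm{v})|_S$, $\bm{v}\in V'$, are lc.

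With Steps 1--3, Lemma~\ref{lem: equivalent definition of LD}(3)$\Rightarrow$(1) gives that $\Aa|_S$ is LD.
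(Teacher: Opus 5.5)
Your proposal is correct and follows essentially the same route as the paper. Both decompose $\Aa=\sum a_i\Aa_i$ via Lemma~\ref{lem: equivalent definition of LD}, observe that $\widetilde S$ is an lc place of every $\Aa_i$, restrict each piece to $S$, and conclude by (3)$\Rightarrow$(1) of that lemma; you are also right that ``RD'' is a typo for ``LD'', and your Steps~2 and~3 just supply the affineness of adjunction and the rationality of $K_{\Aa_i|_S}$, which the paper asserts without comment.
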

\begin{proof}
By Lemma \ref{lem: equivalent definition of LD}, there exist $a_1,\dots,a_m\in (0,1]$ and lc generalized pairs $\Aa_i/U:=(X,B_i,\Mm)/U$ such that $\sum_{i=1}^ma_i=1$, $\sum_{i=1}^ma_i\Aa_i=\Aa$, and $K_{\Aa_i}$ is $\mathbb Q$-Cartier for each $i$. Then $S$ is an lc place of $\Aa_i$ for each $i$. Let $\Aa_{i,S}:=\Aa_i|_S$ for each $i$. Then $\Aa_{i,S}$ is lc and $K_{\Aa_{i,S}}$ is $\mathbb Q$-Cartier for each $i$, and we have $\sum_{i=1}^ma_i\Aa_{i,S}=\Aa|_S$. By Lemma \ref{lem: equivalent definition of LD}, $\Aa|_S$ is LD.
\end{proof}

\begin{lem}\label{lem: LD preserve under mmp}
Let $\Aa/U$ be an LD generalized pair and $\phi: X\dashrightarrow X'$ a sequence of steps of a $K_{\Aa}$-MMP$/U$ (resp. a $K_{\Aa}$-trivial map$/U$). Then $\phi_*\Aa$ is LD.
\end{lem}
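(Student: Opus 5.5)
By induction on the number of steps, it suffices to treat a single step, and more generally a birational map $\phi\colon X\dashrightarrow X'$ over $U$ which is either $K_{\Aa}$-negative or $K_{\Aa}$-trivial. In both cases $\phi$ does not extract divisors and $K_{\phi_*\Aa}=\phi_*K_{\Aa}$ is $\Rr$-Cartier. The plan is to use the characterization in Lemma~\ref{lem: equivalent definition of LD}(3): push forward a rational convex decomposition of $\Aa$ and check that the pieces stay lc.

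First, by Lemma~\ref{lem: equivalent definition of LD} with polytopal $\{V,\bm{c},\Aa(\cdot)\}$, I would pick rational points $\bm{v}_1,\dots,\bm{v}_n\in V$, very close to $\bm{c}$, with $\bm{c}$ in the interior of their convex hull. Write $\Aa_i:=\Aa(\bm{v}_i)$, so that $\sum a_i\Aa_i=\Aa$ with $a_i>0$, each $\Aa_i$ lc, and each $K_{\Aa_i}$ $\Qq$-Cartier. Then $\phi_*\Aa=\sum a_i\phi_*\Aa_i$, and each $K_{\phi_*\Aa_i}=\phi_*K_{\Aa_i}$ is a $\Qq$-divisor. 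The remaining points are that each $K_{\phi_*\Aa_i}$ is $\Rr$-Cartier and that each $\phi_*\Aa_i$ is lc.

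\emph{Trivial case and single flip/divisorial step.} Let $X\to Z\leftarrow X'$ be the contraction of the step (or the common base in the trivial case); for a divisorial step, $Z=X'$.

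\begin{enumerate}
\item Since $K_{\Aa}\equiv_Z 0$ on the extremal ray is not the situation here, I would instead use Lemma~\ref{lem: ld preserves trivialness} together with \cite[Lemma 5.3]{HLS24}. By that lemma, the map $\bm{v}\mapsto K_{\Aa(\bm{v})}\cdot R$ is affine and rational on $V$. It is therefore either identically zero or vanishes on a proper rational affine subspace not containing $\bm{c}$, since $K_{\Aa}\cdot R<0$.
\item Choosing the $\bm{v}_i$ close enough to $\bm{c}$ gives $K_{\Aa_i}\cdot R<0$ for all $i$. Each step is then a $K_{\Aa_i}$-negative extremal contraction of the same ray, and $\phi$ is simultaneously the step of the $K_{\Aa_i}$-MMP.
\item Concretely, $X'=\operatorname{Proj}_Z$ of the $K_{\Aa}$-canonical algebra. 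Since $K_{\Aa_i}\equiv_Z \lambda_i K_{\Aa}$ for some $\lambda_i>0$, with equivalence $\sim_{\Rr,Z}$ by the relative Picard number one setting and the same HLS24 lemma, $\phi_*K_{\Aa_i}$ is $\Rr$-Cartier and $\phi_*K_{\Aa_i}$-ample over $Z$.
\item The negativity lemma then gives $a(E,\phi_*\Aa_i)\ge a(E,\Aa_i)\ge -1$ for every divisor $E$, so each $\phi_*\Aa_i$ is lc.
\end{enumerate}

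In the $K_{\Aa}$-trivial case, Lemma~\ref{lem: ld preserves trivialness} applied over a common resolution shows that $\phi$ is $K_{\Aa_i}$-trivial for every $\bm{v}\in V$. Discrepancies are then preserved, which gives both $\Rr$-Cartierness and lc-ness.

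The main obstacle is the uniformity in the negative case. The $\bm{v}_i$ must be chosen so that every step in the given sequence stays negative for all $\Aa_i$. For a finite sequence this is fine: shrink the neighborhood of $\bm{c}$ step by step, using that the set of $\bm{v}$ on which the ray stays negative is open, and re-select the $\bm{v}_i$ only at the end. One subtlety is that the rational affine structure must transport. On $X'$ the divisors $D_i$ may lose components, so the polytopal of $\phi_*\Aa$ lives in a quotient space. I would avoid this issue by working only with criterion (3) of Lemma~\ref{lem: equivalent definition of LD}, which needs no polytopal. A second subtlety is that a divisor contracted by a divisorial step may carry an irrational coefficient; this is harmless for the same reason.
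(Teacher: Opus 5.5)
Your proposal is correct and follows the paper's proof. Both choose rational points $\bm{v}_i$ of the polytopal close to $\bm{c}$ so that $\phi$ is simultaneously a sequence of steps of a $K_{\Aa(\bm{v}_i)}$-MMP$/U$ (resp.\ a $K_{\Aa(\bm{v}_i)}$-trivial map). Each $\phi_*\Aa(\bm{v}_i)$ is then lc with $\mathbb Q$-divisor canonical class, and the conclusion follows from criterion (3) of Lemma~\ref{lem: equivalent definition of LD}.

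The paper only asserts that such a neighborhood of $\bm{c}$ exists; your continuity argument via \cite[Lemma 5.3]{HLS24} justifies this. Also, the induction on the number of steps that you announce at the start already removes your uniformity concern, because LD is a property of $\phi_*\Aa$ itself.
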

\begin{proof}
Let $\{V,\bm{c},\Aa(\cdot)\}$ be the polytopal of $\Aa$. There exists an open neighborhood $V_0\ni\bm{c}$ such that $\phi$ is a sequence of steps of a $K_{\Aa(\bm{v})}$-MMP$/U$ for any $\bm{v}\in V_0$ (resp. is  a $K_{\Aa(\bm{v})}$-trivial map$/U$). Since $\Aa$ is LD, possibly shrinking $V_0$, we may assume that $\Aa(\bm{v})$ is lc for any $\bm{v}\in V_0$. Thus $\phi_*\Aa(\bm{v})$ is lc for any $\bm{v}\in V_0$.

Pick $\mathbb Q$-vectors $\bm{v}_1,\dots,\bm{v}_m\in V_0$ such that $\bm{c}$ is contained in the interior of the convex hull spanned by $\bm{v}_1,\dots,\bm{v}_m$, where $m=\dim V+1$. Then there exist unique positive real numbers $a_1,\dots,a_m$ such that $\sum_{i=1}^ma_i=1$ and $\sum_{i=1}^ma_i\bm{v}_i=\bm{c}$. Then $\Aa_i':=\phi_*\Aa(\bm{v}_i)$ are lc generalized pairs, $K_{\Aa_i'}$ is a $\mathbb Q$-divisor for each $i$, and $\sum_{i=1}^ma_i\Aa_i'=\phi_*\Aa$. By Lemma \ref{lem: equivalent definition of LD}, $\phi_*\Aa$ is LD.
\end{proof}

\subsection{LD property under ample polarization}

In this subsection, we show that for any lc generalized pair $(X,B,\Mm)$ and any ample $\mathbb R$-divisor $A$ on $X$, the divisor $K_X+B+A+\Mm_X$ can be realized as the canonical class of an LD generalized pair, polarized by another ample class. We will not use the results of this subsection in the rest of the paper; nevertheless, they are conceptually natural and intuitive for the introduction of LD generalized pairs. Indeed, Lemma \ref{lem: +ample is ld} below could immediately reduce the question of the existence of flips (Theorem \ref{thm: flip nonnqc-g}) to the category of LD generalized pairs.

\begin{lem}\label{lem: ld plus general base point free}
    Let $\Aa/U$ be an LD generalized pair, $|H_i|_U$ base-point-free$/U$ linear systems, and $c_1,\dots,c_n\in [0,1]$ real numbers. Then $\left(\Aa,\sum_{i=1}^nc_iA_i\right)$ is LD for any general $A_i\in |H_i|_U$.  
\end{lem}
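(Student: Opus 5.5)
We will prove the statement using the characterization in Lemma \ref{lem: equivalent definition of LD}(3). The idea is to decompose $\Aa$ into lc generalized pairs with rational canonical class, and then add the same general members $\sum c_iA_i$ to each piece. The irrational coefficients $c_i$ spoil rationality, so we first decompose the vector $(c_1,\dots,c_n)$ as well.

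\textbf{Step 1.} Write $\Aa=\sum_{j=1}^m a_j\Aa_j$ as in Lemma \ref{lem: equivalent definition of LD}(3), with $\Aa_j=(X,B_j,\Mm)$ lc, $K_{\Aa_j}$ a $\mathbb Q$-divisor, $a_j\in(0,1]$ and $\sum a_j=1$.

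\textbf{Step 2.} Write $\bm{c}=(c_1,\dots,c_n)\in[0,1]^n$ as a convex combination $\bm{c}=\sum_{k} b_k\bm{q}_k$ of rational vectors $\bm{q}_k\in[0,1]^n\cap\mathbb Q^n$. This is possible because the cube is a rational polytope. We may also take $q_{k,i}=0$ whenever $c_i=0$, and we may take $q_{k,i}$ close to $c_i$.

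\textbf{Step 3.} Form the double index pieces
$$\Aa_{j,k}:=\Bigl(\Aa_j,\sum_i q_{k,i}A_i\Bigr),$$
with weights $a_jb_k$. These weights are positive and sum to $1$. Moreover
$$\sum_{j,k}a_jb_k\Aa_{j,k}=\Bigl(\Aa,\sum_i c_iA_i\Bigr),$$
because $\Mm$ is common to all pieces and the boundaries combine linearly. Each $K_{\Aa_{j,k}}=K_{\Aa_j}+\sum_i q_{k,i}A_i$ is a $\mathbb Q$-divisor, since the $A_i$ are integral Weil (Cartier) divisors. Once we check that each $\Aa_{j,k}$ is lc, Lemma \ref{lem: equivalent definition of LD}(3) gives the conclusion.

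\textbf{Step 4 (main point).} We must show that $(\Aa_j,\sum_i q_iA_i)$ is lc for general $A_i\in|H_i|_U$ and $q_i\in[0,1]$. This is a Bertini-type statement for generalized pairs, and the choice of the $A_i$ must work simultaneously for all the finitely many $j$ and $k$. We handle it as follows.
\begin{enumerate}
\item Take a common log resolution $f\colon Y\to X$ of all the $\Aa_j$ on which $\Mm$ descends.
\item Since $|H_i|_U$ is base-point-free over $U$, the pulled back systems $|f^*H_i|$ are base-point-free. So general members $A_i$ satisfy $f^*A_i=f^{-1}_*A_i$, these strict transforms are smooth, and they meet the support of $\Exc(f)\cup f^{-1}_*\bigl(\bigcup_j\Supp B_j\bigr)$, together with one another, with simple normal crossings.
\item Then $f^*(K_{\Aa_j}+\sum_i q_iA_i)=K_Y+B_{j,Y}+\sum_i q_if^*A_i+\Mm_Y$. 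The new components have coefficient $q_i\le 1$ and the total boundary is snc. Hence the sub-boundary coefficients stay $\le 1$, and a log smooth sub-pair with coefficients $\le 1$ is sub-lc.
\end{enumerate}
Since only finitely many general conditions are imposed, one general choice of the $A_i$ works for all $j,k$. This Bertini step, with the simultaneous genericity in the relative setting over $U$, is the only delicate point. Everything else is formal bookkeeping via Lemma \ref{lem: equivalent definition of LD}.
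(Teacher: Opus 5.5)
Your proof is correct and matches the paper's argument: decompose $\Aa$ via Lemma \ref{lem: equivalent definition of LD}(3), write $\bm{c}$ as a convex combination of rational vectors, and recombine the pieces with product weights. The paper takes its rational vectors in the rational envelope of $\bm{c}$, close to $\bm{c}$, while you take them in the rational cube $[0,1]^n$; both work. You also spell out the Bertini-type lc check on a common log resolution, which the paper simply asserts.
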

\begin{proof}
Write $\Aa=(X,B,\Mm)$. By Lemma \ref{lem: equivalent definition of LD}, we may write $\Aa=\sum b_j\Aa_j$ such that each $\Aa_j:=(X,B_j,\Mm)$ is lc, $K_{\Aa_j}$ is $\mathbb Q$-Cartier, and $\sum b_j=1$. Let $\bm{c}=(c_1,\dots,c_n)$ and $V$ the rational envelope of $\bm{c}$ in $\mathbb R^n$. We may pick $\bm{c}_1,\dots,\bm{c}_k\in V\cap\mathbb Q^n$ such that $\bm{c}$ is contained in the interior of the convex hull spanned by $\bm{c}_1,\dots,\bm{c}_k$. Let $a_1,\dots,a_k\in (0,1]$ be real numbers such that $\sum_{l=1}^ka_i=1$ and $\sum_{l=1}^ka_l\bm{c}_l=\bm{c}$. Write $\bm{c}_l=(c_{l,1},\dots,c_{l,n})$ for any $l$. Then
$$K_{\Aa}+\sum_{i=1}^nc_iA_i=\sum_j\sum_{l=1}^k b_ja_l\left(K_{\Aa_j}+\sum_{i=1}^nc_{l,i}A_i\right)$$
and each $\left(\Aa_j,\sum_{i=1}^nc_{l,i}A_i\right)$ is lc. By Lemma \ref{lem: equivalent definition of LD}, $\left(\Aa,\sum_{i=1}^nc_iA_i\right)$ is LD.
\end{proof}

\begin{lem}\label{lem: +ample is ld}
    Let $\Aa/U$ be an lc generalized pair and $A$ an ample$/U$ $\Rr$-divisor on $X$. Then there exist three ample$/U$ $\Rr$-divisors $H,J$ and $L$ on $X$ satisfying the following.
    \begin{enumerate}
        \item $A=H+J+L$.
        \item $K_{\Aa}+H$ and $L$ are $\mathbb Q$-divisors.
        \item $\left(\Aa,J'+L',\overline{H}\right)$ is LD for some $J'\in |J|_{\mathbb R/U}$ and $L'\in |L|_{\mathbb Q/U}$.
    \end{enumerate}
\end{lem}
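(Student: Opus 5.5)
The idea is to move all the irrationality of $K_{\Aa}+A$ into one ample piece $J$, and then realize $J$ by general members of base-point-free systems with irrational coefficients that can be perturbed freely. Write $\Aa=(X,B,\Mm)$. The pair in (3) is $(X,B+J'+L',\Mm+\overline{H})$. Its nef part is nef$/U$ because $H$ is ample$/U$. Its canonical class is
\[
(K_{\Aa}+H)+J'+L',
\]
and by (2) everything in this sum except $J'$ will be a $\mathbb Q$-divisor. We then check LD through the criterion Lemma~\ref{lem: equivalent definition of LD}(3), varying only the coefficients of $J'$.

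\textbf{Step 1: choosing $H$ and $L$.} Write $K_{\Aa}=\sum_i r_iC_i$ with $C_i$ Cartier (possible since $K_{\Aa}$ is $\mathbb R$-Cartier). Write $\frac{1}{3}A=\sum_k s_kA_k$ with $A_k$ ample$/U$ Cartier and $s_k>0$. Choose rational numbers $q_i,t_k$ very close to $r_i,s_k$ and set
\[
H:=\sum_i(q_i-r_i)C_i+\sum_k t_kA_k .
\]
Then $K_{\Aa}+H=\sum_i q_iC_i+\sum_k t_kA_k$ is a $\mathbb Q$-divisor. Moreover $H$ is a small perturbation of $\frac13A$, hence ample$/U$ by openness of the ample cone. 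Next pick an ample$/U$ $\mathbb Q$-divisor $L$ close to $\frac13A$, and put $J:=A-H-L$. Then $J$ is close to $\frac13A$, so it is ample$/U$. This gives (1) and (2).

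\textbf{Step 2: choosing $J'$ and $L'$.} Write $J=\sum_l j_lG_l$ with $G_l$ very ample$/U$ Cartier and $j_l>0$. By replacing $G_l$ with large multiples we may assume every $j_l$ is small, say $<\frac12$. Let $G_l'\in|G_l|_U$ be general and set $J':=\sum_l j_lG_l'\in|J|_{\mathbb R/U}$. Similarly let $L':=\frac1N L''$ with $N\gg0$ divisible and $L''\in|NL|_U$ general, so $L'\in|L|_{\mathbb Q/U}$ has tiny coefficients.

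\textbf{Step 3: checking LD.} Pick rational vectors $\bm{j}^{(1)},\dots,\bm{j}^{(p)}$ close to $\bm{j}=(j_l)$, all with coordinates in $(0,\frac12)$, such that $\bm{j}$ lies in the interior of their convex hull. Write $\bm{j}=\sum_\alpha a_\alpha\bm{j}^{(\alpha)}$ with $a_\alpha>0$ and $\sum_\alpha a_\alpha=1$. Define
\[
\Aa_\alpha:=\Bigl(X,\;B+\sum_l j^{(\alpha)}_lG'_l+L',\;\Mm+\overline{H}\Bigr).
\]
Each $K_{\Aa_\alpha}=(K_{\Aa}+H)+\sum_l j_l^{(\alpha)}G_l'+L'$ is a $\mathbb Q$-divisor, and $\sum_\alpha a_\alpha\Aa_\alpha=(\Aa,J'+L',\overline{H})$.

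It remains to see that each $\Aa_\alpha$ is lc. This is the step I expect to be the main technical point. The $G_l'$ and $L''$ are general members of base-point-free linear systems, and the coefficients involved are at most $1$. A Bertini-type argument on a fixed log resolution of $\Aa$ on which $\Mm$ descends shows that adding such general members with coefficients in $[0,1]$ preserves lc. This is exactly the mechanism used in the proof of Lemma~\ref{lem: ld plus general base point free}. Adding $\overline{H}$ to the nef part does not change discrepancies, since $\overline{H}$ descends to $X$.

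One subtlety is that finitely many rational vectors $\bm{j}^{(\alpha)}$ must work for the same general choice of $G_l'$. This is fine, because genericity is imposed only on the divisors, independently of the finitely many coefficient vectors in $[0,1]$. Lemma~\ref{lem: equivalent definition of LD}(3) then gives that $(\Aa,J'+L',\overline{H})$ is LD.
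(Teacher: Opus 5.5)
Your proposal is correct and is essentially the paper's proof. It absorbs the irrational part of $K_{\Aa}$ into an ample$/U$ perturbation $H$, takes a rational ample $L$, realizes $J=A-H-L$ by general members of base-point-free systems with coefficients in $(0,1)$, and writes the coefficient vector of $J'$ as a convex combination of nearby rational vectors so that Lemma~\ref{lem: equivalent definition of LD}(3) applies. The only differences are cosmetic and do not affect the argument: you perturb a decomposition of $K_{\Aa}$ into Cartier divisors directly, whereas the paper uses a $\mathbb{Q}$-linearly independent decomposition and the $\mathbb{Q}$-Cartierness of its pieces, and you use an arbitrary rational simplex around $\bm{j}$ instead of one inside its rational envelope.
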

\begin{proof}
    We write $K_{\Aa}=D_0+\sum_{i=1}^c r_iD_i$ where $1,r_1,\dots,r_c$ are $\mathbb Q$-linearly independent and $D_0,\dots,D_c$ are $\mathbb Q$-divisors. By \cite[Lemma 5.3]{HLS24}, each $D_i$ is $\mathbb Q$-Cartier. Let $H_0\geq 0$ be an ample$/U$ $\mathbb Q$-divisor such that $A-H_0$ is ample$/U$. Then there exists a real number $\epsilon>0$ such that $H_0+\sum_{i=1}^c \epsilon_iD_i$ and $A-H_0+\sum_{i=1}^c \epsilon_iD_i$ are ample for any $\epsilon_i\in (-\epsilon,\epsilon)$. 

    Pick $r_i'\in\mathbb Q$ such that $|r_i'-r_i|<\epsilon$ and let
    $$H:=H_0+\sum_{i=1}^c (r_i'-r_i)D_i.$$
    Then $H$ is ample$/U$, $A-H$ is ample$/U$, and
    $$K_{\Aa}+H=D_0+H_0+\sum_{i=1}^cr_i'D_i$$
    is a $\mathbb Q$-divisor. Let $L$ be an ample$/U$ $\mathbb Q$-divisor such that $J:= A-H-L$ is ample$/U$. Then (1-2) follows.

    We let $m>0$ be a sufficiently divisible integer such that $\mathcal{O}_X(mL)$ is globally generated$/U$ and let $L':=\frac{1}{m}G$ for some general $G\in |mL|_{U}$. We write $J=\sum_{i=1}^ms_iJ_i$ where $s_i\in (0,1)$ and $\mathcal{O}_X(J_i)$ is globally generated$/U$ for each $i$, and let $J':=\sum_{i=1}^ms_iG_i$ where $G_i\in |J_i|_{U}$ are general elements. Then $\left(\Aa,\sum_{i=1}^mG_i+L',\overline{H}\right)$ is lc.
    
    Let $V$ be the rational envelope of $\bm{s}:=(s_1,\dots,s_m)\in\mathbb R^m$ and let $\bm{v}_1,\dots,\bm{v}_{m+1}\in V\cap\mathbb Q^m\cap (0,1)^m$ be vectors such that $\bm{s}$ is contained in the interior of the convex hull spanned by $\bm{v}_1,\dots,\bm{v}_{m+1}$. Then there exist unique positive real numbers $a_1,\dots,a_{m+1}$ such that $\sum_{i=1}^{m+1}a_i=1$ and $\sum_{i=1}^{m+1}a_i\bm{v}_i=\bm{s}$. Let
$$\Aa(\bm{v}):=\left(\Aa,\sum_{i=1}^mv_iG_i+L',\overline{H}\right)$$
    for any  $\bm{v}=(v_1,\dots,v_{m})\in\mathbb R^m$, then $$\sum_{i=1}^{m+1}a_i\Aa(\bm{v}_i)=\left(\Aa,J'+L',\overline{H}\right)$$
    and  $K_{\Aa(\bm{v}_i)}$ is a $\mathbb Q$-divisor for each $i$. By Lemma \ref{lem: equivalent definition of LD}, $\left(\Aa,J'+L',\overline{H}\right)$ is LD.
\end{proof}

\section{Models}\label{sec: models}

In this section we recall some basic properties of models of generalized pairs. This section aligns with \cite[Section 2]{Bir12}, \cite[Section 3]{HL23}, \cite[Section 4]{LMX24}, and \cite[Section 3]{Cas+25b}. Some of the results in this section are special cases of results in \cite{LMX24,Cas+25b} where the results were stated for the more general category of algebraically integrable adjoint foliated structures. For the reader's convenience, we still provide the precise statements in this section.

\subsection{Basic properties of models}

\begin{defn}[Log birational model]\label{defn: log birational model}
Let $\Aa/U$ be an lc generalized pair with ambient variety $X$ and $\phi: X\dashrightarrow X'$ a birational map$/U$. Let $\Aa':=(\phi_*\Aa,\Exc(\phi^{-1}))$. If $K_{\Aa}$ is $\mathbb R$-Cartier, then we say that $\Aa'/U$ is a \emph{log birational model} of $\Aa/U$.
\end{defn}

\begin{defn}[Models]\label{defn: minimal model}
    Let $\Aa/U$ be a generalized pair with ambient variety $X$ and $\Aa'/U$ a log birational model of $\Aa/U$ with ambient variety $X'$ associated with birational map$/U$ $\phi: X\dashrightarrow X'$. Consider the following conditions:
    \begin{enumerate}
        \item $K_{\Aa'}$ is nef$/U$.
        \item $K_{\Aa'}$ is semi-ample$/U$.
        \item  $a(D,\Aa)\leq a(D,\Aa')$ for any exceptional$/X'$ prime divisor $D$ on $X$.
        \item  $a(D,\Aa)<a(D,\Aa')$ for any exceptional$/X'$ prime divisor $D$ on $X$.
        \item $\Aa$ is $\mathbb Q$-factorial qdlt.
    \end{enumerate}
    If conditions (1)(3) (resp. (1)(4), (2)(3), (2)(4), (1)(4)(5), (2)(4)(5)) are satisfied, then we say that $\Aa'/U$ is a \emph{bs-weak lc model} (resp. \emph{bs-minimal model, bs-semi-ample model, bs-good minimal model, log minimal model, good log minimal model}) of $\Aa/U$. If condition (4) (resp. (4)(5)) holds and $f: X'\rightarrow Z$ is a $K_{\Aa'}$-Mori fiber space$/U$, then we say that $f: \Aa'\rightarrow Z$ is a bs-Mori fiber space (resp. log Mori fiber space) of $\Aa/U$. 

    Here ``bs-" stands for ``in the sense of Birkar-Shokurov". If $\phi$ does not extract any divisor, then we may remove the prefix ``bs-" in the definitions above.
\end{defn}

\begin{lem}\label{lem: Bir12 2.6}
Let $\Aa/U$ be an lc generalized pair with ambient variety $X$ and let $\Aa'/U$ be a bs-weak lc model of $\Aa/U$ with ambient variety $X'$ and associated with birational map$/U$ $\phi: X\dashrightarrow X'$. Let $p: W\rightarrow X$ and $q: W\rightarrow X'$ be a resolution of indeterminacy of $\phi$. Assume that
$$p^*K_{\Aa}=q^*K_{\Aa'}+E,$$
then $E\geq 0$ and is exceptional$/X'$.
\end{lem}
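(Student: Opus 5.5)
We will follow the standard argument of \cite[Remark 2.6]{Bir12} (negativity lemma), adapted to generalized pairs. First we reduce to a convenient resolution. Since $p^*K_{\Aa}-q^*K_{\Aa'}$ is compatible with pullback to any higher common resolution, we may replace $W$ by a higher model. In particular, we may assume that $p$ and $q$ are log resolutions, and that $\Mm$ descends to $W$. This only changes $E$ by pullback, and the two conclusions (effectivity and being exceptional$/X'$) are insensitive to this replacement.

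Next we show that $q_*E\geq 0$, i.e.\ that $q_*E$ is supported on divisors exceptional over $X$ and has non-negative coefficients there. Let $D$ be a prime divisor on $X'$ and let $D_W$ be its strict transform on $W$. Then
\[
\mult_{D_W}E=a(D,\Aa')-a(D,\Aa),
\]
where $a(D,\Aa)$ is computed via $p$. There are two cases.
\begin{itemize}
\item If $D$ is not exceptional over $X$, then $D$ is the birational transform of a divisor $D_X$ on $X$. By definition of the log birational model, $\mult_{D}B'=\mult_{D_X}B$, so $a(D,\Aa')=a(D,\Aa)$ and the coefficient is $0$.
\item If $D$ is exceptional over $X$, then $D$ is a component of $\Exc(\phi^{-1})$, so $a(D,\Aa')=-1$. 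Since $\Aa$ is lc, $a(D,\Aa)\geq -1$, so the coefficient is $\leq 0$.
\end{itemize}
At first sight this gives the wrong sign, so the sign conventions need to be checked carefully. With $p^*K_{\Aa}=q^*K_{\Aa'}+E$ and $K_{p^*\Aa}=K_W+B_W+\Mm_W$, we have $\mult_{D_W}E=\mult_{D_W}(B_W-B'_W)=-a(D,\Aa)+a(D,\Aa')$. For $D$ exceptional over $X$ this is $a(D,\Aa')-a(D,\Aa)=-1-a(D,\Aa)\leq 0$. So $q_*E\leq 0$, with $q_*E$ supported on $\Exc(\phi^{-1})$.

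We then apply the negativity lemma. The divisor $-E$ is $q$-nef, since $-E\equiv_{X'}-p^*K_{\Aa}$ and $K_{\Aa}$ pulls back to something numerically trivial over $X$. This is not directly the right direction, so we use nefness of $K_{\Aa'}$ as in Birkar: write $E=E^+-E^-$ and argue on $p$. Indeed, $E\equiv_X -q^*K_{\Aa'}$, which is anti-nef$/X$, so $-E$ is nef over $X$. Moreover, $p_*E$ is supported on the divisors $D$ of $X$ exceptional$/X'$, with coefficient $a(D,\Aa')-a(D,\Aa)\geq 0$ by condition (3). By the negativity lemma applied to the morphism $p$, we get $E\geq 0$.

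Finally, we show that $E$ is exceptional$/X'$. Since $E\geq 0$, we have $q_*E\geq 0$, while the computation above gives $q_*E\leq 0$; hence $q_*E=0$. Thus $E$ is $q$-exceptional, which is exactly the statement that $E$ is exceptional$/X'$.

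The only delicate points are the bookkeeping of discrepancy signs and the reduction to $\Mm$ descending on $W$, so that discrepancies can be read off coefficientwise. Neither of these is a real obstacle.
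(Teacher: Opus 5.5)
Your argument is correct. Applying the negativity lemma to $p$, using that $-E\equiv_X q^*K_{\Aa'}$ is nef over $X$ and that $p_*E\geq 0$ by condition (3), gives $E\geq 0$; since $\Aa$ is lc, each component $D$ of $\Exc(\phi^{-1})$ has coefficient $-1-a(D,\Aa)\leq 0$ in $q_*E$ and all other coefficients vanish, which forces $q_*E=0$. This is the standard argument of \cite[Remark 2.6]{Bir12}; the paper does not write it out and simply cites the more general \cite[Lemma 3.3]{Cas+25b}.

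When you write it up, delete the false starts. The opening claim $q_*E\geq 0$ is wrong as stated, and the assertion that $-E$ is $q$-nef fails in general, e.g.\ when $\phi$ contracts a chain containing a $K_{\Aa}$-positive curve. Neither is used in the final argument.
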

\begin{proof}
This is a special case of \cite[Lemma 3.3]{Cas+25b}.
\end{proof}

\begin{lem}\label{lem: Bir12 2.7}
Let $\Aa/U$ be an lc generalized pair. Let $\Aa_1/U$ and $\Aa_2/U$ be two bs-weak lc models of $\Aa/U$. Let $X,X_1,X_2$ be the ambient varieties of $\Aa,\Aa_1,\Aa_2$ respectively with induced birational maps $\phi: X_1\dashrightarrow X_2$. Let $h_1: W\rightarrow X_1$ and $h_2: W\rightarrow X_2$ be two birational morphisms such that $\phi\circ h_1=h_2$. Then:
\begin{enumerate}
    \item $h_1^*K_{\Aa_1}=h_2^*K_{\Aa_2}.$
    \item If $K_{\Aa_2}$ is semi-ample$/U$, then $K_{\Aa_1}$ is semi-ample$/U$.
    \item If $K_{\Aa_2}$ is ample$/U$, then $\phi$ is a morphism.
\end{enumerate}
\end{lem}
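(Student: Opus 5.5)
The plan is to compare both canonical divisors on a common resolution through $\Aa$ itself, using Lemma \ref{lem: Bir12 2.6}. First, replace $W$ by a higher resolution that also dominates $X$. Pulling everything back along $W'\to W$ preserves the equality in (1), as well as semi-ampleness and the property of being a morphism, so this reduction is harmless. So let $p: W\rightarrow X$ be a birational morphism with $\phi_i\circ p=h_i$ as maps, where $\phi_i: X\dashrightarrow X_i$ are the given maps. By Lemma \ref{lem: Bir12 2.6}, write
$$p^*K_{\Aa}=h_1^*K_{\Aa_1}+E_1=h_2^*K_{\Aa_2}+E_2,$$
where $E_i\geq 0$ and $E_i$ is exceptional$/X_i$.

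For (1), set $G:=h_1^*K_{\Aa_1}-h_2^*K_{\Aa_2}=E_2-E_1$. Since $K_{\Aa_2}$ is nef$/U$, the divisor $-G=h_2^*K_{\Aa_2}-h_1^*K_{\Aa_1}$ is nef over $X_1$. Its pushforward to $X_1$ is $h_{1*}E_1-h_{1*}E_2=-h_{1*}E_2\leq 0$, because $E_1$ is exceptional$/X_1$. The negativity lemma therefore gives $-G\leq h_1^*h_{1*}(-G)\le 0$ in the appropriate sense, that is, $G\geq 0$; concretely, $E_2-E_1\geq 0$. By the symmetric argument, $E_1-E_2\geq 0$. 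Hence $E_1=E_2$, and so $h_1^*K_{\Aa_1}=h_2^*K_{\Aa_2}$. The one point to check carefully is the negativity lemma over $X_1$ for a divisor that is nef$/X_1$ and whose pushforward is anti-effective. This step is the main technical point, but it is the standard argument of \cite[Lemma 2.7]{Bir12}.

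For (2), semi-ampleness$/U$ is invariant under pullback by contractions. Therefore $h_1^*K_{\Aa_1}=h_2^*K_{\Aa_2}$ is semi-ample$/U$, and since $h_{1*}\mathcal O_W=\mathcal O_{X_1}$, it follows that $K_{\Aa_1}$ is semi-ample$/U$.

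For (3), suppose $K_{\Aa_2}$ is ample$/U$. Take any curve $C$ contracted by $h_1$. Then $h_2^*K_{\Aa_2}\cdot C=h_1^*K_{\Aa_1}\cdot C=0$, so $h_2$ contracts $C$ as well. By the rigidity lemma, $h_2$ factors through $h_1$, and hence $\phi$ is a morphism.
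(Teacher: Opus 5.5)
Your argument is correct and is essentially the paper's route: the paper just cites \cite[Lemma 3.4]{Cas+25b}, which is the standard proof of \cite[Lemma 2.7]{Bir12} that you reproduce. That proof compares both models through $p^*K_{\Aa}$ via Lemma \ref{lem: Bir12 2.6}, applies the negativity lemma over $X_1$ and over $X_2$, then descends semi-ampleness and uses rigidity. One cosmetic point: the intermediate inequality $-G\le h_1^*h_{1*}(-G)$ presupposes that $h_{1*}(-G)$ is $\mathbb{R}$-Cartier, but the form of the negativity lemma you actually use ($-G$ nef over $X_1$ and $h_{1*}G\geq 0$ imply $G\geq 0$) needs no such hypothesis.
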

\begin{proof}
This is a special case of \cite[Lemma 3.4]{Cas+25b}.
\end{proof}

\begin{lem}\label{lem: Cas+25b 3.5}
    Let $r$ be a positive real number. Let $\Aa_1/U$ and $\Aa_2/U$ be two lc generalized pairs with the same ambient variety $X$ such that
    $$K_{\Aa_2}\equiv_U rK_{\Aa_1}.$$
    Let $\Aa_1'/U$ be a weak lc model (resp. minimal model) of $\Aa_1/U$ with ambient variety $X'$ and induced birational map $\phi: X\dashrightarrow X'$. Let $\Aa_2':=\phi_*\Aa$. Then:
    \begin{enumerate}
        \item $\Aa_2'/U$ is a weak lc model (resp. minimal model) of $\Aa_2/U$.
        \item If $\Aa_1'/U$ is a semi-ample model (resp. good minimal model) of $\Aa_1/U$ and
    $$K_{\Aa_2}\sim_{\mathbb R,U} rK_{\Aa_1},$$
      then  $\Aa_2'/U$ is a semi-ample model (resp. good minimal model) of $\Aa_2/U$.
    \end{enumerate}
\end{lem}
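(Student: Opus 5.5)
We argue directly from the definitions, using the negativity comparisons of Lemma \ref{lem: Bir12 2.6}. (The statement writes $\Aa_2':=\phi_*\Aa$; we read this as $\phi_*\Aa_2$, the image of $\Aa_2$ on $X'$.) Since $\Aa_1'$ is a weak lc model rather than a bs-one, $\phi$ extracts no divisors, so $\Exc(\phi^{-1})=0$. Hence $\Aa_1'=\phi_*\Aa_1$, and $\Aa_2'=\phi_*\Aa_2$ is a log birational model of $\Aa_2$ with the same underlying map.

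First we check that $K_{\Aa_2'}$ is $\mathbb R$-Cartier and $\equiv_U rK_{\Aa_1'}$. Take a common resolution $p: W\to X$, $q: W\to X'$. By Lemma \ref{lem: Bir12 2.6} we can write $p^*K_{\Aa_1}=q^*K_{\Aa_1'}+E_1$ with $E_1\ge 0$ exceptional over $X'$. Then $p^*K_{\Aa_2}-rE_1\equiv_U rq^*K_{\Aa_1'}$ is numerically trivial over $X'$, and its pushforward to $X'$ is $K_{\Aa_2'}$. The delicate point is to deduce that $K_{\Aa_2'}$ is $\mathbb R$-Cartier from a relation that only holds up to numerical equivalence. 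Since the conclusion presupposes that $\Aa_2'$ is a generalized pair, we may assume $K_{\Aa_2'}$ is $\mathbb R$-Cartier, or we may invoke the analogue \cite[Lemma 3.5]{Cas+25b}.

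Granting this, the negativity lemma applied to $p^*K_{\Aa_2}-q^*K_{\Aa_2'}-rE_1$ shows it is $q$-exceptional and $\equiv 0/X'$, hence zero. Therefore $p^*K_{\Aa_2}=q^*K_{\Aa_2'}+rE_1$. This gives the following.

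\begin{enumerate}
\item $K_{\Aa_2'}\equiv_U rK_{\Aa_1'}$ is nef$/U$.
\item For every prime divisor $D$ on $X$ that is exceptional over $X'$, we have $a(D,\Aa_2')-a(D,\Aa_2)=r\,\mult_D E_1=r\bigl(a(D,\Aa_1')-a(D,\Aa_1)\bigr)$. So the inequality of Definition \ref{defn: minimal model}(3), respectively the strict inequality (4), transfers from $\Aa_1$ to $\Aa_2$.
\end{enumerate}

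This proves (1).

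For (2), assume in addition that $K_{\Aa_2}\sim_{\mathbb R,U}rK_{\Aa_1}$. Then $q_*p^*$ gives $K_{\Aa_2'}\sim_{\mathbb R,U}rK_{\Aa_1'}$ on $X'$, because $E_1$ pushes forward to zero on $X'$. Semi-ampleness$/U$ is preserved under $\mathbb R$-linear equivalence and positive scaling, so $K_{\Aa_2'}$ is semi-ample$/U$. Combined with (1), this shows $\Aa_2'$ is a semi-ample model, respectively a good minimal model, of $\Aa_2/U$.

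The main obstacle is the $\mathbb R$-Cartier issue in the second paragraph. Everything else is linearity of discrepancies together with the negativity lemma.
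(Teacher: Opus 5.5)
Your negativity-lemma and discrepancy bookkeeping is fine. Part (1), however, hinges on a step you never prove: that $K_{\Aa_2'}=K_{X'}+\phi_*B_2+\Mm_{2,X'}$ is $\mathbb R$-Cartier. This is not a side issue. A weak lc model or minimal model is in particular a generalized pair, so $\mathbb R$-Cartierness is \emph{part of the conclusion} of (1). It is also needed before the negativity lemma or nefness make sense. ``We may assume it'' is therefore circular. Falling back on \cite[Lemma 3.5]{Cas+25b} is simply the paper's proof, which consists of that citation alone. So your direct argument does not establish (1).

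In fact this step cannot be derived from $K_{\Aa_2}\equiv_U rK_{\Aa_1}$ alone. Let $E$ be an elliptic curve and $H$ a divisor of degree $d>0$ on $E$. Let $\pi\colon X:=\operatorname{Tot}(\mathcal O_E(-H)^{\oplus 2})\to E$, and let $\phi\colon X\to U$ be the small contraction of the zero section $C\cong E$ onto the vertex of the affine cone over $(E\times\mathbb P^1,\mathcal O(H)\boxtimes\mathcal O(1))$.

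For the two sub-bundles $D_1,D_2$ one has $K_X+D_1+D_2\sim\pi^*K_E\sim 0$. Hence $\Aa_1:=(X,D_1+D_2)$ is lc and $K_{\phi_*\Aa_1}\sim 0$ is Cartier. Since $\phi$ is small, $\phi_*\Aa_1/U$ is a minimal model, hence also a weak lc model, of $\Aa_1/U$.

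Now take $\delta$ of degree $0$ on $E$ with $\mathcal O_E(\delta)$ non-torsion, and set $\Aa_2:=(X,D_1+D_2,\overline{\pi^*\delta})$. Since $\pi^*\delta\cdot C=0$, $\Aa_2$ is an lc generalized pair with $K_{\Aa_2}\equiv_U K_{\Aa_1}$.

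Suppose $\phi_*\pi^*\delta$ were $\mathbb R$-Cartier. As $\phi$ is small, $\pi^*\delta=\phi^*\phi_*\pi^*\delta$, so its class would vanish in $\operatorname{Pic}(\phi^{-1}V)\otimes\mathbb R$ for a neighbourhood $V$ of the vertex. Restricting to $C$ would then make $\delta$ torsion, a contradiction. So $\phi_*\Aa_2$ is not even a generalized pair, and statement (1) with only $\equiv_U$ needs an extra hypothesis (e.g.\ $X'$ $\mathbb Q$-factorial).

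What does work is the $\sim_{\mathbb R,U}$ case. This is the hypothesis of (2), and the only form in which the paper uses the lemma (proof of Theorem \ref{thm: bir12 1.1 variation1}). Write $K_{\Aa_2}-rK_{\Aa_1}$ as a combination of principal divisors plus the pullback of an $\mathbb R$-Cartier divisor on $U$. Pushing forward by $\phi$, which extracts no divisors, shows that $K_{\Aa_2'}-rK_{\Aa_1'}$ is $\mathbb R$-Cartier and $\sim_{\mathbb R,U}0$. After that your argument is complete, and lc-ness of $\Aa_2'$ follows from $p^*K_{\Aa_2}=q^*K_{\Aa_2'}+rE_1$ with $rE_1\ge 0$.

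So your proof establishes (2) and the $\sim_{\mathbb R,U}$ version of (1). In (2), though, you should obtain $\mathbb R$-Cartierness first, via $q_*p^*$, rather than ``combining with (1)''.
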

\begin{proof}
It is a special case of \cite[Lemma 3.5]{Cas+25b}.
\end{proof}

\begin{thm}[Very exceptional MMP]\label{thm: very exceptional mmp}
Let $\Aa/U$ be an lc algebraically integrable adjoint foliated structure with ambient variety $X$. Assume that $X$ is potentially klt, and $K_{\Aa}\sim_{\mathbb R,U}E\geq 0$ for some very exceptional$/U$ (cf. \cite[Definition 3.1]{Bir12}) $\mathbb R$-divisor $E$. Then we may run a $K_{\Aa}$-MMP$/U$ with scaling of an ample$/U$ $\Rr$-divisor and any such MMP terminates with a good minimal model $\Aa'/U$ of $\Aa/U$ such that $K_{\Aa'}\sim_{\mathbb R,U}0$.

In particular, the reduced divisor contracted by this $K_{\Aa}$-MMP$/U$ is exactly $\Supp E$.
\end{thm}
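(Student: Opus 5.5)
This is Birkar's very exceptional MMP \cite[Theorem 3.4]{Bir12}, transplanted to the setting of \cite{Cas+25b}, and I would follow Birkar's argument. Since $X$ is potentially klt, we may pass to a small $\mathbb Q$-factorialization and assume $X$ is $\mathbb Q$-factorial. The MMP for $\Aa$ with scaling of an ample$/U$ divisor $A$ then exists: either invoke the existence results for lc algebraically integrable adjoint foliated structures on potentially klt ambient varieties, or, in the generalized pair case, use the cone and contraction theorem \cite[Theorem~B]{CHLX23} together with $\mathbb Q$-factorial flips \cite{CHLX23}. Because $K_{\Aa}\sim_{\mathbb R,U}E\geq 0$, every step is $E$-negative. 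So the MMP only contracts curves and divisors inside $\Supp E$, and $E_i$ (the pushforward of $E$) stays effective and very exceptional$/U$ at every stage.

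The key point is to show that this MMP eventually contracts all of $\Supp E$. Let $\lambda_i$ be the scaling numbers and $\lambda=\lim\lambda_i$.

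First I would handle the case $\lambda>0$. Then the MMP is also an MMP for $K_{\Aa}+\lambda A$. After perturbing, $\Aa+\lambda A$ can be treated as a klt-type structure with big boundary (use Lemma~\ref{lem: generalized lc to non-lc pair}-type perturbations, or the corresponding statement in \cite{Cas+25b}). By \cite{BCHM10}-type termination with scaling, this MMP terminates. That forces some $\lambda_i\leq\lambda$ to have $K_{\Aa_i}+\lambda A_i$ nef, and the MMP continues beyond that point, contradicting the definition of $\lambda$. So $\lambda=0$.

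With $\lambda=0$, the divisor $K_{\Aa_i}$, hence $E_i$, becomes a limit of nef$/U$ divisors on the terminal models. I would then apply the negativity lemma for very exceptional divisors \cite[Lemma 3.3]{Bir12}. Roughly, a very exceptional effective divisor that is a limit of movable$/U$ divisors must be zero. Since no divisors are contracted after finitely many steps, the $E_i$ eventually have constant support. Comparing $N_\sigma(X_i/U,E_i)$, which tends to zero because $E_i+\lambda_iA_i$ is nef, with the very exceptionality of $E_i$ gives $E_i=0$ for $i\gg0$. Once $E_i=0$, we get $K_{\Aa_i}\sim_{\mathbb R,U}0$, and the MMP must stop, since no $K$-negative ray remains. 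This gives a good minimal model with $K_{\Aa'}\sim_{\mathbb R,U}0$.

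The main obstacle is termination: excluding an infinite sequence of flips when $\lambda=0$. My plan there is to argue that the flips are $E_i$-negative with $E_i$ very exceptional and of fixed support. Using \cite[Lemma 3.3]{Bir12}, some component of $E_i$ is covered by curves on which $E_i$ is negative, so it must be a divisorial contraction rather than a flip once $\lambda_i$ is small. The remaining possibility is infinitely many flips while $E_i\neq0$, and I would rule this out by that covering-curve argument combined with the observation that, given $\lambda=0$, $E_i$ would otherwise lie in the closure of the movable cone.

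The final sentence of the theorem then follows because the contracted divisors lie in $\Supp E$ and all of $E$ disappears.
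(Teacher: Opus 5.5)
Your argument is correct and is Birkar's proof of \cite[Theorem~3.4]{Bir12}, which the paper relies on by quoting its adjoint foliated version \cite[Theorem~3.6]{Cas+25b}: if the MMP did not terminate with $\lambda>0$, it would be an infinite $(K_{\Aa}+\frac{\lambda}{2}A)$-MMP with scaling for a structure that perturbs to a klt pair with big boundary, and if $\lambda=0$ then, once the maps are isomorphisms in codimension one, $E_i$ is a limit of movable$/U$ divisors, so \cite[Lemma~3.3]{Bir12} forces $E_i=0$ and the MMP stops. This already rules out an infinite sequence of flips, so drop your ``main obstacle'' paragraph (its claim that $E_i$-negative curves covering a component of $E_i$ force the next step to be divisorial is not valid), and drop the initial passage to a small $\mathbb Q$-factorialization too, since the statement concerns MMPs on $X$ itself and your argument works there directly.
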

\begin{proof}
It is a special case of \cite[Theorem 3.6]{Cas+25b}.
\end{proof}

\subsection{Log toroidal model}

\begin{defn}[Log toroidal model]\label{defn: log toroidal models}
Let $\Aa/U$ be an lc generalized pair with ambient variety $X$ and $h: X'\rightarrow X$ a projective birational morphism. Let $E\geq 0$ be an $h$-exceptional $\mathbb R$-divisor and let $\Aa':=(h^*\Aa,E)$. Write $\Aa'=(X',B',\Mm)$. We say that $h: \Aa'\rightarrow\Aa$ is a \emph{log toroidal model} if 
\begin{enumerate}
    \item $(X',\Supp B'\cup\Exc(h))$ is $\mathbb Q$-factorial log toroidal, $B'\geq 0$, and $\Mm$ descends to $X'$, 
    \item $\Aa'$ is lc, and
    \item for any $h$-exceptional prime divisor $D$ such that $a(D,\Aa)>-1$, $D$ is an irreducible component of $E$.
\end{enumerate}
In addition:
\begin{itemize}
\item If $(X',\Supp B'\cup\Exc(h))$ is log smooth, then we say that $h: \Aa'\rightarrow\Aa$ is a \emph{log smooth model}.
\item If $\Supp E\subset\Supp\{B'\}$, then we say that $h: \Aa'\rightarrow\Aa$ is a \emph{proper log toroidal model}.
\item If $h: \Aa'\rightarrow\Aa$ is a log smooth model and a proper log toroidal model, then we say that $h: \Aa'\rightarrow\Aa$ is a proper log smooth model.
\item If  $h: \Aa'\rightarrow\Aa$ is a (proper) log toroidal model (resp. (proper) log smooth model), then we say that $\Aa'$ is a (proper) log toroidal model (resp. (proper) log smooth model) of $\Aa$.
\end{itemize}
\end{defn}

\begin{defn}
    Let $\Aa/U=(X,B,\Mm)/U$ be a generalized pair and $\pi: X\rightarrow Z$ a contraction. We say that $\Aa/Z$ is \emph{relatively log toroidal} if the following conditions hold.
    \begin{enumerate}
        \item $X$ has at most $\mathbb Q$-factorial quotient singularities.
        \item $(X,\Sigma)$ is log toroidal for some reduced divisor $\Sigma$ such that $\Supp B\subset\Sigma$. In particular, $X$ is klt.
        \item $\pi$ is a contraction.
        \item There exists a log smooth pair $(Z,\Sigma_Z)$ such that $\pi: (X,\Sigma)\rightarrow (Z,\Sigma_Z)$ is an equidimensional toroidal contraction. In particular, $Z$ is smooth.
        \item $\Mm$ descends to $X$.
    \end{enumerate}
Note that we do not require that $\pi$ is a contraction$/U$.
\end{defn}
\begin{deflem}\label{deflem: foliated log resolution}
  Let $\Aa/U=(X,B,\Mm)/U$ be a generalized pair and $\phi: X\dashrightarrow Z$ a dominant map. A \emph{log toroidal modification} of $\Aa$ with respect to $\phi$ is a projective birational morphism $h: X'\rightarrow X$, such that there exists a projective birational morphism $h_Z: Z'\rightarrow Z$ and a contraction $f: X'\rightarrow Z'$ with $\phi\circ h=h_Z\circ f$, so that
  $$(X',h^{-1}_*B+\Exc(h),\Mm)/Z$$
  is relatively log toroidal. Log toroidal modification always exists: indeed, a log toroidal modification of $\Aa$ is a foliated log resolution of $(X,\Ff,B,\Mm)$ where $\Ff$ is the foliation induced by $\phi$, which is known to exist by \cite[Lemma 6.2.4]{CHLX23}.

  We say that $f: X'\rightarrow Z'$ is \emph{associated to} $h:\Aa'\rightarrow\Aa$.
\end{deflem}

\begin{deflem}\label{deflem: eoltm}
    Let $\Aa/U$ be an lc generalized pair with ambient variety $X$, $\phi: X\dashrightarrow Z$ a dominant map, and $h: X'\rightarrow X$ a log toroidal modification of $\Aa$ with respect to $\phi$. Then there exists a proper log toroidal model $h: \Aa'\rightarrow\Aa$, such that if $\Aa$ is LD, then $\Aa'$ is LD. We say that $h: \Aa'\rightarrow\Aa$ is a \emph{proper log toroidal model with respect to $\phi$}. If $\Aa'$ is LD, then we say that $h: \Aa'\rightarrow\Aa$ is a \emph{proper LD log toroidal model with respect to $\phi$}.
\end{deflem}
\begin{proof}
 Write $\Exc(h)=E_1+E_2$ where $E_1\wedge E_2=0$, any irreducible component of $E_1$ is an lc place of $\Aa$, and no irreducible component of $E_2$ is an lc place of $\Aa$. Write $E_2=\sum_{i=1}^m F_i$ where $F_i$ are the irreducible components of $E_2$.
    
    If $\Aa$ is not LD, then we may choose arbitrary $0<\epsilon\ll 1$, and $\Aa':=(h^{-1}_*\Aa,E_1+(1-\epsilon)E_2)$ satisfies our requirements. Therefore, we may assume that $\Aa$ is LD. 
    
    We let $\{V,\bm{c},\Aa(\cdot)\}$ be the polytopal of $\Aa$. By Lemma \ref{lem: LD preserves lc center}, for any $\bm{v}\in V$, any irreducible component of $E_1$ is an lc place of $\Aa(\bm{v})$. Thus there exists an open neighborhood $V_0\ni\bm{c}$ in $V$, such that for any $\bm{v}\in V_0$, $a(F_i,\Aa(\bm{v}))>-1$ for any $i$ and $a(F,\Aa(\bm{v}))=-1$ for any irreducible component $F$ of $E_1$. Let $n:=\dim V+1$ and let $\bm{v}_1,\dots,\bm{v}_n$ be $\mathbb Q$-vectors in $V_0$ such that $\bm{c}$ is contained in the interior of the convex hull spanned by $\bm{v}_1,\dots,\bm{v}_n$, then there exists unique real numbers $a_1,\dots,a_n\in (0,1]$ such that $\sum_{i=1}^na_i=1$ and $\sum_{i=1}^na_i\bm{v}_i=\bm{c}$. Then there exist $\mathbb Q$-divisors $G_i\geq 0$ on $X'$ such that $\Supp G_i=\Supp E_2$ and $\Aa_i':=(h^*\Aa(\bm{v}_i),E_1+G_i)$ is a proper log toroidal model of $\Aa(\bm{v}_i)$. We let $\Aa':=\sum_{i=1}^n a_i\Aa_i'$. By Lemma \ref{lem: equivalent definition of LD}, $\Aa'$ is LD and $h: \Aa'\rightarrow\Aa$ is a proper log toroidal model of $\Aa$.
\end{proof}

\begin{lem}\label{lem: eolsm}
Let $\Aa/U$ be an lc generalized pair. Then $\Aa$ has a proper log smooth model. Moroever, if $\Aa$ is LD, then $\Aa$ has a proper LD log toroidal model.
\end{lem}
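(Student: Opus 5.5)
The plan is to deduce both assertions from Definition-Lemma~\ref{deflem: eoltm}, applied to a suitably chosen log resolution. The proper log smooth model will come from taking $\phi$ to be the identity (or the structure map) and using an honest log resolution. The proper LD log toroidal model will come from applying Definition-Lemma~\ref{deflem: eoltm} directly to a log toroidal modification.

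\textbf{The LD statement.} First take any dominant map $\phi: X\dashrightarrow Z$; the simplest choice is $\phi=\mathrm{id}_X$, or $Z$ a point. By Definition-Lemma~\ref{deflem: foliated log resolution}, a log toroidal modification $h: X'\rightarrow X$ of $\Aa$ with respect to $\phi$ exists. Definition-Lemma~\ref{deflem: eoltm} then produces a proper log toroidal model $h:\Aa'\rightarrow\Aa$ which is LD whenever $\Aa$ is LD. This gives the second assertion immediately.

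\textbf{The log smooth statement.} Take $h: X'\rightarrow X$ to be a log resolution of $\Aa$. We may arrange that $(X',\Supp h^{-1}_*B\cup\Exc(h))$ is log smooth, $\Exc(h)$ is divisorial, and $\Mm$ descends to $X'$. Write $\Exc(h)=E_1+E_2$ as in the proof of Definition-Lemma~\ref{deflem: eoltm}, where the components of $E_1$ are exactly the exceptional lc places of $\Aa$. Set
\[
\Aa':=(h^{-1}_*\Aa,\;E_1+(1-\epsilon)E_2)\quad\text{for }0<\epsilon\ll 1.
\]
We then need to check the defining conditions.
\begin{itemize}
\item \emph{Form of $\Aa'$.} We must have $\Aa'=(h^*\Aa,E)$ with $E\ge 0$ exceptional. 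Here $E$ has coefficient $0$ along $E_1$, and coefficient $a(F_i,\Aa)+1-\epsilon>0$ along each component $F_i$ of $E_2$, since $a(F_i,\Aa)>-1$ and there are finitely many $F_i$.
\item \emph{Lc condition.} $\Aa'$ is lc because its boundary has coefficients in $[0,1]$ on a log smooth model to which $\Mm$ descends.
\item \emph{Condition (3).} Every exceptional divisor with $a>-1$ lies in $E_2=\Supp E$.
\item \emph{Properness.} $\Supp E=\Supp E_2\subset\Supp\{B'\}$, because the coefficients $1-\epsilon$ lie in $(0,1)$.
\end{itemize}
Hence $h:\Aa'\rightarrow\Aa$ is a proper log smooth model.

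\textbf{Main obstacle.} The only subtle point is to ensure that the log resolution can be chosen so that $\Exc(h)$ is a divisor and $\Mm$ descends. This is standard: first pass to a model where $\Mm$ descends, then take a further log resolution. The subtlety in the LD case is handled entirely by Definition-Lemma~\ref{deflem: eoltm}, which is why that case only yields a toroidal model rather than a smooth one.
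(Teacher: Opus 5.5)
Your proposal is correct and is essentially the paper's argument. The paper gets both assertions at once by applying Definition-Lemma~\ref{deflem: eoltm} with $\phi=\mathrm{id}_X$; a log toroidal modification with respect to the identity is just a log resolution, so the resulting model is log smooth. Your explicit model $(h^{-1}_*\Aa,E_1+(1-\epsilon)E_2)$ on a log resolution is exactly the non-LD branch of that Definition-Lemma's proof written out, and your checks of the four conditions are all in order.

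One correction to your closing remark: with $\phi=\mathrm{id}_X$ the LD branch also produces a model on a log resolution, because the LD property forces each $B(\bm{v}_i)$ to be supported in $\Supp B$. So the LD case is not what forces the weaker ``toroidal'' wording; the statement simply records less than it could.
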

\begin{proof}
The lemma follows from Definition-Lemma \ref{deflem: eoltm} by letting $\phi$ be the identity morphism.
\end{proof}

\begin{lem}\label{lem: Bir12 2.8}
Let $\Aa/U$ be an lc generalized pair and $\Aa'$ a log toroidal model of $\Aa$. Then any bs-weak lc model (resp. bs-minimal model, bs-semi-ample model, bs-good minimal model, log minimal model, good log minimal model) of $\Aa'/U$ is a bs-weak lc model (resp. bs-minimal model, bs-semi-ample model, bs-good minimal model, log minimal model, good log minimal model) of $\Aa/U$. 
\end{lem}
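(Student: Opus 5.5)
The statement is the generalized-pair analogue of \cite[Lemma 2.8]{Bir12}, and I would follow Birkar's argument. The plan is to reduce everything to a comparison of discrepancies between $\Aa$ and $\Aa'$, together with an explicit description of the log birational model.

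Let $h: X'\rightarrow X$ be the morphism, so $\Aa'=(h^*\Aa,E)$ with $E\geq 0$ $h$-exceptional. Let $\Aa''/U$ be a bs-weak lc model of $\Aa'/U$ with ambient variety $X''$ and map $\phi': X'\dashrightarrow X''$. Set $\phi:=\phi'\circ h^{-1}: X\dashrightarrow X''$. The first step is to check that $\Aa''$ is also the log birational model of $\Aa$ attached to $\phi$, i.e.\ that
$$\Aa''=(\phi'_*\Aa',\Exc(\phi'^{-1}))=(\phi_*\Aa,\Exc(\phi^{-1})).$$
On divisors that are non-exceptional over $X$, the boundaries of $\Aa'$ and $h^{-1}_*\Aa$ agree. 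A prime divisor $D$ on $X''$ that is exceptional over $X$ is either exceptional over $X'$, in which case it has coefficient $1$ on both sides, or it is the image of an $h$-exceptional divisor on $X'$. In the latter case I must show its coefficient in $\phi'_*\Aa'$ is $1$.

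Suppose $D$ is $h$-exceptional with $\mult_D(B'+E)<1$. Definition \ref{defn: log toroidal models}(3) then puts $D$ in $\Supp E$, and $a(D,\Aa')=a(D,\Aa)-\mult_D E<a(D,\Aa)$. I would argue that such $D$ must be contracted by $\phi'$. The tool is Lemma \ref{lem: Bir12 2.6}: writing $p^*K_{\Aa'}=q^*K_{\Aa''}+F$ with $F\geq 0$ exceptional over $X''$, the divisor $E$ is exceptional over $X$ and sits in the discrepancy difference. The cleaner route is the following. On a common resolution $W$ with $p: W\to X$, the divisor $p^*K_{\Aa}-q^*K_{\Aa''}$ is nef over $X$ restricted to $p$-exceptional parts, since it equals $F$ minus the pullback of $E$, and $K_{\Aa''}$ is nef$/U$. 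Applying the negativity lemma over $X$ to $q^*K_{\Aa''}-p^*K_{\Aa}$, which is $p$-nef, shows $q^*K_{\Aa''}\leq p^*K_{\Aa}$ up to the pushforward. I expect to need to combine this carefully with $F\geq 0$ to conclude that every component of $E$ appears in $F$ with positive coefficient, hence is contracted on $X''$. This negativity bookkeeping is the main obstacle. The subtle point is showing that components of $E$ with $a(D,\Aa)>-1$ cannot survive on $X''$; this is exactly where condition (3) of the definition is needed.

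Once the identification of $\Aa''$ is established, the remaining checks are routine. Nefness and semi-ampleness of $K_{\Aa''}$ are unchanged. For a prime divisor $D$ on $X$ exceptional over $X''$, its strict transform is exceptional over $X''$ on $X'$, and $a(D,\Aa)=a(D,\Aa')$ because $D$ is not $h$-exceptional. So the inequalities (3) and (4) of Definition \ref{defn: minimal model} transfer directly from $\Aa'$ to $\Aa$. For the \emph{log} minimal model versions, condition (5) concerns only $\Aa''$ and is therefore preserved. If a non-bs version is required, i.e.\ no extracted divisors, one notes that the divisors extracted by $\phi$ are either extracted by $\phi'$ or are $h$-exceptional divisors that survive, and the argument above handles both cases.
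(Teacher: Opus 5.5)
Your proposal is correct. The paper gives no argument of its own here; it simply invokes \cite[Lemma~3.8]{Cas+25b}, the corresponding statement for algebraically integrable adjoint foliated structures. You instead specialize Birkar's proof of \cite[Lemma~2.8]{Bir12} directly to generalized pairs. The citation keeps Section~\ref{sec: models} uniform with the other lemmas there. Your route gives a self-contained proof and shows exactly where the defining properties of a log toroidal model are used.

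The step you flag as the main obstacle closes in one line. Write $p'\colon W\to X'$ for the map in Lemma~\ref{lem: Bir12 2.6}, so that $p'^*K_{\Aa'}=q^*K_{\Aa''}+F$, and set $p=h\circ p'$.
\begin{itemize}
\item You then have $p^*K_{\Aa}-q^*K_{\Aa''}=F-p'^*E$.
\item Its negative is nef over $X$, because $K_{\Aa''}$ is nef$/U$ and $p^*K_{\Aa}\equiv_X 0$.
\item Since $E$ is $h$-exceptional, $p_*(F-p'^*E)=p_*F\geq 0$.
\end{itemize}
The negativity lemma then gives $F\geq p'^*E$ directly, so every component of $E$ is exceptional over $X''$. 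The hypothesis $F\geq 0$ is used only to check the pushforward condition; no further bookkeeping is needed.

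Two small corrections.

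First, condition (3) of Definition~\ref{defn: log toroidal models} is not what forces components of $E$ to be contracted; negativity does this for all of $E$. What condition (3) does is guarantee that an $h$-exceptional divisor surviving on $X''$, which lies outside $\Supp E$, is an lc place of $\Aa$. It therefore has coefficient $1$ in $\phi'_*\Aa'$, and this is what makes $\Aa''$ the log birational model of $\Aa$ attached to $\phi$.

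Second, drop your closing remark about non-bs versions. The lemma asserts only bs-type conclusions; the ``log minimal model'' of Definition~\ref{defn: minimal model} also allows extraction. A non-bs transfer fails in general, because surviving $h$-exceptional lc places are extracted by $\phi$. For example, on a smooth surface take $\Mm=\bm{0}$, $U=X$, $E=0$, and blow up a point where two components of $B$ meet transversally. Then $\Aa'/U$ is a minimal model of itself but only a bs-minimal model of $\Aa/U$.
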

\begin{proof}
This is a special case of \cite[Lemma 3.8]{Cas+25b}.
\end{proof}

\subsection{Models under birational transformations}

\begin{lem}\label{lem: foliation lsm has lmm}
Let $\Aa/U$ be an lc generalized pair and $\Aa'/U$ a bs-weak lc model of $\Aa/U$. Let $\Aa_W$ be a log toroidal model of $\Aa$ and assume that the induced birational map $\phi: W\dashrightarrow X'$ is a morphism, where $W$ and $X'$ are the ambient varieties of $\Aa_W$ and $\Aa'$ respectively. 

Then we may run a $K_{\Aa_W}$-MMP$/X'$ with scaling of an ample$/X'$ $\Rr$-divisor and any such MMP terminates with a good minimal model $\Aa_Y/X'$ of $\Aa_W/X'$ such that $\Aa_Y=q^*\Aa'$, where $Y$ is the ambient variety of $\Aa_Y$ and $q: Y\rightarrow X'$ is the induced morphism.

In particular, $\Aa_Y/U$ is a $\mathbb Q$-factorial qdlt minimal model of $\Aa_W/U$.
\end{lem}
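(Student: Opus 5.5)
The plan is to compare $K_{\Aa_W}$ with the pullback of $K_{\Aa'}$ along the morphism $\phi: W\to X'$, and then use the very exceptional MMP (Theorem \ref{thm: very exceptional mmp}) relative to $X'$. Write $h: W\rightarrow X$ for the log toroidal model map, with $\Aa_W=(h^*\Aa,E_W)$ and $E_W\geq 0$ exceptional$/X$. First I will show that
$$K_{\Aa_W}=\phi^*K_{\Aa'}+F$$
for some $\mathbb R$-divisor $F\geq 0$ that is exceptional$/X'$. Since $W\to X'$ is a birational morphism, $F$ is then automatically very exceptional$/X'$ in the sense of \cite[Definition 3.1]{Bir12}. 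Moreover, $W$ is $\mathbb Q$-factorial log toroidal, hence klt and in particular potentially klt, so the hypotheses of Theorem \ref{thm: very exceptional mmp} (applied over $X'$) are satisfied.

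For the key identity, apply Lemma \ref{lem: Bir12 2.6} to the resolution $W$ of the map $X\dashrightarrow X'$: we get $h^*K_{\Aa}=\phi^*K_{\Aa'}+E$ with $E\geq 0$ and exceptional$/X'$. Hence
$$K_{\Aa_W}=h^*K_{\Aa}+E_W=\phi^*K_{\Aa'}+E+E_W.$$
I then need $F:=E+E_W$ to be exceptional$/X'$. The components of $E$ already are. For a component $D$ of $E_W$ that is a divisor on $X'$, the definition of log birational model (Definition \ref{defn: log birational model}) shows that $D$ is either a divisor on $X$ (impossible, since $D$ is $h$-exceptional) or a component of $\Exc$ of the inverse map. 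In the latter case $a(D,\Aa')=-1$, and condition (3) gives $a(D,\Aa)\leq -1$, so $D$ is an lc place of $\Aa$. By Definition \ref{defn: log toroidal models}(3) and the structure of $\Aa_W$ (which is lc, with coefficient of $D$ equal to $1$ both via $h^*\Aa$ and via $\Aa_W$), we get $\mult_D E_W=0$. This bookkeeping is the step I expect to be the main obstacle: one has to make sure the extra boundary $E_W$ sits only over non-lc places, which are exactly the divisors contracted on $X'$, possibly using $\mult_D E\geq 0$ to absorb coefficients.

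Given the identity, Theorem \ref{thm: very exceptional mmp} lets us run the $K_{\Aa_W}$-MMP$/X'$ with scaling of an ample$/X'$ divisor. Every such MMP terminates with $\Aa_Y$ satisfying $K_{\Aa_Y}\sim_{\mathbb R,X'}0$, and it contracts exactly $\Supp F$. Consequently $K_{\Aa_Y}=q^*K_{\Aa'}$ by the negativity lemma, since the difference is $q$-exceptional and $\mathbb R$-trivial over $X'$ after contracting $\Supp F$. Every divisor on $Y$ that is exceptional over $X'$ has coefficient in $\Aa_Y$ equal to that in $q^*\Aa'$, because $\mult F=0$ there. This gives $\Aa_Y=q^*\Aa'$, and $\Aa_Y/X'$ is a good minimal model.

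Finally, $\Aa_Y$ is $\mathbb Q$-factorial qdlt. Indeed, the MMP preserves $\mathbb Q$-factoriality, and qdlt-ness follows because $\Aa_W$ is $\mathbb Q$-factorial log toroidal and the MMP steps are $K_{\Aa_W}$-negative, so near the generic points of lc centers nothing changes (lc centers of $\Aa_Y$ are images of lc centers of $\Aa_W$ not contained in the flipped loci). Since $K_{\Aa_Y}=q^*K_{\Aa'}$ is nef$/U$, and the divisors contracted (those in $\Supp F$) satisfy the strict discrepancy inequality by the negativity built into each MMP step, $\Aa_Y/U$ is a $\mathbb Q$-factorial qdlt minimal model of $\Aa_W/U$.
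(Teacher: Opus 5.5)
Your proposal is correct and is essentially the argument behind the paper's proof, which simply cites \cite[Lemma 3.10]{Cas+25b}: write $K_{\Aa_W}=\phi^*K_{\Aa'}+F$ with $F\geq 0$ exceptional (hence very exceptional) over $X'$ using Lemma \ref{lem: Bir12 2.6}, then apply Theorem \ref{thm: very exceptional mmp} over $X'$, using that $W$ is potentially klt. One small fix concerns a component $D$ of $E_W$ that is a divisor on $X'$: the equality $a(D,\Aa)=a(D,\Aa')=-1$ comes from $E$ being exceptional$/X'$ in Lemma \ref{lem: Bir12 2.6}, not from condition (3) of Definition \ref{defn: minimal model}, which only concerns divisors on $X$; and $\mult_DE_W=0$ then follows from $\Aa_W$ being lc, not from Definition \ref{defn: log toroidal models}(3).
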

\begin{proof}
This is a special case of \cite[Lemma 3.10]{Cas+25b}.
\end{proof}

\begin{lem}\label{lem: g-pair weak glc imply lmm}
Let $\Aa/U$ be an lc generalized pair. If $\Aa/U$ has a bs-weak lc model (resp. bs-semi-ample model), then $\Aa/U$ has a log minimal model (resp. good log minimal model).
\end{lem}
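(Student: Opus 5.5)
Let $\Aa'/U$ be a bs-weak lc model (resp. bs-semi-ample model) of $\Aa/U$, with ambient variety $X'$ and induced birational map $\phi: X\dashrightarrow X'$. The plan is to resolve $\phi$ by a log toroidal model and then run a relative MMP over $X'$ with Lemma \ref{lem: foliation lsm has lmm}.

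First, take a log resolution $p: W\rightarrow X$ such that the induced map $W\dashrightarrow X'$ is a morphism $g: W\rightarrow X'$. By Lemma \ref{lem: eolsm}, more precisely by Definition-Lemma \ref{deflem: eoltm} applied with $\phi=\mathrm{id}$ on a high enough resolution, we may choose a proper log smooth model $p: \Aa_W\rightarrow\Aa$ with ambient variety $W$. Write $\Aa_W=(p^*\Aa,E)$, where $E\geq 0$ is $p$-exceptional and its support consists exactly of the $p$-exceptional divisors with $a(\cdot,\Aa)>-1$. Replacing $W$ by a further blow-up if needed, we can keep $g$ a morphism, since log smooth models can be taken over any sufficiently high resolution.

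Second, apply Lemma \ref{lem: foliation lsm has lmm} to $\Aa'$, $\Aa_W$, and $g$. We run a $K_{\Aa_W}$-MMP$/X'$ with scaling, and it terminates with $\Aa_Y/X'$ with $\Aa_Y=q^*\Aa'$, where $q: Y\rightarrow X'$. That lemma also says that $\Aa_Y/U$ is a $\mathbb Q$-factorial qdlt minimal model of $\Aa_W/U$. Since $K_{\Aa_Y}=q^*K_{\Aa'}$ and $K_{\Aa'}$ is nef$/U$ (resp. semi-ample$/U$), $K_{\Aa_Y}$ is nef$/U$ (resp. semi-ample$/U$). Hence $\Aa_Y/U$ is a log minimal model (resp. good log minimal model) of $\Aa_W/U$: the $\mathbb Q$-factorial qdlt condition (5) holds, and the strict negativity (4) for divisors contracted by $W\dashrightarrow Y$ comes from the MMP.

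Finally, apply Lemma \ref{lem: Bir12 2.8}. Since $\Aa_W$ is a log toroidal model of $\Aa$, any log minimal model (resp. good log minimal model) of $\Aa_W/U$ is one of $\Aa/U$, which gives the claim.

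The step I expect to need care is the first: ensuring the log smooth model can be chosen on a $W$ that dominates $X'$. Definition-Lemma \ref{deflem: eoltm} produces a model on any given log toroidal modification, and any log resolution of $\Aa$ factoring through the resolution of indeterminacy of $\phi$ qualifies, so I expect this to go through. Everything else is direct citation.
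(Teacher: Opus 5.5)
Your proof is correct: taking a log resolution of $\Aa$ on which the map to $X'$ becomes a morphism, putting a proper log smooth model on it via Definition-Lemma \ref{deflem: eoltm}, using Lemma \ref{lem: foliation lsm has lmm} to get the $\mathbb Q$-factorial qdlt minimal model $\Aa_Y=q^*\Aa'$ of $\Aa_W/U$ (good when $K_{\Aa'}$ is semi-ample$/U$), and transferring it to $\Aa/U$ with Lemma \ref{lem: Bir12 2.8} is the standard reduction. The paper does not write this out and simply cites \cite[Lemma 3.12]{Cas+25b}, which is the same argument in the setting of algebraically integrable adjoint foliated structures, so your route is essentially the paper's.
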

\begin{proof}
This is a special case of \cite[Lemma 3.12]{Cas+25b}.
\end{proof}

\begin{lem}\label{lem: Cas+25b 3.13}
Let $\Aa/U$ be an lc generalized pair. Let $h: X'\rightarrow X$ be a projective birational morphism and $\Aa'/U$ an lc generalized pair such that $\Aa'=(h^*\Aa,E)$ for some $E\geq 0$ that is exceptional$/X$. Then:
\begin{enumerate}
    \item Any bs-weak lc model of $\Aa/U$ is a bs-weak lc model of $\Aa'/U$.
    \item If $\Aa/U$ has a bs-weak lc model (resp. bs-semi-ample model), then $\Aa'/U$ has a log minimal model (resp. good log minimal model).
\end{enumerate}
\end{lem}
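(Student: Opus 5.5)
Part (1) follows from the negativity lemma, applied on a common resolution. Fix a bs-weak lc model $\Aa''/U$ of $\Aa/U$, with ambient variety $X''$, and write $\psi\colon X\dashrightarrow X''$ for the induced map; then $\psi\circ h\colon X'\dashrightarrow X''$ is the induced map from $X'$. Choose a common resolution $p\colon W\to X$, $p'\colon W\to X'$, $q\colon W\to X''$ with $p=h\circ p'$. Lemma \ref{lem: Bir12 2.6} gives $p^*K_{\Aa}=q^*K_{\Aa''}+F$ with $F\geq 0$ exceptional$/X''$. Since $K_{\Aa'}=h^*K_{\Aa}+E$, we get
$$p'^*K_{\Aa'}=q^*K_{\Aa''}+F+p'^*E.$$
Here $F+p'^*E\geq 0$, and it is exceptional$/X''$: every component of $E$ is $h$-exceptional, so its center on $X$ has codimension $\ge2$.

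We must also check that this is a bs-weak lc model in the sense of Definition \ref{defn: minimal model}. Two points are involved.

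First, $\Aa''$ should be the log birational model of $\Aa'$ as well as of $\Aa$. The boundaries agree on non-exceptional divisors. Any divisor on $X''$ that is exceptional over $X'$ is also exceptional over $X$, so it carries coefficient $1$ in both cases.

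Second, condition (3) must hold. Let $D$ be a prime divisor on $X'$ that is exceptional$/X''$. Then
$$a(D,\Aa')-a(D,\Aa'')=\mult_D(F+p'^*E)\geq 0,$$
which is exactly the inequality $a(D,\Aa')\leq a(D,\Aa'')$. Nefness of $K_{\Aa''}$ is unchanged. This proves (1).

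For (2), suppose $\Aa/U$ has a bs-weak lc model (respectively a bs-semi-ample model). By (1), the same model is a bs-weak lc model of $\Aa'/U$, and it is semi-ample in the second case. Lemma \ref{lem: g-pair weak glc imply lmm} applies because $\Aa'$ is lc, and it yields a log minimal model (respectively a good log minimal model) of $\Aa'/U$.

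The only delicate step is the bookkeeping in (1): making sure the exceptional divisors of $X'\dashrightarrow X''$ are handled correctly, including components of $E$ that are not exceptional over $X''$. This does not occur, since such components are exceptional over $X$ and hence over $X''$. All of this is the generalized-pair specialization of \cite[Lemma 3.13]{Cas+25b}.
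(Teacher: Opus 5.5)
Your overall route (Lemma \ref{lem: Bir12 2.6} on a common resolution for (1), then Lemma \ref{lem: g-pair weak glc imply lmm} for (2)) is a reasonable direct proof of what the paper simply quotes from \cite[Lemma 3.13]{Cas+25b}. However, part (1) has a genuine gap, located exactly in the bookkeeping you flag and then dismiss.

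\textbf{Where the gap is.} Your claim that divisors ``exceptional over $X$'' are ``hence exceptional over $X''$'' is false. Since $\Aa''$ is only a \emph{bs}-weak lc model, $\psi\colon X\dashrightarrow X''$ may extract divisors, and these enter $\Aa''$ with coefficient $1$ as components of $\Exc(\psi^{-1})$. Such a divisor $P$ on $X''$ can be a divisor $P'$ on $X'$ that is exceptional over $X$, or its centre on $X'$ can lie in $\Supp E$. So two of your assertions are not justified:
\begin{enumerate}
\item that $F+p'^*E$ is exceptional over $X''$;
\item that the log birational model of $\Aa'$ on $X''$ equals $\Aa''$.
\end{enumerate}
Your check for the second point covers divisors non-exceptional over $X$ and divisors exceptional over $X'$. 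It misses divisors exceptional over $X$ but not over $X'$. Write $h^*\Aa=(X',B_{X'},\Mm)$. On such a divisor, $\Aa''$ has coefficient $1$, while the model of $\Aa'$ has coefficient $\mult_{P'}(B_{X'}+E)$. This is not a formality: if $E$ had positive coefficient $e$ along such a $P'$, the two models would differ (coefficient $1+e$ versus $1$), and (1) would fail.

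\textbf{The missing idea.} The fix uses the hypothesis that $\Aa'$ is lc, which your argument for (1) never invokes. Let $P$ be a prime divisor on $X''$ exceptional over $X$, with strict transform $P_W$ on $W$.
\begin{enumerate}
\item Since $P$ is a component of $\Exc(\psi^{-1})$, we have $a(P,\Aa'')=-1$.
\item Since $F$ is exceptional over $X''$, $\mult_{P_W}F=0$, so $a(P,\Aa)=a(P,\Aa'')=-1$.
\item From $p'^*K_{\Aa'}=p^*K_{\Aa}+p'^*E$ we get $a(P,\Aa')=-1-\mult_{P_W}p'^*E$.
\item Lc-ness of $\Aa'$ then forces $\mult_{P_W}p'^*E=0$ and $a(P,\Aa')=-1$.
\end{enumerate}
This gives both missing facts at once. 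First, $F+p'^*E$ has zero coefficient along every divisor of $X''$, so it is exceptional over $X''$. Second, divisors of $X''$ that are divisors on $X'$ but exceptional over $X$ get coefficient $1$ in the log birational model of $\Aa'$, so that model is $\Aa''$. With this inserted, nefness carries over and your argument for (2) goes through unchanged.

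\textbf{A minor sign slip.} From $p'^*K_{\Aa'}=q^*K_{\Aa''}+F+p'^*E$ one gets
$a(D,\Aa'')-a(D,\Aa')=\mult_D(F+p'^*E)\geq 0$,
not $a(D,\Aa')-a(D,\Aa'')$ as you wrote. Your conclusion $a(D,\Aa')\leq a(D,\Aa'')$ is nonetheless correct.
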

\begin{proof}
   This is a special case of \cite[Lemma 3.13]{Cas+25b}. 
\end{proof}

\begin{lem}\label{lem: mm preserved under dlt model}
Let $\Aa/U$ be an lc generalized pair. Let $h: X'\rightarrow X$ be a birational morphism which only extracts lc places of $\Aa$ and $\Aa':=h^*\Aa$. Then any bs-weak lc model (resp. bs-minimal model) of $\Aa'/U$ is a bs-weak lc model (resp. bs-minimal model) of $\Aa/U$.
\end{lem}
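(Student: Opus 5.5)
The plan is to compare discrepancies directly, using that $h$ only extracts lc places, so $K_{\Aa'}=h^*K_{\Aa}$. Let $\Aa''/U$ be a bs-weak lc model (resp. bs-minimal model) of $\Aa'/U$, with ambient variety $X''$ and induced birational map $\psi: X'\dashrightarrow X''$. Set $\phi:=\psi\circ h^{-1}: X\dashrightarrow X''$. By Definition~\ref{defn: log birational model}, $\Aa''=(\psi_*\Aa',\Exc(\psi^{-1}))$. We must verify three things: that $\Aa''$ is also the log birational model $(\phi_*\Aa,\Exc(\phi^{-1}))$ of $\Aa$; that $K_{\Aa''}$ is nef$/U$, which is immediate; and that condition (3) (resp. (4)) of Definition~\ref{defn: minimal model} holds for prime divisors on $X$ exceptional over $X''$.

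\emph{Step 1: identify the boundaries.} Write $\Aa'=(X',B',\Mm)$. Its components are $h^{-1}_*B$ and the $h$-exceptional divisors $E_j$, each with coefficient $1$, because every $E_j$ is an lc place of $\Aa$. Take a prime divisor $P$ on $X''$.
\begin{itemize}
\item If $P$ is not exceptional over $X$, then $\phi^{-1}$ does not contract $P$, and its coefficient is $\mult_{P}B$ in both descriptions.
\item If $P$ is exceptional over $X$ but not over $X'$, then $P$ is the image of some $E_j$. It has coefficient $1$ in $\psi_*B'$, and coefficient $1$ in $\Exc(\phi^{-1})$.
\item If $P$ is exceptional over $X'$, it lies in both $\Exc(\psi^{-1})$ and $\Exc(\phi^{-1})$ with coefficient $1$.
\end{itemize}
Hence the two boundaries agree, and the nef part $\Mm$ is the same. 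So $\Aa''$ is a log birational model of $\Aa$ with the same canonical divisor.

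\emph{Step 2: the discrepancy conditions.} Let $D$ be a prime divisor on $X$ that is exceptional over $X''$. Its strict transform $D'$ on $X'$ is then exceptional over $X''$. Since $K_{\Aa'}=h^*K_{\Aa}$, we have $a(D,\Aa)=a(D',\Aa')$. The bs-weak lc (resp. bs-minimal) condition for $\Aa''$ over $\Aa'$ gives $a(D',\Aa')\le a(D',\Aa'')$ (resp. $<$). Because $D'$ and $D$ define the same divisorial valuation, $a(D',\Aa'')=a(D,\Aa'')$, and the required inequality follows.

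The only point that needs care is Step 1. One has to be sure that divisors extracted by $h$ and not contracted by $\psi$ receive coefficient $1$ on $X''$ in both constructions. This is exactly where the hypothesis that $h$ extracts only lc places is used. Without it, the coefficient of $E_j$ in $B'$ could be less than $1$, and $\Aa''$ would fail to be the log birational model of $\Aa$ required by the definition.
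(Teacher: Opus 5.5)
Your argument is correct. The paper does not argue directly here. It records the lemma as a special case of \cite[Lemma 3.14]{Cas+25b}, which is stated in the more general setting of algebraically integrable adjoint foliated structures.

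Your route is the elementary, self-contained verification for generalized pairs, and it has two steps:
\begin{enumerate}
\item You show that $(\psi_*\Aa',\Exc(\psi^{-1}))$ coincides with $(\phi_*\Aa,\Exc(\phi^{-1}))$ for $\phi=\psi\circ h^{-1}$, by checking coefficients prime divisor by prime divisor on $X''$.
\item You transport the discrepancy conditions for prime divisors on $X$ to their strict transforms on $X'$, using $K_{\Aa'}=h^*K_{\Aa}$.
\end{enumerate}

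The citation gives generality to the foliated setting at no cost. Your argument makes visible exactly where the hypothesis enters. The lc-place condition is what forces $B'=h^{-1}_*B+\Exc(h)$. This ensures that the $h$-exceptional divisors not contracted by $\psi$ carry coefficient $1$ in both descriptions of the boundary on $X''$. It also guarantees $B'\geq 0$, so that $\Aa'$ is a generalized pair at all.
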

\begin{proof}
This is a special case of \cite[Lemma 3.14]{Cas+25b}.
\end{proof}

\subsection{Models under MMP}

The following theorem shows that existence of a bs-weak lc model is equivalent to the existence of a minimal model when the ambient variety is klt.

\begin{thm}\label{thm: bswlc imply mm}
    Let $\Aa/U$ be an lc generalized pair with ambient variety $X$. Assume that $X$ is potentially klt and $\Aa/U$ has a KNQC bs-weak lc model (resp. bs-semi-ample model). Then any sequence of steps of a $K_{\Aa}$-MMP$/U$ with scaling of an ample$/U$ $\mathbb R$-divisor terminates with a minimal model (resp. good minimal model) of $\Aa/U$. In particular, $\Aa/U$ has a $\mathbb Q$-factorial minimal model (resp.  $\mathbb Q$-factorial good minimal model).
\end{thm}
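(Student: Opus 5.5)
We follow the standard Birkar–Shokurov argument (cf.\ \cite[Theorem~4.1]{Bir12}, \cite[Lemma~3.15]{HL23}, and the foliated version in \cite{Cas+25b}), adapted to generalized pairs.

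First we reduce to the case where the bs-weak lc model is a log toroidal model dominating $X$. Let $\Aa'/U$ be the given KNQC bs-weak lc model with ambient variety $X'$ and induced map $\phi\colon X\dashrightarrow X'$. Take a log resolution $p\colon W\to X$ resolving $\phi$, with $q\colon W\to X'$, and let $\Aa_W$ be a log smooth model of $\Aa$ on $W$ (Lemma~\ref{lem: eolsm}). By Lemma~\ref{lem: foliation lsm has lmm}, a $K_{\Aa_W}$-MMP$/X'$ terminates with $\Aa_Y=q_Y^*\Aa'$. Thus $K_{\Aa_Y}$ is the pullback of the NQC$/U$ divisor $K_{\Aa'}$, hence NQC$/U$, and $\Aa_Y$ is a $\mathbb Q$-factorial qdlt minimal model of $\Aa/U$ (Lemma~\ref{lem: Bir12 2.8}). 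Comparing on a common resolution $r\colon W'\to X$, $s\colon W'\to Y$, Lemma~\ref{lem: Bir12 2.6} gives $r^*K_{\Aa}=s^*K_{\Aa_Y}+F$ with $F\ge 0$ exceptional$/Y$. So $K_{\Aa}$ has a Nakayama--Zariski decomposition with NQC positive part $s^*K_{\Aa_Y}$, and $N_\sigma(X/U,K_{\Aa})=r_*F$.

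Next we run the MMP. Since $X$ is potentially klt, we pass to a small $\mathbb Q$-factorialization (harmless, since it is an isomorphism in codimension one). By the cone and contraction theorems \cite{CHLX23} and the existence of flips in the $\mathbb Q$-factorial case \cite{CHLX23}, each step of a $K_{\Aa}$-MMP$/U$ with scaling of an ample$/U$ divisor $A$ exists. Let $\lambda_i$ be the scaling numbers and $\lambda=\lim\lambda_i$. The key claim is that the MMP contracts exactly the components of $N_\sigma$ after finitely many steps, and that $\lambda_i=0$ eventually. Following \cite[Theorem~4.1]{Bir12}, we use the NQC property of $K_{\Aa_Y}$ (write $K_{\Aa_Y}=\sum \mu_j N_j$ with $N_j$ nef Cartier and $\mu_j>0$) together with the length bound $K\cdot C\ge -2\dim X$ on extremal rays from the cone theorem \cite[Theorem~2.2.1]{CHLX23}. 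These imply that for $t$ small the MMP with scaling is $K_{\Aa}+tA$-trivial on the positive part, which is a standard argument of Shokurov polytope type. This shows that the divisors contracted by the MMP are exactly the components of $N_\sigma$, and that these are contracted after finitely many steps.

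After those finitely many steps, the MMP consists only of flips, and we compare with $\Aa_Y$. If $\lambda>0$, the MMP is also an MMP for $K_{\Aa}+\lambda A$, which has a good minimal model, and it terminates by the usual finiteness argument. If $\lambda=0$, we show termination by Birkar's argument: pass to a common log toroidal model and apply the ``$K_{\Aa_Y}$ NQC'' comparison to deduce that $K_{\Aa_i}$ is nef once the model is isomorphic in codimension one to $Y$. Here we use that nefness of a pullback of an NQC divisor is detected on finitely many extremal classes with bounded intersection numbers, as in Lemma~\ref{lem: ld nef impleis nqc}. Concretely, after the divisorial steps $X_i$ and $Y$ are isomorphic in codimension one, and $p_i^*K_{\Aa_i}-s^*K_{\Aa_Y}$ is anti-effective and exceptional, so it vanishes by the negativity lemma once $K_{\Aa_i}$ is nef. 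The NQC discreteness (intersection numbers lie in $\sum\mu_j\mathbb Z$, a set with no accumulation to $0$ from above when bounded below by $-2d$) forces $\lambda_i=0$ after finitely many steps. The outcome is a minimal model. In the bs-semi-ample case, Lemma~\ref{lem: Bir12 2.7}(2) transfers semi-ampleness to it.

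I expect the main obstacle to be the termination in the $\lambda=0$ case. Without NQC this fails in general, and the proof must use the NQC decomposition of $K_{\Aa_Y}$ to obtain a discreteness of intersection numbers along the scaling.
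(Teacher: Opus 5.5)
\textbf{The gap: termination when $\lambda=\lim\lambda_i=0$ is not proved.} Your preliminary reductions are fine: passing to a $\mathbb{Q}$-factorial qdlt minimal model $\Aa_Y$ with KNQC canonical class, and noting that for $i\gg0$ only flips occur. The statement about $N_\sigma$ follows from Lemma~\ref{lem: scaling basic properties}, not from the vague ``Shokurov polytope'' sentence.

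The problem is your final step. The discreteness you invoke belongs to $K_{\Aa_Y}$: its intersection numbers with curves on $Y$ lie in $\sum_j\mu_j\mathbb{Z}_{\ge0}$. The curves contracted along the MMP live on $X_i$ and are measured against $K_{\Aa_i}$. That divisor is neither NQC nor crepant to $K_{\Aa_Y}$; if $p_i^*K_{\Aa_i}=s^*K_{\Aa_Y}$ held, $K_{\Aa_i}$ would already be nef and the MMP would have stopped.

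Even granting some discreteness of $K_{\Aa_i}\cdot C_i\in[-2\dim X,0)$, it says nothing about $\lambda_i=-K_{\Aa_i}\cdot C_i/(A_i\cdot C_i)$, because $A_i\cdot C_i$ is uncontrolled. Discreteness arguments of this type, as in Lemmas~\ref{lem: ld nef impleis nqc} and~\ref{lem: can run scaling}, only show that nearby classes remain nef or that a nef threshold is attained by an extremal ray. They carry no information about the length of a sequence of flips.

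Note also the sign. Since $K_{\Aa_Y}$ is nef and $G_i:=p_i^*K_{\Aa_i}-s^*K_{\Aa_Y}$ is exceptional over $X_i$, the negativity lemma makes $G_i$ \emph{effective}. Its anti-effectivity is equivalent to nefness of $K_{\Aa_i}$, which is exactly what you need to prove.

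\textbf{The missing idea} is Birkar's comparison argument, in which the NQC hypothesis is used on the minimal-model side rather than on $X_i$. Schematically:
\begin{enumerate}
\item By Lemma~\ref{lem: trivial mmp}, for a fixed $0<t_0\ll1$ a $(K_{\Aa_Y}+t_0A_Y)$-MMP on $Y$ is $K_{\Aa_Y}$-trivial.
\item This MMP terminates via \cite{BCHM10}, after perturbing into a klt pair with big boundary on the $\mathbb{Q}$-factorial klt $Y$. Its output is a single model $Y'$ on which $K+tA$ is nef for all $t\in[0,t_0]$.
\item For $i\gg0$ with $\lambda_i<\lambda_{i-1}\le t_0$, both $X_i$ and $Y'$ are weak lc models of $(\Aa,tA)$ for two distinct values of $t$.
\item Lemma~\ref{lem: Bir12 2.7}(1) together with linearity in $t$ then shows that $K_{\Aa_i}$ is crepant to the nef divisor $K_{\Aa_{Y'}}$, a contradiction.
\end{enumerate}
This is \cite[Theorem~4.1]{Cas+25b}. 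Combined with the case $\lambda>0$ it gives \cite[Theorem~4.6]{Cas+25b}. In that case the potentially klt hypothesis turns $K_{\Aa}+\frac{\lambda}{2}A$ into a klt pair with big boundary, and \cite{BCHM10} applies.

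The paper does not reprove any of this. It cites \cite[Theorem~4.6]{Cas+25b} directly, uses Lemma~\ref{lem: Bir12 2.7}(2) for the good case, and takes a small $\mathbb{Q}$-factorialization of the \emph{output} model.

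\textbf{A secondary issue.} Your opening move to a small $\mathbb{Q}$-factorialization of $X$ is not harmless for an ``any sequence of steps'' statement. A given MMP on $X$ does not become an MMP on the $\mathbb{Q}$-factorialization without a lifting argument as in Proposition~\ref{prop: lift mmp afs}. Moreover, the pullback of the ample scaling divisor is no longer ample there.
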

\begin{proof}
By \cite[Theorem 4.6]{Cas+25b}, any sequence of steps of a $K_{\Aa}$-MMP$/U$ with scaling of an ample$/U$ $\mathbb R$-divisor terminates with a minimal model $\Aa'/U$ of $\Aa/U$. By Lemma \ref{lem: Bir12 2.7}, if $\Aa/U$ has a bs-semi-ample model, then $\Aa'/U$ is a good minimal model of $\Aa/U$. Since $X$ is potentially klt, the ambient variety $X'$ of $\Aa'$ is potentially klt. Thus there exists a small $\mathbb Q$-factorialization $\pi: X''\rightarrow X'$. $\pi^*\Aa'/U$ is a $\mathbb Q$-factorial minimal model (resp.  $\mathbb Q$-factorial good minimal model) of $\Aa/U$.
\end{proof}

\begin{lem}\label{lem: minimal model same after running mmp}
    Let $\Aa/U$ be an lc generalized pair and let $\phi: X\dashrightarrow X'$ be a $K_{\Aa}$-negative map$/U$ with $\Aa':=\phi_*\Aa$. Then:
    \begin{enumerate}
        \item Any (bs-)minimal model of $\Aa/U$ is a (bs-)minimal model of $\Aa'/U$.
        \item Any (bs-)minimal model of $\Aa'/U$ is a (bs-)minimal model of $\Aa/U$.
    \end{enumerate}
\end{lem}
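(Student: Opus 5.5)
The key comparison is the pullback of canonical divisors to a common resolution, using that $\phi$ is $K_{\Aa}$-negative. Let $p\colon W\to X$ and $q\colon W\to X'$ resolve $\phi$. By the definition of a $K_{\Aa}$-negative map, $p^*K_{\Aa}=q^*K_{\Aa'}+F$ with $F\geq 0$ and $\Supp p_*F=\Exc(\phi)$. Since $\phi$ extracts no divisors, this identity is equivalent to the discrepancy statements $a(D,\Aa)\leq a(D,\Aa')$ for every prime divisor $D$ over $X$, with strict inequality for every divisor $D$ on $X$ contracted by $\phi$.

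For a (bs-)minimal model $\Aa''/U$ with ambient variety $X''$, we may assume $W$ also resolves the maps to $X''$, via $r\colon W\to X''$. By Lemma~\ref{lem: Bir12 2.6}, $p^*K_{\Aa}=r^*K_{\Aa''}+G$ with $G\geq 0$ exceptional over $X''$. Condition (4) of Definition~\ref{defn: minimal model} then says that $\Supp p_*G$ is exactly the set of divisors on $X$ contracted to $X''$, and $G$ records the discrepancy differences.

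\textbf{(1).} Let $\Aa''$ be a (bs-)minimal model of $\Aa$. We have $q^*K_{\Aa'}=r^*K_{\Aa''}+(G-F)$.
\begin{enumerate}
\item \emph{$X''$ is a log birational model of $\Aa'$.} Every divisor on $X'$ is a divisor on $X$ not contracted by $\phi$. Every divisor on $X''$ is either on $X$, or an lc place of $\Aa$ when in the bs-setting. Divisors contracted by $\phi$ lie in $\Supp p_*F$, so they have $a(D,\Aa)<a(D,\Aa')\leq 0$ and are not lc places. I expect one needs $G-F\geq 0$ to conclude that they are also contracted on $X''$.
\item \emph{Positivity of $G-F$.} Apply the negativity lemma over $X''$: $G-F$ equals $q^*K_{\Aa'}-r^*K_{\Aa''}$, and since $K_{\Aa''}$ is nef, $q^*K_{\Aa'}$ pulls back from $X'$ on which $\ldots$. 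This is not direct. Instead, I will use the standard trick: $-(G-F)$ is nef over $X'$ because $r^*K_{\Aa''}$ is nef and $q^*K_{\Aa'}\equiv_{X'}0$. Its pushforward to $X'$ is $q_*G$. Since $F$ is $q$-exceptional and $q_*G\geq0$, the negativity lemma applied over $X'$ gives $G-F\geq 0$.
\item \emph{Strict inequalities.} For divisors on $X'$ contracted on $X''$, the coefficient of $G-F$ equals that of $G$, which is positive because such a divisor is not $\phi$-exceptional. This gives condition (4) for $\Aa'$.
\item \emph{Extracted divisors.} The boundary coefficients of extracted divisors agree, since $\Exc$ is taken relative to $X$ and $X'$ consistently. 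In the non-bs case nothing is extracted.
\end{enumerate}

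\textbf{(2).} Let $\Aa''$ be a (bs-)minimal model of $\Aa'$, so $q^*K_{\Aa'}=r^*K_{\Aa''}+G'$ with $G'\geq 0$ by Lemma~\ref{lem: Bir12 2.6}. Adding gives $p^*K_{\Aa}=r^*K_{\Aa''}+(G'+F)$ with $G'+F\geq 0$, and strict positivity holds on every divisor of $X$ contracted on $X''$:
\begin{itemize}
\item divisors contracted by $\phi$ are covered by $F$;
\item divisors surviving on $X'$ but contracted on $X''$ are covered by $G'$.
\end{itemize}
The boundary on $X''$ is the pushforward of that of $\Aa$ plus the exceptional divisors, which coincides with the definition of the boundary of $\Aa''$. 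This is the routine bookkeeping step.

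The main obstacle I expect is in (1): checking that $X''$ does not contain divisors that are exceptional over $X'$ with the wrong coefficient. This is exactly what the negativity argument over $X'$ is designed to control, so that is the step I would check first.
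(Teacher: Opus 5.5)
Your argument is correct in substance, and it proves directly what the paper only cites: the paper's proof is the single line invoking \cite[Lemma 3.15]{Cas+25b}, which is stated there for algebraically integrable adjoint foliated structures. Your mechanism is the standard one. You compare $p^*K_{\Aa}=q^*K_{\Aa'}+F$ with $p^*K_{\Aa}=r^*K_{\Aa''}+G$ from Lemma~\ref{lem: Bir12 2.6}. You then apply the negativity lemma over $X'$ to $G-F$, which is anti-nef over $X'$ and satisfies $q_*(G-F)=q_*G\geq 0$ because $F$ is $q$-exceptional. The citation buys brevity and generality; your route gives a self-contained proof that uses only Lemma~\ref{lem: Bir12 2.6}, the negativity lemma and the definitions.

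Several steps need tightening.

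In (1), drop the claim $a(D,\Aa')\leq 0$ for $\phi$-exceptional $D$. It is false in general: the blow-down of a point on a smooth surface is $K_X$-negative, yet the exceptional curve $E$ has $a(E,X')=1$. It is also not what you need. What you need, and should state explicitly, is the consequence of $G-F\geq 0$. If a $\phi$-exceptional divisor $D$ of $X$ survived on $X''$, its strict transform on $W$ would have $G$-coefficient $0$, since $G$ is exceptional over $X''$, and positive $F$-coefficient, which is impossible. Only after this do the divisors of $X''$ exceptional over $X'$ coincide with those exceptional over $X$. That is what makes $\Aa''$ the log birational model of $\Aa'$, and, in the non-bs case, ensures $X'\dashrightarrow X''$ extracts nothing.

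In (2), first note that $\Aa'$ is lc, which follows from $F\geq 0$; this is needed for Lemma~\ref{lem: Bir12 2.6} to apply to $\Aa'$. The step you call routine bookkeeping also hides the analogous issue in the bs-setting. A $\phi$-exceptional divisor $D$ could a priori be re-extracted by $X'\dashrightarrow X''$, where it would carry coefficient $1$ in $\Aa''$ rather than $\mult_D B$. This is excluded as follows. Divisors extracted by $X'\dashrightarrow X''$ have $G'$-coefficient $0$, so they are lc places of $\Aa'$. On the other hand, $-1\leq a(D,\Aa)<a(D,\Aa')$, because $\Aa$ is lc and $D$ is a component of $\Supp p_*F$.

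With these additions both parts go through.
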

\begin{proof}
This is a special case of \cite[Lemma 3.15]{Cas+25b}.
\end{proof}

\begin{lem}\label{lem: scaling basic properties}
Let $\Aa/U$ be an lc generalized pair. Let 
$$\phi_i: \Aa_i\dashrightarrow \Aa_{i+1},\quad \Aa_1:=\Aa$$ 
be a sequence of steps of a $K_{\Aa}$-MMP$/U$ with scaling of some $\mathbb R$-divisor $C$. Let $X_i$ be the ambient variety of $\Aa_i$ and $C_i$ be the image of $C$ on $X_i$ for each $i$, and let
$$\lambda_i:=\inf\{s\geq 0\mid K_{\Aa_i}+sC_i\text{ is nef}/U\}$$
be the scaling numbers of this MMP. Assume this MMP does not terminate. Let $\lambda:=\lim_{i\rightarrow+\infty}\lambda_i$. Then for any $i\gg 0$, we have the following.
\begin{enumerate}
    \item $\phi_i$ is a flip.
    \item If $\lambda=0$, then $K_{\Aa_i}$ is movable$/U$.
    \item Assume that $\lambda=0$ and $\Aa/U$ has a bs-minimal model (resp. minimal model) $\Aa'/U$ with ambient variety $X'$. Let $\psi_i: X_i\dashrightarrow X'$ be the induced birational map. Then $\psi_i$ does not contract any divisor (resp. $\psi_i$ is small).
\end{enumerate}
\end{lem}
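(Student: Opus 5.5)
The argument follows the standard proof for pairs (cf. \cite[Lemma 2.9, Theorem 2.10]{Bir12}, \cite[Lemma 3.16]{Cas+25b}). We first use that the scaling numbers are non-increasing, $\lambda_1\geq\lambda_2\geq\cdots\geq\lambda$, and that each step is $(K_{\Aa_i}+\lambda_iC_i)$-trivial and $K_{\Aa_i}$-negative.

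\textbf{(1).} Each divisorial contraction decreases the Picard number $\rho(X_i/U)$ by one. Moreover, flips preserve $\rho$, and since $\phi_i$ is defined on a small modification, no step increases $\rho$. Hence there are only finitely many divisorial contractions. Equivalently, the number of prime divisors on $X_i$ strictly drops at each divisorial step and never increases. Therefore $\phi_i$ is a flip for all $i\gg 0$.

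\textbf{(2).} Assume $\lambda=0$ and fix $i_0$ such that all $\phi_i$ with $i\geq i_0$ are flips. For $j\geq i\geq i_0$ the map $X_i\dashrightarrow X_j$ is small, and $K_{\Aa_j}+\lambda_jC_j$ is nef$/U$. Pulling back to a common resolution, $K_{\Aa_i}+\lambda_jC_i$ is the strict transform of a nef divisor under a small map, hence movable$/U$: we have $\sigma_P(X_i/U,K_{\Aa_i}+\lambda_jC_i)=0$ for every prime $P$, because small maps identify the divisors and nef divisors have $\sigma_P=0$. Letting $j\to\infty$, we get $\lambda_j\to 0$, and the lower semicontinuity of $\sigma_P$ (cf. \cite[Lemma 3.4]{LX25}, the relative version of Nakayama's results) gives $\sigma_P(X_i/U,K_{\Aa_i})=0$. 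If $K_{\Aa_i}$ is only pseudo-effective, we instead use that $\sigma_P$ is a limit along $D+\epsilon A$, which is where the scaling by $C$ matters. This limit argument is the main technical point and requires care in the relative $\mathbb R$-Cartier setting.

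\textbf{(3).} Let $\Aa'/U$ be a bs-minimal model. Suppose $\psi_i$ contracts a prime divisor $D$ on $X_i$ for some $i\gg0$. Take a common resolution $p:W\to X_i$, $q:W\to X'$. By Lemma \ref{lem: Bir12 2.6}, $p^*K_{\Aa_i}=q^*K_{\Aa'}+E$ with $E\geq0$ exceptional$/X'$. Here we use Lemma \ref{lem: minimal model same after running mmp}, which says $\Aa'$ is also a bs-minimal model of $\Aa_i$, so condition (4) gives $\mult_{D_W}E>0$ for the strict transform $D_W$ of $D$. Since $q^*K_{\Aa'}$ is nef$/U$, the standard negativity argument yields $N_\sigma(W/U,p^*K_{\Aa_i})\geq E$ on $q$-exceptional divisors: $E$ is the fixed part of the Zariski decomposition relative to a nef piece. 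Hence $\sigma_D(X_i/U,K_{\Aa_i})>0$, contradicting (2).

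For the minimal model case, $\psi_i^{-1}$ also extracts no divisor, since $\phi$ extracts none and all $X_i\dashrightarrow X$ contract only. Hence $\psi_i$ is small. The obstacle is the inequality $\sigma_{D}\geq \mult_{D_W}E$; we plan to deduce it from \cite[Lemma 3.4]{Cas+25b}-type arguments, or directly from the negativity lemma applied to $q^*K_{\Aa'}+\epsilon H$ for ample $H$.
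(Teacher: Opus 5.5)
Your route is the standard one; the paper gives no argument of its own beyond citing \cite[Lemma 3.16]{Cas+25b}. Part (2) and the bs-half of (3) go through, and the facts you flag as obstacles are standard properties of $N_\sigma$:
\begin{itemize}
\item $N_\sigma$ is unchanged under a small map between $\mathbb{R}$-Cartier transforms. Here $C_i$ is $\mathbb{R}$-Cartier because $\lambda_{i-1}>0$.
\item The limit is handled by writing $K_{\Aa_i}+\epsilon A=(K_{\Aa_i}+\lambda_jC_i)+(\epsilon A-\lambda_jC_i)$, where the second summand is ample$/U$ for $j\gg0$.
\item $N_\sigma(W/U,q^*K_{\Aa'}+E)=E$ for $E\geq 0$ exceptional$/X'$ (cf.\ \cite[Lemma 3.4]{LX25}).
\end{itemize}

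The real gap is in (1): the lemma has no $\mathbb{Q}$-factoriality hypothesis. The rules ``divisorial steps lower $\rho(X_i/U)$ by one, flips preserve it'' are $\mathbb{Q}$-factorial facts. They are not available when a step is $X_i\to Z_i\leftarrow X_{i+1}$ with $X_{i+1}$ merely the ample model over $Z_i$. Your fallback also fails, because the set of prime divisors on $X_i$ is infinite, so counting them bounds nothing. You need a genuinely finite bound, for example:
\begin{itemize}
\item Fix a resolution $\mu\colon W\to X$, and let $D_1,\dots,D_k$ be the prime divisors contracted by $X\dashrightarrow X_i$.
\item Suppose $\sum_j a_j\mu^{-1}_*D_j+F\equiv_U 0$ with $F$ $\mu$-exceptional. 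On a resolution of the graph of $X\dashrightarrow X_i$ dominating $W$, this pulls back to a divisor that is exceptional over $X_i$, since every divisor of $X_i$ is a non-contracted divisor of $X$.
\item That pullback is numerically trivial over $X_i$, hence zero by the negativity lemma, so all $a_j=0$.
\end{itemize}
Thus $k\leq\rho(W/U)$. Since no step extracts divisors, only finitely many steps can fail to be small.

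In the minimal-model half of (3), your reason (``$\phi$ extracts no divisor and $X\dashrightarrow X_i$ only contracts'') does not show that $\psi_i$ extracts nothing. What must be excluded is that the MMP contracted a prime divisor $D$ of $X$ that is still a divisor on $X'$. You can get this in either of two ways:
\begin{itemize}
\item Invoke Lemma \ref{lem: minimal model same after running mmp}(1) in its non-bs form: a minimal model of $\Aa/U$ is a minimal model of $\Aa_i/U$, so $\psi_i$ extracts no divisor.
\item Argue directly: such a $D$ would satisfy $a(D,\Aa)<a(D,\Aa_i)\leq a(D,\Aa')=a(D,\Aa)$, a contradiction. The first inequality is the $K_{\Aa}$-negativity of $X\dashrightarrow X_i$. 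The second follows from the negativity lemma over $X_i$ applied to $p^*K_{\Aa_i}-q^*K_{\Aa'}$: its negative is nef$/X_i$, and its pushforward to $X_i$ is effective because $\Aa'$ is a weak lc model of $\Aa$.
\end{itemize}
Combined with your argument that $\psi_i$ contracts no divisor for $i\gg0$, this gives smallness.
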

\begin{proof}
This is a special case of \cite[Lemma 3.16]{Cas+25b}.
\end{proof}

\subsection{LD models}

The following lemma shows that LD is a property that is preserved under taking minimal models or weak lc models.

\begin{lem}\label{lem: ld minimal model is ld}
    Let $\Aa/U$ be an LD generalized pair and $\Aa'/U$ a bs-weak lc model of $(X,B,\Mm)/U$. Then $\Aa'/U$ is LD. In particular, $K_{\Aa'}$ is NQC$/U$.
\end{lem}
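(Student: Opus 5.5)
Let $\{V,\bm{c},\Aa(\cdot)\}$ be the polytopal of $\Aa=(X,B,\Mm)$, let $\phi: X\dashrightarrow X'$ be the induced birational map, and write $\Aa'=(\phi_*\Aa,\Exc(\phi^{-1}))=(X',B',\Mm)$. I will produce an explicit decomposition of $\Aa'$ as in Lemma \ref{lem: equivalent definition of LD}(3). Once $\Aa'$ is LD and $K_{\Aa'}$ is nef$/U$, Lemma \ref{lem: ld nef impleis nqc} gives that $K_{\Aa'}$ is NQC$/U$.

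\textbf{Step 1: transfer the family to $X'$.} For $\bm{v}\in V$ set $\Aa'(\bm{v}):=(\phi_*\Aa(\bm{v}),\Exc(\phi^{-1}))$, so $\Aa'(\bm{c})=\Aa'$. By Lemma \ref{lem: LD preserves lc center}(2), for $\bm{v}$ near $\bm{c}$ the pair $\Aa(\bm{v})$ is lc. The first issue is that $K_{\Aa'(\bm{v})}$ must be $\mathbb R$-Cartier. I will take a common resolution $p: W\to X$, $q: W\to X'$ and write $p^*K_{\Aa}=q^*K_{\Aa'}+E$ with $E\ge0$ exceptional$/X'$ (Lemma \ref{lem: Bir12 2.6}). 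The divisor $K_{\Aa(\bm{v})}-K_{\Aa}=D(\bm{v})-D$ is a fixed linear combination of the $D_i$ with coefficients depending linearly on $\bm{v}$. By \cite[Lemma 5.3]{HLS24}, its rational-basis components $D_0,\dots$ (as in the proof of Lemma \ref{lem: +ample is ld}) are $\mathbb Q$-Cartier, and they are numerically trivial on any curve on which $K_{\Aa}$ is. I therefore want to show that $\phi_*$ of these components is $\mathbb Q$-Cartier and that $p^*(D(\bm{v})-D)-q^*\phi_*(D(\bm{v})-D)$ is small in $\bm{v}-\bm{c}$. This is cleanest after reducing via Lemmas \ref{lem: eolsm}, \ref{lem: Bir12 2.8}, \ref{lem: foliation lsm has lmm} and \ref{lem: LD under dlt model}. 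Concretely, I replace $\Aa$ by a proper LD log toroidal model $\Aa_W$ (LD by Definition-Lemma \ref{deflem: eoltm}) with $W\to X'$ a morphism. Lemma \ref{lem: foliation lsm has lmm} then gives a $K_{\Aa_W}$-MMP$/X'$ ending with $\Aa_Y=q^*\Aa'$. By Lemma \ref{lem: LD preserve under mmp}, $\Aa_Y$ is LD. By Lemma \ref{lem: LD under dlt model} applied to $q: Y\to X'$, $\Aa'$ is LD, provided $q$ is a $\mathbb Q$-factorial qdlt modification, i.e. only extracts lc places of $\Aa'$.

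\textbf{Step 2: checking the extraction condition.} Since $\Aa_Y=q^*\Aa'$, every $q$-exceptional divisor $F$ has coefficient in $B_Y$ equal to $-a(F,\Aa')$. These divisors are among those of $\Exc(W\to X)$ and the lc places on $W$. Divisors of $\Aa_W$ with coefficient $<1$ that are exceptional over $X'$ satisfy $a(F,\Aa_W)<a(F,\Aa')$, and are therefore contracted by the MMP (this is the content of Lemma \ref{lem: foliation lsm has lmm} being a good minimal model over $X'$ with $K\sim_{X'}0$, together with Theorem \ref{thm: very exceptional mmp}). Hence the remaining $q$-exceptional divisors are lc places, and $Y$ is $\mathbb Q$-factorial qdlt since $\Aa_Y$ came from a log toroidal model via an MMP. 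I would double check this using the bs-weak lc condition $a(D,\Aa)\le a(D,\Aa')$.

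\textbf{Expected obstacle.} The delicate point is Step 2 in the bs-setting: $\phi$ may extract divisors, and $\Exc(\phi^{-1})$ is given coefficient $1$. I need the divisors surviving on $Y$ over $X'$ to be exactly lc places of $\Aa'$, so that Lemma \ref{lem: LD under dlt model} applies. If that fails, I will fall back to directly decomposing. I take $\Aa_Y=\sum a_i\Aa_{Y,i}$ with $\mathbb Q$-Cartier lc pieces, where $\Aa_{Y,i}$ are images of $\Aa_W(\bm{v}_i)$ and are $K$-trivial$/X'$ by Lemma \ref{lem: ld preserves trivialness}. I then push forward to $X'$ via $q_*$, using $K_{\Aa_{Y,i}}\sim_{\mathbb Q,X'}0$ to get $\mathbb Q$-Cartierness and lc-ness on $X'$. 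Lemma \ref{lem: equivalent definition of LD}(3) then concludes.
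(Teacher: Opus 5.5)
\paragraph{A gap in Step~2.}
Your Step~2 does not hold in the generality of the statement. For a bs-weak lc model, the condition on a $\phi$-exceptional prime divisor $D$ on $X$ is only $a(D,\Aa)\le a(D,\Aa')$, and equality is allowed. For example, take $K_{\Aa}$ nef$/U$ and $\phi$ a $K_{\Aa}$-trivial divisorial contraction of a divisor that is not an lc place.

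Put $g\colon W\to X'$, let $\psi\colon W\dashrightarrow Y$ be the MMP of Lemma~\ref{lem: foliation lsm has lmm}, and set $F_W:=K_{\Aa_W}-g^*K_{\Aa'}\ge 0$. Since $\Aa_Y=q^*\Aa'$, you have $\psi_*F_W=K_{\Aa_Y}-q^*K_{\Aa'}=0$. Moreover, any contracted divisor $P$ satisfies $a(P,\Aa_W)<a(P,\Aa_Y)=a(P,\Aa')$. So $\psi$ contracts exactly $\Supp F_W$.

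For the strict transform of a divisor $D$ as above, $\mult_D F_W=a(D,\Aa')-a(D,\Aa)=0$. Hence $D$ survives on $Y$ as a $q$-exceptional divisor with coefficient $-a(D,\Aa')$, which may be $<1$. Then $q$ extracts a non-lc place of $\Aa'$, so it is not a $\mathbb Q$-factorial qdlt modification, and Lemma~\ref{lem: LD under dlt model} cannot be applied. Your list of $q$-exceptional divisors (``among those of $\Exc(W\to X)$ and the lc places on $W$'') omits exactly these strict transforms. Your claim is correct for $h$-exceptional divisors, by condition (3) of Definition~\ref{defn: log toroidal models}, and for bs-minimal models, but not for bs-weak lc models.

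\paragraph{The fallback is a complete proof.}
Your fallback works and makes Step~2 unnecessary, so it should become the main argument.
\begin{enumerate}
\item Take the pieces in the polytopal family of $\Aa_Y$ itself, $\Aa_{Y,i}=\Aa_Y(\bm v_i)$ as in the proof of Lemma~\ref{lem: equivalent definition of LD}. Then Lemma~\ref{lem: ld preserves trivialness} applies directly; the images of $\Aa_W(\bm v_i)$ do lie in this family.
\item Each $K_{\Aa_{Y,i}}$ is a $\mathbb Q$-divisor with $K_{\Aa_{Y,i}}\sim_{\mathbb R,X'}0$, hence $K_{\Aa_{Y,i}}\sim_{\mathbb Q,X'}0$.
\item Therefore $K_{q_*\Aa_{Y,i}}=q_*K_{\Aa_{Y,i}}$ is $\mathbb Q$-Cartier and $q^*K_{q_*\Aa_{Y,i}}=K_{\Aa_{Y,i}}$, so $q_*\Aa_{Y,i}$ is lc.
\item Finally $\sum a_i\,q_*\Aa_{Y,i}=q_*\Aa_Y=\Aa'$. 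Lemma~\ref{lem: equivalent definition of LD}(3) gives LD, and Lemma~\ref{lem: ld nef impleis nqc} gives the NQC part.
\end{enumerate}

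\paragraph{Comparison with the paper.}
The paper follows the route you began in Step~1 and then abandoned. On a common resolution $p,q$ of $\phi$, it sets $\Aa'(\bm v):=(\phi_*\Aa(\bm v),\Exc(\phi^{-1}))$. It notes that $E(\bm v):=p^*K_{\Aa(\bm v)}-q^*K_{\Aa'(\bm v)}$ is $\mathbb Q$-affine on $V$ with $E(\bm c)\ge 0$. It deduces $E(\bm v)\ge 0$ for $\bm v$ near $\bm c$, and hence that $\Aa'(\bm v)$ is lc there.

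That argument is shorter and uses no MMP. It does, however, tacitly rely on two facts:
\begin{itemize}
\item $K_{\Aa'(\bm v)}$ is $\mathbb R$-Cartier, which follows from \cite[Lemma 5.3]{HLS24} applied on $X'$;
\item a $\mathbb Q$-affine function vanishing at $\bm c$ vanishes on the whole rational envelope $V$, so zero coefficients of $E(\bm c)$ stay zero.
\end{itemize}
Your fallback gets both $\mathbb Q$-Cartierness and the crepant comparison for free from relative triviality over $X'$. The price is invoking Lemmas~\ref{lem: foliation lsm has lmm} and~\ref{lem: LD preserve under mmp}.
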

\begin{proof}
Let $X$ and $X'$ be the ambient variety of $\Aa$ and $\Aa'$ respectively and $\phi: X\dashrightarrow X'$ be the associated birational map$/U$. Let $\{V,\bm{c},\Aa(\cdot)\}$ be the polytopal of $\Aa$ and let $\Aa'(\bm{v}):=(\phi_*\Aa(\bm{v}),\Exc(\phi^{-1}))$ for any $\bm{v}\in V$. Let $p: W\rightarrow X$ and $q: W\rightarrow X'$ be a resolution of indeterminacy of $\phi$ and write
$$p^*K_{\Aa(\bm{v})}=q^*K_{\Aa'(\bm{v})}+E(\bm{v})$$
for some $\mathbb R$-divisor $E(\bm{v})$. Then $E(\cdot): V\rightarrow\Weil_{\mathbb R}(W)$ is $\mathbb Q$-affine, and we have $E(\bm{c})\geq 0$. Thus $E(\bm{v})\geq 0$ for any $\bm{v}\in V$ and $||\bm{v}-\bm{c}||\ll 1$. Since $\Aa(\bm{v})$ is lc for any $\bm{v}\in V$ such that $||\bm{v}-\bm{c}||\ll 1$, $\Aa'(\bm{v})$ is lc for any $\bm{v}\in V$ such that $||\bm{v}-\bm{c}||\ll 1$. Thus $\Aa'=\Aa'(\bm{c})$ is LD. The in particular part follows from Lemma \ref{lem: ld nef impleis nqc}.
\end{proof}

\section{Minimal model program with scaling}\label{sec: mmp with scaling}

The goal of this section is to prove that, for LD generalized pairs, existence of minimal models is equivalent to the termination of MMP with scaling (Theorem \ref{thm: eomm implies tof with scaling}). Such a result was proved for lc pairs in \cite[Theorem 1.9]{Bir12}, for NQC generalized pairs in \cite[Theorem 4.1]{HL22} and \cite[Theorem 4.1]{LT22}, and more generally for algebraically integrable adjoint foliated structures in \cite[Theorem 4.1]{Cas+25b}. We also need to show that a similar result holds for MMP over a non-empty open subset (Lemma \ref{lem: termination over open subset}), and construct a special MMP with scaling numbers always decreasing to $0$ (Lemma \ref{lem: construct mmp with scaling to 0}).

\subsection{Existence of MMP with scaling}

\begin{lem}[{cf. \cite[Lemma 4.1]{Hu25}}]\label{lem: can run scaling}
Let $\Aa/U$ be an lc generalized pair and $(\Aa,C)/U$ an LD generalized pair for some $\mathbb R$-divisor $C$. Assume that $K_{\Aa}+C$ is nef$/U$. Let
$$\lambda:=\inf\{t\mid t\geq 0, K_{\Aa}+tC\text{ is nef}/U\}.$$
Then either $\lambda=0$ and $K_{\Aa}$ is nef$/U$, or there exists a $K_{\Aa}$-negative extremal ray $R\in\overline{NE}(X/U)$ such that $(K_{\Aa}+\lambda C)\cdot R=0$.
\end{lem}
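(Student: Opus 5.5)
Suppose $\lambda>0$. The plan is to use the LD structure of $(\Aa,C)$ to write $K_{\Aa}+C$ as a convex combination of $\mathbb Q$-classes with bounded denominators. Then the cone theorem of \cite{CHLX23}, applied to a nearby lc generalized pair, together with a discreteness argument in the style of the proof of Lemma~\ref{lem: ld nef impleis nqc}, should show that only finitely many extremal rays can be relevant near $\lambda$.

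\textbf{Step 1 (decomposition).} Write $(\Aa,C)=\sum_{i=1}^n a_i\Aa_i$ as in Lemma~\ref{lem: equivalent definition of LD}. Each $\Aa_i=(X,B_i,\Mm)$ is lc, $K_{\Aa_i}$ is $\mathbb Q$-Cartier, $a_i>0$ and $\sum a_i=1$. Fix an integer $I>0$ with $IK_{\Aa_i}$ Cartier for all $i$, and set $d=\dim X$. For $t\in[0,1]$ we have
$$K_{\Aa}+tC=(1-t)K_{\Aa}+t(K_{\Aa}+C),$$
which is a combination of $K_{\Aa}$ and the $K_{\Aa_i}$.

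\textbf{Step 2 (reduction to a dichotomy of rays).} Let $R$ be an extremal ray with $(K_{\Aa}+\lambda C)\cdot R=0$ and $K_{\Aa}\cdot R<0$, if such a ray exists; this is what we want. So suppose no such ray exists. Then for $t$ slightly less than $\lambda$, the divisor $K_{\Aa}+tC$ is negative on some extremal ray $R_t$. Since $K_{\Aa}+\lambda C$ is nef, we get $(K_{\Aa}+\lambda C)\cdot R_t\ge 0$, and since $K_{\Aa}+tC$ interpolates linearly, $K_{\Aa}\cdot R_t<0$.

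By the cone theorem \cite[Theorem 2.2.1]{CHLX23} applied to the lc generalized pair $\Aa$, each such $K_{\Aa}$-negative extremal ray is spanned by a rational curve $\Gamma$ with $0<-K_{\Aa}\cdot \Gamma\le 2d$. We can also apply the cone theorem to the $\Aa_i$. Any ray $R$ is either $K_{\Aa_i}$-nonnegative for all $i$, or it is spanned by a curve $\Gamma$ with $K_{\Aa_i}\cdot\Gamma\ge -2d$ for all $i$ (this is the same argument as Case 3 of Lemma~\ref{lem: ld nef impleis nqc}). In the first case, $(K_{\Aa}+C)\cdot R\ge0$ automatically. In the second case,
$$I(K_{\Aa}+C)\cdot\Gamma\in\Gamma_0:=\Big\{\sum a_i\gamma_i \;\Big|\; \gamma_i\in\mathbb Z_{\ge -2dI}\Big\},$$
a set whose positive part has a positive infimum $\gamma_0$.

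\textbf{Step 3 (the main obstacle: controlling $K_{\Aa}\cdot\Gamma$ for a single curve).} This step needs one curve $\Gamma$ that simultaneously has bounded $K_{\Aa}$-degree and bounded-below $K_{\Aa_i}$-degrees, which the cone theorem for separate pairs does not directly give. I would first try the cone theorem for the combination $\Aa^\epsilon:=(1-\epsilon)\Aa+\epsilon(\Aa,C)$, which is lc for $\epsilon\in[0,1]$. Alternatively, I would use the length bound from \cite{CHLX23} for the whole convex family, to get a spanning curve with $K_{\Aa}\cdot\Gamma\ge -2d$ and $K_{\Aa_i}\cdot\Gamma\ge-2d$.

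Granting such a $\Gamma$, the threshold at which $R_t$ becomes relevant is
$$t_R=\frac{-K_{\Aa}\cdot\Gamma}{C\cdot\Gamma}=\frac{-K_{\Aa}\cdot\Gamma}{(K_{\Aa}+C)\cdot\Gamma-K_{\Aa}\cdot\Gamma}.$$
Here $0<-K_{\Aa}\cdot\Gamma\le 2d$ and $(K_{\Aa}+C)\cdot\Gamma\in I^{-1}\Gamma_0\cap[0,\infty)$. If $(K_{\Aa}+C)\cdot\Gamma=0$, the relevant ray is $(K_{\Aa}+C)$-trivial and then $t_R=1$. Otherwise it is at least $\gamma_0/I$. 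So
$$\lambda-t_R\ \ge\ \frac{c}{\text{bounded}}\quad\text{whenever } t_R<\lambda,$$
with some uniform gap $c>0$ depending only on $d,I,\gamma_0$ and $\lambda$. This is incompatible with $t_R\to\lambda$ as $t\to\lambda^-$.

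Hence $\lambda$ is attained as a supremum of the $t_R$, so some $K_{\Aa}$-negative extremal ray $R$ has $(K_{\Aa}+\lambda C)\cdot R=0$. If $\lambda=0$, then $K_{\Aa}$ is nef by definition of $\lambda$, which gives the other alternative.
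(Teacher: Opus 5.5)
Your proposal has a genuine gap, and it is not only the curve-selection issue you flagged. The ``uniform gap'' conclusion of Step 3 does not follow. All you know is $x:=-K_{\Aa}\cdot\Gamma\in(0,2d]$ and $y:=(K_{\Aa}+C)\cdot\Gamma\in I^{-1}\Gamma_0\cap[0,\infty)$, with $t_R=x/(x+y)$.

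When $\lambda=1$ this does give $1-t_R\geq \frac{\gamma_0/I}{2d+\gamma_0/I}$. When $\lambda<1$, however, nothing excludes rays with a fixed value $y>0$ (with $\lambda y/(1-\lambda)<2d$) and values $x\uparrow\lambda y/(1-\lambda)$. Then $t_R\uparrow\lambda$ and the supremum $\lambda=\sup_R t_R$ is never attained. The reason is that $\Aa$ itself is only lc, not LD. Thus $K_{\Aa}$ is an arbitrary $\mathbb R$-Cartier class, and the cone theorem only \emph{bounds} its degrees on extremal curves without giving them any DCC structure. Discreteness at the upper endpoint $K_{\Aa}+C$ alone does not suffice.

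The missing idea, which is exactly what the paper does, is to move the lower endpoint of the interpolation off $K_{\Aa}$. By Lemma~\ref{lem: combination of LD with lc g-pair}, both $(\Aa,\frac{\lambda}{2}C)$ and $(\Aa,\lambda C)$ are LD. The latter has nef canonical class, so by Lemma~\ref{lem: ld nef impleis nqc} it is a convex combination of lc generalized pairs with nef $\mathbb Q$-Cartier canonical classes.

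Now take a $(K_{\Aa}+\frac{\lambda}{2}C)$-negative extremal ray $R_k$ with a suitable spanning curve $C_k$. The degree $\alpha_k:=(K_{\Aa}+\frac{\lambda}{2}C)\cdot C_k$ lies in a set of the form $\{\sum_i a_il_i/I\mid l_i\in\mathbb Z_{\geq -2dI}\}\cap[-2d,0)$, with weights from the LD decomposition of $(\Aa,\frac{\lambda}{2}C)$, and this set is finite. The degree $\beta_k:=(K_{\Aa}+\lambda C)\cdot C_k$ lies in a DCC subset of $[0,\infty)$. Hence the thresholds $t_k=1/(1+\beta_k/(-\alpha_k))$ form an ACC set with supremum $1$. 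The supremum is therefore attained, which forces $\beta_{k_0}=0$ and $K_{\Aa}\cdot C_{k_0}=2\alpha_{k_0}<0$. Your $K_{\Aa}+C$ would also work as the upper endpoint; what is indispensable is an LD lower endpoint $K_{\Aa}+sC$ with $0<s<\lambda$.

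As for the obstacle you did flag, the fix is not the cone theorem for $(\Aa,\epsilon C)$, which says nothing about the individual summands. Instead, take $C_k$ of minimal degree, among curves spanning $R_k$, against a fixed ample$/U$ Cartier divisor. Every other spanning curve is numerically a multiple $\geq 1$ of it. So the bounds $\geq -2d$ obtained from the cone theorem applied separately to each lc $\mathbb Q$-summand all hold on this single curve.
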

\begin{proof}
Since nef$/U$ is a closed condition, $K_{\Aa}+\lambda C$ is nef$/U$.  We may assume that $\lambda>0$. Let $\Aa(s):=\left(\Aa,\frac{1+s}{2}\lambda C\right)$ for any $s\in\mathbb R$. By Lemma \ref{lem: combination of LD with lc g-pair}, $\Aa(0)$ and $\Aa(1)$ are LD. By Lemmas \ref{lem: equivalent definition of LD} and \ref{lem: ld nef impleis nqc}, we may write 
$$\Aa(0)=\sum_{i=1}^m a_i\Aa_i,\quad\text{ and }\quad \Aa(1)=\sum_{j=1}^nc_j\Cc_j,$$
such that $a_i,b_j\in (0,1]$, $\sum_{i=1}^ma_i=\sum_{j=1}^nc_j=1$, $\Aa_i,\Cc_j$ are lc and have the same nef part as $\Aa$, and $K_{\Aa_i},K_{\Cc_j}$ are nef$/U$ $\mathbb Q$-divisors. Let $I$ be a positive integer such that $IK_{\Aa_i}$ and $IK_{\Cc_j}$ are Cartier for any $i,j$. 

By \cite[Theorem 2.2.1]{CHLX23}, we may let $\{R_k\}_{k\in\Lambda}$ be the set of $K_{\Aa(0)}$-negative extremal rays in $\overline{NE}(X/U)$, and $C_k$ a curve which generates $R_k$ such that $$ -2\dim X\leq K_{\Aa_i}\cdot C_k,\quad -2\dim X\leq K_{\Cc_j}\cdot C_k$$ 
for any $i,j,k$. Let $\alpha_k:=K_{\Aa(0)}\cdot C_k$ and $\beta_k:=K_{\Aa(1)}\cdot C_k$ for each $k$. Then for each $k$, we have
$$-2\dim X\leq \alpha_k=K_{\Aa(0)}\cdot C_k=\sum_{i=1}^m\frac{a_il_{i,k}}{I}<0\quad \text{and}\quad \beta_k=K_{\Aa(1)}\cdot C_k=\sum_{j=1}^n\frac{c_jl_{i,k}'}{I}\geq 0,$$
where $l_{i,j},l_{i,k}'$ are integers, each $l_{i,j}\geq -2I\dim X$, and each $l_{i,j}'\geq 0$. Therefore, 
$$\Ii:=\{\alpha_k\}_{k\in\Lambda}\quad \text{and}\quad \Ii':=\{\beta_k\}_{k\in\Lambda}$$
are DCC sets. For any $k\in\Lambda$, let 
$$t_k:=-\frac{\alpha_k}{\beta_k-\alpha_k}=\frac{1}{1+\frac{\beta_k}{-\alpha_k}},$$ 
then $\{t_k\}_{k\in\Lambda}$ is an ACC set. We have
\begin{align*}
  1&=\inf\{s\geq 0\mid K_{\Aa(s)}\text{ is nef}/U\}=\inf\{s\geq 0\mid K_{\Aa(s)}\cdot C_k\geq 0,\forall k\}\\
  &=\sup\{t_k\}_{k\in\Lambda}=\max\{t_k\}_{k\in\Lambda}=t_{k_0}  
\end{align*}
for any $k_0\in\Lambda$. Thus $\beta_{k_0}=0$, and we take $R=R_{k_0}$.
\end{proof}

\subsection{MMP with scaling and existence of minimal models}

\begin{thm}\label{thm: eomm implies tof with scaling}
Let $\Aa/U$ be an lc generalized pair and $C\geq 0$ an $\mathbb R$-divisor on $X$ such that $(\Aa,C)$ is lc. Let 
$$\phi_i: \Aa_i\dashrightarrow\Aa_{i+1}, \Aa_1:=\Aa$$ be a $K_{\Aa}$-MMP$/U$ with scaling of $C$, $C_i$ the image of $C$ on $X_i$ for each $i$, and let
$$\lambda_i:=\inf\{s\geq 0\mid K_{\Aa_i}+sC_i\text{ is nef}/U\}$$
    be the scaling numbers, and let $\lambda_0:=1$. Then:
    \begin{enumerate}
        \item If $\Aa$ is LD, then $(\Aa_i,sC_i)$ is LD for any $i$ and any $s\in [0,\lambda_{i-1}]$ unless $s=\lambda_{i-1}=1$ and $(\Aa,C)$ is not LD.
        \item Assume that one of the following conditions hold.
        \begin{enumerate}
            \item $\Aa$ is LD and $\Aa/U$ has a bs-weak lc model.
            \item $K_{\Aa}$ is not pseudo-effective$/U$.
            \item $\Aa/U$ has a KNQC bs-weak lc model.
        \end{enumerate}
Then either this MMP terminates, or $\lim_{i\rightarrow+\infty}\lambda_i>0$.
    \end{enumerate}
\end{thm}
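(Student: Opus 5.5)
For part (1), I will argue by induction on $i$. The base case $i=1$ uses Lemma \ref{lem: combination of LD with lc g-pair}. If $(\Aa,C)$ is LD, then $(\Aa,sC)$ is LD for every $s\in(0,1]$. For $s=0$ this is just the hypothesis that $\Aa$ is LD. If $(\Aa,C)$ is not LD but $\Aa$ is LD and $(\Aa,C)$ is lc, I will show that $(\Aa,sC)$ is LD for $s\in(0,1)$. To do so, I will pass to a $\mathbb Q$-factorial dlt modification of $(\Aa,sC)$ and use Lemma \ref{lem: ld and irrationality}: for $s\in(0,1)$ we have $\Supp\{B+sC\}\supset\Supp\{B\}\supset\Supp\Mm^{\irr}_X$, where the second inclusion uses LD of $\Aa$ and Lemma \ref{lem: LD under dlt model}. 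This is the same convexity mechanism as in the proof of Lemma \ref{lem: combination of LD with lc g-pair}, with the endpoint $\Aa$ playing the role of $(\Aa,C)$.

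For the inductive step, $\phi_i$ is a step of a $(K_{\Aa_i}+\lambda_i C_i)$-trivial map and is $K_{\Aa_i}$-negative. It is therefore $(K_{\Aa_i}+sC_i)$-negative for all $s\in[0,\lambda_i)$ and $(K_{\Aa_i}+\lambda_iC_i)$-trivial. Lemma \ref{lem: LD preserve under mmp}, applied to the LD pairs $(\Aa_i,sC_i)$ for $s\in[0,\lambda_i]$ (these are LD since $\lambda_i\le\lambda_{i-1}$, excluding the exceptional endpoint), shows that $(\Aa_{i+1},sC_{i+1})$ is LD. The only delicate case is $s=\lambda_i=\lambda_{i-1}=1$, where LD can fail; this is exactly the stated exception.

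For part (2), I will argue by contradiction. Suppose the MMP does not terminate and $\lambda:=\lim\lambda_i=0$. By Lemma \ref{lem: scaling basic properties}, all steps $\phi_i$ for $i\gg0$ are flips and $K_{\Aa_i}$ is movable$/U$. Case (b) is immediate: $K_{\Aa_i}+\lambda_iC_i$ is nef, hence pseudo-effective, so $\lambda=0$ would force $K_{\Aa}$ to be pseudo-effective$/U$, a contradiction. Case (a) reduces to case (c): by Lemma \ref{lem: ld minimal model is ld}, any bs-weak lc model of the LD pair $\Aa$ is LD, and hence KNQC by Lemma \ref{lem: ld nef impleis nqc}.

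In case (c) I will follow the argument of \cite[Theorem 1.9]{Bir12} and \cite[Theorem 4.1]{Cas+25b}. By Lemma \ref{lem: g-pair weak glc imply lmm}, pass to a log minimal model $\Aa'$ (with KNQC class, via Lemma \ref{lem: Bir12 2.7}(1)). Take a common resolution $W$ of $X_i$ and $X'$, and compare $p^*(K_{\Aa_i}+\lambda_iC_i)$ with $q^*K_{\Aa'}$. By Lemma \ref{lem: scaling basic properties}(3), $X_i\dashrightarrow X'$ is an isomorphism in codimension one for $i\gg0$. Then the negativity lemma together with the nefness of $K_{\Aa_i}+\lambda_iC_i$ shows that the discrepancies stabilize in the limit. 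Once $\lambda_i$ is small, $\Aa_i$ is itself a minimal model in codimension one, and the flips must eventually stop. Since all of this is already proved in \cite[Theorem 4.1]{Cas+25b} for the KNQC case, I expect simply to invoke it.

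The main obstacle is the base case of part (1) when $(\Aa,C)$ is not LD: we must check that LD of $(\Aa,sC)$ for $s<1$ genuinely follows from LD of $\Aa$ alone. This requires verifying the support inclusion on a dlt model that simultaneously extracts lc places of $(\Aa,sC)$ and is compatible with $\Aa$.
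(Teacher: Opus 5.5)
Your proposal is correct and follows the same route as the paper. For (1) you induct on $i$ using Lemma \ref{lem: LD preserve under mmp}. For (2) you use the pseudo-effectivity argument in case (b), reduce (a) to (c) via Lemmas \ref{lem: ld minimal model is ld} and \ref{lem: ld nef impleis nqc}, and appeal directly to \cite[Theorem 4.1]{Cas+25b} in case (c). The base-case ``obstacle'' you flag is handled in the paper by applying Lemma \ref{lem: combination of LD with lc g-pair} to the lc pair $(\Aa,C)$ with the $\mathbb R$-Cartier divisor $-C$; the lemma does not require effectivity, and its proof is exactly the convexity argument you describe.
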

\begin{proof}
Let $\Aa_i(s):=(\Aa_i,sC_i)$ for any $s\in [0,1]$.

(1) We apply induction on $i$. When $i=1$, (1) follows from  Lemma \ref{lem: combination of LD with lc g-pair} as $\Aa_1(0)$ is LD and $\Aa_1(1)$ is lc. By induction on $i$, we may assume that $(\Aa_i,sC_i)$ is LD for any $s\in [0,\lambda_{i-1})$ and $(\Aa_i,\lambda_{i-1}C_i)$ is LD when $\lambda_{i-1}<1.$

We have that $\phi_i$ is a step of a $(K_{\Aa_i}+sC_i)$-MMP$/U$ for any $s\in [0,\lambda_i)$ and is $(K_{\Aa_i}+\lambda_{i}C_i)$-trivial. Since $\lambda_i\leq\lambda_{i-1}$, $(\Aa_i,sC_i)$ is LD for any $s\in [0,\lambda_i)$, and $(\Aa_i,\lambda_iC_i)$ is LD when $\lambda_i<\lambda_{i-1}$, or $\lambda_i=\lambda_{i-1}<1$, or $\lambda_i=\lambda_{i-1}=1$ and $(\Aa,C)$ is LD. By Lemma \ref{lem: LD preserve under mmp}, $(\Aa_{i+1},sC_{i+1})$ is LD for any $s\in [0,\lambda_i)$, and $(\Aa_{i+1},\lambda_iC_{i+1})$ is LD when $\lambda_i<1$ or $(\Aa,C)$ is LD. We are done by induction on $i$.

(2) We may assume that this MMP does not terminate and $\lim_{i\rightarrow+\infty}\lambda_i=0$.

If (2.b) holds, then there exists $\lambda>0$ such that $K_{\Aa}+\lambda C$ is not pseudo-effective$/U$, hence $K_{\Aa_i}+\lambda C_i$ is not pseudo-effective$/U$ for any $i$. We have $\lim_{i\rightarrow+\infty}\lambda_i>\lambda>0$. 

If (2.a) or (2.c) holds, then by Lemmas \ref{lem: ld nef impleis nqc} and \ref{lem: ld minimal model is ld}, (2.c) holds, and we get a contradiction to \cite[Theorem 4.1]{Cas+25b}.
\end{proof}

\begin{lem}\label{lem: termination over open subset}
Let $\Aa/U$ be a $\mathbb Q$-factorial LD qdlt generalized pair, $A$ an ample$/U$ $\mathbb R$-divisor on $X$, and $U^0\subset U$ a non-empty open subset. Let $\Aa^0:=\Aa\times_UU^0$. Assume that $\Aa^0/U^0$ has a bs-weak lc model. Then we may run a $K_{\Aa}$-MMP$/U$ with scaling of $A$, and any such MMP
$$\phi_i: \Aa_i\dashrightarrow\Aa_{i+1}, \Aa_1:=\Aa$$
terminates with a model $\Aa_n/U$ such that $\Aa_n^0/U^0:=\left(\Aa_n\times_UU^0\right)/U^0$ is a $\mathbb Q$-factorial qdlt minimal model of $\Aa^0/U^0$.
\end{lem}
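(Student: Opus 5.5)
We first check that the MMP can be run. Since $\Aa$ is $\mathbb Q$-factorial qdlt, $X$ is potentially klt by Lemma \ref{lem: qdlt is potentially klt}. Fix $A$ and choose, via Lemma \ref{lem: ld plus general base point free} after writing $A$ as a positive combination of general members of base-point-free$/U$ linear systems (replacing $A$ by an $\mathbb R$-linearly equivalent divisor), a representative with $(\Aa,A)$ LD and lc; after possibly scaling we may take $K_{\Aa}+A$ nef$/U$. By Theorem \ref{thm: eomm implies tof with scaling}(1), every $(\Aa_i,sA_i)$ with $s\in[0,\lambda_{i-1}]$ stays LD, except possibly the trivial endpoint case. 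Lemma \ref{lem: can run scaling} then produces at each step a $K_{\Aa_i}$-negative extremal ray $R$ with $(K_{\Aa_i}+\lambda_iA_i)\cdot R=0$. The contraction exists by \cite[Theorem~B]{CHLX23}. A divisorial contraction preserves $\mathbb Q$-factorial qdlt. For a flip, $\Aa_i$ is $\mathbb Q$-factorial, so the flip exists by the $\mathbb Q$-factorial case \cite{CHLX23}. So the MMP with scaling of $A$ can be run, and each $\Aa_i$ remains $\mathbb Q$-factorial LD qdlt by Lemma \ref{lem: LD preserve under mmp}.

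Next, compare with the open subset. Restricted over $U^0$, the sequence is a $K_{\Aa^0}$-MMP$/U^0$ with scaling of $A^0$, whose steps over $U^0$ are either genuine steps or isomorphisms. The scaling numbers over $U^0$ satisfy $\lambda_i^0\le\lambda_i$. The pair $\Aa^0$ is still LD, since LD is local and the defining decomposition restricts, and it has a bs-weak lc model. So Theorem \ref{thm: eomm implies tof with scaling}(2.a), applied to the induced MMP over $U^0$, gives the following: only finitely many steps are nontrivial over $U^0$, and after them $K_{\Aa_i^0}$ is nef$/U^0$ once $\lambda_i^0$ reaches $0$. More precisely, I would argue that over $U^0$ this is an MMP with scaling, so either it terminates or its scaling numbers have a positive limit $\mu$. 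In the latter case, $K_{\Aa^0}+\mu' A^0$ for $\mu'<\mu$ would have to be non-pseudo-effective. This contradicts the existence of a weak lc model, using the argument behind \cite[Theorem 4.1]{Cas+25b} together with Lemma \ref{lem: ld minimal model is ld}. So eventually $\Aa_i^0$ is a minimal model of $\Aa^0/U^0$; it is $\mathbb Q$-factorial qdlt since $X_i$ is.

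The main obstacle is the meaning of ``terminates'' over $U$: the MMP over $U$ might still run infinitely many steps, all of them isomorphisms over $U^0$. I would interpret the statement as saying that we stop at the first index $n$ with $K_{\Aa_n}+\lambda_nA_n$ and $\lambda_n$ such that $K_{\Aa_n}$ is nef over $U^0$. To guarantee that such an $n$ is reached, I would use the following argument. Suppose all steps from some index on are isomorphisms over $U^0$. Then every extremal ray $R$ contracted satisfies $(K_{\Aa_i}+\lambda_iA_i)\cdot R=0$, and its flipping locus lies over $U\setminus U^0$. The scaling numbers $\lambda_i^0$ then stabilize at a value with $K_{\Aa_i^0}+\lambda_i^0A_i^0$ nef. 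We would need to show that this value is $0$. This follows from the previous paragraph: the induced MMP over $U^0$ has only finitely many nontrivial steps, and its final scaling number, computed over $U^0$, must be $0$, because otherwise Lemma \ref{lem: can run scaling} applied over $U^0$ yields a further $K$-negative ray over $U^0$. Hence at some finite $n$, $K_{\Aa_n}$ is nef over $U^0$, and $\Aa_n^0$ is the desired minimal model.
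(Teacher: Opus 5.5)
There is a genuine gap: you never prove that the scaling numbers tend to $0$, and neither of your two attempts to get around this works.

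\emph{Your second paragraph.} You claim that a positive limit $\mu$ of the scaling numbers over $U^0$ forces $K_{\Aa^0}+\mu'A^0$ to be non-pseudo-effective for $\mu'<\mu$. Nothing in an MMP with scaling gives this. Moreover, \cite[Theorem 4.1]{Cas+25b}, which drives Theorem~\ref{thm: eomm implies tof with scaling}(2), only rules out non-termination when the limit is $0$. Lemma~\ref{lem: ld minimal model is ld} does not help with a positive limit either.

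\emph{Your third paragraph.} Suppose Lemma~\ref{lem: can run scaling} applied over $U^0$ produces a $K_{\Aa_n^0}$-negative ray over $U^0$. This contradicts nothing. The MMP over $U$ is free to keep contracting $(K_{\Aa_i}+\lambda_iA_i)$-trivial rays lying over $U\setminus U^0$ forever, with $\lambda_i\ge\lambda^0>0$, and it is never forced to pick the ray over $U^0$. Excluding exactly this infinite sequence is what has to be proved, so the argument is circular.

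The missing idea, which the paper uses, is a reduction to \cite{BCHM10}. Suppose the MMP does not terminate and $\lambda:=\lim\lambda_i>0$.
\begin{itemize}
\item $X$ is $\mathbb Q$-factorial klt (Lemma~\ref{lem: qdlt is potentially klt}) and $A$ is ample$/U$, so \cite[Lemma 3.4]{HL22} gives a klt pair $(X,\Delta)$ with $\Delta$ big$/U$ and $K_X+\Delta\sim_{\mathbb R,U}K_{\Aa}+\frac{\lambda}{2}A$.
\item Every step contracts a ray $R$ with $K_{\Aa_i}\cdot R<0$ and $(K_{\Aa_i}+\lambda_iA_i)\cdot R=0$. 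Since $\lambda_i\ge\lambda>\frac{\lambda}{2}$, each step is $(K_{\Aa_i}+\frac{\lambda}{2}A_i)$-negative.
\item Hence the sequence is a $(K_X+\Delta)$-MMP$/U$ with scaling of $A$. It terminates by \cite[Corollary 1.4.2]{BCHM10}, a contradiction.
\end{itemize}

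Once $\lambda=0$ is known, the rest of your outline goes through.
\begin{itemize}
\item If only finitely many steps are nontrivial over $U^0$, then $K_{\Aa_n^0}+\lambda_iA_n^0$ is nef$/U^0$ for all $i\ge n$. Hence $K_{\Aa_n^0}$ is nef$/U^0$.
\item If infinitely many steps are nontrivial over $U^0$, the induced sequence over $U^0$ is an MMP with scaling of $A^0$. At a nontrivial step the contracted locus meets $X^0$, so the scaling number over $U^0$ equals $\lambda_i$. Thus its scaling numbers tend to $0$, and Theorem~\ref{thm: eomm implies tof with scaling}(2.a), applied to the LD pair $\Aa^0$, gives a contradiction.
\end{itemize}
Your first paragraph, showing the scaling steps exist via Lemma~\ref{lem: can run scaling} and Theorem~\ref{thm: eomm implies tof with scaling}(1), is fine. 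Your reading of ``terminates'' also matches how the paper later uses this lemma.
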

\begin{proof}
Write $A=\sum r_iH_i$ where each $r_i\in (0,1)$ and each $H_i$ is ample and globally generated$/U$. Possibly replacing $A$ with $\sum r_iH_i'$ where each $H_i'\in |H_i|_U$ is general, we may assume that $(\Aa,A)$ is lc $\mathbb Q$-factorial qdlt and $K_{\Aa}+A$ is nef$/U$. Let $X_i$ be the ambient variety of $\Aa_i$ and $A_i$ the image of $A$ on $X_i$ for each $i$. We may assume that this MMP does not terminate. Let
$$\lambda_i:=\inf\{t\mid t\geq 0, K_{\Aa_i}+tA_i\text{ is nef}/U\}$$
be the scaling numbers of this MMP and let $\lambda:=\lim_{i\rightarrow+\infty}\lambda_i$. 

Assume that $\lambda>0$. Since $X$ is $\mathbb Q$-factorial klt, by \cite[Lemma 3.4]{HL22}, $K_{\Aa}+\frac{\lambda}{2}A\sim_{\mathbb R,U}K_{X_1}+\Delta$ for some klt pair $(X_1,\Delta)$ such that $\Delta$ is big$/U$, and $\phi_i$ is a sequence of steps of a $(K_{X_1}+\Delta)$-MMP$/U$ with scaling of $A$, which terminates by \cite[Corollary 1.4.2]{BCHM10}, a contradiction. Therefore, $\lambda=0$. 

We let $\Aa_i^0:=\Aa_i\times_UU^0,X_i^0:=X_i\times_UU^0$ for each $i$, $\phi_i^0:=\phi_i|_{X^0}$, $A_i^0:=A_i|_{X^0}$, and let
$$\mathcal{N}:=\{i\in\mathbb N^+\mid \phi_i\text{ is not the identity map over }U^0\}.$$ 
Since $\Aa/U$ is $\mathbb Q$-factorial qdlt, $\Aa_i^0$ is $\mathbb Q$-factorial qdlt for any $i$. Since each $\phi_i$ does not extract any divisor and is $K_{\Aa_i}$-negative, the induced birational map $X\dashrightarrow X_i$ does not extract any divisor and is $K_{\Aa}$-negative for any $i$, hence the induced birational map $X^0\dashrightarrow X_i^0$ does not extract any divisor and is $K_{\Aa^0}$-negative for any $i$.

There are two cases:

\medskip

\noindent\textbf{Case 1}. $\mathcal{N}$ is a finite set. In this case, let $n:=\max\{j\mid j\in\mathcal{N}\}+1$. Then $\phi_i^0$ is the identity map over $U^0$ for any $i\geq n$. In this case, $K_{\Aa_i^0}+\lambda_iA_i^0$ is nef$/U^0$ for any $i\geq n$, hence $K_{\Aa_n^0}+\lambda_iA_n^0$ is nef$/U^0$ for any $i\geq n$. Since $\lambda=0$, $K_{\Aa_n^0}$ is nef$/U^0$. Thus $\Aa_n^0/U^0$ is a $\mathbb Q$-factorial qdlt minimal model of $\Aa^0/U^0$.

\medskip

\noindent\textbf{Case 2}. $\mathcal{N}$ is not a finite set. We may write $\mathcal{N}=\{n_i\}_{i=1}^{+\infty}$ such that $n_i<n_{i+1}$ for each $i$, then we get a sequence of induced birational maps
$$\Aa^0\cong \Aa^0_{n_1}\dashrightarrow \Aa^0_{n_2}\dashrightarrow\dots\dashrightarrow \Aa^0_{n_i}\dashrightarrow\dots,$$
which is a sequence of the $K_{\Aa^0}$-MMP$/U^0$ with scaling of $A^0$. Since $\Aa$ is LD, $\Aa^0$ is LD. Since $\Aa^0/U^0$ has a bs-weak lc model, by Theorem \ref{thm: eomm implies tof with scaling}, this MMP terminates, a contradiction.
\end{proof}

\subsection{A special MMP with strictly decreasing scaling numbers}

\begin{lem}\label{lem: trivial mmp}
Let $d$ be a positive integer and $\epsilon$ a positive real number. Then $\delta:=\frac{\epsilon}{2d+\epsilon}$ satisfies the following. Let $\Aa/U$ be an $\epsilon$-KNQC lc generalized pair of dimension $d$ with ambient variety $X$. Let $C$ be an $\mathbb R$-divisor on $X$ such that $(\Aa,C)/U$ is lc. Then for any $s\in [0,\delta)$, any sequence of steps of a $(K_{\Aa}+sC)$-MMP$/U$ $\phi: X\dashrightarrow X'$ is $K_{\Aa}$-trivial, $(\phi_*\Aa,\phi_*C)/U$ is lc, and $\phi_*\Aa/U$ is lc and $\epsilon$-KNQC.
\end{lem}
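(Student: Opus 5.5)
The plan is to argue one step at a time and then induct on the number of steps. So it suffices to show the following for a single step $X\dashrightarrow X'$ of a $(K_{\Aa}+sC)$-MMP$/U$ with $s\in[0,\delta)$, given by an extremal ray $R$ and contraction $f:X\to Z$. The step is $K_{\Aa}$-trivial. The pairs $\phi_*\Aa$ and $(\phi_*\Aa,\phi_*C)$ are lc. And $\phi_*\Aa/U$ is again $\epsilon$-KNQC. Since these are exactly the hypotheses of the statement, the induction then goes through. Throughout, write $K_{\Aa}=\sum d_iD_i$ with $d_i\geq\epsilon$ and each $D_i$ a nef$/U$ Cartier divisor; in particular $K_{\Aa}$ is nef$/U$.

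\textbf{Step 1: the ray is $K_{\Aa}$-trivial.} Since $(K_{\Aa}+sC)\cdot R<0$ and $K_{\Aa}\cdot R\geq 0$, we must have $s>0$ and $C\cdot R<0$. Hence $(K_{\Aa}+C)\cdot R<0$, so $R$ is a $K_{(\Aa,C)}$-negative extremal ray of the lc generalized pair $(\Aa,C)$. By the cone theorem \cite[Theorem 2.2.1]{CHLX23}, $R$ is spanned by a curve $\Gamma$ with $(K_{\Aa}+C)\cdot\Gamma\geq -2d$. Suppose $K_{\Aa}\cdot\Gamma>0$. Each $D_i\cdot\Gamma$ is a non-negative integer and at least one is positive, so $K_{\Aa}\cdot\Gamma\geq\epsilon$. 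Writing $K_{\Aa}+sC=(1-s)K_{\Aa}+s(K_{\Aa}+C)$, we get
$$(K_{\Aa}+sC)\cdot\Gamma\geq (1-s)\epsilon-2ds>0,$$
because $s<\delta=\frac{\epsilon}{2d+\epsilon}$. This contradicts the negativity of $R$. Hence $K_{\Aa}\cdot R=0$, and in fact $D_i\cdot R=0$ for every $i$.

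\textbf{Step 2: descent and transfer to $X'$.} Since each $D_i$ is Cartier and trivial on $R$, the contraction theorem for lc generalized pairs \cite[Theorem B]{CHLX23} (its part on line bundles trivial on the ray) should give $D_i\sim f^*D_{i,Z}$ for a Cartier divisor $D_{i,Z}$ on $Z$. If the step is a divisorial contraction, take $X'=Z$. If it is a flip $f^+:X'\to Z$, set $D_i':=f^{+*}D_{i,Z}$, which equals $\phi_*D_i$. In either case each $D_i'$ is Cartier and nef$/U$, being a pullback of a divisor that is nef$/U$ ($D_{i,Z}$ is nef$/U$ because $D_i$ is). Therefore $K_{\phi_*\Aa}=\sum d_iD_i'$ is $\epsilon$-NQC$/U$.

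\textbf{Step 3: lc is preserved.} Since $K_{\Aa}\equiv_Z 0$ (in fact $\sim_{\mathbb R,Z}0$ by Step 2), the map is $K_{\Aa}$-trivial, so discrepancies with respect to $\Aa$ and $\phi_*\Aa$ coincide and $\phi_*\Aa$ is lc. Moreover $-C$ is $f$-ample, so the step is also $(K_{\Aa}+C)$-negative. The standard negativity-lemma comparison of discrepancies then shows that $(\phi_*\Aa,\phi_*C)$ is lc. This completes the single step, and induction on the number of steps finishes the proof.

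I expect the main obstacle to be Step 2, specifically justifying that Cartier divisors trivial on $R$ descend to Cartier divisors on $Z$ in the non-$\mathbb Q$-factorial, non-NQC generalized lc setting. I would take this from the contraction theorem of \cite{CHLX23}. The numerical part in Step 1 is straightforward once the cone theorem is applied to $(\Aa,C)$ rather than to $\Aa$.
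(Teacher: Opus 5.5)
Your argument is correct and is essentially the paper's proof. The paper writes $\frac{1}{s}(K_{\Aa}+sC)=K_{\Aa}+C+\sum(\frac{1}{s}-1)r_iH_i$ with $(\frac{1}{s}-1)r_i>2d$ and applies the length of extremal rays for $(\Aa,C)$ to get $H_i$-triviality, which is exactly your estimate $(1-s)\epsilon-2ds>0$; it then uses the contraction theorem to see that the $\phi_*H_i$ are nef$/U$ Cartier and that the step is also a $(K_{\Aa}+C)$-MMP step. One cosmetic point: $f^{+*}D_{i,Z}$ is only linearly equivalent to $\phi_*D_i$, so for the exact decomposition required by $\epsilon$-NQC you should write $K_{\phi_*\Aa}=\sum d_i\phi_*D_i$, where each $\phi_*D_i$ is Cartier and nef$/U$ because it is linearly equivalent to $f^{+*}D_{i,Z}$.
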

\begin{proof}
We may assume that $\phi$ is a single step of a $(K_{\Aa}+sC)$-MMP$/U$. Write $K_{\Aa}=\sum r_iH_i$ where each $r_i\geq\epsilon$ and each $H_i$ is nef$/U$ Cartier. We have
$$K_{\Aa}+C+\sum\left(\frac{1}{s}-1\right)r_iH_i=\frac{1}{s}(K_{\Aa}+sC)$$
and $\left(\frac{1}{s}-1\right)r_i>2d$ for any $i$. Then $\phi$ is also a step of a $(K_{\Aa}+C)$-MMP$/U$. By the length of extremal rays \cite[Theorem 2.2.1]{CHLX23}, $\phi$ is $H_i$-trivial for any $i$. By the contraction theorem \cite[Theorem 2.2.6]{CHLX23}, $\phi_*H_i$ is nef$/U$ Cartier for each $i$. Thus $\phi$ is $K_{\Aa}$-trivial and $\phi_*K_{\Aa}$ is $\epsilon$-NQC$/U$, hence $\phi_*\Aa/U$ is $\epsilon$-KNQC. Since $\phi$ is also a step of a $(K_{\Aa}+C)$-MMP$/U$, $(\phi_*\Aa,\phi_*C)/U$ is lc. The lemma follows.
\end{proof}

\begin{lem}\label{lem: construct mmp with scaling to 0}
    Let $\Aa/U$ be a $\mathbb Q$-factorial lc generalized pair with ambient variety $X$. Assume that $X$ is klt. Let $H\geq 0$ be an $\mathbb R$-divisor on $X$ such that $(\Aa,H)/U$ is lc, $K_{\Aa}+H$ is NQC$/U$, and $(\Aa,sH)/U$ has a KNQC bs-weak lc model for any $s\in (0,1]$. Then we may run a $K_{\Aa}$-MMP$/U$ with scaling of $H$ 
    $$\phi_i: \Aa_i\dashrightarrow \Aa_{i+1}, \Aa_1:=\Aa$$
    with scaling numbers 
    $$\lambda_i:=\inf\{t\mid t\geq 0, K_{\Aa_i}+tH_i\text{ is nef}/U\},$$
    where $X_i$ is the ambient variety of $\Aa_i$ and $H_i$ is the image of $H$ on $X_i$, such that either this MMP terminates, or $\lim_{i\rightarrow+\infty}\lambda_i=0$.
\end{lem}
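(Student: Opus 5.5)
We construct the MMP inductively, choosing at each step an extremal ray adapted to the scaling. The key input is that at each stage $K_{\Aa_i}+\lambda_iH_i$ is nef and NQC$/U$: at stage $1$ this is the hypothesis that $K_{\Aa}+H$ is NQC$/U$ (with $\lambda_0:=1$). To make the NQC property propagate we will choose the extremal rays via Lemma \ref{lem: trivial mmp}, rather than with the ray-selection of Lemma \ref{lem: can run scaling}, which requires LD.

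Concretely, suppose $\Aa_i$ has been constructed with $X_i$ $\mathbb Q$-factorial klt, $(\Aa_i,H_i)$ lc, and $K_{\Aa_i}+\lambda_{i-1}H_i$ nef and NQC$/U$. Write $K_{\Aa_i}+\lambda_{i-1}H_i=\sum r_jN_j$ with $r_j>0$ and $N_j$ nef Cartier, and put $\epsilon:=\min r_j$. If $\lambda_i<\lambda_{i-1}$, then $K_{\Aa_i}+\lambda_iH_i$ is not obviously NQC. We therefore avoid computing the infimum directly: since $X_i$ is $\mathbb Q$-factorial klt, we run a $(K_{\Aa_i}+tH_i)$-MMP with scaling of $H_i$ from the value $t=\lambda_{i-1}$. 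For $t$ close to $\lambda_{i-1}$, Lemma \ref{lem: trivial mmp} applied to the $\epsilon$-KNQC pair $(\Aa_i,\lambda_{i-1}H_i)$ with $C=-(\lambda_{i-1}-t)H_i$ (after rescaling to keep the pair lc) shows the steps are $(K_{\Aa_i}+\lambda_{i-1}H_i)$-trivial and preserve $\epsilon$-NQC. The resulting family of rays, extremal for $K_{\Aa_i}$ and trivial for $K_{\Aa_i}+\lambda_iH_i$, exists by the cone theorem \cite[Theorem 2.2.1]{CHLX23} together with the length bound. That bound makes the set of values $K_{\Aa_i}\cdot C_k$ discrete relative to the Cartier $N_j$, exactly as in the proof of Lemma \ref{lem: can run scaling}, so the infimum $\lambda_i$ is attained by some ray $R$. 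We contract $R$ by \cite[Theorem 2.2.6]{CHLX23}. If the contraction is a divisorial contraction or a Mori fiber space it exists. If it is flipping, the flip exists because $X_i$ is $\mathbb Q$-factorial klt: by \cite[Lemma 3.4]{HL22}, $K_{\Aa_i}+\lambda_iH_i$ is $\mathbb R$-linearly equivalent over $Z$ to a klt pair with big boundary, so BCHM applies. The NQC property at the new value $\lambda_i$ persists because the new nef Cartier pieces are the pushforwards $\phi_*N_j$ (the step is $N_j$-trivial) together with the NQC decomposition of $K_{\Aa_i}+\lambda_iH_i$ coming from the rational choice of ray. This is the most delicate point, and I would first try to guarantee it by perturbing $\lambda_i$ within the rational envelope, mimicking Lemma \ref{lem: ld nef impleis nqc}.

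Assume now the MMP does not terminate and $\lambda:=\lim\lambda_i>0$. Since $\lambda\le 1$, the hypothesis gives a KNQC bs-weak lc model of $(\Aa,\lambda H)/U$. For $i\gg0$ all steps are $(K_{\Aa}+\lambda H)$-trivial or negative, so the tail of the sequence is a $(K_{\Aa_i}+\lambda H_i)$-MMP with scaling of $H_i$ whose scaling numbers tend to $0$, and $(\Aa_i,\lambda H_i)$ still has that KNQC bs-weak lc model by Lemma \ref{lem: minimal model same after running mmp}. By Theorem \ref{thm: eomm implies tof with scaling}(2.c), such an MMP terminates, a contradiction. Hence either the MMP terminates or $\lambda=0$.

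The main obstacle is the bookkeeping in the second paragraph: ensuring that at every stage the ray realizing $\lambda_i$ exists (attainment of the infimum) without the LD hypothesis. The NQC hypothesis on $K_{\Aa}+H$, propagated through Lemma \ref{lem: trivial mmp}, is what should substitute for LD there.
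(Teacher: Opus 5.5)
There is a genuine gap, and it sits in your last paragraph, which is where the real content of the lemma lies. From $\lambda:=\lim\lambda_i>0$ you conclude that the tail is a $(K_{\Aa_i}+\lambda H_i)$-MMP with scaling of $H_i$ whose scaling numbers tend to $0$. That holds only if $\lambda_i>\lambda$ for every $i$. A step with $\lambda_i=\lambda$ is $(K_{\Aa_i}+\lambda H_i)$-\emph{trivial}, not negative, so it is not a step of a $(K_{\Aa}+\lambda H)$-MMP, and Theorem \ref{thm: eomm implies tof with scaling}(2.c) (i.e.\ \cite[Theorem 4.1]{Cas+25b}) says nothing about it. In your construction the ray attaining $\lambda_i$ is arbitrary, so nothing prevents an infinite sequence of $K_{\Aa}$-flips with $\lambda_i=\lambda>0$ for all $i\gg0$. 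Excluding this is exactly what ``we may run'' in the statement asks for. Notice that you use the bs-weak lc model hypothesis only at $s=\lambda$.

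The paper instead works in blocks. At a stage with $\lambda_n>0$ it picks $0<\mu\ll1$ by Lemma \ref{lem: trivial mmp}, so that every $(K_{\Aa_n}+(\lambda_n-\mu)H_n)$-MMP is $(K_{\Aa_n}+\lambda_nH_n)$-trivial. Hence any such MMP is a segment of the $K_{\Aa}$-MMP with scaling of $H$ with scaling number $\lambda_n$. It then runs the $(K_{\Aa_n}+(\lambda_n-\mu)H_n)$-MMP with scaling of an \emph{ample} divisor. This terminates by \cite[Theorem 4.6]{Cas+25b}, because $(\Aa_n,(\lambda_n-\mu)H_n)$ inherits a KNQC bs-weak lc model from $(\Aa,(\lambda_n-\mu)H)$. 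At the end of the block $\lambda_m\le\lambda_n-\mu<\lambda_n$, so the scaling number strictly drops after finitely many steps. Iterating gives $\lambda_i>\lambda$ for all $i$, and only then does the reduction to \cite[Theorem 4.1]{Cas+25b} go through. Using ample scaling inside each block also makes existence of the relevant extremal rays automatic.

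Your second paragraph is also not correct as written.
\begin{itemize}
\item The length bound only gives $K_{\Aa_i}\cdot C_k\in[-2\dim X,0)$. The NQC decomposition of $K_{\Aa_i}+\lambda_{i-1}H_i$ controls $(K_{\Aa_i}+\lambda_{i-1}H_i)\cdot C_k$, but not $K_{\Aa_i}\cdot C_k$.
\item The discreteness in Lemma \ref{lem: can run scaling} comes from $\mathbb Q$-Cartier lc pieces of \emph{both} endpoints, i.e.\ from LD. Without it, the supremum of the $t_k$ need not be attained.
\item If $\lambda_i<\lambda_{i-1}$, the chosen ray has $(K_{\Aa_i}+\lambda_{i-1}H_i)\cdot R>0$. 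So the step is not $N_j$-trivial, and the $\phi_*N_j$ need not be nef or Cartier.
\item The rational-envelope perturbation you propose again needs LD.
\end{itemize}
If you want a step-by-step version, the right source of NQC for $K_{\Aa_i}+\lambda_iH_i$ is the hypothesis at $s=\lambda_i$. Indeed, $(\Aa_i,\lambda_iH_i)$ is a weak lc model of $(\Aa,\lambda_iH)$, so by Lemma \ref{lem: Bir12 2.7}(1) it is crepant to a KNQC one. NQC then descends to the $\mathbb Q$-factorial $X_i$ by the negativity lemma. With that, Lemma \ref{lem: trivial mmp} does produce a $K_{\Aa_i}$-negative ray with $(K_{\Aa_i}+\lambda_iH_i)\cdot R=0$. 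But this repair does not touch the stabilization problem above, which still needs the block construction.
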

\begin{proof}
We run the MMP in the following way. Suppose that we have already obtained a sequence of steps of $K_{\Aa}$-MMP$/U$ with scaling of $H$
$$\phi_i: \Aa_i\dashrightarrow \Aa_{i+1}, \Aa_1:=\Aa, 1\leq i\leq n-1,$$
$X_i$ is the ambient variety of $\Aa_i$, and $H_i$ is the image of $H$ on $X_i$ for each $i$. If $\lambda_n=0$ then we are done. When $\lambda_n>0$, we have that $K_{\Aa_n}+H_n$ is NQC$/U$. By Lemma \ref{lem: trivial mmp}, we may pick $0<\mu\ll 1$ so that any sequence of steps of a $(K_{\Aa_n}+(\lambda_n-\mu)H_n)$-MMP$/U$ is $(K_{\Aa_n}+\lambda_n H_n)$-trivial.

Since the induced birational map $X\dashrightarrow X_n$ is also a sequence of steps of a $(K_{\Aa}+(\lambda_n-\mu)H)$-MMP$/U$ and $(\Aa,(\lambda_n-\mu)H)/U$ has a good minimal model, $(\Aa_n,(\lambda_n-\mu)H_n)/U$ has a good minimal model. By \cite[Theorem 4.6]{Cas+25b}, the $(K_{\Aa_n}+(\lambda_n-\mu)H_n)$-MMP$/U$ with scaling of an ample divisor $\phi_{i}: X_i\dashrightarrow X_{i+1}, n\leq i\leq m$ terminates with a KNQC minimal model $(\Aa_m,(\lambda_n-\mu)H_m)/U$, where $\Aa_i$ and $H_i$ are the images of $\Aa_n$ and $H_n$ on $X_i$ for any $n\leq i\leq m$. We have that $\phi_i, n\leq i\leq m$ is a sequence of steps of a $K_{\Aa_n}$-MMP$/U$ with scaling of $H_n$. Thus $\phi_i, 1\leq i\leq m$ is a sequence of steps of a $K_{\Aa}$-MMP$/U$ with scaling of $H$. Since $K_{\Aa_m}+(\lambda_n-\mu)H_m$ is nef$/U$, we have $\lambda_m\leq\lambda_n-\mu<\lambda_n$.

By repeating this process, we obtain a sequence of steps of a 
$K_{\Aa}$-MMP$/U$ with scaling of $H$
$$\phi_i: \Aa_i\dashrightarrow \Aa_{i+1},\Aa_1:=\Aa$$
such that either this MMP terminates, or $\lambda_i>\lambda:=\lim_{j\rightarrow+\infty}\lambda_j$ for any $i$. If $\lambda>0$, then this also an infinite sequence of steps of a $(K_{\Aa}+\lambda H)$-MMP$/U$ with scaling of $H$ with scaling numbers $(\lambda_i-\lambda)$, and $\lim_{i\rightarrow+\infty}(\lambda_i-\lambda)=0$. This contradicts \cite[Theorem 4.1]{Cas+25b}. Thus $\lambda=0$.
\end{proof}

\subsection{Lifting MMP with scaling}

The goal of this section is to show that, for generalized pairs, MMP (with scaling) can be lifted to a sequence of steps of $\mathbb Q$-factorial (q)dlt MMP (with scaling). This important for proving the special termination in Section \ref{sec: spe tof} but is also of independent interest.

\begin{prop}\label{prop: lift mmp afs}
Let $\Aa/U$ be an lc generalized pair with ambient variety $X$ and $C$ an $\mathbb R$-divisor on $X$. Let $h: X'\rightarrow X$ be a $\mathbb Q$-factorial qdlt modification of $\Aa'$ and $\Aa':=h^*\Aa$.

Let
$$\phi_i: X_i\dashrightarrow X_i,\quad X_1:=X,\quad  1\leq i\leq n-1$$
be a sequence of birational maps$/U$ which does not extract any divisor, $\Aa_i,C_i$ the images of $\Aa,C$ on $X_i$ respectively. Assume that there exist projective morphisms$/U$
$$f_i: X_i\rightarrow T_i\quad \text{and}\quad f_i^+: X_{i+1}\rightarrow T_i,\quad 1\leq i\leq n-1$$
such that $f_i^+\circ\phi_i=f_i$, $-K_{\Aa_i}$ is ample$/T_i$, and $K_{\Aa_{i+1}}$ is ample$/T_i$. 

Then there exists a sequence of birational maps$/U$ 
$$\phi_i': X_i'\dashrightarrow X_{i+1}',\quad X_1':=X', \quad 1\leq i\leq n-1$$ and birational morphisms 
$$h_i: X_i'\rightarrow X_{i},\quad  h_1:=h,\quad 1\leq i\leq n-1$$ satisfying the following. Let $\Aa_i'$ be the image of $\Aa'$ on $\Aa_i'$, then for each $i$:
\begin{enumerate}
    \item $h_i$ is a $\mathbb Q$-factorial qdlt modification of $\Aa_i$ and $\Aa_i'=h_i^*\Aa_i$. Moreover, if $h$ is a $\mathbb Q$-factorial dlt modification of $\Aa$, then $h_i$ is a $\mathbb Q$-factorial dlt modification of $\Aa_i$.
    \item $h_{i+1}\circ\phi_i'=\phi_i\circ h_i$.
    \item $\phi_i'$ is a $K_{\Aa_i'}$-MMP$/T_i$ that is not the identity morphism unless $\phi_i$ is the identity morphism, and $\Aa'_{i+1}/T_i$ is a $\mathbb Q$-factorial good minimal model of $\Aa_i'/T_i$.
    \item $\{\phi_i'\}_{i=1}^{n-1}$ is a sequence of steps of a $K_{\Aa'}$-MMP$/U$.
    \item For each $i$, let
    $$\lambda_i:=\inf\{s\geq 0\mid K_{\Aa_i}+sC_i\text{ is nef}/U\}$$
    and assume that $K_{\Aa_i}+\lambda_iC_i\equiv_{T_i}0.$ Then:
    \begin{enumerate}
        \item $\phi_i'$ is a sequence of steps of a $K_{\Aa'}$-MMP$/U$ with scaling of $C':=h^*C$, and the scaling numbers of this MMP are all equal to $\lambda_i$.
        \item $\{\phi_i'\}_{i=1}^{n-1}$ is a sequence of  steps  of a $K_{\Aa'}$-MMP$/U$ with scaling of $C'$.
    \end{enumerate} 
    Here ``a sequence of steps" can be $0$ step.
\end{enumerate}
\end{prop}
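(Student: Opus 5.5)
We build the lifts $\phi_i'$ and $h_{i+1}$ by induction on $i$; everything reduces to a single step $\phi=\phi_i$ over $T=T_i$. Assume $h_i: X_i'\rightarrow X_i$ is a $\mathbb Q$-factorial qdlt modification with $\Aa_i'=h_i^*\Aa_i$. Then $K_{\Aa_i'}=h_i^*K_{\Aa_i}$. Since $X_i'$ is $\mathbb Q$-factorial and potentially klt by Lemma \ref{lem: qdlt is potentially klt}, we may run a $K_{\Aa_i'}$-MMP$/T_i$ with scaling of an ample$/T_i$ divisor, using the cone and contraction theorems \cite[Theorem B]{CHLX23} and the existence of $\mathbb Q$-factorial flips \cite{CHLX23}.

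To show that this MMP terminates with a good minimal model, we first check that $\Aa_{i+1}/T_i$ is a semi-ample model of $\Aa_i/T_i$. The map $\phi_i$ does not extract divisors, and $K_{\Aa_{i+1}}$ is ample$/T_i$. Moreover, $\phi_i$ is $K_{\Aa_i}$-negative by the standard negativity lemma argument: on a common resolution, $p^*K_{\Aa_i}-q^*K_{\Aa_{i+1}}$ is anti-ample$/X_{i+1}$ pulled back plus ample$/X_i$ pulled back, and it is exceptional over both, so it is $\geq 0$. The strict inequality on divisors contracted by $\phi_i$ holds because $-K_{\Aa_i}$ is ample$/T_i$ and $f_i$ contracts the images of those divisors. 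Hence $\Aa_{i+1}/T_i$ is a bs-semi-ample (indeed good minimal) model of $\Aa_i/T_i$. By Lemma \ref{lem: mm preserved under dlt model} combined with Lemma \ref{lem: Cas+25b 3.13}, or alternatively Lemma \ref{lem: foliation lsm has lmm}, $\Aa_i'/T_i$ has a bs-semi-ample model whose canonical class is ample$/T_i$ up to pullback, so it is KNQC$/T_i$. Theorem \ref{thm: bswlc imply mm} then implies that the $K_{\Aa_i'}$-MMP$/T_i$ with scaling terminates with a $\mathbb Q$-factorial good minimal model $\Aa_{i+1}'/T_i$ on $X_{i+1}'$.

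Next we produce $h_{i+1}$ and verify (1)--(2). By Lemma \ref{lem: Bir12 2.7}, $K_{\Aa_{i+1}'}$ and $K_{\Aa_{i+1}}$ pull back to the same divisor on a common resolution. Since $K_{\Aa_{i+1}}$ is ample$/T_i$, the ample model of $K_{\Aa_{i+1}'}$ over $T_i$ is $X_{i+1}$, which yields a morphism $h_{i+1}: X_{i+1}'\rightarrow X_{i+1}$ with $K_{\Aa_{i+1}'}=h_{i+1}^*K_{\Aa_{i+1}}$; this is part (3) of Lemma \ref{lem: Bir12 2.7} applied to the two weak lc models. The divisors extracted by $h_{i+1}$ are divisors of $X_i'$ that are not contracted, and every divisor not on $X_{i+1}$ has discrepancy $-1$ equal to its discrepancy for $\Aa_i$. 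So $h_{i+1}$ only extracts lc places and $\Aa_{i+1}'=h_{i+1}^*\Aa_{i+1}$. For $\mathbb Q$-factorial qdlt (resp. dlt), note that an MMP preserves qdlt/dlt for generalized pairs, because lc centers only get deleted and $\mathbb Q$-factoriality is kept. Part (2) is immediate from the construction. If $\phi_i$ is not the identity, then $X_{i+1}\not\cong X_i$ over $T_i$, which forces $\phi_i'$ to be nontrivial. Part (4) holds because each step over $T_i$ is a step over $U$.

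For (5), note that $K_{\Aa_i'}+\lambda_i C_i'=h_i^*(K_{\Aa_i}+\lambda_iC_i)\equiv_{T_i}0$ and is nef$/U$. Along the MMP over $T_i$ this class stays pulled back from $T_i$, hence remains nef$/U$ and trivial on each contracted ray. Each step is $K$-negative, so each scaling number equals $\lambda_i$ provided $K_{\Aa_j'}+sC_j'$ is not nef for $s<\lambda_i$. We arrange this by running the MMP over $T_i$ as a $K_{\Aa_i'}$-MMP$/U$ with scaling of $C'$, choosing at each step rays $R$ over $T_i$ with $(K+\lambda_iC')\cdot R=0$. Such rays exist because every curve over $T_i$ satisfies this equality. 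Termination is then the delicate point. \textbf{The main obstacle} is that scaling by $C'$ (not ample) over $T_i$ need not coincide with the ample-scaling MMP used above. I would handle it by observing that over $T_i$ we have $C'\equiv_{T_i}-\lambda_i^{-1}K$ when $\lambda_i>0$, so any $K$-MMP$/T_i$ is automatically with scaling of $C'$ with constant number $\lambda_i$; termination then follows from the good-minimal-model argument via Theorem \ref{thm: bswlc imply mm}. The case $\lambda_i=0$ is degenerate, since then $K_{\Aa_i}\equiv_{T_i}0$ contradicts $-K_{\Aa_i}$ being ample unless $f_i$ is the identity, in which case $\phi_i$ is the identity. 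Concatenating the steps gives (5b).
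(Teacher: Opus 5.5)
Your proposal is correct and follows the paper's proof essentially step for step. You show that $\Aa_{i+1}$ is a semi-ample model of $\Aa_i'/T_i$, run the ample-scaling $K_{\Aa_i'}$-MMP$/T_i$ via Theorem \ref{thm: bswlc imply mm}, recover $h_{i+1}$ from the ampleness of $K_{\Aa_{i+1}}$ over $T_i$ (Lemma \ref{lem: Bir12 2.7}), and observe that $K_{\Aa_i'}+\lambda_iC_i'\equiv_{T_i}0$ makes that same MMP a $K_{\Aa'}$-MMP$/U$ with scaling of $C'$ and constant scaling number $\lambda_i$.

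One sentence needs fixing. Divisors of $X_i$ contracted by $\phi_i$ are not on $X_{i+1}$, but they need not have discrepancy $-1$. To see that $h_{i+1}$ only extracts lc places, use your strict inequality $a(D,\Aa_i)<a(D,\Aa_{i+1})$ to show that $\phi_i'$ must contract such $D$. For the semi-ample model, Lemma \ref{lem: Cas+25b 3.13}(1) alone suffices; Lemma \ref{lem: mm preserved under dlt model} goes in the opposite direction.
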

\begin{proof}
Suppose that we have already constructed $\phi_i'$ for $1\leq i\leq j-1$ and $h_i$ for $i\leq j$ which satisfy our requirements. When $j=1$ this immediately from our assumption. If $j=n$ then we are done. Thus we may assume that $1\leq j\leq n-1$. In the following, we construct $\phi_j'$ and $h_{j+1}$ which satisfy our requirements. Let $C_j'$ be the image of $C'$ on $X_j'$.

Since $\phi_j$ is the ample model$/T_j$ of $K_{\Aa_j}$, $\phi_j\circ h_j$ is the ample model$/T_j$ of $K_{\Aa_j'}$. Since $\phi_j\circ h_j$ does not extract any divisor, by the negativity lemma, $\Aa_{j+1}/T_n$ is a semi-ample model of $\Aa_j'/T_n$. Since $\Aa_j'$ is $\mathbb Q$-factorial qdlt, $X_j'$ is $\mathbb Q$-factorial klt. By Theorem \ref{thm: bswlc imply mm}, we may run a $K_{\Aa_j'}$-MMP$/T_j$ with scaling of an ample divisor $A_j$ which terminates with a good minimal model $\Aa_{j+1}'/T_j$ of $\Aa_j'/T_j$. Let $X_{j+1}'$ be the ambient variety of $\Aa_{j+1}$. Since $\Aa_j'$ is $\mathbb Q$-factorial qdlt, $\Aa_{j+1}'/T_j$ is $\mathbb Q$-factorial qdlt, and if $\Aa_j'$ is dlt, then $\Aa_{j+1}'$ is dlt. Moreover, the induced birational map $h_{j+1}: X_{j+1}'\dashrightarrow X_{j+1}$ is the ample model$/T_j$ of $K_{\Aa_{j+1}'}$. Since $K_{\Aa_{j+1}'}$ is semi-ample$/T_j$, $h_{j+1}$ is a contraction and 
$$K_{\Aa_{j+1}'}=h_{j+1}^*\left(h_{j+1,*}K_{\Aa_{j+1}'}\right)=h_{j+1}^*K_{\Aa_{j+1}},$$
hence $\Aa_{j+1}'=h_{j+1}^*\Aa_j$. 

We let $\phi_j'$ be the induced birational map $X_j'\dashrightarrow X_{j+1}'$. If $\phi_j$ is not the identity morphism, then $\phi_{j}'$ is not the identity morphism, otherwise $h_j^*\Aa_j=h_{j+1}^*\Aa_{j+1}$ which contradicts that $-K_{\Aa_j}$ is ample$/T_j$ and $K_{\Aa_{j+1}}$ is ample$/T_j$.

Moreover, under the condition of (5), $K_{\Aa_j'}+\lambda_jC_j'\equiv_{T_j}0$ and 
$$K_{\Aa_j'}+\lambda_jC_j'=h_j^*(K_{\Aa_j}+\lambda_jC_j)$$
is nef$/U$. Thus $\phi_{j}'$ is also a $K_{\Aa_j'}$-MMP$/U$ with scaling of $C_j'$ and the scaling numbers are all equal to $\lambda_j$. 

Therefore, we have constructed $\phi_{j}'$ and $h_{j+1}$ which satisfy our requirements. The proposition follows by induction on $j$.
\end{proof}

\section{Special termination}\label{sec: spe tof}

The goal of this section is to establish a special termination result for LD generalized pairs which can be seen as generalizations of the special termination results in \cite[Section 4.2]{Hu25}. As we explained in the introduction, the special termination here is different from the special termination as in \cite{Fuj07} as the difficulty function set we construct may rely on the original generalized pair and inductive hypothesis. More precisely, we need the coefficient set $\Ii_k$ as in the following lemma.

\begin{lem}\label{lem: coefficient ld special termination}
Let $d$ be a positive integer and $1\leq k\leq d-1$ an integer. Let $\Aa/U=(X,B,\Mm)/U$ be a $\mathbb Q$-factorial dlt LD generalized pair. Let
$$\phi_i: \Aa_i\dashrightarrow \Aa_{i+1}, \Aa_1:=\Aa$$
be a sequence of steps of a $K_{\Aa}$-MMP$/U$. Assume that
\begin{enumerate}
    \item $\phi_i$ is an isomorphism near the generic point of any lc center of $\Aa_i$, and
    \item $\phi_i$ is an isomorphism near any lc center of $\Aa_i$ of dimension $\leq k-1$.
\end{enumerate}
Then there exists a finite set $\Ii_k\subset [0,1]$ satisfying the following. Write $\Aa_i=(X_i,B_i,\Mm)$ for each $i$. Let $S$ be an lc center of $\Aa$ of dimension $k$, $S_i$ the image of $S$ on $X_i$ for each $i$, and $\Aa_{S_i}:=\Aa_i|_{S_i}$. Write
$$\Aa_{S_i}=(S_i,B_{S_i},\Mm^i).$$
Then $B_{S_i}\in\Ii_k$.
\end{lem}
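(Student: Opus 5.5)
The plan is to use the LD structure to reduce to finitely many $\mathbb Q$-generalized pairs with a fixed Cartier index. I then bound the coefficients of the adjunction boundaries at codimension-one points of $S_i$ using a uniform $\epsilon$-plt bound for surface germs, as sketched in the introduction.

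\emph{Step 1: decompose.} Let $\{V,\bm{c},\Aa(\cdot)\}$ be the polytopal of $\Aa$. As in the proofs of Lemmas \ref{lem: LD preserve under mmp} and \ref{lem: ld nef impleis nqc}, I choose $\mathbb Q$-vectors $\bm{v}_1,\dots,\bm{v}_n$ in a small neighborhood of $\bm{c}$ and positive $a_j$ with $\sum a_j\bm{v}_j=\bm{c}$ and $\sum a_j=1$. I want the neighborhood small enough that three things hold.
\begin{itemize}
\item Each $\Aa^j:=\Aa(\bm{v}_j)$ is $\mathbb Q$-factorial dlt with $\Nklt(\Aa^j)=\Nklt(\Aa)$ (Lemma \ref{lem: LD preserves lc center}).
\item Every lc place of $\Aa$ is an lc place of each $\Aa^j$.
\item The given sequence $\phi_i$ is simultaneously a sequence of steps of a $K_{\Aa^j}$-MMP$/U$.
\end{itemize}
The last point needs care, since the sequence may be infinite. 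I intend to use that each $\phi_i$ is $K_{\Aa_i}$-negative and that, by \cite[Lemma 5.3]{HLS24}, the classes $K_{\Aa_i(\bm{v})}$ vary $\mathbb Q$-affinely in $\bm{v}$. If an infinite sequence is not directly controllable, I will use the weaker statement that the discrepancies $a(E,\Aa^j_i)$ are non-decreasing along the $\phi_i$, which I expect to be the only property actually needed.

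Fix $I$ with $IK_{\Aa^j}$ Cartier for all $j$. Then $IK_{\Aa^j_i}$ is an integral Weil divisor on the $\mathbb Q$-factorial klt variety $X_i$ for all $i$. By compatibility of adjunction with convex combinations (Lemma \ref{lem: ld adjunction}),
$$B_{S_i}=\sum_j a_jB^j_{S_i},$$
where $\Aa^j_i|_{S_i}=(S_i,B^j_{S_i},\Mm^{j,i})$ and each $B^j_{S_i}\in[0,1]$. So it suffices to show that each $B^j_{S_i}$ has coefficients in a finite set $\Ii^j$ independent of $i$. Then $\Ii_k:=\{\sum a_j\gamma_j\mid\gamma_j\in\Ii^j\}\cap[0,1]$ works.

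\emph{Step 2: localize at a prime divisor $P$ of $S_i$.} There are two cases for $P$.
\begin{itemize}
\item If $P$ is an lc center of $\Aa_i$, it has dimension $k-1$. By hypothesis (2) it is untouched by the MMP, and its coefficient is $1$.
\item Otherwise, write $S_i$ as a component of the intersection of a $(k+1)$-dimensional lc center $W_i$ of $\Aa_i$ with one more component of $\lfloor B_i\rfloor$. Cutting $W_i$ by general hyperplanes through the generic point of $P$ yields a surface germ $(T,C\ni p)$. This germ is plt, since $\Aa_i$ is dlt and $P$ is not an lc center.
\end{itemize}
In the second case, the standard computation of the different for the $\mathbb Q$-Cartier divisor $IK_{\Aa^j_i}$ shows that $\mult_PB^j_{S_i}$ has the form $1-\frac{1}{l}+\frac{m}{lI}$. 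Here $l$ is the local Cartier index of $C$ at $p$ (the order of the cyclic quotient), and $m\in\mathbb Z_{\geq0}$. For fixed $l$, the condition $\leq1$ leaves finitely many values. So everything reduces to a uniform bound on $l$.

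\emph{Step 3 (main obstacle): a uniform bound on $l$.} The obstacle is that the divisor $P$ can be newly created in $S_i$ by a flip meeting $S_i$, so the bound cannot come from $S$ itself. My plan is to find $\epsilon>0$, depending only on $\Aa$ and the $\bm{v}_j$, such that every such germ $(T,C)$ is $\epsilon$-plt.
\begin{itemize}
\item On $X=X_1$, the set of discrepancies $a(E,\Aa^j)$ over all $E$ that are not lc places is bounded away from $-1$ by some $\epsilon$. This holds because $\Aa^j$ is dlt with finitely many relevant divisors on a log resolution.
\item Discrepancies only increase along the MMP, and lc places are preserved by hypothesis (1).
\item Hence $\Aa^j_i$ is "$\epsilon$-dlt away from lc places" for all $i$. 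By inversion of adjunction and the general hyperplane cuts, this passes to $(T,C)$ being $\epsilon$-plt.
\end{itemize}
Then \cite[16.6 Proposition]{Kol+92} gives $l\leq\lceil 1/\epsilon\rceil$. This yields the finite set $\Ii^j$ and hence $\Ii_k$.

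The delicate point is making the $\epsilon$-plt transfer to the cut surface rigorous, using only generalized adjunction with the nef part $\Mm$. For this I would use that $\Mm$ contributes nonnegatively to the boundary after adjunction.
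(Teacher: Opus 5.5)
\textbf{Gap: the nef part is missing from your adjunction formula.} The formula $\mult_PB^j_{S_i}=1-\frac1l+\frac{m}{lI}$ in Step 2 is the different for an ordinary pair whose boundary has coefficients in $\frac1I\mathbb Z$. It leaves out the nef part. Only $K_{\Aa^j}$ is a $\mathbb Q$-divisor; $B^j$ itself may be irrational, because $\Mm_X$ may be. In generalized adjunction $K_{S_i}+B^j_{S_i}+\Mm^S_{S_i}=K_{\Aa^j_i}|_{S_i}$, the part of $\Mm$ that does not descend along $P$ is absorbed into $B^j_{S_i}$. What integrality actually gives is $NI(\mult_PB^j_{S_i}+\mult_P\Mm^S_{S_i})\in\mathbb Z$, where $N$ is the local index of $X_i$ at the generic point of $P$.

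The term $\mult_P\Mm^S_{S_i}$ is an arbitrary real number attached to the valuation $P$. For example, take $X$ a smooth surface over $U=X$, $S$ a smooth curve, $P\in S$ a point, and let $\Mm$ descend to the blow-up $Y$ of $P$ with $\Mm_Y=-\alpha E$, where $\alpha\in(0,1)$ is irrational. Then $K_X+S+\Mm_X$ is Cartier, yet $B_S=\alpha P$. As you note yourself, $P$ can be newly created on $S_i$ by flips. So a uniform bound on $l$ alone cannot give finiteness: you also need $P$ to range over finitely many valuations over $S$, and your proposal has no mechanism for this.

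\textbf{The missing idea is to work with the adjoint pair on the center rather than on $X$.} This is how the paper argues.
\begin{itemize}
\item The induced maps $S_i\to T_i\leftarrow S_{i+1}$ satisfy the negativity lemma, so $a(P,\Aa_{S_1})\le a(P,\Aa_{S_i})$.
\item If $\mult_PB_{S_i}<1$, then $a(P,\Aa_{S_i})<0$, and hypothesis (2) forces $\Center_{S_1}P\not\subset\lfloor B_{S_1}\rfloor$.
\item Since $\Aa_{S_1}$ is klt off $\lfloor B_{S_1}\rfloor$, every such $P$ lies in a fixed finite set $\mathcal S$ of divisors over $S_1$.
\end{itemize}
This one observation gives both ingredients you need.
\begin{itemize}
\item It gives $\epsilon:=\min_{E\in\mathcal S}a(E,\Aa_{S_1})+1>0$. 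Then $N\le 1/\epsilon$ follows from $\mult_PB_{S_i}\ge\frac{N-1}{N}$, with no need to transfer $\epsilon$-plt-ness from $X$ through adjunction and hyperplane cuts.
\item It makes $\mult_P\Mm^S_{S_i}$ take finitely many values, because $\Mm^S$ is a fixed $\bb$-divisor by hypothesis (1).
\end{itemize}
Finiteness then follows from $u_{i,j}:=\mult_PB^j_{S_i}\ge 0$, from $\sum_ja_ju_{i,j}\le 1$ (so $u_{i,j}\le 1/a_j$), and from the discreteness above.

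\textbf{Secondary points.} You never need the $\phi_i$ to be a $K_{\Aa^j}$-MMP. Monotonicity of discrepancies is used only for $\Aa$ itself, and $IK_{\Aa^j_i}$ is integral simply as a pushforward. Conversely, your claims that $B^j_{S_i}\in[0,1]$ and that $a(E,\Aa^j_i)$ is non-decreasing are not justified for an infinite sequence, since $\Aa^j_i$ need not remain lc.
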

\begin{proof}
Since $\Aa$ is LD, there exist a positive integer $I$, $a_1,\dots,a_m\in (0,1]$ and lc generalized pairs $\Aa^j=(X,B^j,\Mm), 1\leq j\leq m$, such that $\sum_{j=1}^ma_j=1$, $\sum_{j=1}^ma_j\Aa^j=\Aa$, $\Aa^j$ is lc, and $IK_{\Aa^j}$ is a Weil divisor for each $j$. Since each $\Aa^j$ is lc, $\lfloor B\rfloor\subset\lfloor B^j\rfloor$ for any $j$, and $B^j\geq 0$ for any $j$. We let $\Aa_i^j,B_i^j$ be the images of $\Aa^j,B^j$ on $X_j$ respectively for any $i,j$. Then $IK_{\Aa_i^j}$ are Weil divisors for any $i,j,l$. Since $\Aa_{S_1}$ is dlt,
$$\mathcal{S}:=\{E\mid E\text{ is a prime divisor over }S_1,a(E,\Aa_{S_1})<0,\Center_{S_1}E\not\subset\lfloor B_{S_1}\rfloor\}$$
is a finite set. We let
$$\epsilon:=\inf\{a(E,\Aa_{S_1})\mid E\in\mathcal{S}\}+1=\min\{a(E,\Aa_{S_1})\mid E\in\mathcal{S}\}+1,$$
then $\epsilon>0$. By assumption (1), there exists a unique $\bb$-divisor $\Mm^S$ such that $\Mm^i=\Mm^S$ for any $i$. We let $g: S'\rightarrow S$ be a projective birational morphism such that $\Center_XE$ is a prime divisor for any $E\in\mathcal{S}$ and let 
$$\Ii:=\{\mult_E\Mm^S_{S'}\mid E\in\mathcal{S}\}.$$
Then $\Ii$ is a finite set.

We let $\phi_i: X_i\xrightarrow{f_i}Z_i\xleftarrow{f_i^+}X_{i+1}$ be each step of this MMP and let $S_i\xrightarrow{g_i} T_i\xleftarrow{g_i^+}S_{i+1}$ be the induced morphisms, where $g_i=f_i|_{S_i}$, $g_i^+=f_i^+|_{S_{i+1}}$, and $T_i$ is the normalization of $f_i(S_i)$. Then $K_{\Aa_{S_i}}$ is anti-ample$/T_i$ and $K_{\Aa_{S_{i+1}}}$ is ample$/T_i$, so by the negativity lemma, we have
$$a(E,\Aa_{S_i})\leq a(E,\Aa_{S_{i+1}})$$
for any prime divisor $E$ over $S=S_1$.

Let $D$ be an irreducible component of $B_{S_i}$ for some $i$ such that $\mult_DB_{S_i}<1$. Then $D$ is not contained in $\lfloor B_{S_i}\rfloor$. By assumption (2), so $\Center_{S_1}D$ is not contained in $\lfloor B_{S_1}\rfloor$. We have 
$$a(D,\Aa_{S_1})\leq a(D,\Aa_{S_i})<0,$$
which implies that $D\in\mathcal{S}$. Thus
$$\epsilon\leq a(D,\Aa_{S_1})+1\leq a(D,\Aa_{S_i})+1<1.$$
Let $N$ be the minimal positive integer such that $NP$ is Cartier near the generic point of $D$ on $X_i$ for any Weil divisor $P$ on $X$. Then
$$\left(K_{X_i}+\left\lfloor B_i\right\rfloor\right)|_{S_i}=K_{S_i}+\frac{N-1}{N}D$$
near the generic point of $D$, and so 
$$-a(D,\Aa_{S_i})=\mult_DB_{S_i}\geq \frac{N-1}{N},$$
hence
$$\epsilon\leq a(D,\Aa_{S_i})+1\leq\frac{1}{N}.$$
Thus $N\leq\frac{1}{\epsilon}$. Let $u_{i,j}$ be the unique real numbers such that
$$K_{S_i}+u_{i,j}D+\Mm^S_{S_i}=K_{\Aa_i^j}\Big|_{S_i}=\left(K_{X_i}+B_i^j+\Mm_{X_i}\right)\Big|_{S_i}$$
near the generic point of $D$. Then $u_{i,j}\geq 0$ for any $i,j$. Since $NIK_{\Aa_i^j}$ is Cartier near the generic point of $D$, we have
$$NI(u_{i,j}+\mult_D\Mm^S_{S_i})\in\mathbb Z.$$
Since 
$$\mult_D\Mm^S_{S_i}=\mult_{\Center_{S'}D}\Mm^S_{S'}\in\Ii,$$ 
$u_{i,j}$ belongs to a discrete set. We have
$$K_{S_i}+\sum_{j=1}^ma_ju_{i,j}D+\Mm^S_{S_i}=K_{\Aa_i}|_{S_i}=K_{\Aa_{S_i}}$$
near the generic point of $D$. Since $\Aa_{S_i}$ is lc, $\sum_{j=1}^ma_ju_{i,j}\leq 1$. Therefore, $u_{i,j}$ belongs to a bounded set, hence $u_{i,j}$ belongs to a finite set. Therefore,
$$\mult_{D}B_{S_i}=\sum_{j=1}^ma_ju_{i,j}$$
belongs to a finite set. The lemma follows.
\end{proof}

\begin{thm}\label{thm: special termination}
Let $\Aa/U$ be a $\Qq$-factorial LD dlt generalized pair and $C\geq 0$ an $\Rr$-divisor on $X$, such that $(\Aa,C)$ is LD and $K_{\Aa}+C$ is nef$/U$. Let
$$\phi_i: \Aa_i\dashrightarrow \Aa_{i+1}, \Aa_1:=\Aa$$
be a $K_{\Aa}$-MMP$/U$ with scaling of $C$, $X_i$ the ambient variety of $\Aa_i$ and $C_i$ the image of $C$ on $X_i$ for each $i$. Let
$$\lambda_i:=\inf\{t\mid t\geq 0, K_{\Aa_i}+tC_i\text{ is nef}/U\}$$
be the scaling numbers of this MMP. 

Assume that $\lim_{i\rightarrow+\infty}\lambda_i=0$. Assume that for any $i\gg 0$ and any lc center $S_i$ of $\Aa_i$ of dimension $\geq 1$, either $\Aa|_{S_i}$ has a bs-weak model, or $K_{\Aa|_{S_i}}$ is not pseudo-effective$/U$. Write $\Aa_i=(X_i,B_i,\Mm)$ for any $i$. Then $\phi_i$ is an isomorphism near $\lfloor B_i\rfloor$ for any $i\gg 0$.
\end{thm}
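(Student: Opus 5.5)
We follow the classical special termination scheme of Fujino/Birkar, with Lemma \ref{lem: coefficient ld special termination} taking the place of the usual difficulty bound. The plan is an induction on $k$, the dimension of lc centers.

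\textbf{Step 0 (preliminary reductions).}
Since there are finitely many lc centers and each step of the MMP is $K_{\Aa_i}$-negative, discrepancies do not decrease. After finitely many steps, $\phi_i$ contracts no lc center and no lc place of any $\Aa_i$. Hence, for $i\gg0$, $\phi_i$ is an isomorphism near the generic point of every lc center, which is hypothesis (1) of Lemma \ref{lem: coefficient ld special termination}. Truncating the sequence, we may assume this holds for all $i$. By Theorem \ref{thm: eomm implies tof with scaling}(1), each $(\Aa_i,sC_i)$ stays LD for $s\le\lambda_{i-1}$, and by Lemma \ref{lem: LD preserve under mmp} each $\Aa_i$ is LD. By induction on $k=0,1,\dots,d-1$, we then show that for $i\gg0$, $\phi_i$ is an isomorphism near every lc center of dimension $\le k$. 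The case $k=0$ is immediate: a $0$-dimensional lc center that is not isomorphic near it would be contracted, contradicting Step 0. The case $k=d-1$ gives the theorem, since $\lfloor B_i\rfloor$ is the union of the codimension-one lc centers.

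\textbf{Inductive step.}
Assume the statement holds for dimension $\le k-1$, so that hypothesis (2) of Lemma \ref{lem: coefficient ld special termination} holds after truncation. Fix a $k$-dimensional lc center $S$ with images $S_i$. The restrictions $\phi_i|_{S_i}$ induce birational maps $S_i\dashrightarrow S_{i+1}$. As in the proof of that lemma, these factor as $S_i\to T_i\leftarrow S_{i+1}$ with $K_{\Aa_{S_i}}$ anti-ample$/T_i$ and $K_{\Aa_{S_{i+1}}}$ ample$/T_i$. Each such map is an isomorphism in codimension one for $i\gg0$: by the lemma, the boundary coefficients lie in the finite set $\Ii_k$, and discrepancies of divisors over $S$ are monotone and take values in a finite set. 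This is a counting of the difficulty
\[
\#\{E : a(E,\Aa_{S_i})<0,\ \Center_{S_i}E\not\subset\lfloor B_{S_i}\rfloor\},
\]
made weighted by $\Ii_k$ in the manner of \cite{Fuj07}. Moreover, $\Aa_{S_i}$ is LD by Lemma \ref{lem: ld adjunction}, and likewise $(\Aa_{S_i},\lambda C|_{S_i})$ is LD.

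Following Proposition \ref{prop: lift mmp afs}, we lift these maps to a sequence of steps of a $K_{\Aa_{S_{i_0}}}$-MMP$/U$ with scaling of $C|_{S_{i_0}}$ on a $\mathbb Q$-factorial dlt model, with scaling numbers $\lambda_i\to0$. We then use the hypothesis: either $\Aa|_{S_i}$ has a bs-weak lc model, or $K_{\Aa|_{S_i}}$ is not pseudo-effective$/U$. In the first case, Theorem \ref{thm: eomm implies tof with scaling}(2)(a) applies, and the lifted MMP must terminate because $\lim\lambda_i=0$. In the second case, Theorem \ref{thm: eomm implies tof with scaling}(2)(b) forces $\lim\lambda_i>0$, which is a contradiction. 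So in either case $S_i\dashrightarrow S_{i+1}$ is an isomorphism for $i\gg0$, and the standard argument (as in \cite{Fuj07}) shows that $\phi_i$ is an isomorphism near $S_i$.

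\textbf{Main obstacle.}
The delicate point is the passage from "isomorphism on the lifted model of $S_i$" to "$\phi_i$ is an isomorphism near $S_i$." This requires using that $K_{\Aa_i}$ is anti-ample over $Z_i$ and that $S_i$ meets the flipping locus. It also requires checking that the lifted sequence really is an MMP with scaling to which Theorem \ref{thm: eomm implies tof with scaling} applies.

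A second difficulty is verifying that the LD structure restricts compatibly. Theorem \ref{thm: eomm implies tof with scaling}(2) applies to the pair $\Aa_{S_i}$ together with the restricted scaling divisor, and this needs the fixed $\bb$-divisor $\Mm^S$ from hypothesis (1) of Lemma \ref{lem: coefficient ld special termination}. I expect the codimension-one finiteness via $\Ii_k$ to be the crux, but it is supplied by that lemma.
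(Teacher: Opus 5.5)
Your proposal is correct and follows essentially the same route as the paper. Both arguments proceed by induction on the dimension of the lc center, and use the finite coefficient set $\Ii_k$ from Lemma~\ref{lem: coefficient ld special termination} in a Fujino-type difficulty to show that $\phi_i|_{S_i}$ eventually extracts no divisor. Both then lift via Proposition~\ref{prop: lift mmp afs} to a non-terminating MMP with scaling numbers tending to $0$ on a $\mathbb Q$-factorial dlt model of $\Aa_{S_n}$, which contradicts Theorem~\ref{thm: eomm implies tof with scaling}(2).

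You leave two small points implicit. First, that dlt model must again be LD (Lemma~\ref{lem: LD under dlt model}). Second, it must inherit the bs-weak lc model or non-pseudo-effectivity hypothesis (Lemma~\ref{lem: Cas+25b 3.13}). The paper also treats as standard the final passage from $S_i\dashrightarrow S_{i+1}$ being an isomorphism to $\phi_i$ being an isomorphism near $S_i$.
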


\begin{proof}
The proof generally follows from the same line of the proof of \cite[Proof of Theorem 4.2.1]{Fuj07} but we need to replace the difficulty coefficient set by the set given as in Lemma \ref{lem: coefficient ld special termination} inductively. For the reader's convenience, we provide a full proof here.

We may assume that this MMP does not terminate. Let $\Aa_i(s):=(\Aa_i,sC_i)$ for any $s\in [0,1]$ and any $i$ and let $\lambda_0:=1$. By Theorem \ref{thm: eomm implies tof with scaling}, $(\Aa_i,sC_i)$ is LD for any $s\in [0,\lambda_{i-1}]$ for any $i$. We let $\phi_i: X_i\xrightarrow{f_i}Z_i\xleftarrow{f_i^+}X_{i+1}$ be each step of this MMP where $f_i$ is the $K_{\Aa_i}$-negative extremal contraction$/U$. Then for $i\gg 0$, $\phi_i$ is a flip and $f_i,f_{i+1}$ are small. 

\medskip

\noindent\textbf{Step 1.} In this step we reduce the question to the termination near a single non-trivial lc center. Since there are only finitely many lc centers of $\Aa$, by comparison of discrepancies, the number of lc centers of $\Aa_i$ is decreasing, and if the exceptional locus of $f_i$ contains an lc center of $\Aa_i$, then the number of lc centers of $\Aa_{i+1}$ is strictly less than the number of lc centers of $\Aa_i$. Thus for any $i\gg 0$, $\phi_i$ is an isomorphism near the generic point of any lc center of $\Aa_i$. In particular, $\phi_i$ is an isomorphism near any $0$-dimensional lc center of $\Aa_i$ for any $i\gg 0$.

Now we only need to prove that the MMP terminates near the image of any non-trivial lc center $S$ of $\Aa$ on $X_i$ for any $i\gg 0$. We apply induction on $k:=\dim S<\dim X$. Since the $k=0$ case is already done, we may assume that $k>0$, and that $\phi_i$ is an isomorphism near any dimension $\leq k-1$ lc center of $\Aa_i$ for any $i\gg 0$. We let $S_i$ be the birational transform of $S$ on $X_i$ for any $i$, $T_i$ the normalization of $f_i(S_i)$, and $f_{S_i}: S_i\rightarrow T_i$, $f_{S_i}^+: S_{i+1}\rightarrow T_i$ the induced morphisms. Then $f_{S_i}$ and $f_{S_i}^+$ are birational for any $i\gg 0$ as $\phi_i$ is an isomorphism near the generic point of $S_i$. Moreover, for $i\gg 0$, $\Mm|_{S_i}$ is a fixed $\bb$-divisor, and we denote by $\Mm^S:=\Mm|_{S_i}$ for $i\gg 0$. Let $\Aa_{S_i}:=\Aa_i|_{S_i}$ for any $i$.

\medskip

\noindent\textbf{Step 2.} In this step we show that discrepancies of $\Aa_{S_i}$ increase for $i\gg 0$. By the negativity lemma, we have
$$a(E,\Aa_{S_i})\leq a(E,\Aa_{S_{i+1}})$$
for any $i\gg 0$ and any prime divisor $E$ over $S_i$. More precisely, let $W_i$ be the normalization of the main component of $X_i\times_{Z_i}X_{i+1}$ and let $p_i: W_i\rightarrow X_i$ and $q_i: W\rightarrow X_{i+1}$ be the induced birational morphisms, then we have
$$p_i^*K_{\Aa_i}-q_i^*K_{\Aa_{i+1}}:=F_i\geq 0$$
for some $F_i\geq 0$ by applying the negativity lemma for the induced birational morphism $W_i\rightarrow Z_i$. For any $i\gg 0$, since $\phi_i$ is an isomorphism near the generic point of $S_i$, $p_i$ and $q_i$ are isomorphisms over the generic point of $S_i$ and $S_{i+1}$ respectively, and we may let $S_{W_i}$ be the strict transform of $S_i$ on $W_i$ and let $p_{S_i}: S_{W_i}\rightarrow S_i$ and $q_{S_i}: S_{W_i}\rightarrow S_{i+1}$ be the induced birational morphisms. Then we have
$$p_{S_i}^*K_{\Aa_{S_i}}=p_i^*K_{\Aa_i}|_{S_{W_i}}\geq (p_i^*K_{\Aa_i}-F_i)|_{S_{W_i}}=q_i^*K_{\Aa_{i+1}}|_{S_{W_i}}=q_{S_i}^*K_{\Aa_{S_{i+1}}},$$
so $a(E,\Aa_{S_i})\leq a(E,\Aa_{S_{i+1}})$ for any prime divisor $E$ over $S_i$.

\medskip

\noindent\textbf{Step 3.} In this step we construct the difficulty function and use it to show that $\phi_{S_i}$ does not extract any divisor for any $i\gg 0$. Write $\Aa_{S_i}=(S_i,B_{S_i},\Mm^S)$ for any $i\gg 0$. By Lemma \ref{lem: coefficient ld special termination} and induction hypothesis, there exists a fintie set $\Ii_k$, such that for any $i\gg 0$, $B_{S_i}\in\Ii_k$. We define
$$d_i:=\sum_{\alpha\in\Ii}\#\{E\mid a(E,\Aa_{S_i})<1-\alpha\text{ and }\Center_{S_i}E\not\subset\lfloor B_{S_i}\rfloor\}.$$
Since $\Ii_k$ is a finite set and $\Aa_{S_i}$ is dlt for any $i\gg 0$, $d_i\geq 0$ and $d_i<+\infty$ for any $i\gg 0$. 

Let $\phi_{S_i}:=\phi_i|_{S_i}: S_i\dashrightarrow S_{i+1}$ for any $i\gg 0$. By induction hypothesis, $\phi_{S_i}$ is an isomorphism near $\lfloor B_{S_i}\rfloor$ for any $i\gg 0$. Therefore, for any $i\gg 0$ and any prime divisor $E$ over $S_i$, if $\Center_{S_i}(E)\subset\lfloor B_{S_i}\rfloor$ (resp. $\Center_{S_{i+1}}(E)\subset\lfloor B_{S_{i+1}}\rfloor$), then by the negativity lemma, $\phi_i$ (resp. $\phi_i^{-1}$) isomorphism near the generic point of $\Center_{S_i}(E)$ (resp. $\Center_{S_{i+1}}(E)$). Therefore, for any $i\gg 0$, $\Center_{S_i}(E)\subset\lfloor B_{S_i}\rfloor$ if and only if $\Center_{S_{i+1}}(E)\subset\lfloor B_{S_{i+1}}\rfloor$. By \textbf{Step 2}, we have $d_n\geq d_{n+1}$.

Moreover, for $i\gg 0$, assume that $f_{S_{i}}^+$ contracts a divisor $E$. Then $\mult_EB_{S_{i+1}}=\alpha\in\Ii_k$ for some $\alpha$. Since $K_{\Aa_{S_i}}$ is anti-ample$/T_i$ and $K_{\Aa_{S_{i+1}}}$ is ample$/T_i$, by \cite[Lemma 3.38]{KM98}, we have
$$a(E,\Aa_i)<a(E,\Aa_{i+1})=1-\alpha,$$
hence $d_i>d_{i+1}$ in this case.

Therefore, for any $i\gg 0$, $f_{S_{i}}^+$ does not contract any divisor, hence $\phi_{S_i}$ does not extract any divisor.

\medskip

\noindent\textbf{Step 4.} Fix $n\gg 0$. Let $g_n: Y_n\rightarrow S_n$ be a $\mathbb Q$-factorial dlt modification of $\Aa_{S_n}$ and let $\Aa_{Y_n}:=g_n^*\Aa_{S_n}$. By \textbf{Step 3}, $\phi_{S_i}$ does not extract any divisor for any $i\geq n$. Since $\phi_{S_i}$ is not an isomorphism for infinitely many $i\geq n$, by Proposition \ref{prop: lift mmp afs}, there exist birational morphisms $g_i: Y_i\rightarrow S_i$, $i\geq n$ of $\Aa_i$ for any $i\geq n$ and birational maps $\psi_i: Y_i\dashrightarrow Y_{i+1}$, $i\geq n$, such that for any $i\geq n$,
\begin{itemize}
    \item $g_i$ is a $\mathbb Q$-factorial dlt modification of $\Aa_{S_i}$,
    \item $g_{i+1}\circ\psi_i=\phi_{S_i}\circ g_i$ and 
    $$\Aa_{Y_{i+1}}:=g_{i+1}^*\Aa_{S_{i+1}}=\psi_{i,*}\Aa_{Y_i}$$
    for any $i\geq n$, and
    \item $\psi_i$ is a sequence of steps of a $K_{\Aa_{Y_n}}$-MMP$/U$ with scaling of $g_i^*\left(C_i|_{S_i}\right)$ and the scaling numbers of this MMP is $\lambda_i$.
\end{itemize}
Since $\lim_{i\rightarrow+\infty}\lambda_i=0$, $\{\psi_i\}_{i=n}^{+\infty}$ is a sequence of steps of a $K_{\Aa_{Y_n}}$-MMP$/U$ with scaling of $g_n^*\left(C_n|_{S_n}\right)$ and the limit of the scaling numbers of this MMP is $0$.

Since $\Aa_n$ is LD, by Lemma \ref{lem: ld adjunction}, $\Aa_{S_n}$ is LD. By Lemma \ref{lem: LD under dlt model}, $\Aa_{Y_n}$ is LD. Since either $\Aa_{S_n}/U$ has a bs-weak lc model or $K_{\Aa_{S_n}}$ is not pseudo-effective$/U$, by Lemma \ref{lem: Cas+25b 3.13}, either $\Aa_{Y_n}/U$ has a bs-weak lc model, or $K_{\Aa_{Y_n}}$ is not pseudo-effective$/U$. This contradicts Theorem \ref{thm: eomm implies tof with scaling}.
\end{proof}

\section{Reduction via Iitaka fibration}\label{sec: abundant gmm}

In this section, we use the relative Iitaka fibration to reduce the existence of good minimal models for generalized pairs to the abundant case.  We first treat the case of relative Iitaka dimension zero and obtain good minimal models under additional hypotheses.  We then establish the existence of good minimal models for klt abundant generalized pairs, a key input for the gluing theory in Section~\ref{sec: glue}.

\subsection{Good minimal model with relative Iitaka dimension zero}

In this subsection, we study the existence of good minimal models for generalized pairs $\Aa/U$ admitting a contraction$/U$ $\pi: X\rightarrow Z$ such that $\kappa_{\sigma}(X/Z,K_{\Aa})=\kappa_{\iota}(X/Z,K_{\Aa})=0$, where $X$ is the ambient variety of $\Aa$. We prove the existence of good minimal models for such generalized pairs in some cases.

\begin{lem}\label{lem: has19 3.2 step 3 abu ver}
    Let $\Aa/U=(X,B,\Mm)/U$ be a $\mathbb Q$-factorial lc generalized pair such that $X$ is klt and $\pi: X\rightarrow U$ the induced morphism. Assume that:
    \begin{enumerate}
        \item $\pi$ is an equidimensional contraction.
        \item $U$ is $\mathbb Q$-factorial.
        \item $\kappa_{\sigma}(X/U,K_{\Aa})=\kappa_{\iota}(X/U,K_{\Aa})=0$.
    \end{enumerate}
    Then $\Aa/U$ has a $\mathbb Q$-factorial good minimal model $\Aa'/U$ such that $K_{\Aa'}\sim_{\mathbb R,U}0$.
\end{lem}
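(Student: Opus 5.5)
The plan is to reduce to the very exceptional MMP (Theorem \ref{thm: very exceptional mmp}). Since $\kappa_{\iota}(X/U,K_{\Aa})=0$, the class $K_{\Aa}$ is effective over $U$: we may write $K_{\Aa}\sim_{\mathbb R,U}E$ for some $\mathbb R$-divisor $E\geq 0$. We cannot expect $E$ itself to be very exceptional$/U$. The idea is to subtract from $E$ the pullback of a divisor on $U$, so that what remains is very exceptional. This is the point where the hypotheses that $\pi$ is equidimensional and $U$ is $\mathbb Q$-factorial enter.

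\textbf{Step 1: make $E$ degenerate over $U$.} Since $\pi$ is equidimensional, every prime divisor $P\subset X$ maps either onto $U$ or onto a prime divisor of $U$. Suppose some component $P$ of $E$ dominates $U$. Then $E$ restricted to the generic fiber is a nonzero effective divisor. Combined with $\kappa_{\sigma}(X/U,K_{\Aa})=0$, this should give a contradiction, since $N_\sigma(X/U,E)=E$ must hold. More precisely, I would argue that $\kappa_\sigma=0$ forces $E=N_\sigma(X/U,K_{\Aa})$. Horizontal components of $N_\sigma$ are already excluded on the generic fiber, because an effective $\mathbb R$-divisor with $\kappa_\sigma=0$ on a variety over a field is rigid, and each horizontal component restricts to a fixed component there. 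I would have to accept this step, or else pass to the generic fiber and use that $\kappa_\sigma(X_\eta,K_{\Aa}|_{X_\eta})=0$ there as well. In the end every component of $E$ is vertical, so $\pi(\Supp E)$ is a proper closed subset of $U$ and its divisorial part consists of prime divisors $Q_j\subset U$.

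\textbf{Step 2: subtract pullbacks.} Since $U$ is $\mathbb Q$-factorial, each $Q_j$ is $\mathbb Q$-Cartier and $\pi^*Q_j$ is defined. Its support is exactly $\pi^{-1}(Q_j)$, whose components are all divisors because $\pi$ is equidimensional. For each $j$, let $\mu_j:=\max\{t\geq 0\mid E-t\pi^*Q_j\geq 0\text{ over the generic point of }Q_j\}$, and set $E':=E-\sum_j\mu_j\pi^*Q_j$. Then $E'\geq 0$ and $E'\sim_{\mathbb R,U}K_{\Aa}$. Over the generic point of each $Q_j$ there is some component of $\pi^{-1}(Q_j)$ not in $\Supp E'$. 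Over codimension $\geq2$ points of $U$ no divisor of $X$ maps there, by equidimensionality. Hence $E'$ is very exceptional$/U$ in the sense of \cite[Definition 3.1]{Bir12}: it is vertical, and over every prime divisor of $U$ some component of the preimage is missing from $\Supp E'$. The condition on codimension $\geq2$ images is vacuous because of equidimensionality.

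\textbf{Step 3: run the MMP.} $X$ is klt, hence potentially klt, and $K_{\Aa}\sim_{\mathbb R,U}E'$ with $E'$ very exceptional$/U$. Theorem \ref{thm: very exceptional mmp}, applied to the generalized pair, viewed as an algebraically integrable adjoint foliated structure, gives a $K_{\Aa}$-MMP$/U$ with scaling of an ample divisor. This MMP terminates with a good minimal model $\Aa'/U$ satisfying $K_{\Aa'}\sim_{\mathbb R,U}0$. Each step of an MMP starting from a $\mathbb Q$-factorial klt variety preserves $\mathbb Q$-factoriality, so $\Aa'$ is $\mathbb Q$-factorial. Alternatively, if the MMP is not a $\mathbb Q$-factorial one, we take a small $\mathbb Q$-factorialization as in the proof of Theorem \ref{thm: bswlc imply mm}.

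\textbf{Main obstacle.} The delicate point is Step 1, ruling out horizontal components of $E$ using $\kappa_\sigma=\kappa_\iota=0$. I would first try to show that $E$ can be chosen equal to $N_\sigma(X/U,K_{\Aa})$, which has only finitely many components, and then check that this $N_\sigma$ is vertical by restricting to the generic fiber. If that fails, I would use \cite{LX25}'s relative Nakayama–Zariski results together with Lemma \ref{lem: property of numerical and Iitaka dimension}.
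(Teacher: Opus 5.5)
Your Step 1 has a genuine gap, and the claim it aims at is false: $E$ need not be vertical$/U$. On a very general fiber $F$, the hypothesis $\kappa_{\sigma}(X/U,K_{\Aa})=0$ gives only this. The positive part $P_\sigma(K_{\Aa}|_F)$ is movable of numerical dimension $0$, hence numerically trivial, so $E|_F=N_\sigma(K_{\Aa}|_F)$. Being rigid, or made of fixed components, is perfectly compatible with $E|_F\neq 0$. Here is a counterexample: take $U$ a point, $X$ the blow-up of an abelian surface at a point, $B=0$ and $\Mm=\bm{0}$. Every hypothesis holds, and $K_X\sim C$ with $C$ the $(-1)$-curve. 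But $C$ is not vertical over the point, so it is not very exceptional (over a point only $0$ is). Your Step 2 only removes vertical components, so it leaves $E'=C$, and Theorem \ref{thm: very exceptional mmp} cannot be applied on $X$. The required model with $K_{\Aa'}\sim_{\mathbb R,U}0$ only appears after $C$ has been contracted.

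The missing idea is to run an MMP before invoking very exceptionality.
\begin{itemize}
\item Run a $K_{\Aa}$-MMP$/U$ with scaling of an ample$/U$ divisor $A$ on the $\mathbb Q$-factorial klt variety $X$.
\item Suppose the scaling numbers had a positive limit $\lambda$. As in the proof of Lemma \ref{lem: termination over open subset}, $K_{\Aa}+\frac{\lambda}{2}A\sim_{\mathbb R,U}K_X+\Delta$ with $(X,\Delta)$ klt and $\Delta$ big$/U$, and \cite[Corollary 1.4.2]{BCHM10} would force termination.
\item Hence either the MMP terminates, necessarily with $K$ nef since $K_{\Aa}$ is pseudo-effective$/U$, or $\lambda=0$. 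In both cases some $K_{\Aa_i}$ is movable$/U$ by Lemma \ref{lem: scaling basic properties}.
\item On a very general fiber $F$ of $X_i\to U$, $K_{\Aa_i}|_F\sim_{\mathbb R}E_i|_F\geq 0$ is movable with $\kappa_\sigma=0$ (Lemma \ref{lem: property of numerical and Iitaka dimension}(5)). So it is numerically trivial, $E_i|_F=0$, and $E_i$ is vertical$/U$.
\item The MMP extracts no divisor, so every prime divisor of $X_i$ still maps onto $U$ or onto a prime divisor of $U$. Your Step 2 therefore works verbatim on $X_i$, giving a very exceptional$/U$ divisor $E_i'\geq 0$ with $K_{\Aa_i}\sim_{\mathbb R,U}E_i'$.
\item Since $K_{\Aa_i}$ is movable$/U$, \cite[Lemma 3.3]{Bir12} gives $E_i'=0$. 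So $\Aa_i$ is the desired $\mathbb Q$-factorial good minimal model with $K_{\Aa_i}\sim_{\mathbb R,U}0$; equivalently, apply Theorem \ref{thm: very exceptional mmp} on $X_i$.
\end{itemize}

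For comparison, the paper gives no direct argument here: it simply quotes \cite[Proposition 11.2.1(2.b)]{CHLX23} with $\Ff=T_X$.
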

\begin{proof}
It is a special case of \cite[Proposition 11.2.1(2.b)]{CHLX23} by letting $\Ff=T_X$.
\end{proof}

\begin{lem}\label{lem: abundant+trivial+lcc dominate imply gmm}
    Let $\Aa/U$ be an lc generalized pair with ambient variety $X$ and $f: X\rightarrow Z$ a contraction$/U$. Assume that:
\begin{enumerate}
    \item $\kappa_{\sigma}(X/U,K_{\Aa})=\dim Z-\dim U$.
    \item $K_{\Aa}\sim_{\mathbb R,Z}0$.
    \item Any lc center of $\Aa$ dominates $Z$.
\end{enumerate}
Then $\Aa/U$ has a good log minimal model.
\end{lem}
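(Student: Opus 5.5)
We plan to reduce to the Iitaka-dimension-zero situation of Lemma \ref{lem: has19 3.2 step 3 abu ver} over the base $Z$, and then descend to $Z$, where bigness takes over. First pass to a $\mathbb Q$-factorial dlt modification: by Lemma \ref{lem: mm preserved under dlt model} and Lemma \ref{lem: Bir12 2.8} it suffices to produce a good log minimal model for any log toroidal model, so we take a proper log toroidal model $h:\Aa'\to\Aa$ with respect to $f$ (Definition-Lemma \ref{deflem: eoltm}), with associated equidimensional toroidal contraction $f':X'\to Z'$ and $Z'\to Z$ birational, $Z'$ smooth. Since every lc center of $\Aa$ dominates $Z$, the added exceptional divisor $E$ (components that are not lc places) has coefficients in $(0,1)$, and $K_{\Aa'}=h^*K_{\Aa}+E$ with $E\ge 0$ exceptional over $X$. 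Conditions (1)--(3) pass to $\Aa'$ up to $E$: $\kappa_\sigma$ is unchanged by Lemma \ref{lem: property of numerical and Iitaka dimension}(3), and $K_{\Aa'}\sim_{\mathbb R,Z'}E$.

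Next we kill $E$ over $Z'$. The part of $E$ that is vertical over $Z'$ but does not map onto a divisor of $Z'$ is very exceptional over $Z'$ (equidimensionality of $f'$ means every vertical divisor maps to a divisor; so we decompose $E$ into a horizontal part, a part that is a pullback of a divisor on $Z'$ up to $\mathbb R$-linear equivalence, and a very exceptional remainder). For the horizontal part: $\kappa_\iota(X'/Z',K_{\Aa'})=\kappa_\sigma(X'/Z',K_{\Aa'})=0$ because $K_{\Aa'}\sim_{\mathbb R,Z'}E$ with $E$ exceptional over $X$ and $K_{\Aa}\sim_{\mathbb R,Z}0$. Hence Lemma \ref{lem: has19 3.2 step 3 abu ver} applies over $Z'$ (equidimensional, $Z'$ smooth, $X'$ klt $\mathbb Q$-factorial), giving a $\mathbb Q$-factorial good minimal model $\Aa''/Z'$ with $K_{\Aa''}\sim_{\mathbb R,Z'}0$, i.e. $K_{\Aa''}\sim_{\mathbb R}g^*L$ for some $\mathbb R$-Cartier $L$ on $Z'$, where $g:X''\to Z'$. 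The lc centers still dominate $Z'$ since the MMP over $Z'$ only contracts components of $E$.

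Finally we work on $Z'$. By condition (1) and Lemma \ref{lem: property of numerical and Iitaka dimension}(4), $\kappa_\sigma(Z'/U,L)=\dim Z'-\dim U$, so $L$ is big$/U$. We then use the canonical bundle formula for generalized pairs (available from \cite{CHLX23} in the non-NQC setting) to write $K_{\Aa''}\sim_{\mathbb R}g^*(K_{Z'}+B_{Z'}+\Nn_{Z'})$ for an lc generalized pair $(Z',B_{Z'},\Nn)$; because all lc centers dominate $Z'$, this pair is in fact klt. A klt generalized pair with big canonical class has a good minimal model by \cite[Lemma 3.4]{HL22} plus \cite{BCHM10}; running its MMP$/U$ and pulling back along $g$ (with Lemma \ref{lem: Cas+25b 3.13} and Lemma \ref{lem: g-pair weak glc imply lmm}) gives a bs-semi-ample model of $\Aa''/U$, hence of $\Aa/U$, and so a good log minimal model. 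The main obstacle is the canonical bundle formula step: ensuring the base generalized pair is genuinely klt (so the big-case result applies without NQC). If that is delicate, the fallback is to avoid the formula altogether: perturb using bigness of $L$ to write $K_{\Aa''}\sim_{\mathbb R,U}(1-\epsilon)K_{\Aa''}+\epsilon g^*(A+F)$ with $A$ ample and absorb $g^*A$ and $F$ into a klt pair as in Lemma \ref{lem: generalized lc to non-lc pair}, using that lc centers are horizontal to control the nklt locus.
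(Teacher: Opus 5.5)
The gap is in your final step, where running the MMP of the base and ``pulling back along $g$'' is supposed to give a bs-semi-ample model of $\Aa''/U$ via Lemma \ref{lem: Cas+25b 3.13} and Lemma \ref{lem: g-pair weak glc imply lmm}. Neither lemma does this. Lemma \ref{lem: Cas+25b 3.13} concerns a birational morphism $h\colon X'\to X$ with $\Aa'=(h^*\Aa,E)$, and Lemma \ref{lem: g-pair weak glc imply lmm} only upgrades a bs-weak lc (bs-semi-ample) model you already have.

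Lifting the base's good minimal model $\phi_Z\colon Z'\dashrightarrow Z''$ to $X''$ is the main content of the lemma. It is not formal, because $\phi_Z$ is not a morphism. Suppose $p\colon Y\to Z'$ and $q\colon Y\to Z''$ resolve $\phi_Z$, with $p^*K_{\Aa_{Z'}}=q^*K_{\Aa_{Z''}}+F$. Suppose $\rho\colon X_W\to X''$ is a log smooth model $\Aa_W=(\rho^*\Aa'',E)$ such that $f_W\colon X_W\to Y$ is a morphism. Then you only get $K_{\Aa_W}\sim_{\mathbb R,Z''}f_W^*F+E$. The divisor $f_W^*F$ is very exceptional over $Z''$, but $E$ need not be: its components can dominate $Z''$ or map onto divisors of $Z''$. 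So neither Theorem \ref{thm: very exceptional mmp} nor \cite[Lemma 3.3]{Bir12} applies as is. The crepant pullback of $\Aa''$ to the main component of $X''\times_{Z'}Y$ does not help either. It has the right canonical class, but it is not a log birational model: extracted divisors do not get coefficient $1$, and the boundary may fail to be effective.

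The paper's proof resolves exactly this, in four steps:
\begin{enumerate}
\item It runs a $K_{\Aa'}$-MMP over the output of the base MMP until the canonical class is movable.
\item It restricts to the open set $Z_0$ over which the base MMP is an isomorphism; its complement in the target has codimension $\geq 2$.
\item It uses \cite[Lemma 3.4(2)]{LX25} to see that $N_\sigma$ over $Z_0$ is exactly $E$, so $E$ is contracted over $Z_0$ by \cite[Lemma 2.25]{LMX24}.
\item It concludes that the remaining $F_i+E_i$ is very exceptional and kills it with \cite[Lemma 3.3]{Bir12}.
\end{enumerate}
You need this argument or an equivalent one.

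Your fallback does not close the gap. $g^*A$ is only semi-ample on $X''$, and $K_{\Aa''}$ is not big over $U$ unless $g$ is birational. So there is no ample divisor on $X''$ with which to absorb the possibly non-NQC nef part $\Mm$ via \cite[Lemma 3.4]{HL22} or Lemma \ref{lem: generalized lc to non-lc pair}. Moreover, the resulting pair could not be klt whenever $\Aa''$ has (horizontal) lc centers.

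The rest of your plan is sound but partly misdirected. The detour through the log toroidal model and Lemma \ref{lem: has19 3.2 step 3 abu ver} is unnecessary: hypothesis (2) already gives $K_{\Aa}\sim_{\mathbb R,Z}0$. The paper gets a klt base directly from \cite[Theorem 2.3.2]{CHLX23} applied to $f$, so the canonical bundle formula is not the obstacle you feared. Deducing good minimal models of klt generalized pairs with big canonical class from \cite[Lemma 3.4]{HL22}, \cite{BCHM10} and Lemma \ref{lem: Cas+25b 3.5} is a legitimate substitute for the paper's citation of \cite[Theorem 2.1.1]{Cas+25a}.
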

\begin{proof}
By Lemma \ref{lem: g-pair weak glc imply lmm}, we only need to show that $\Aa/U$ has a bs-semi-ample model. Let $h: X'\rightarrow X$ be a $\mathbb Q$-factorial dlt modification of $\Aa$ and let $\Aa':=h^*\Aa$. By Lemma \ref{lem: mm preserved under dlt model}, possibly replacing $\Aa$ with $h^*\Aa$, we may assume that $\Aa$ is $\mathbb Q$-factorial dlt. By \cite[Theorem 2.3.2]{CHLX23}, there exists a klt generalized pair $\Aa_Z/U$ such that $K_{\Aa}\sim_{\mathbb R}f^*K_{\Aa_Z}$. By Lemma \ref{lem: property of numerical and Iitaka dimension}(1), $K_{\Aa_Z}$ is big$/U$. By \cite[Theorem 2.1.1]{Cas+25a}, we may run a $K_{\Aa_Z}$-MMP$/U$ with scaling of an ample divisor $\phi: Z\dashrightarrow Z'$ which terminates with a good minimal model $\Aa_{Z'}/U$ of $\Aa_Z/U$. We let $p: Y\rightarrow Z$ and $q: Y\rightarrow Z'$ be a resolution of indeterminacy of $\phi$, then we have
$$p^*K_{\Aa_Z}=q^*K_{\Aa_{Z'}}+F$$
for some $F\geq 0$ that is exceptional$/Z'$. We let $h: \Aa'\rightarrow\Aa$ be a proper log smooth model of $\Aa$, $X'$ the ambient variety of $\Aa'$, such that the induced map $f': X'\dashrightarrow Y$ is a morphism. We have
$$\Aa'=(h^*\Aa,E)$$
for some $h$-exceptional $\mathbb R$-divisor $E\geq 0$. Then
\begin{equation}\label{equ: aa' and z'}
    K_{\Aa'}=h^*K_{\Aa}+E\sim_{\mathbb R}(f\circ h)^*K_{\Aa_Z}+E=f'^*p^*K_{\Aa_Z}+E=f'^*q^*K_{\Aa_{Z'}}+f'^*F+E.
\end{equation}
Since $\phi$ does not extract any divisor, there exist open subsets $Z_0\subset Z$ and $Z_0'\subset Z'$ such that $\phi_0:=\phi|_{Z_0}: Z_0\dashrightarrow Z_0'$ is an isomorphism, and $\dim Z'\backslash Z_0'\leq \dim Z-2$ (here we consider $\dim\emptyset=-\infty$). We run a $K_{\Aa'}$-MMP$/Z'$ with scaling of an ample divisor
$$\phi_i: \Aa_i\dashrightarrow \Aa_{i+1},\quad \Aa_1:=\Aa'$$
and let $\Aa_i^0:=\Aa_i\times_{Z'}Z_0$ for each $i$. Then there exists $i\gg 0$ such that $K_{\Aa_i}$ is movable$/Z'$. Let $X_i$ be the ambient variety of $\Aa_i$, $X_i^0:=X_i\times_{Z'}Z_0$, $E_i$ the image of $E$ on $X_i$, $E_i^0:=E_i|_{X_i^0}$, $F_i$ the image of $f'^*F$ on $X_i$, and $f_i: X_i\dashrightarrow Z'$ the induced morphism for each $i$.  

We have that $K_{\Aa_i^0}$ is movable$/Z_0$. By \cite[Lemma 3.4(2)]{LX25}, we have
$$N_{\sigma}(X_1^0/Z_0,K_{\Aa_1^0})=N_{\sigma}\left(X_1/Z_0,\left(h^*K_{\Aa}+E\right)|_{X_1^0}\right)=E^0_1.$$
By \cite[Lemma 2.25]{LMX24}, $E^0_1$ is contracted by the induced birational map $X_1^0\dashrightarrow X_i^0$. Thus $E_i|_{X_i^0}=0$. Since $\dim Z'\backslash Z_0'\leq \dim Z-2$, $\dim f_i(\Supp E_i)\leq\dim Z-2$. Thus $\dim f_i(\Supp F_i\cup\Supp E_i)\leq\dim Z-2$, so $F_i+E_i$ is very exceptional$/Z'$. We have
$$K_{\Aa'}\sim_{\mathbb R,Z'}F_i+E_i\geq 0$$
and $K_{\Aa'}$ is movable$/Z'$. By \cite[Lemma 3.3]{Bir12}, $F_i=E_i=0$ and $K_{\Aa_i}\sim_{\mathbb R,Z'}0$. By (\ref{equ: aa' and z'}), 
$$K_{\Aa_i}\sim_{\mathbb R}f_i^*K_{\Aa_{Z'}}$$
is semi-ample$/U$. Thus $\Aa_i/U$ is a good minimal model of $\Aa/U$. The lemma follows.
\end{proof}

\begin{prop}\label{prop: abundant+lcc dominate imply gmm}
    Let $\Aa/U$ be an lc generalized pair with ambient variety $X$ and $f: X\rightarrow Z$ a contraction$/U$. Assume that
\begin{enumerate}
    \item $\kappa_{\sigma}(X/U,K_{\Aa})=\dim Z-\dim U$.
    \item $\kappa_{\iota}(X/Z,K_{\Aa})=\kappa_{\sigma}(X/Z,K_{\Aa})=0$.
    \item Any lc center of $\Aa$ dominates $Z$.
\end{enumerate}
Then $\Aa/U$ has a good log minimal model.
\end{prop}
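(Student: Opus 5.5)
The plan is to reduce Proposition \ref{prop: abundant+lcc dominate imply gmm} to Lemma \ref{lem: abundant+trivial+lcc dominate imply gmm}. To do so I will first replace $\Aa$ by a model on which $K_{\Aa}$ becomes $\mathbb R$-linearly trivial over the base. By Lemma \ref{lem: g-pair weak glc imply lmm} it suffices to construct a bs-semi-ample model. Lemmas \ref{lem: Bir12 2.8} and \ref{lem: Cas+25b 3.13} let me pass to log toroidal models.

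\textbf{Step 1: relatively log toroidal model.} Take a log toroidal modification of $\Aa$ with respect to $f$. By Definition-Lemma \ref{deflem: eoltm}, this gives a proper log toroidal model $h:\Aa'\to\Aa$ with ambient variety $X'$. It comes with an equidimensional toroidal contraction $f':X'\to Z'$, where $Z'\to Z$ is birational and $Z'$ is smooth, hence $\mathbb Q$-factorial, and $X'$ is $\mathbb Q$-factorial klt.

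I then check that the hypotheses survive on $\Aa'$.
\begin{itemize}
\item By Lemma \ref{lem: property of numerical and Iitaka dimension}(3), $\kappa_\sigma$ and $\kappa_\iota$ are unchanged, since $K_{\Aa'}=h^*K_{\Aa}+E$ with $E\ge0$ exceptional.
\item Hypothesis (2) transfers to $X'/Z'$. One compares the relative dimensions over $Z$ and over $Z'$ on a common generic fibre, using Lemma \ref{lem: property of numerical and Iitaka dimension}(3)(4).
\item Every lc center of $\Aa'$ is either the strict transform of an lc center of $\Aa$ or lies over one. This needs care, because the exceptional divisors over non-lc places carry coefficient $<1$. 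By (3), all such centers dominate $Z$, hence dominate $Z'$.
\end{itemize}

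\textbf{Step 2: relative good minimal model over $Z'$.} Apply Lemma \ref{lem: has19 3.2 step 3 abu ver} to $\Aa'/Z'$. This gives a $\mathbb Q$-factorial good minimal model $\Aa''/Z'$ with $K_{\Aa''}\sim_{\mathbb R,Z'}0$, obtained by a $K_{\Aa'}$-MMP$/Z'$, and hence a good minimal model of $\Aa'$ in particular.

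Since this MMP is $K_{\Aa'}$-negative, Lemma \ref{lem: minimal model same after running mmp} shows that any bs-semi-ample model of $\Aa''/U$ is one of $\Aa'/U$, and therefore of $\Aa/U$. The remaining hypotheses must also pass to $\Aa''$.
\begin{itemize}
\item Lemma \ref{lem: property of numerical and Iitaka dimension}(5) gives $\kappa_\sigma(X''/U,K_{\Aa''})=\dim Z-\dim U=\dim Z'-\dim U$.
\item The MMP only increases discrepancies, so every lc center of $\Aa''$ is the image of an lc place of $\Aa'$. Its center on $X'$ dominates $Z'$, and since everything is over $Z'$, the center on $X''$ dominates $Z'$ as well.
\end{itemize}

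\textbf{Step 3: conclude.} Now apply Lemma \ref{lem: abundant+trivial+lcc dominate imply gmm} to $\Aa''/U$ with the contraction $X''\to Z'$. This yields a good log minimal model, hence a bs-semi-ample model of $\Aa''/U$, and therefore of $\Aa/U$. Lemma \ref{lem: g-pair weak glc imply lmm} then finishes the proof.

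\textbf{Main obstacle.} I expect Step 1 to be the delicate part, namely transferring $\kappa_\iota(X/Z)=\kappa_\sigma(X/Z)=0$ to $X'/Z'$ and confirming that the toroidal model introduces no new lc centers not dominating $Z'$. The lc-center issue should be handled by the properness of the model: on non-lc-place exceptional divisors $\Supp E\subset\Supp\{B'\}$, so lc centers of $\Aa'$ correspond to lc places of $\Aa$. For the Iitaka dimensions, I would compare over the generic point of $Z$, where $Z'\to Z$ is an isomorphism.
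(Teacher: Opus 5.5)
Your proposal is correct and follows essentially the same route as the paper: pass to a proper log toroidal model with respect to $f$, use Lemma~\ref{lem: has19 3.2 step 3 abu ver} together with Theorem~\ref{thm: bswlc imply mm} to run a $K_{\Aa'}$-MMP$/Z'$ ending with $K_{\Aa''}\sim_{\mathbb R,Z'}0$, apply Lemma~\ref{lem: abundant+trivial+lcc dominate imply gmm}, and transfer back via Lemmas~\ref{lem: minimal model same after running mmp}, \ref{lem: Bir12 2.8} and \ref{lem: g-pair weak glc imply lmm}. Your added justifications fill in details the paper leaves implicit: that lc places of the proper model are lc places of $\Aa$, and that the relative invariants over $Z$ and $Z'$ agree.
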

\begin{proof}
    By Definition-Lemma \ref{deflem: eoltm}, there exists a proper log toroidal model $h: \Aa'\rightarrow\Aa$ of $\Aa$ with respect to $f$ associated with $f': X'\rightarrow Z'$. By Lemma \ref{lem: property of numerical and Iitaka dimension}(3), we have $$\kappa_{\iota}(X'/Z',K_{\Aa'})=\kappa_{\iota}(X/Z,K_{\Aa})=0=\kappa_{\sigma}(X/Z,K_{\Aa})=\kappa_{\sigma}(X'/Z',K_{\Aa'}).$$
    By Lemma \ref{lem: has19 3.2 step 3 abu ver} and Theorem \ref{thm: bswlc imply mm}, we may run a $K_{\Aa'}$-MMP$/Z'$ with scaling of an ample divisor which terminates with a good minimal model $\Aa''/Z'$ of $\Aa'/Z'$. By Lemma \ref{lem: property of numerical and Iitaka dimension}(5), 
    $$\kappa_{\sigma}(X''/U,K_{\Aa''})=\kappa_{\sigma}(X'/U,K_{\Aa'})=\kappa_{\sigma}(X/U,K_{\Aa})=\dim Z-\dim U.$$
    Since any lc place of $\Aa'$ is an lc place of $\Aa$ hence dominates $Z$, any lc center of $\Aa'$ dominants $Z'$, so any lc center of $\Aa''$ dominants $Z'$. By Lemma \ref{lem: abundant+trivial+lcc dominate imply gmm}, $\Aa''/U$ has a good log minimal model. By Lemmas \ref{lem: minimal model same after running mmp} and \ref{lem: Bir12 2.8}, $\Aa/U$ has a bs-semi-ample model. By Lemma \ref{lem: g-pair weak glc imply lmm}, $\Aa/U$ has a good log minimal model.
\end{proof}

\subsection{Good minimal model for abundant generalized pairs}

The goal of this subsection is to show that klt abundant pseudo-effective generalized pairs have good minimal models. This will be used later in establishing the gluing theory. We refer the reader to \cite[Definition 2.2]{Hu23} for the definition of invariant Iitaka fibrations (see also \cite[Definition 6.1]{CHL24}).

\begin{lem}\label{lem: iitaka fibration numerical abundant divisor gpair dimension}
Let $\Aa/U$ be an lc generalized pair with ambient variety $X$ such that $K_{\Aa}$ is abundant$/U$ and pseudo-effective$/U$. Let $\phi: X\dashrightarrow Y$ be an invariant Iitaka fibration$/U$ of $K_{\Aa}$ and let $h: \Aa'\rightarrow\Aa$ be a proper log toroidal model with respect to $\phi$ and associated with $f: X'\rightarrow Z$. Then
$$\kappa_{\sigma}(X'/U,K_{\Aa'})=\kappa_{\iota}(X'/U,K_{\Aa'})=\dim Z-\dim U,\ \kappa_{\sigma}(X'/Z,K_{\Aa'})=\kappa_{\iota}(X'/Z,K_{\Aa'})=0.$$
\end{lem}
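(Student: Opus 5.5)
The plan is to first transfer everything from $X$ to $X'$ using the invariance properties in Lemma \ref{lem: property of numerical and Iitaka dimension}, and then compute the dimensions relative to $Z$ by restricting to a general fibre of $f$.

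\textbf{Step 1: from $X$ to $X'$.} Since $h\colon\Aa'\to\Aa$ is a proper log toroidal model, we have $K_{\Aa'}=h^*K_{\Aa}+E$ with $E\geq 0$ exceptional over $X$. Lemma \ref{lem: property of numerical and Iitaka dimension}(3) then gives
$$\kappa_\sigma(X'/U,K_{\Aa'})=\kappa_\sigma(X/U,K_{\Aa}),\qquad \kappa_\iota(X'/U,K_{\Aa'})=\kappa_\iota(X/U,K_{\Aa}).$$
Because $K_{\Aa}$ is abundant$/U$, these two numbers are equal. Since $\phi$ is an invariant Iitaka fibration, both equal $\dim Y-\dim U$. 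The map $X'\to Z$ is compatible with $X\dashrightarrow Y$ through a birational $Z\to Y$, so $\dim Z=\dim Y$. This proves the first equality.

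\textbf{Step 2: the invariant Iitaka dimension over $Z$.} Choose $D\geq 0$ with $D\sim_{\mathbb R,U}K_{\Aa}$; this is possible because $\kappa_\iota\geq 0$. By the defining property of the invariant Iitaka fibration (cf. \cite[Definition 2.2]{Hu23}), after pulling back to the resolved fibration $f\colon X'\to Z$, a general fibre $F$ satisfies
$$\kappa_\iota(F,(h^*D+E)|_F)=0.$$
For this I will use that $h^*D$ restricted to a general fibre has $\kappa_\iota=0$, and that adding the exceptional $E|_F$ does not change this, by Lemma \ref{lem: property of numerical and Iitaka dimension}(3) applied on $F$ if necessary. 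The relative invariant Iitaka dimension over $Z$ is computed on the general fibre, so $\kappa_\iota(X'/Z,K_{\Aa'})=0$.

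\textbf{Step 3: the numerical dimension over $Z$, the main obstacle.} The easy half is the inequality $\kappa_\sigma(X'/Z,K_{\Aa'})\geq\kappa_\iota(X'/Z,K_{\Aa'})=0$. For the upper bound, I will use the easy addition formula for $\kappa_\sigma$ along the fibration $X'\to Z$ over $U$:
$$\kappa_\sigma(X'/U,K_{\Aa'})\geq \kappa_\sigma(F,K_{\Aa'}|_F)+(\dim Z-\dim U).$$
This should hold because $K_{\Aa'}$ is $\mathbb R$-linearly equivalent over $U$ to an effective divisor $\geq f^*$ of a big$/U$ divisor on $Z$ (up to exceptional terms); this is the standard argument of Nakayama/Lehmann for abundant divisors. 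Combined with Step 1, the display forces $\kappa_\sigma(F,K_{\Aa'}|_F)=0$, and hence $\kappa_\sigma(X'/Z,K_{\Aa'})=0$.

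The delicate point is justifying this inequality in the relative setting. The planned route is:
\begin{enumerate}
\item write $K_{\Aa'}\sim_{\mathbb R,U}f^*H+G$ with $H$ big$/U$ on $Z$ and $G\geq 0$, which follows from the Iitaka fibration property;
\item use that $\kappa_\sigma$ of $f^*H+G$ dominates $\dim Z-\dim U$ plus $\kappa_\sigma$ of the fibre restriction.
\end{enumerate}
If this route is hard to carry out directly, the alternative is to cite \cite[Lemma 2.3]{LX25}, or the corresponding results in \cite{CHL24,Hu23}, which already contain this statement for abundant divisors.
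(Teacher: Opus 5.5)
Your Steps 1 and 3 are essentially the paper's proof. The paper also transfers the dimensions to $X'$ via Lemma~\ref{lem: property of numerical and Iitaka dimension}(3) and writes $h^*K_{\Aa}\sim_{\mathbb R,U}f^*A+E'$ with $A$ ample on $Z$. It then applies \cite[(3.3)]{Fuj20} on a general fibre of $X'\to U$ to $K_{\Aa'}+kf^*A$, whose $\kappa_\sigma$ equals that of $K_{\Aa'}$ by Lemma~\ref{lem: property of numerical and Iitaka dimension}(2). Your version, splitting the big divisor $H$ on $Z$ as ample plus effective and applying the same inequality, is equivalent. Note that the inequality needs that ample piece, and it is not ``easy addition'', which goes the other way.

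\textbf{The gap is Step 2.} The conclusion $\kappa_\iota(F,(h^*D+E)|_F)=0$ is true, but Lemma~\ref{lem: property of numerical and Iitaka dimension}(3) cannot be applied on $F$. The reason is that $E|_F$ is in general not exceptional for $h|_F\colon F\to h(F)$. To resolve the indeterminacy of $\phi$, and to make the model toroidal over $Z$, $h$ may extract divisors dominating $Z$. Their traces on $F$ are then divisors mapping onto divisors of $h(F)$. Already for the blow-up resolving the pencil of lines through a point of $\mathbb P^2$, the exceptional curve cuts each fibre $F\cong\mathbb P^1$ in a point: a non-exceptional divisor with $\kappa=1$. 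Adding such a divisor can a priori raise the Iitaka dimension on $F$, so Step 2 as written proves nothing. Step 3 leans on it for the lower bound $\kappa_\sigma(X'/Z,K_{\Aa'})\geq\kappa_\iota(X'/Z,K_{\Aa'})=0$.

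\textbf{The fix is the paper's ordering, which makes Step 2 unnecessary.} Step 3 alone gives $\kappa_\sigma(X'/Z,K_{\Aa'})\leq 0$, since its upper bound never used Step 2. Since $K_{\Aa'}\sim_{\mathbb R,U}h^*D+E\geq 0$, hence also $\sim_{\mathbb R,Z}$, you get $\kappa_\iota(X'/Z,K_{\Aa'})\geq 0$. Therefore $0\leq\kappa_\iota(X'/Z,K_{\Aa'})\leq\kappa_\sigma(X'/Z,K_{\Aa'})\leq 0$, which gives both vanishing statements at once. So it is the global inequality, not any fibrewise exceptionality, that shows the horizontal part of $E$ does not change $\kappa_\iota$ or $\kappa_\sigma$ on $F$.
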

\begin{proof}
Let $\pi: X\rightarrow U$ be the associated morphism and $X\rightarrow U'\rightarrow U$ the Stein factorization of $\pi$. Possibly replacing $U$ by $U'$, we may assume that $\pi$ is a contraction. Let $\kappa:=\kappa_{\sigma}(X/U,K_{\Aa})$. Then $\kappa\geq 0$. Since $K_{\Aa}$ is pseudo-effective$/U$ and abundant$/U$ and $\phi$ is an invariant Iitaka fibration$/U$ of $K_{\Aa}$, we have
$$\dim Z-\dim U=\kappa_{\iota}(X/U,K_{\Aa})=\kappa_{\sigma}(X/U,K_{\Aa})=\kappa.$$
Since $\Aa'$ is a log toroidal model of $\Aa$, by Lemma \ref{lem: property of numerical and Iitaka dimension}(3), we have
$$\kappa_{\sigma}(X'/U,K_{\Aa'})=\kappa_{\sigma}(X/U,K_{\Aa})=\dim Z-\dim U=\kappa_{\iota}(X/U,K_{\Aa})=\kappa_{\iota}(X'/U,K_{\Aa'}).$$
Thus we only need to show that 
$$\kappa_{\sigma}(X'/Z,K_{\Aa'})=\kappa_{\iota}(X'/Z,K_{\Aa'})=0.$$
We have that
$\Aa'=(h^*\Aa,E)$
for some $E\geq 0$ that is exceptional$/X$. Moreover, there exists an ample$/U$ $\mathbb R$-divisor $A\geq 0$ on $Z$ and an exceptional$/X$ $\mathbb R$-divisor $E'\geq 0$ on $X'$, such that
$$h^*K_{\Aa}\sim_{\mathbb R,U}f^*A+E'.$$
Then for any positive real number $k$, we have
$$K_{\Aa'}+kf^*A\sim_{\mathbb R,U}(1+k)f^*A+E+E'.$$
Since
$$(1+k)f^*A+E+E'\geq f^*A+E+E'\geq \frac{1}{1+k}((1+k)f^*A+E+E')),$$
and $E$ is exceptional$/X$, for any $k\geq 0$, by Lemma \ref{lem: property of numerical and Iitaka dimension}(2)(3),
\begin{align*}
    &\kappa_{\sigma}(X'/U,K_{\Aa'}+kf^*A)=\kappa_{\sigma}(X'/U,(1+k)f^*A+E+E')\\
    =&\kappa_{\sigma}(X'/U,f^*A+E+E')=\kappa_{\sigma}(X'/U,h^*K_{\Aa}+E)=\kappa_{\sigma}(X'/U,K_{\Aa'})=\kappa_{\sigma}(X/U,K_{\Aa})=\kappa.
\end{align*}
Let $F$ be a general fiber of $\pi\circ h$, $Z_F:=f(F)$, and $f_F:=f|_F$. By \cite[(3.3)]{Fuj20} and Lemma \ref{lem: property of numerical and Iitaka dimension}(1), we have
 \begin{align*}
\kappa &= \kappa_{\sigma}(X'/U,K_{\Aa'}+kf^*A)=\kappa_{\sigma}((K_{\Aa'}+kf^*A)|_F)\geq \kappa_{\sigma}(F/Z_F,K_{\Aa'}|_F)+\kappa(A|_{F_Z})\\
    &=\kappa_{\sigma}(F/Z_F,K_{\Aa'}|_F)+\dim Z_F=\kappa_{\sigma}(F/Z_F,K_{\Aa'}|_F)+\kappa=\kappa_{\sigma}(X'/Z,K_{\Aa'})+\kappa.
\end{align*}
Therefore, 
$\kappa_{\sigma}(X'/Z,K_{\Aa'})=0$. Since $K_{\Aa}$ is abundant$/U$ and pseudo-effective$/U$, $K_{\Aa'}$ is abundant$/U$ and pseudo-effective$/U$. Thus 
$$0\leq\kappa_{\iota}(X'/Z,K_{\Aa'})\leq\kappa_{\sigma}(X'/Z,K_{\Aa'})=0,$$
so $\kappa_{\iota}(X'/Z,K_{\Aa'})=0$ and the lemma follows.
\end{proof}

\begin{lem}\label{lem: canoincal model implies glmm}
    Let $\Aa/U$ be a klt generalized pair such that $K_{\Aa}$ is abundant$/U$ and pseudo-effective$/U$. Then $\Aa/U$ has a $\mathbb Q$-factorial good minimal model.
\end{lem}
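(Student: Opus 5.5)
The plan is to reduce to the Iitaka-dimension-zero situation handled in the previous subsection, and then use the klt hypothesis to make condition (3) of Proposition \ref{prop: abundant+lcc dominate imply gmm} trivial.

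\textbf{Setup.} Let $X$ be the ambient variety of $\Aa$. Let $\phi\colon X\dashrightarrow Y$ be an invariant Iitaka fibration$/U$ of $K_{\Aa}$. By Definition-Lemma \ref{deflem: eoltm}, choose a proper log toroidal model $h\colon \Aa'\rightarrow\Aa$ with respect to $\phi$, associated with $f\colon X'\rightarrow Z$. Lemma \ref{lem: iitaka fibration numerical abundant divisor gpair dimension} then gives
$$\kappa_{\sigma}(X'/U,K_{\Aa'})=\dim Z-\dim U,\qquad \kappa_{\sigma}(X'/Z,K_{\Aa'})=\kappa_{\iota}(X'/Z,K_{\Aa'})=0.$$
These are conditions (1) and (2) of Proposition \ref{prop: abundant+lcc dominate imply gmm} for $\Aa'$ and $f$.

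\textbf{Condition (3).} Since $\Aa$ is klt, no exceptional divisor of $h$ is an lc place of $\Aa$. Hence in the construction of $\Aa'=(h^*\Aa,E)$ every $h$-exceptional divisor appears in $E$ with coefficient $<1$: this is the case $E_1=0$ in the proof of Definition-Lemma \ref{deflem: eoltm}. Non-exceptional divisors have the same coefficients as in $B$, which are $<1$. Since $(X',\Supp B'\cup \Exc(h))$ is log toroidal and $\Mm$ descends to $X'$, the generalized pair $\Aa'$ is klt. So its only lc center is $X'$, which dominates $Z$, and condition (3) holds.

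\textbf{Conclusion.} Proposition \ref{prop: abundant+lcc dominate imply gmm} now shows that $\Aa'/U$ has a good log minimal model. By Lemma \ref{lem: Bir12 2.8}, this is a bs-good minimal model (in particular a bs-semi-ample model) of $\Aa/U$. Since $\Aa$ is klt, $X$ is potentially klt (\cite[Lemma 3.4]{HL22}). A semi-ample $K$ is NQC, so this bs-semi-ample model is a KNQC bs-weak lc model. Theorem \ref{thm: bswlc imply mm} therefore yields a $\mathbb Q$-factorial good minimal model of $\Aa/U$.

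\textbf{Main obstacle.} The step needing most care is verifying that $\Aa'$ is genuinely klt, so that Proposition \ref{prop: abundant+lcc dominate imply gmm}(3) applies. This requires checking that the coefficients chosen for $E$ in Definition-Lemma \ref{deflem: eoltm} are strictly less than $1$ when $\Aa$ is klt, and that the log toroidal structure then gives klt singularities. A secondary issue is that Theorem \ref{thm: bswlc imply mm} applies to bs-models rather than only honest ones; it is stated for bs-weak lc models, so this should be fine.
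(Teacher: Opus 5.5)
Your proposal is correct and follows the paper's proof essentially step by step. The chain is the same: an invariant Iitaka fibration, a proper log toroidal model from Definition-Lemma~\ref{deflem: eoltm}, Lemma~\ref{lem: iitaka fibration numerical abundant divisor gpair dimension} together with Proposition~\ref{prop: abundant+lcc dominate imply gmm} (condition (3) being automatic because $\Aa'$ is klt), then Lemma~\ref{lem: Bir12 2.8} and Theorem~\ref{thm: bswlc imply mm}. Your argument that $\Aa'$ is klt spells out what the paper only asserts: by properness every exceptional divisor lies in $\Supp E\subset\Supp\{B'\}$, so $B'$ has all coefficients $<1$ on a log toroidal model to which $\Mm$ descends. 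Omitting the paper's preliminary small $\mathbb Q$-factorialization is harmless, since Theorem~\ref{thm: bswlc imply mm} only needs $X$ to be potentially klt.
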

\begin{proof}
Let $\Aa:=(X,B,\Mm)$. Possibly replacing $X$ by a small $\mathbb Q$-factorialization, we may assume that $X$ is $\mathbb Q$-factorial klt. Since $K_{\Aa}$ is abundant$/U$ and pseudo-effective$/U$, $\kappa:=\kappa_{\iota}(X/U,K_{\Aa})\geq 0$. Let $\phi: X\dashrightarrow Y$ be an invariant Iitaka fibration$/U$ (cf. \cite[Definition 6.1]{CHL24}) of $K_{\Aa}$. By Definition-Lemma \ref{deflem: eoltm}, there exists a proper log toroidal model $h: \Aa'\rightarrow\Aa$ with respect to $\phi$ associated with $f: X'\rightarrow Z$. Then $\Aa'$ is klt. By Lemma \ref{lem: iitaka fibration numerical abundant divisor gpair dimension} and Proposition \ref{prop: abundant+lcc dominate imply gmm}, $\Aa'/U$ has a good log minimal model. By Lemma \ref{lem: Bir12 2.8}, $\Aa/U$ has a bs-semi-ample model. By Theorem \ref{thm: bswlc imply mm}, $\Aa/U$ has a $\mathbb Q$-factorial good minimal model.
\end{proof}

\section{Gluing theory for generically trivial generalized pairs}\label{sec: glue}

In this section, we establish the gluing theory for LD generalized pairs. We will adopt the notation as in \cite[Definitions 9.15, 9.16, 9.18]{Kol13} for the definition of stratification, stratified schemes, boundary of stratified schemes, stratified morphisms, stratified relations, and properties (N), (SN), (HN), (HSN).

Our main output is Theorem~\ref{thm: bir12 1.7 g LD}, which can be viewed as an LD analogue of \cite[Theorem~1.7]{Bir12} and will be used in Section~\ref{sec: hx13 1.1}.

\subsection{Gluing theory of lc crepant structures}

\begin{defn}
Let $\Aa/U$ be a generalized pair associated with morphism $\pi: X\rightarrow U$. We say that $\pi: \Aa\rightarrow U$ is an \emph{lc} (resp. \emph{dlt}) \emph{crepant log structure} if $\pi$ is a contraction, $\Aa$ is lc (resp. dlt), and $K_{\Aa}\sim_{\mathbb R,U}0$. An \emph{lc center} of a crepant log structure $\pi: \Aa\rightarrow U$ is the image of an lc center of $\Aa$ in $U$.
\end{defn}

\begin{defn}[{\cite[Definition 13.1.4]{CHLX23}}]
Let $\pi: \Aa\rightarrow U$ be an lc crepant log structure. For any integer $0\leq k\leq\dim U$, we denote by $S_k^*(U,\Aa)$ the union of all lc centers of $\pi: \Aa\rightarrow U$ of dimension $\leq k$. We define $S_{-1}^*(U,\Aa):=\emptyset$ and let $S_k(U,\Aa):=S_k^*(U,\Aa)\backslash S_{k-1}^*(U,\Aa)$ for any $0\leq k\leq \dim U$. 

The \emph{lc stratification} of $U$ induced by $\pi: \Aa\rightarrow U$ is the stratification induced by $S_*(U,\Aa)$ and is denoted by $(U,S_*(\Aa))$, i.e.\ we have $S_k(\Aa)(U)=S_k(U,\Aa)$ for any $k$. We denote by 
$$B(U,\Aa):=U\setminus S_{\dim U}(U,\Aa).$$
\end{defn}

\begin{defn}[{\cite[Definition 13.1.5]{CHLX23}}]\label{defn: of glc origin}
A \emph{semi-normal stratified space} $(Y,S_*)$ is called \emph{of lc origin} if $S_i(Y)$ is unibranch for any $i$, and there exist lc crepant log structures $\pi_j: \Aa_j\rightarrow U_j$ and a finite surjective stratified morphism
$$\phi: \bigsqcup_j(U_j,S_*(\Aa_j))\rightarrow (Y,S_*).$$
\end{defn}

The following two constructions and one lemma are almost verbatim to \cite[Constructions 4.12 and 4.13, Lemma 4.14]{LX23} except that we consider LD generalized pairs instead of NQC generalized pairs. For the reader's convenience, we provide the full statements here.

\begin{cons}\label{cons: gluing part 1}
Let $\Aa/U:=(X,B,\Mm)/U$ be a dlt generalized pair, $W\subset\lfloor B\rfloor$ a reduced divisor, $\pi: W^n\rightarrow W$ the normalization of $W$, $D$ the double locus of $W^n$, $D^n$ the normalization of $D$, $\tau: D^n\rightarrow D^n$ the induced involution, and $(\tau_1,\tau_2): D^n\rightrightarrows W^n$ a finite stratified equivalence relation whose normalization map is given by the quotient morphism $\pi: W^n\to W=W^n/R$, where $R$ is the finite equivalence relation generated by $D^n$. Let $L_W:=K_{\Aa}|_W$, $$L:=K_{\Aa}|_{W^n}=K_{\Aa_W^n}$$
where $\Aa_W^n:=\Aa|_{W^n}:=\left(W^n,B_{W^n},\Mm^{W^n}\right)$, and suppose that $L$ is semi-ample$/U$. Let $g^n: W^n\rightarrow Y^n$ and $h^n: D^n\rightarrow T^n$ be the morphisms$/U$ induced by $L$ and $L|_{D^n}$ respectively so that we have the commutative diagram
\begin{displaymath}
    \xymatrix{ 
        D^n \ar[dd]_{h^n}\ar@<.5ex>[rr]^{\tau_1} \ar@<-.5ex>[rr]_{\tau_2} && W^n \ar[dd]^{g^n} \\
        &&\\
        T^n \ar@<.5ex>[rr]^{\sigma_1}\ar@<-.5ex>[rr]_{\sigma_2} && Y^n 
    }
\end{displaymath}
where $(\sigma_1,\sigma_2): T^n\rightrightarrows Y^n$ are induced by $(\tau_1,\tau_2): D^n\rightrightarrows W^n$. We let 
$$\Aa_D^n:=\Aa_W^n|_{D^n}:=\left(D^n,B_{D^n},\Mm^{D^n}\right).$$
It is clear that $g^n: \Aa_W^n\to Y^n$ and  $h^n: \Aa_D^n\rightarrow T^n$ are dlt crepant log structures. We let $(Y^n,S_*(\Aa_W^n))$ and $(T^n,S_*(\Aa_D^n))$ be their induced stratified schemes respectively.
\end{cons}

\begin{cons}\label{cons: glue part 2}
Notations and conditions as in Construction \ref{cons: gluing part 1}. Assume that $K_{\Aa}$ is $\mathbb Q$-Cartier. Let $m$ be a sufficiently divisible positive integer such that $mL_W$ is Cartier, $|mL/U|$ defines $g^n$, and there exists a very ample$/U$ divisor $H$ on $Y^n$ such that $(g^{n})^*H=M:=mL$.

Let $p_W: W^n_M\to W^n$, $p_Y: Y^n_H\to Y^n$ be the total spaces of the line bundles $M$ and $H$ respectively. Let $B_{W^n_M}:=p_W^{-1}(B_{W^n})$, $\Aa_{W_M}^n:=(W^n_M,B_{W^n_M},p^*_W\Mm^{W^n})$, and $g^n_M: \Aa_{W_M}^n\to Y^n_H$ the dlt crepant log structure with induced stratification $(Y^n_H, S_*(\Aa_{W_M}^n):=p_Y^{-1}S_*(\Aa_{W}^n))$. 

Let $p_D: D^n_M\to D^n$ and $p_T: T^n_H\to T^n$ be the total spaces of the line bundles $M|_{D^n}$ and $H|_{T^n}$. Let $B_{D^n_M}:=p_D^{-1}(B_{D^n})$, $\Aa_{D_M}^n:=(D^n_M,B_{D^n_M},p^*_D\Mm^{D^n})$, and $h^n_M: \Aa_{D_M}^n\to T^n_H$ the dlt crepant log structure with induced stratification $(T^n_H, S_*( \Aa_{D_M}^n):=p_T^{-1}S_*( \Aa_{D}^n))$. 

Then we have a finite pre-relation $(\sigma_{1H},\sigma_{2H}): T^n_H\rightrightarrows Y^n_H$ induced by the finite relation $(\tau_{1M},\tau_{2M}): D^n_M\rightrightarrows W^n_M$, where $\tau_{1M},\tau_{2M}: D^n_M\rightarrow W^n_M$ are liftings of $\tau_{1},\tau_{2}$ respectively.
\end{cons}

\begin{lem}[{cf.~\cite[Lemma 3.11]{HX13}}]\label{lem: induced relation is stratified}
Notations and conditions as in Construction \ref{cons: gluing part 1}. Then
\begin{enumerate}
    \item $(\sigma_1,\sigma_2): T^n\rightrightarrows Y^n$ gives a stratified equivalence relation, and
    \item $(Y^n,S_*(\Aa_W^n))$ and $(T^n,S_*(\Aa_D^n))$ satisfy (HN) and (HSN).
\end{enumerate}
If we have the additional notations and conditions as in Construction \ref{cons: glue part 2}, then\begin{enumerate}
    \item[(3)] $(\sigma_{1H},\sigma_{2H}): T^n_H\rightrightarrows Y^n_H$ gives a stratified equivalence relation, and
    \item[(4)] $(Y^n_H, S_*(\Aa_{W_M}^n))$ and $(T^n_H, S_*(\Aa_{D_M}^n))$ satisfy (HN) and (HSN).
\end{enumerate}
\end{lem}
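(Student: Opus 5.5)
The plan is to follow the argument of \cite[Lemma 3.11]{HX13} and its generalized pair version \cite[Lemma 4.14]{LX23} essentially verbatim. The only place where NQC was used there is through the structure of lc centers of crepant log structures and adjunction. For non-NQC generalized pairs, these are available by \cite{CHLX23}: the definitions of lc stratifications and of spaces of lc origin, together with the subadjunction/canonical bundle formula \cite[Theorem 2.3.2]{CHLX23}. So the strategy is to check that each ingredient of \cite{HX13} holds for crepant log structures of generalized pairs.

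\textbf{Part (2).} For (2), I would use that $g^n:\Aa_W^n\to Y^n$ and $h^n:\Aa_D^n\to T^n$ are dlt crepant log structures.
\begin{enumerate}
\item The normality of the closures of strata, (HN), follows from the analogue of \cite[Theorem 4.40]{Kol13} for generalized pairs. Concretely, for an lc center $Z\subset Y^n$, take a minimal lc center $V$ of $\Aa_W^n$ dominating it.
\item The Stein factorization of $V\to Z$, together with adjunction to $V$ (which is dlt by \cite[Lemma 2.9, Proposition 2.10]{HL22}), shows that $V\to Z^n$ has connected fibers.
\item The connectedness principle for generalized pairs then gives that $Z$ is unibranch, hence its normalization is a homeomorphism.
\item The seminormality statement (HSN) follows the same way from the fact that unions of lc centers of crepant structures are seminormal, as in \cite[Theorem 9.26]{Kol13}. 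I would invoke the corresponding statement \cite[Section 13]{CHLX23} for lc crepant log structures of generalized pairs.
\end{enumerate}

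\textbf{Part (1).} For (1), I would check the three requirements.
\begin{enumerate}
\item $\sigma_1,\sigma_2$ are stratified. Images of lc centers of $\Aa_D^n$ under $\tau_i$ are lc centers of $\Aa_W^n$, because $D^n$ is obtained by adjunction and lc centers are preserved. Combined with the commutativity $g^n\circ\tau_i=\sigma_i\circ h^n$, this maps strata of $T^n$ onto strata of $Y^n$ of the same dimension.
\item $(\sigma_1,\sigma_2)$ is finite. This follows since $g^n$ and $h^n$ are the ample models of $L$ and $\tau_i^*L=L|_{D^n}$.
\item The generated relation is an equivalence relation that is stratified. Here I would argue as in \cite[Lemma 3.11]{HX13}. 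The involution $\tau$ descends to $T^n$ because $h^n$ is the ample model of an involution-invariant line bundle. The key point is that the relation on $Y^n$ is determined by its restriction to generic points of strata. This is where the generalized crepant log structure property, namely the analogue of \cite[Proposition 4.37]{Kol13} on sources and springs, is needed. I would cite \cite[Theorem 13.1.6]{CHLX23} (or the corresponding statement there) to get that $(Y^n,S_*)$ is of lc origin, so that Kollár's \cite[Theorem 9.21]{Kol13} applies.
\end{enumerate}

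\textbf{Parts (3) and (4).} For (3) and (4), I would reduce to (1) and (2). The total spaces $W^n_M\to Y^n_H$ and $D^n_M\to T^n_H$ are again dlt crepant log structures with nef part pulled back, since $p_W^*M=(g^n_M)^*p_Y^*H$. The stratifications are preimages under the $\mathbb A^1$-bundles $p_Y$ and $p_T$, so (HN) and (HSN) follow because normality and seminormality are preserved by smooth base change. The relation $(\sigma_{1H},\sigma_{2H})$ is the pullback of $(\sigma_1,\sigma_2)$ under the line bundle projections, using the canonical liftings $\tau_{iM}$ induced by the isomorphisms $\tau_i^*M\cong M|_{D^n}$. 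The stratified-equivalence property is then inherited.

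\textbf{Main obstacle.} I expect the main obstacle to be confirming that the source/spring theory for lc crepant structures, which Kollár proves for pairs, holds for non-NQC generalized pairs. My plan is to rely entirely on \cite[Section 13]{CHLX23}, and to note that neither the LD nor the NQC hypothesis enters in this lemma.
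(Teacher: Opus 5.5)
There is a genuine gap in your treatment of (1), which is the real content of the lemma. You need $\sigma_1$ and $\sigma_2$ to be stratified, i.e.\ $\sigma_i^{-1}S_*(\Aa_W^n)=S_*(\Aa_D^n)$. Your first item gives only one inclusion. Because $\tau_i$ sends lc centers of $\Aa_D^n$ to lc centers of $\Aa_W^n$ and $g^n\circ\tau_i=\sigma_i\circ h^n$, you get $S_k^*(T^n,\Aa_D^n)\subseteq\sigma_i^{-1}S_k^*(Y^n,\Aa_W^n)$ for every $k$.

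The claim that strata go onto strata of the same dimension needs the reverse inclusion. Suppose $t\in T^n$ and $\sigma_i(t)\in g^n(V)$ for an lc center $V$ of $\Aa_W^n$ with $\dim g^n(V)\le k$; here $V$ need not lie in $D^n$, or may meet $D^n$ only over other points of $T^n$. Then $t$ must lie in an lc center of $h^n\colon\Aa_D^n\to T^n$ of dimension $\le k$. This does not follow from the commutative diagram. It is a property of crepant log structures under adjunction (connectedness of fibres of $g^n$ together with a source/spring type argument), and it is exactly what the paper imports as \cite[Lemma 13.2.4]{CHLX23}.

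Your third item does not supply it. \cite[Theorem 9.21]{Kol13} constructs a geometric quotient \emph{given} a finite stratified equivalence relation, so it presupposes stratifiedness. It also presupposes finiteness of the generated relation, which the paper only obtains later, in the proof of Theorem \ref{thm: semi-ample over U0 implies semi-ample over U}, via $U^0$ and \cite[Lemma 9.55]{Kol13}. Being of lc origin is a property of the target space, not of how $\sigma_i$ interacts with the stratifications.

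The paper's argument is short. $\tau$ permutes the lc centers of $\Aa_D^n$, so $h^n$ and $h^n\circ\tau$ induce the same stratification of $T^n$. This reduces both $\sigma_1$ and $\sigma_2$ to the single canonical morphism $\sigma\colon T^n\to Y^n$, and $\sigma^{-1}S_*(\Aa_W^n)=S_*(\Aa_D^n)$ is \cite[Lemma 13.2.4]{CHLX23}. The same is done on the total spaces for (3).

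Once (1) is established, your derivation of (3) by pulling back along $p_Y,p_T$ is fine: the stratifications there are defined as $p_Y^{-1}S_*$ and $p_T^{-1}S_*$, and $\sigma_{iH}$ lifts $\sigma_i$. For (2) and (4), your fallback citation of \cite[Section 13]{CHLX23} matches the paper's use of \cite[Theorem 13.2.5]{CHLX23}. Your own sketch of (HN) would not suffice without that citation, since unibranchness with homeomorphic normalization does not give normality.
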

\begin{proof}
(2)(4) follow from \cite[Theorem 13.2.5]{CHLX23}. We prove (1)(3). For any lc center $V$ of $\Aa_D^n$ (resp. of $\Aa_{D_M}^n$), $\tau(V)$ (resp. $\tau_M(V)$) is also an lc center on $D^n$ (resp. $D^n_M$). Thus the lc stratification induced by $h^n: \Aa_D^n\to T^n$ (resp. $h^n_M: \Aa_{D_M}^n\to T^n_H$) is the same as the lc stratification induced by $h^n\circ\tau: \Aa_D^n\to T^n$ (resp. $h^n_M\circ\tau_M: \Aa_{D_M}^n\to T^n_H$). Hence we only need to check that $\sigma^{-1}S_*(\Aa_W^n)$ (resp. $\sigma_H^{-1}S_*(\Aa_{W_M}^n)$) coincides with $S_*(\Aa_D^n)$ (resp. $S_*(\Aa_{D_M}^n)$), where $\sigma$ (resp. $\sigma_H$) is the canonical morphism $T^n\to Y^n$ (resp. $T^n_H\to Y^n_H$). But this follows directly from \cite[Lemma 13.2.4]{CHLX23}. 
\end{proof}

\subsection{Gluing to lc strata}

\begin{thm}\label{thm: semi-ample over U0 implies semi-ample over U}
Let $\Aa/U=(X,B,\Mm)/U$ be an $\mathbb Q$-factorial LD dlt generalized pair, $U^0$ a non-empty subset of $U$, $W:=\lfloor B\rfloor$, and $\Aa^0:=\Aa\times_UU^0$. Assume that:
\begin{enumerate}
\item $K_{\Aa}$ is a $\mathbb Q$-divisor.
    \item $K_{\Aa^0}$ is semi-ample$/U^0$.
    \item The image of any lc center of $\Aa$ in $U$ intersects $U^0$.
    \item $K_{\Aa}|_S$ is semi-ample$/U$ for any irreducible components $S$ of $W$.
\end{enumerate}
Then $K_{\Aa}|_W$ is semi-ample$/U$.
\end{thm}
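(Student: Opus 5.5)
We follow the gluing strategy of \cite[Theorem 1.4]{HX13} and \cite[Theorem 4.15]{LX23}, using the LD hypothesis only where those arguments relied on NQC decompositions. Since $K_{\Aa}$ is a $\mathbb Q$-divisor, $L_W:=K_{\Aa}|_W$ is $\mathbb Q$-Cartier. By (4) and the fact that $W^n=\bigsqcup S$ with every $S$ normal (since $\Aa$ is dlt), $L=K_{\Aa}|_{W^n}$ is semi-ample$/U$. Thus the hypotheses of Constructions \ref{cons: gluing part 1} and \ref{cons: glue part 2} are satisfied. The goal is to show that the stratified equivalence relation $(\sigma_1,\sigma_2): T^n\rightrightarrows Y^n$ generates a \emph{finite} equivalence relation. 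Then the geometric quotient $Y:=Y^n/R$ exists by \cite[Theorem 9.21]{Kol13}, and $g^n$ descends to $g: W\to Y$. The section-level analogue on the line bundle total spaces $Y^n_H$, $T^n_H$ will show that a multiple of $H$ descends to an ample line bundle on $Y$ pulling back to $mL_W$. This gives semi-ampleness of $L_W$, because $W$ is seminormal and satisfies $S_2$-type conditions and $W^n\to W$ is the normalization with conductor $D$.

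By Lemma \ref{lem: induced relation is stratified}, both relations are stratified and the spaces satisfy (HN) and (HSN). Therefore, by \cite[Theorem 9.21, Corollary 9.22]{Kol13}, finiteness of the generated relation reduces to the following statement: for every lc stratum, the groupoid of the induced crepant structures acts through finite groups. Concretely, for each lc center $V$ of $\Aa_W^n$ with its source $\Aa_{V}$ and base $Z_V$, the image of the birational crepant self-maps $\mathrm{Bir}^c(V/Z_V)$ in $\mathrm{Aut}$ of the relevant section spaces must be finite. This is the analogue of \cite[Lemma 3.5]{LX23}, and it is the key point.

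We plan to prove this finiteness in two stages. First we use hypothesis (3): every lc center meets $U^0$, and by (2) the same gluing over $U^0$ already works, because $K_{\Aa^0}$ is semi-ample, so the quotient exists over $U^0$. A crepant self-map of the source of a stratum is then determined by its restriction over $U^0$, i.e. over the generic point of the stratum's image. This reduces the question to a finite-type statement on the generic fiber. Second, we handle that generic statement with the LD hypothesis. Lemma \ref{lem: ld adjunction} shows that every stratum $\Aa|_V$ is LD, and by Lemma \ref{lem: equivalent definition of LD} it is a convex combination $\sum a_i\Aa_{V,i}$ of lc pairs whose canonical classes are $\mathbb Q$-divisors with the same nef part. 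Using Lemma \ref{lem: ld preserves trivialness}, each $K_{\Aa_{V,i}}$ is $\mathbb Q$-linearly trivial over $Z_V$. Since $K_{\Aa}$ is itself a $\mathbb Q$-divisor, a fixed $m$ makes every $mK_{\Aa_{V,i}}$ Cartier and trivial over $Z_V$. The crepant maps then act on the finite-dimensional pluricanonical-type spaces $H^0(mK_{\Aa_{V}})$ preserving an $L^{2/m}$-type finite measure. We expect this to give finiteness by the Burnside/Fujino–Gongyo argument (\cite[Theorem 3.9]{FG14}, \cite[Lemma 3.5]{LX23}), applied to each rational piece.

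We expect the main obstacle to be this finiteness of the crepant birational action in the non-NQC setting. The moduli part $\Mm^V$ on a stratum need not be semi-ample or NQC, so the usual B-representation finiteness, which is proved via the canonical bundle formula producing a klt pair, is not directly available. Our first attempt is to use the canonical bundle formula of \cite[Theorem 2.3.2]{CHLX23} on the rational pieces $\Aa_{V,i}$, whose $K$ is a $\mathbb Q$-divisor trivial over $Z_V$. This reduces to the Iitaka dimension zero case on the fibres, where $H^0$ is one-dimensional and the action is by roots of unity of bounded order. If that fails, we instead argue as in \cite[Theorem 1.7]{Bir12}. We would run induction on dimension and use Lemma \ref{lem: reduction to Nlc locus} to avoid the general type assumption.
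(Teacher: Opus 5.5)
\paragraph{The gap is in the finiteness step.} This step is the entire content of the theorem, and your argument for it does not go through. You reduce finiteness of the relation generated by $(\sigma_1,\sigma_2)$ to finiteness of the image of crepant birational self-maps of each stratum acting on sections of $mK_{\Aa_V}$. You then hope to prove that by the Burnside/$L^{2/m}$ argument of \cite{FG14}.

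That argument is not available here. It relies on a section of $m(K_V+B_V)$ being a pluricanonical form, and hence defining a finite measure. This fails once the nef part $\Mm^V$ is non-trivial. Decomposing via Lemma~\ref{lem: equivalent definition of LD} does not help, since every piece carries the same $\Mm^V$. Applying the canonical bundle formula of \cite{CHLX23} does not help either, since it does not give a base whose moduli part is semi-ample.

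You leave this step as an expectation with a fallback, and the fallback also fails. Lemma~\ref{lem: reduction to Nlc locus} needs bigness, so it does not avoid the general type assumption. Moreover, the \cite{Bir12}-type induction is the surrounding argument that uses the present theorem as input, so it cannot replace it.

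\paragraph{What you missed.} Your ``first stage'' already finishes the proof; no B-representation input is needed. Nor does the proof use the LD hypothesis beyond $\Aa$ being dlt with $K_{\Aa}$ $\mathbb{Q}$-Cartier.
\begin{enumerate}
\item By (2), $L_{W^0}$ is semi-ample$/U^0$. So the contraction $g^0\colon W^0\to Z^0$ and a very ample $H_{Z^0}$ with $(g^0)^*H_{Z^0}=mL_{W^0}$ exist.
\item Hence $Z^0=Y^{n,0}/T^{n,0}$ and $Z^0_{H_{Z^0}}=Y^{n,0}_H/T^{n,0}_H$ are geometric quotients \cite[Lemma 9.8]{Kol13}. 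In particular, the relations generated over $U^0$ are finite.
\item By (3), every stratum of $(Y^n,S_*(\Aa_W^n))$, and of $(Y^n_H,S_*(\Aa_{W_M}^n))$, meets the part lying over $U^0$.
\item These spaces satisfy (HN) and (HSN) by Lemma~\ref{lem: induced relation is stratified}. So \cite[Lemma 9.55]{Kol13} propagates finiteness from $U^0$ to all of $U$, exactly as in \cite[Proposition 3.12]{HX13}.
\item Then \cite[Theorem 9.21]{Kol13} gives the quotients $Z=Y^n/T^n$ and $Z_{H_Z}=Y^n_H/T^n_H$. We get a line bundle $H_Z$ with $g^*H_Z=mL_W$ and $\pi_Z^*H_Z=H$.
\item Since $H$ is ample$/U$ and $\pi_Z$ is finite, $H_Z$ is ample$/U$. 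Hence $L_W$ is semi-ample$/U$.
\end{enumerate}
Your sentence ``this reduces the question to a finite-type statement on the generic fiber'' should have been the conclusion. Finiteness at the generic points of all strata is already supplied by the existence of the quotient over $U^0$.
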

\begin{proof}
Let $X^:=X\times_UU^0$, $W^0:=W\cap X^0$, $L_W:=K_{\Aa}|_W$,  $L_{W^0}:=L_W|_{W^0}$, and $L:=L_W|_{W^n}$, where $W^n$ is the normalization of $W$. By assumption (3), $L$ is semi-ample$/U$. Let $g^n: W^n\rightarrow Y^n$ be the contraction$/U$ induced by $L$ and let $g^0: W^0\rightarrow Z^0$ be the contraction$/U$ induced by $L_{W^0}$. Then there exists a sufficiently divisible positive integer $m$, such that
\begin{itemize}
    \item $mK_{\Aa}$ is Cartier, 
    \item $|mK_{\Aa^0}|$ is globally generated$/U^0$,
    \item there exists a very ample$/U$ divisor $H$ on $Y^n$ such that $(g^n)^*H=M=mL$, and
    \item there exists a very ample$/U^0$ divisor $H_{Z^0}$ on $Z^0$ such that $(g^0)^*H_{Z^0}=mL_{W^0}$.
\end{itemize}
By construction, all conditions of Constructions \ref{cons: gluing part 1} and \ref{cons: glue part 2} hold. Therefore, in the following, we will adopt all notations as in Constructions \ref{cons: gluing part 1} and \ref{cons: glue part 2}. By Lemma \ref{lem: induced relation is stratified}, $(\sigma_1,\sigma_2):T^n\rightrightarrows Y^n$ and $(\sigma_{1H},\sigma_{2H}):T^n_H\rightrightarrows Y^n_H$ are stratified equivalence relations, and  $(Y^n,S_*(\Aa_W^n))$, $(T^n,S_*(\Aa_D^n))$, $(Y^n_H, S_*(\Aa_{W_M}^n))$, and $(T^n_H, S_*(\Aa_{D_M}^n))$ satisfy (HN) and (HSN).

We let  $p_{Z^0}: Z^0_{H_{Z_0}}\rightarrow Z^0$ be the total spaces of the line bundle $H_{Z^0}$.

We let $Y^{n,0}=Y^n\times_UU^0$, $T^{n,0}=T^n\times_UU^0$, $Y^{n,0}_H=Y^n_H\times_UU^0$, and $T^{n,0}_H=T^n_H\times_UU^0$. Then the geometric quotients $Z^0=Y^{n,0}/T^{n,0}$ and $Z^0_{H_{Z^0}}=Y^{n,0}_H/T^{n,0}_{H}$ exist by \cite[Lemma 9.8]{Kol13}. In particular, the equivalence relations generated by $(\sigma_1,\sigma_2)|_{T^{n,0}}: T^{n,0}\rightrightarrows Y^{n,0}$ and $(\sigma_{1H},\sigma_{2H})|_{T^{n,0}_H}: T^{n,0}_H\rightrightarrows Y^{n,0}_H$ are finite. By \cite[Lemma 9.55]{Kol13}, the equivalence relations generated by $(\sigma_1,\sigma_2):T^n\rightrightarrows Y^n$ and $(\sigma_{1H},\sigma_{2H}):T^n_H\rightrightarrows Y^n_H$ are finite (cf. \cite[Proposition 3.12]{HX13}). By \cite[Theorem 9.21]{Kol13}, the geometric quotients $Y^n/T^n$ and $Y^n_H/T^n_H$ exist. 

We denote $Z:=Y^n/T^n$ and $Z_{H_Z}:=Y^n_H/T^n_H$. Then we have induced morphisms $p_Z: Z_{H_Z}\rightarrow Z$, $g: W\rightarrow Z$, and $\pi_Z: Y^n\rightarrow Z$, such that
\begin{itemize}
\item  $p_Z: Z_{H_Z}\rightarrow Z$ is a total space of a line bundle $H_Z$ on $Z$,
\item $Z^0=Z\times_UU^0$ and $Z^0_{H_{Z^0}}=Z_{H_Z}\times_UU^0$,
\item $g^0=g|_{W^0}$ and $g^*H_Z=mL_W$, and
\item $\pi_Z^*H_Z=H$.
\end{itemize}
Since $H$ is ample$/U$, $H_Z$ is ample$/U$. Thus $L_W=K_{\Aa}|_W$ is semi-ample$/U$. 
\end{proof}

\subsection{Gluing to high dimensions}

\begin{lem}\label{lem: an abudant lemma}
Let $X\rightarrow U$ be a projective morphism between normal quasi-projective varieties, $f: X\rightarrow Z$ a contraction$/U$, $D$ a vertical$/Z$ $\mathbb R$-Cartier $\mathbb R$-divisor on $X$, and $A$ an ample$/U$ $\mathbb R$-divisor on $Z$. Then $f^*A-tD$ is abundant$/U$ for any $0<t\ll 1$.
\end{lem}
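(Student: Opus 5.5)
The plan is to compute both the numerical dimension $\kappa_\sigma$ and the invariant Iitaka dimension $\kappa_\iota$ of $f^*A-tD$ over $U$ and show that both equal $\dim Z-\dim U$ once $t$ is small. First I will reduce to the case where $D$ is supported on finitely many prime divisors, all vertical over $Z$. Writing $D=D^+-D^-$ with $D^\pm\ge 0$ vertical, I will compare $f^*A-tD$ with $f^*A\pm tD^{\pm}$. Replacing $X$ by a resolution and using Lemma \ref{lem: property of numerical and Iitaka dimension}(3)(4), I may assume $X$ is smooth. Both invariants are then unchanged after pulling back.

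\textbf{Upper bound.} Since $D$ is vertical over $Z$, there is an ample$/U$ $\mathbb R$-divisor $A'$ on $Z$ and an effective divisor $G\ge 0$ on $Z$ with $f^*G\ge \pm D$. For instance, take $G$ to be a multiple of a very ample divisor vanishing along $f(\Supp D)$; this is possible because $f(\Supp D)\subsetneq Z$. Then for $0<t\ll 1$,
$$f^*A-tD\le f^*(A+tG),$$
and $A+tG$ is an $\mathbb R$-Cartier divisor on $Z$. This gives $\kappa_\sigma(X/U,f^*A-tD)\le\kappa_\sigma(X/U,f^*(A+tG))=\dim Z-\dim U$. The inequality uses monotonicity of $\kappa_\sigma$ under adding effective divisors, together with Lemma \ref{lem: property of numerical and Iitaka dimension}(4).

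\textbf{Lower bound.} Symmetrically, $f^*A-tD\ge f^*(A-tG)$, and $A-tG$ is still ample$/U$ for $t\ll 1$. In particular $f^*A-tD\sim_{\mathbb R,U}f^*(A-tG)+E$ with $E\ge 0$, which gives effectivity. It follows that $\kappa_\iota(X/U,f^*A-tD)\ge\kappa_\iota(X/U,f^*(A-tG))=\dim Z-\dim U$. Since $\kappa_\iota\le\kappa_\sigma$ in general, the two bounds combine to
$$\dim Z-\dim U\le\kappa_\iota\le\kappa_\sigma\le\dim Z-\dim U.$$
So both are equal, and the divisor is abundant$/U$.

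The main obstacle is the monotonicity step $\kappa_\sigma(X/U,P)\le\kappa_\sigma(X/U,P+E)$ for $E\ge0$ in the relative $\mathbb R$-divisor setting. My first approach is to restrict to a general fiber over $U$, where it becomes Nakayama's absolute statement. I also need the existence of $G$ with $f^*G\ge|D|$, which is only clear when $D$ is supported over a proper closed subset of $Z$. I am reading ``vertical'' in that sense: every component of $D$ maps to a proper subvariety. If a component only surjects onto a divisor, $G$ can still be chosen through that image.
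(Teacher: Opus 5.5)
Your argument is correct and is essentially the paper's. The paper writes $A\sim_{\mathbb R,U}A'+L$ with $A'$ ample$/U$ and $L\geq 0$ whose support contains $f(\Supp D)$, then sandwiches $\kappa_\iota$ and $\kappa_\sigma$ of $f^*A-tD\sim_{\mathbb R,U}f^*A'+(f^*L-tD)$ between those of $f^*A'$ and $f^*A$, just as you sandwich between $f^*(A-tG)$ and $f^*(A+tG)$. Your two-sided choice $f^*G\geq\pm D$ is in fact slightly more careful, since the paper's upper bound $\kappa(f^*A)\geq\kappa(f^*A-tD)$ tacitly uses $D\geq 0$ (which holds where the lemma is applied), while your preliminary reductions (splitting $D$, passing to a resolution) are unnecessary.
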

\begin{proof}
    Let $V:=f(\Supp D)$. Then we have $A\sim_{\mathbb R,U}A'+L$ such that $A'$ is ample$/U$, $L\geq 0$ and $V\subset\Supp L$. Therefore, for any $0<t\ll 1$,
    $$f^*A-tD\sim_{\mathbb R,U}f^*A'+(f^*L-tD).$$
    and $f^*L-tD\geq 0$. By Lemma \ref{lem: property of numerical and Iitaka dimension}(1),
    $$\dim Z=\kappa_{\iota}(X/U,f^*A)\geq\kappa_{\iota}(X/U,f^*A-tD)\geq \kappa_{\iota}(X/U,f^*A')=\dim Z$$
    and
    $$\dim Z=\kappa_{\sigma}(X/U,f^*A)\geq\kappa_{\sigma}(X/U,f^*A-tD)\geq \kappa_{\sigma}(X/U,f^*A')=\dim Z$$
    and the lemma follows.
\end{proof}

The goal of this subsection is to prove the following theorem.

\begin{thm}\label{thm: bir12 1.7 g LD}
Let $\Aa/U$ be an LD generalized pair and $U^0\subset U$ a non-empty open subset. Let $\Aa^0:=\Aa\times_UU^0$. Assume that:
\begin{enumerate}
    \item $K_{\Aa}$ is nef$/U$.
    \item $K_{\Aa^0}$ is semi-ample$/U^0$.
    \item The image of any lc center of $\Aa$ in $U$ intersects $U^0$.
\end{enumerate}
Then $K_{\Aa}$ is semi-ample$/U$.	
\end{thm}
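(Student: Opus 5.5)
We first reduce to the $\mathbb Q$-factorial dlt case with $\mathbb Q$-coefficients, and then do induction on dimension in the style of \cite[Theorem~1.7]{Bir12}.

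\textbf{Reductions.} Let $h: X'\rightarrow X$ be a $\mathbb Q$-factorial dlt modification. By Lemma \ref{lem: LD under dlt model}, $h^*\Aa$ is LD. Its lc centers map onto lc centers of $\Aa$, and semi-ampleness of $K_{\Aa}$ is equivalent to semi-ampleness of $h^*K_{\Aa}$. So we may assume $\Aa$ is $\mathbb Q$-factorial dlt.

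Next, let $\{V,\bm{c},\Aa(\cdot)\}$ be the polytopal of $\Aa$. Combine Lemma \ref{lem: ld nef impleis nqc} (nefness persists for $\bm{v}$ near $\bm{c}$) with Lemma \ref{lem: LD preserves lc center} (lc/dlt and $\Nklt$ are preserved). This gives $\mathbb Q$-vectors $\bm{v}_1,\dots,\bm{v}_n$ near $\bm{c}$ with $\bm{c}$ in the interior of their convex hull, each $\Aa(\bm{v}_j)$ being a $\mathbb Q$-factorial dlt LD pair with $\mathbb Q$-Cartier, nef$/U$ canonical class and the same lc centers. Since $K_{\Aa}$ is a positive combination of the $K_{\Aa(\bm{v}_j)}$, it suffices to show each $K_{\Aa(\bm{v}_j)}$ is semi-ample$/U$.

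To make hypothesis (2) pass to the $\Aa(\bm{v}_j)$, take the ample model $\Aa^0\rightarrow Z^0$ over $U^0$. By Lemma \ref{lem: semi-ampleness ld} applied over $U^0$, after shrinking the neighbourhood of $\bm{c}$ each $K_{\Aa(\bm{v}_j)^0}$ is semi-ample$/U^0$ with the same ample model. We may therefore assume in addition that $K_{\Aa}$ is a $\mathbb Q$-divisor.

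\textbf{Boundary.} We induct on $\dim X$. Let $W:=\lfloor B\rfloor$ and let $S$ be a component of $W$. By Lemma \ref{lem: ld adjunction}, $\Aa|_S$ is LD and dlt. Moreover $K_{\Aa|_S}$ is nef$/U$ and semi-ample over $U^0$, and every lc center of $\Aa|_S$ is an lc center of $\Aa$, so it meets the preimage of $U^0$. By induction, $K_{\Aa}|_S$ is semi-ample$/U$. Theorem \ref{thm: semi-ample over U0 implies semi-ample over U} then gives that $K_{\Aa}|_W$ is semi-ample$/U$.

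\textbf{Interior (main obstacle).} It remains to pass from $W$ to $X$. If $K_{\Aa}$ or $B+\Mm_X$ is big$/U$, Lemma \ref{lem: reduction to Nlc locus} finishes. In general, following \cite[Theorem~1.7]{Bir12}, we use the semi-ample fibration over $U^0$. Let $f^0: X^0\rightarrow Z^0$ be the ample model of $K_{\Aa^0}$, and take its closure $X\dashrightarrow Z$ over $U$, with $Z$ projective over $U$. The key point is that $K_{\Aa}$ is abundant$/U$: over the generic point of $U$ it is semi-ample, and nefness plus the structure over $U^0$ should give $\kappa_\sigma=\kappa_\iota$. I expect this to be the hardest step.

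Granting abundance, replace $\Aa$ by a proper LD log toroidal model $\Aa'$ with respect to $X\dashrightarrow Z$, associated with $f': X'\rightarrow Z'$ (Definition-Lemma \ref{deflem: eoltm}). By Lemma \ref{lem: iitaka fibration numerical abundant divisor gpair dimension}, $K_{\Aa'}$ has relative Iitaka and numerical dimension $0$ over $Z'$. Running a $K_{\Aa'}$-MMP$/Z'$ as in Lemma \ref{lem: has19 3.2 step 3 abu ver} and Proposition \ref{prop: abundant+lcc dominate imply gmm} makes $K$ trivial over $Z'$, so it descends to a generalized pair on $Z'$ via \cite[Theorem 2.3.2]{CHLX23}. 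The vertical lc centers are exactly the ones handled by $W$.

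To conclude, perturb using Lemma \ref{lem: an abudant lemma} to make the descended class big$/U$ up to a vertical correction supported on the nklt locus. Then apply Lemma \ref{lem: reduction to Nlc locus} together with the semi-ampleness on $W$ already proved. Finally, Lemma \ref{lem: Bir12 2.7}(2) transfers semi-ampleness back to $\Aa$, since $\Aa$ and the resulting model are both weak lc models. If abundance cannot be established directly, the fallback is to copy Birkar's argument of adding a pullback of an ample divisor from $Z$ so as to reduce to the big case.
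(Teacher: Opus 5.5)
\textbf{There is a genuine gap, in the non-big case.} Your reductions and your treatment of the boundary and the big case match the paper. The reductions are the dlt modification and the passage to a $\mathbb Q$-divisor via Lemmas \ref{lem: LD preserves lc center}, \ref{lem: semi-ampleness ld}, \ref{lem: ld nef impleis nqc}; the boundary and big case use Theorem \ref{thm: semi-ample over U0 implies semi-ample over U} followed by Lemma \ref{lem: reduction to Nlc locus}.

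The gap is not where you locate it. Abundance of $K_{\Aa}$ over $U$ is essentially immediate from semi-ampleness over $U^0$. What is missing is a morphism defined over all of $U$ along which $K$ is trivial, on a model where $K$ is still nef over $U$.

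\begin{itemize}
\item Your $K_{\Aa'}$-MMP over $Z'$ only gives $K_{\Aa''}\sim_{\mathbb Q,Z'}0$. Nothing makes $K_{\Aa''}$, equivalently the descended class $K_{\Aa_{Z'}}$, nef over $U$, since $Z'\to Z$ is only a modification. So $\Aa''$ is not a weak lc model of $\Aa'$ over $U$, and Lemma \ref{lem: Bir12 2.7}(2) cannot transfer semi-ampleness back to $\Aa$.
\item For the same reason Lemma \ref{lem: reduction to Nlc locus} cannot be applied on $Z'$. It would also need a $\mathbb Q$-factorial dlt structure there and semi-ampleness on the nklt locus of the base, not on $W\subset X$.
\item The paper repairs nefness in Lemma \ref{lem: abundant+trivial+lcc dominate imply gmm} by running an MMP on the base. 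That needs a klt base, i.e.\ all lc centers horizontal, and the vertical components of $\lfloor B\rfloor$ destroy exactly this.
\item Your fallback has the same defect. There is no morphism $X\to Z$ over $U$ to pull an ample divisor back along. Birkar's argument takes as input that $K_{\Aa}-tP$ is semi-ample over $U$ for some $P\ge 0$ supported on $\lfloor B\rfloor$, which is hypothesis (5) of Theorem \ref{thm: bir12 1.7 g} and has to be constructed.
\item Components of $\lfloor B\rfloor$ that are horizontal over $Z$ are never addressed.
\end{itemize}

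\textbf{The missing idea is a dichotomy on $\lfloor B\rfloor$.}

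\emph{First case.} Some component $S$ of $\lfloor B\rfloor$ has $K_{\Aa}-\epsilon S$ not pseudo-effective$/U$ for every $\epsilon>0$. Then a $(K_{\Aa}-\epsilon S)$-MMP ends with a Mori fiber space $f\colon X'\to Z$; this MMP is $K_{\Aa}$-trivial by Lemma \ref{lem: trivial mmp}. The image $S'$ is horizontal and $K_{\Aa'}\sim_{\mathbb R}f^*L$. Semi-ampleness of $K_{\Aa'}|_{S'}$, which holds by induction, forces $L$ to be semi-ample.

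\emph{Second case.} $K_{\Aa}-t\lfloor B\rfloor$ is pseudo-effective for some $t>0$.
\begin{itemize}
\item Every component of $\lfloor B\rfloor$ is then vertical over the ample model $Z^0$ of $K_{\Aa^0}$.
\item By Lemma \ref{lem: an abudant lemma}, the \emph{klt} class $K_{\Aa}-t\lfloor B\rfloor$ is abundant. By Lemma \ref{lem: canoincal model implies glmm} it has a good minimal model; klt is what permits the MMP on the base there. This model is reached by a $K_{\Aa}$-trivial MMP, so nefness and semi-ampleness over $U^0$ survive.
\item On a $\mathbb Q$-factorial dlt modification of that model, $K-tP$ is semi-ample$/U$ for all small $t$, with $\Supp P$ the reduced boundary.
\item The ample model $g$ of $K-t_1P$, for general $t_1$, is a genuine morphism over $U$ to a lower-dimensional $T$ with $K\sim_{\mathbb R,T}0$.
\item The canonical bundle formula \cite[Lemma 11.4.2, Theorem 11.4.4]{CHLX23} then gives an lc generalized pair on $T$. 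Its canonical class is a nef$/U$ $\mathbb Q$-divisor, semi-ample over $U^0$, and its lc centers are images of lc centers of $\Aa$. The induction hypothesis applies to it directly.
\end{itemize}
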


To prove Theorem \ref{thm: bir12 1.7 g LD}, we prove it inductively together with the following theorem. It is clear that Theorem \ref{thm: bir12 1.7 g} is a weaker version of Theorem \ref{thm: bir12 1.7 g LD}.

\begin{thm}\label{thm: bir12 1.7 g}
Let $\Aa/U=(X,B,\Mm)/U$ be a $\mathbb Q$-factorial dlt generalized pair, $U^0\subset U$ a non-empty open subset, and $\Aa^0:=\Aa\times_UU^0$. Assume that:
\begin{enumerate}
    \item $K_{\Aa}$ is a nef$/U$ $\mathbb Q$-divisor.
    \item $K_{\Aa^0}$ is semi-ample$/U^0$.
    \item The image of any lc center of $\Aa$ in $U$ intersects $U^0$. 
    \item $K_{\Aa}|_S$ is semi-ample$/U$ for any irreducible component $S$ of $\lfloor B\rfloor$.
    \item Either $K_{\Aa}$ is big$/U$, or there exists an $\mathbb R$-divisor $P\geq 0$ on $X$ such that $\Supp P\subset\Supp\lfloor B\rfloor$ and $K_{\Aa}-tP$ is semi-ample$/U$ for any $0<t\ll 1$.
\end{enumerate}
Then $K_X+B+\Mm_X$ is semi-ample$/U$.	
\end{thm}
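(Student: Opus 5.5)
We first make $L:=K_{\Aa}|_W$ semi-ample$/U$, where $W:=\lfloor B\rfloor$, and then lift semi-ampleness from $W$ to $X$ using condition (5). Since $\Aa$ is $\mathbb Q$-factorial dlt, it is $\mathbb Q$-factorial qdlt, so $X$ is klt and $K_{\Aa}$ is a $\mathbb Q$-Cartier $\mathbb Q$-divisor by (1). Theorem \ref{thm: semi-ample over U0 implies semi-ample over U} asks for the LD hypothesis, but its proof only uses that $K_{\Aa}$ is a $\mathbb Q$-divisor together with assumptions (2)--(4) and the gluing machinery of Constructions \ref{cons: gluing part 1}, \ref{cons: glue part 2} and Lemma \ref{lem: induced relation is stratified}. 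Applying that argument verbatim under our (1)--(4) gives that $K_{\Aa}|_W$ is semi-ample$/U$.

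\textbf{Case where $K_{\Aa}$ is big$/U$.} Lemma \ref{lem: reduction to Nlc locus} applies directly: $K_{\Aa}$ is nef and big$/U$, and $K_{\Aa}|_S$ is semi-ample$/U$ for every component $S$ of $\lfloor B\rfloor$. We plan to feed in $K_{\Aa}|_W$, which we have just shown is semi-ample. That lemma goes through Lemma \ref{lem: generalized lc to non-lc pair}(2) and Hashizume's semi-ampleness result \cite[Theorem 2.23]{Has25a} for $K_X+\Delta+A$ with $\Nlc(X,\Delta)=W$. Its input is semi-ampleness on the non-lc locus, which is precisely $K_{\Aa}|_W$. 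So $K_{\Aa}$ is semi-ample$/U$.

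\textbf{Case with an effective $P$.} Suppose instead there is $P\ge0$ with $\Supp P\subset W$ and $K_{\Aa}-tP$ semi-ample$/U$ for $0<t\ll1$. Fix such a $t$ and let $g\colon X\to Z$ be the ample model$/U$ of $K_{\Aa}-tP$, so that $K_{\Aa}-tP\sim_{\mathbb R,U} g^*A_Z$ with $A_Z$ ample$/U$. The plan is to apply the argument of Lemma \ref{lem: reduction to Nlc locus} \emph{relatively over $Z$}. Over $Z$ we have $K_{\Aa}\sim_{\mathbb R,Z} tP$, whose support lies in $W$.

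To get bigness over $Z$, we perturb the boundary: consider $(X,B-sP,\Mm)$ for small $s>0$, which has the same nef part. Then $K_{\Aa}-sP$ is nef$/U$, being a convex combination of $K_{\Aa}$ and $K_{\Aa}-tP$. Moreover, $K_X+B+\Mm_X\equiv_Z tP$ is effective and supported on $W$, and after subtracting $\epsilon P$ it behaves well on $W$. Alternatively, we can make $K_{\Aa}-\epsilon P+\delta g^*A_Z$ big$/U$ and apply Lemma \ref{lem: generalized lc to non-lc pair} to a suitable perturbation. The pair then has klt part off $W$, non-lc locus inside $W$, and log canonical class $\sim_{\mathbb R,U} \mu K_{\Aa}$ plus an ample class pulled back from $Z$.

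The semi-ampleness on $W$ transfers because $P|_W$ is a combination of $K_{\Aa}|_W$ and $g^*A_Z|_W$, both semi-ample. Then \cite[Theorem 2.23]{Has25a} (lc-type abundance with semi-ampleness on the non-lc locus, applied to $K_X+\Delta+A$) yields semi-ampleness of $K_{\Aa}$ itself, using that $K_{\Aa}=(K_{\Aa}-tP)+tP$ and writing it in the form required by Lemma \ref{lem: generalized lc to non-lc pair}.

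\textbf{Main obstacle.} The delicate step is producing, in the second case, a genuine pair $(X,\Delta)$ and an ample $A$ with $K_X+\Delta+A\sim_{\mathbb R,U}\mu K_{\Aa}$ and $\Nlc(X,\Delta)=W$ without assuming bigness. Lemma \ref{lem: generalized lc to non-lc pair} uses bigness of $K_{\Aa}$ or of $B+\Mm_X$ to absorb $\Mm$ into a klt pair via \cite[Lemma 3.4]{HL22}. I would try to obtain that ampleness from $g^*A_Z$ plus the negative multiple $-\delta P$, in the spirit of Lemma \ref{lem: an abudant lemma}, which shows $g^*A_Z-\delta P$ is abundant. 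Failing that, I would reduce to the general-type situation over $Z$ and argue as in \cite[Theorem 1.7]{Bir12}, gluing the semi-ample structures on $W$ and over $Z$.
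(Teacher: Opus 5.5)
Your big case matches the paper. Gluing via Theorem \ref{thm: semi-ample over U0 implies semi-ample over U} makes $K_{\Aa}|_{\lfloor B\rfloor}$ semi-ample$/U$, and Lemma \ref{lem: reduction to Nlc locus} finishes. You do not need to rerun that proof: since $K_{\Aa}$ is a $\mathbb Q$-divisor, $\Aa$ is already LD via the one-term decomposition in Lemma \ref{lem: equivalent definition of LD}(3).

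The gap is the non-big case. Every divisor you propose to make big is dominated by a multiple of $K_{\Aa}$. For example,
\[
K_{\Aa}-\epsilon P+\delta g^*A_Z\sim_{\mathbb R,U}(1+\delta)K_{\Aa}-(\delta t+\epsilon)P\le (1+\delta)K_{\Aa},
\]
so it is not big$/U$ whenever $K_{\Aa}$ is not. Likewise $g^*A_Z$ is not big, since $\dim Z<\dim X$, and abundance of $g^*A_Z-\delta P$ (Lemma \ref{lem: an abudant lemma}) does not produce an ample class. Working relatively over $Z$ gains nothing either: for general $t$ one actually has $K_{\Aa}\sim_{\mathbb R,Z}0$. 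So there is no ample divisor with which to absorb $\Mm$ into a boundary via \cite[Lemma 3.4]{HL22}. Lemma \ref{lem: generalized lc to non-lc pair} is therefore unavailable, and \cite[Theorem 2.23]{Has25a} cannot be invoked. You correctly flag this as the main obstacle, but none of your fixes resolves it, and ``reduce to general type over $Z$ and argue as in \cite[Theorem 1.7]{Bir12}'' is not an argument.

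The missing idea is to descend $K_{\Aa}$ to a lower-dimensional base and induct on dimension.
\begin{enumerate}
\item Pick $t_1\in(0,t_0)$ general and let $g\colon X\to Y$ be the ample model$/U$ of $K_{\Aa}-t_1P$. The ample model does not change for $t$ near $t_1$, so both $P\sim_{\mathbb R,Y}0$ and $K_{\Aa}\sim_{\mathbb R,Y}0$.
\item Since $K_{\Aa}$ is not big, $\dim Y<\dim X$; if $\dim Y=0$ you are done.
\item Because $K_{\Aa}$ is a $\mathbb Q$-divisor, the canonical bundle formula \cite[Lemma 11.4.2, Theorem 11.4.4]{CHLX23} gives an lc generalized pair $\Aa_Y/U$ with $K_{\Aa}\sim_{\mathbb Q}g^*K_{\Aa_Y}$. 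Its lc centers are images of lc centers of $\Aa$.
\item On a $\mathbb Q$-factorial dlt modification of $\Aa_Y$, the canonical class is a nef$/U$ $\mathbb Q$-divisor, hence LD. It is semi-ample over $U^0$, and the image in $U$ of every lc center meets $U^0$.
\item Theorem \ref{thm: bir12 1.7 g LD} in dimension $\le d-1$ then makes it semi-ample$/U$, and this pulls back to $K_{\Aa}$.
\end{enumerate}
This is why the paper proves the statement jointly with Theorem \ref{thm: bir12 1.7 g LD} by induction, via Propositions \ref{prop: inductive approach to bir12 1.7} and \ref{prop: inductive approach to bir12 1.7 weak version}. Your proposal has neither the inductive step nor this descent, and in the non-big case that descent is what replaces bigness.
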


\begin{prop}\label{prop: inductive approach to bir12 1.7}
Let $d\geq 2$ be an integer. Then Theorem \ref{thm: bir12 1.7 g} in dimension $d$ and Theorem \ref{thm: bir12 1.7 g LD} in dimension $\leq d-1$ imply Theorem \ref{thm: bir12 1.7 g LD} in dimension $d$.
\end{prop}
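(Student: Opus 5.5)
We will reduce Theorem \ref{thm: bir12 1.7 g LD} in dimension $d$ to Theorem \ref{thm: bir12 1.7 g} in dimension $d$ by a chain of standard reductions. The induction on dimension feeds the lc strata, and an Iitaka-fibration argument supplies the auxiliary divisor $P$ in condition (5).

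\textbf{Step 1: reduce to $\mathbb Q$-factorial dlt, $\mathbb Q$-coefficients.} Let $h\colon X'\to X$ be a $\mathbb Q$-factorial dlt modification of $\Aa$. By Lemma \ref{lem: LD under dlt model}, $h^*\Aa$ is LD. Its lc centers map to lc centers of $\Aa$, so hypotheses (1)--(3) persist, and semi-ampleness of $h^*K_{\Aa}$ is equivalent to that of $K_{\Aa}$. Hence we may assume $\Aa$ is $\mathbb Q$-factorial dlt.

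Next we pass to rational pieces. By Lemma \ref{lem: ld nef impleis nqc} there is a neighborhood $V_0\ni\bm c$ on which $K_{\Aa(\bm v)}$ is nef$/U$. Since $K_{\Aa^0}$ is semi-ample$/U^0$, Lemma \ref{lem: semi-ampleness ld} (applied over $U^0$) shows that, after shrinking $V_0$, each $K_{\Aa(\bm v)^0}$ is semi-ample$/U^0$ with the same ample model. By Lemma \ref{lem: LD preserves lc center}(2), shrinking further, $\Aa(\bm v)$ is $\mathbb Q$-factorial dlt LD with the same nklt locus, so (3) holds for each $\Aa(\bm v)$. Writing $\Aa=\sum a_i\Aa(\bm v_i)$ with rational $\bm v_i\in V_0$, it suffices to prove each $K_{\Aa(\bm v_i)}$ is semi-ample$/U$, because a positive combination of semi-ample divisors is semi-ample. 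We may therefore assume in addition that $K_{\Aa}$ is a $\mathbb Q$-divisor, which gives (1) of Theorem \ref{thm: bir12 1.7 g}.

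\textbf{Step 2: lc strata.} Let $S$ be a component of $\lfloor B\rfloor$ with normalization. Then $\Aa|_S$ is LD by Lemma \ref{lem: ld adjunction} and has dimension $d-1$. Its canonical class $K_{\Aa}|_S$ is nef$/U$ and semi-ample over $U^0$. Each lc center of $\Aa|_S$ is an lc center of $\Aa$, so its image meets $U^0$. Theorem \ref{thm: bir12 1.7 g LD} in dimension $d-1$ then gives that $K_{\Aa}|_S$ is semi-ample$/U$, which is (4).

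\textbf{Step 3: condition (5), the main obstacle.} If $K_{\Aa}$ is big$/U$ we are done, so assume it is not. Since $K_{\Aa^0}$ is semi-ample, $K_{\Aa}$ is abundant$/U$. Let $f\colon X\dashrightarrow Z$ be the Iitaka fibration, which over $U^0$ is the ample model $Z^0$. The plan is to replace $\Aa$ by a proper LD log toroidal model with respect to $f$ (Definition-Lemma \ref{deflem: eoltm}), run a $K$-MMP$/Z'$ that terminates with a model on which $K\sim_{\mathbb R,Z'}0$ (Lemmas \ref{lem: iitaka fibration numerical abundant divisor gpair dimension} and \ref{lem: has19 3.2 step 3 abu ver}), and then descend via the canonical bundle formula. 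By Lemma \ref{lem: Bir12 2.7}, a semi-ample model for the new pair yields semi-ampleness for the original nef $K_{\Aa}$.

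On that model $K\sim_{\mathbb R}f^*L_Z$ with $L_Z$ big$/U$ on $Z$, and $L_Z$ is semi-ample over $U^0$. Write $L_Z\sim A+N$ with $A$ ample and $N\ge 0$. The vertical part of $f^*N$ should be absorbed by the boundary, so that $P:=$ (the components of $f^*N$ lying in $\lfloor B\rfloor$) satisfies $K-tP$ semi-ample for small $t$, in the spirit of Lemma \ref{lem: an abudant lemma}. Alternatively, if all lc centers dominate $Z$, we would apply Theorem \ref{thm: bir12 1.7 g} to the klt-type base generalized pair, whose class is big, with the strata handled by Step 2 and Theorem \ref{thm: semi-ample over U0 implies semi-ample over U}.

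The delicate point is matching $\Supp P\subset\lfloor B\rfloor$ while keeping (4) and LD through the MMP. I expect this matching to require an extra perturbation $\Aa(\bm v)$ together with the use of the big case of Theorem \ref{thm: bir12 1.7 g} on the base. Once (1)--(5) hold, Theorem \ref{thm: bir12 1.7 g} in dimension $d$ gives that $K_{\Aa}$ is semi-ample$/U$.
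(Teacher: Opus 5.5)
\textbf{There is a genuine gap in Step 3.} Your Steps 1 and 2 are fine and agree with the paper's opening reductions: pass to $\mathbb Q$-coefficients via Lemmas \ref{lem: LD preserves lc center}, \ref{lem: semi-ampleness ld} and \ref{lem: ld nef impleis nqc}, pass to a $\mathbb Q$-factorial dlt modification, and obtain condition (4) from the $(d-1)$-dimensional case of Theorem \ref{thm: bir12 1.7 g LD}.

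Step 3 is the real content of the proposition, and you never construct a $P$ with $\Supp P\subset\Supp\lfloor B\rfloor$ and $K-tP$ semi-ample$/U$. The route you sketch cannot produce one, for three reasons.
\begin{itemize}
\item Semi-ampleness of $K_{\Aa}-tP$ is a statement of the same kind as the one being proved. ``Absorbing the vertical part of $f^*N$'' therefore needs a mechanism that produces good minimal models, and you do not supply one.
\item The Iitaka-fibration mechanism in the paper (Proposition \ref{prop: abundant+lcc dominate imply gmm}) requires every lc center to dominate the base. This fails for your lc pair in general.
\item A proper log toroidal model adds an exceptional $E\ge 0$. The MMP over $Z'$ then gives a model trivial over $Z'$ but no longer nef over $U$. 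So neither Lemma \ref{lem: Bir12 2.7} nor Theorem \ref{thm: bir12 1.7 g} applies to it without first having a good minimal model over $U$, which is the original problem.
\end{itemize}
You also never treat the case where $K_{\Aa}-t\lfloor B\rfloor$ is not pseudo-effective$/U$ for any $t>0$.

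The missing idea is a case split on exactly this.

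\emph{Case 1.} Some component $S$ of $\lfloor B\rfloor$ has $K_{\Aa}-\epsilon S$ not pseudo-effective$/U$ for every $\epsilon>0$. Since $K_{\Aa}$ is a nef$/U$ $\mathbb Q$-divisor, it is $\epsilon$-KNQC, so by Lemma \ref{lem: trivial mmp} a $(K_{\Aa}-\epsilon S)$-MMP$/U$ with $0<\epsilon\ll1$ is $K_{\Aa}$-trivial. It ends with a Mori fibre space $f\colon X'\to Z$ on which $K_{\Aa'}\sim_{\mathbb R,Z}0$ and $S'$ is horizontal. The $(d-1)$-dimensional LD theorem makes $K_{\Aa'}|_{S'}$, a pullback of some $L$ on $Z$, semi-ample$/U$. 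Hence $L$, and so $K_{\Aa'}\sim_{\mathbb R}f^*L$, is semi-ample, with no appeal to Theorem \ref{thm: bir12 1.7 g}.

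\emph{Case 2.} Otherwise $K_{\Aa}-t\lfloor B\rfloor$ is pseudo-effective$/U$, so every component of $\lfloor B\rfloor$ is vertical over the ample model of $K_{\Aa^0}$. Lemma \ref{lem: an abudant lemma} then shows that $(\Aa,-t\lfloor B\rfloor)$ is a \emph{klt} generalized pair with abundant canonical class. Lemma \ref{lem: canoincal model implies glmm} gives it a good minimal model, reached by a $K_{\Aa}$-trivial MMP $X\dashrightarrow X'$ (Lemma \ref{lem: trivial mmp} again). This model is good for every $t\in(0,t_1]$. Take a $\mathbb Q$-factorial dlt modification $h\colon Y\to X'$ and set $P:=h^*\lfloor B'\rfloor$. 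Then $\Supp P\subset\lfloor B_Y\rfloor$ and $K_{\Aa_Y}-tP$ is semi-ample$/U$. Condition (4) is re-established on $Y$ by the $(d-1)$-dimensional LD theorem, and only then is Theorem \ref{thm: bir12 1.7 g} in dimension $d$ applied.

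Subtracting $t\lfloor B\rfloor$ to land in the klt abundant setting, together with the Mori fibre space case, is what your proposal lacks.
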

\begin{proof}
By Lemmas \ref{lem: LD preserves lc center}, \ref{lem: semi-ampleness ld}, and \ref{lem: ld nef impleis nqc}, we may assume that $K_{\Aa}$ is a $\mathbb Q$-divisor. By Lemma \ref{lem: LD under dlt model}, possibly replacing $\Aa$ with a $\mathbb Q$-factorial dlt modification, we may assume that $\Aa$ is $\mathbb Q$-factorial dlt. Write $\Aa=(X,B,\Mm)$. There are two cases.

\medskip

\noindent\textbf{Case 1.} There exists an irreducible component $S$ of $\lfloor B\rfloor$ such that $K_{\Aa}-\epsilon S$ is not pseudo-effective$/U$ for any $\epsilon>0$. By Lemma \ref{lem: trivial mmp} and \cite[Theorem 2.1.5]{Cas+25a}, we may pick $0<\epsilon\ll 1$ and run a $\left(K_{\Aa}-\epsilon S\right)$-MMP$/U$ with scaling of an ample divisor which terminates with a Mori fiber space$/U$ $f: X'\rightarrow Z$, such that the induced birational map$/U$ $\phi: X\dashrightarrow X'$ is $K_{\Aa}$-trivial. Let $\Aa':=\phi_*\Aa$, $\Aa'^0:=\Aa'\times_UU^0$, $\widetilde{S}:=\phi_*S$, $S'$ the normalization of $\widetilde{S}$, and $S'_0:=S'\times_UU^0$. Let $\Aa_{S'}:=\Aa'|_{S'}$ and $\Aa_{S'}^0:=\Aa_{S'}\times_UU^0$. 

By Lemma \ref{lem: LD preserve under mmp}, $\Aa'$ is LD. By Lemma \ref{lem: ld adjunction}, $\Aa_{S'}$ is LD. Since $\phi$ is $K_{\Aa}$-trivial, $K_{\Aa'}$ is nef$/U$ and $K_{\Aa'^0}$ is semi-ample$/U^0$. Thus $K_{\Aa_{S'}}$ is nef and $K_{\Aa_{S'}^0}$ is semi-ample$/U^0$. By Theorem \ref{thm: bir12 1.7 g LD} in dimension $d-1$, $K_{\Aa_{S'}}$ is semi-ample$/U$. 

Since $f$ is $K_{\Aa'}$-trivial and is a step of a $\left(K_{\Aa'}-\epsilon\widetilde{S}\right)$-MMP, $K_{\Aa'}\sim_{\mathbb R,Z}0$ and $\widetilde{S}$ is horizontal$/Z$. Thus $K_{\Aa_{S'}}\sim_{\mathbb R,Z}0$. Let $f_S: S'\rightarrow Z$ be the associated projective morphism. Then there exists an $\mathbb R$-divisor $L$ on $Z$ such that $$K_{\Aa_{S'}}\sim_{\mathbb R}f_S^*L,$$
hence $K_{\Aa'}\sim_{\mathbb R}f^*L$. Since $K_{\Aa_{S'}}$ is semi-ample$/U$, $L$ is semi-ample$/U$, so $K_{\Aa'}$ is semi-ample$/U$, and so $K_{\Aa}$ is semi-ample$/U$.

\medskip

\noindent\textbf{Case 2.} $K_{\Aa}-t\lfloor B\rfloor$ is pseudo-effective$/U$ for some $t>0$. Let $B_0:=B|_{X^0}$. We let
$$\psi_0: X^0\dashrightarrow Z^0$$
be the ample model$/U^0$ of $K_{\Aa^0}$. Since $K_{\Aa^0}-t\lfloor B_0\rfloor$ is pseudo-effective$/U^0$, any irreducible component of $\lfloor B_0\rfloor$ is vertical$/Z^0$. For any $0<t\ll 1$, by Lemma \ref{lem: an abudant lemma}, $K_{\Aa^0}-t\lfloor B_0\rfloor$ is abundant$/U^0$, hence $K_{\Aa}-t\lfloor B\rfloor$ is abundant$/U$.

Let $\Aa(t):=(\Aa,-t\lfloor B\rfloor)$ for any $t\in\mathbb R$. For any $0<t\ll 1$, since $\Aa(t)$ is klt and $K_{\Aa(t)}$ is abundant$/U$, by Lemma \ref{lem: canoincal model implies glmm}, $\Aa(t)/U$ has a $\mathbb Q$-factorial good minimal model. By Theorem \ref{thm: bswlc imply mm} and Lemma \ref{lem: trivial mmp}, we may fix $0<t_1\ll 1$ and run a $K_{\Aa(t_1)}$-MMP$/U$ with scaling of an ample divisor $\phi: X\dashrightarrow X'$ which terminates with a good minimal model $\Aa'(t_1)/U$ of $\Aa(t_1)/U$. Let $X'$ be the ambient variety of $\Aa'(t_1)$ and $\phi: X\dashrightarrow X'$ the associated birational map. Let $\Aa'(t):=\phi_*\Aa(t)$ for any $t\in\mathbb R$. Then for any $t\in (0,t_1]$, $\phi$ is also a sequence of steps of a $K_{\Aa(t)}$-MMP$/U$ and $K_{\Aa'(t)}$ is nef$/U$, so $\Aa'(t)/U$ is a minimal model of $\Aa(t)/U$. By Lemma \ref{lem: Bir12 2.7}, $\Aa'(t)/U$ is a good minimal model of $\Aa(t)/U$ for any $t\in (0,t_1]$.

Let $h: Y\rightarrow X'$ be a $\mathbb Q$-factorial dlt modification of $\Aa'$, $\Aa_Y:=h^*\Aa'$, $\Aa'^0:=\Aa'\times_UU^0$, and $\Aa_Y^0:=\Aa_Y\times_UU^0$. Since $\phi$ is $K_{\Aa}$-trivial and $K_{\Aa}$ is a nef$/U$ $\mathbb Q$-divisor, $K_{\Aa'}$ is nef$/U$ $\mathbb Q$-divisor and $K_{\Aa'^0}$ is semi-ample$/U^0$, hence
\begin{itemize}
    \item $K_{\Aa_Y}$ is a nef$/U$ $\mathbb Q$-divisor, and
    \item $K_{\Aa_Y^0}$ is semi-ample$/U^0$. 
\end{itemize}
Moreover, since any lc place of $\Aa_Y$ is an lc place of $\Aa$,
\begin{itemize}
    \item the image of any lc center of $\Aa_Y$ in $U$ intersects $U^0$.
\end{itemize}
Write $\Aa_Y=(Y,B_Y,\Mm)$. For any irreducible component $S_Y$ of $\lfloor B_Y\rfloor$, $K_{\Aa_Y|_{S_Y}}=K_{\Aa_Y}|_{S_Y}$ is a nef$/U$ $\mathbb Q$-divisor and so $\Aa|_{S_Y}$ is LD, $K_{\Aa|_{S_Y}\times_UU^0}=K_{\Aa_Y^0}|_{S_Y\times_UU^0}$ is semi-ample$/U^0$, and since any lc center of $\Aa|_{S_Y}$ is an lc center of $\Aa_Y$, the image of any lc center of $\Aa|_{S_Y}$ in $U$ intersects $U^0$. By Theorem \ref{thm: bir12 1.7 g LD} in dimension $d-1$,
\begin{itemize}
    \item $K_{\Aa_Y|_{S_Y}}$ is semi-ample$/U$ for any irreducible component $S_Y$ of $\lfloor B_Y\rfloor$.
\end{itemize}
Let $\Aa_Y(t):=h^*\Aa(t)$ for any $t\in\mathbb R$ and let $P:=h^*\lfloor B'\rfloor$, where $B'$ is the image of $B$ on $X'$. Since $\Aa(t)$ is klt for any $0<t\ll 1$, we have $\Supp P=\Supp \lfloor B_Y\rfloor$. Thus 
\begin{itemize}
    \item $K_{\Aa_Y(t)}-tP$ is semi-ample$/U$ for any $0<t\ll 1$, where $\Supp P=\Supp\lfloor B_Y\rfloor$.
\end{itemize}
By Theorem \ref{thm: bir12 1.7 g} in dimension $d$, $K_{\Aa_Y}$ is semi-ample$/U$, so $K_{\Aa'}$ is semi-ample$/U$, and so $K_{\Aa}$ is semi-ample$/U$. The theorem follows.
\end{proof}

\begin{prop}\label{prop: inductive approach to bir12 1.7 weak version}
Let $d\geq 2$ be an integer. Then Theorem \ref{thm: bir12 1.7 g LD} in dimension $\leq d-1$ implies Theorem \ref{thm: bir12 1.7 g} in dimension $d$.
\end{prop}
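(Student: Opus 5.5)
The plan is to first show that $K_{\Aa}|_{\lfloor B\rfloor}$ is semi-ample$/U$, and then to lift semi-ampleness from $\lfloor B\rfloor$ to $X$. The big case is handled by Lemma \ref{lem: reduction to Nlc locus}; the non-big case is reduced to a big one on the ample model of $K_{\Aa}-tP$.

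\textbf{Step 1 (the boundary).} Set $W:=\lfloor B\rfloor$ and $L:=K_{\Aa}$. Since $L$ is a $\mathbb Q$-divisor, the vector $\bm{c}$ is rational, so its rational envelope is the single point $\{\bm{c}\}$. Hence $\Aa$ is trivially LD. Assumptions (1)--(4) of Theorem \ref{thm: semi-ample over U0 implies semi-ample over U} then hold, and $L|_W$ is semi-ample$/U$. This uses only Kollár's gluing via Lemma \ref{lem: induced relation is stratified}; the hypothesis ``Theorem \ref{thm: bir12 1.7 g LD} in dimension $\leq d-1$'' is not needed here because (4) is assumed. It is needed in Step 3, when we restrict to strata of a new generalized pair.

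\textbf{Step 2 (big case).} If $L$ is big$/U$, apply Lemma \ref{lem: reduction to Nlc locus} directly. Lemma \ref{lem: generalized lc to non-lc pair}(2) converts $\mu L$ into $K_X+\Delta+A$ with $A$ ample and $\Nlc(X,\Delta)=W$. By Step 1, $L|_W$ is semi-ample$/U$, so the lc-pair result \cite[Theorem 2.23]{Has25a} shows that $L$ is semi-ample$/U$.

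\textbf{Step 3 (non-big case).} Here we have $P\geq 0$ with $\Supp P\subset W$ and $L-tP$ semi-ample$/U$ for $0<t\ll 1$.
\begin{enumerate}
\item Fix two small values $t_1<t_2$. Any curve contracted by the ample model of $L-t_2P$ is also contracted by the ample model of $L-t_1P$. Since $L$ is nef, $L-tP$ for $t$ between $t_1$ and $t_2$ is a convex combination of these two semi-ample classes. From this we get a single contraction $g\colon X\rightarrow Z$ and an ample$/U$ $\mathbb Q$-divisor $A_Z$ such that $L-tP\sim_{\mathbb Q,U}g^*A_Z$ for all small $t$.
\item Taking differences of these relations for two values of $t$ gives $P\sim_{\mathbb Q,Z}0$ and hence $L\sim_{\mathbb Q,Z}0$.
\item All lc centers of $\Aa$ lie over $U^0$ by assumption (3). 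Over $U^0$, $L$ is semi-ample and $L-tP$ is its perturbation. Using this, I will check that every lc center not contained in $\Supp P$ dominates $Z$, while the components of $P$ are pulled back from $Z$.
\item Apply the canonical bundle formula \cite[Theorem 2.3.2]{CHLX23} (or its lc version, keeping track of lc centers) to write $L\sim_{\mathbb R}g^*K_{\Aa_Z}$. Then $K_{\Aa_Z}\sim_{\mathbb Q,U}A_Z+tP_Z$ with $P=g^*P_Z$, so $K_{\Aa_Z}$ is big$/U$.
\item Replace $\Aa_Z$ by a $\mathbb Q$-factorial dlt modification. Its boundary strata are images of strata of $\Aa$. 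Their restrictions are LD and semi-ample over $U^0$, so Theorem \ref{thm: bir12 1.7 g LD} in dimension $\leq d-1$ makes them semi-ample$/U$, and Theorem \ref{thm: semi-ample over U0 implies semi-ample over U} glues them.
\item Step 2, applied on $Z$ (where $\dim Z<d$, or by repeating Step 2 verbatim), gives that $K_{\Aa_Z}$ is semi-ample$/U$. Therefore $L$ is semi-ample$/U$.
\end{enumerate}

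\textbf{Expected obstacle.} The main difficulty is in Step 3: making the descent $L\sim g^*K_{\Aa_Z}$ rigorous while keeping the lc centers of $\Aa_Z$ under control, since vertical lc centers inside $\Supp P$ do not dominate $Z$. If this fails, the fallback is to avoid descent and prove lifting directly. The key fact is that $\mathbf{B}(L)\subset\Supp P\subset W$, because $mL=m(L-tP)+mtP$. Sections of $mL|_W$ would then be extended using the vanishing $H^1(X,\mathcal{O}_X(mL-W))=0$. This vanishing would come from Lemma \ref{lem: generalized lc to non-lc pair}-type perturbations applied to the big$/U$ divisor $L-tP+\epsilon g^*A_Z$, with the nef part absorbed into an ample divisor.
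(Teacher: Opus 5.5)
Your Steps 1--2 are exactly the paper's big case. Your non-big case also has the paper's skeleton: take the ample model $g$ of $K_{\Aa}-tP$, get $P\sim_{\mathbb R,Z}0$ and $K_{\Aa}\sim_{\mathbb R,Z}0$, and descend to a lower-dimensional base.

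The difference is how you finish on the base. The paper uses the lc canonical bundle formula \cite[Lemma 11.4.2, Theorem 11.4.4]{CHLX23}. Since $K_{\Aa}$ is a $\mathbb Q$-divisor, this gives an lc generalized pair $\Aa_Y$ with $K_{\Aa}\sim_{\mathbb Q}g^*K_{\Aa_Y}$, and every lc center of $\Aa_Y$ is the image of an lc center of $\Aa$. A $\mathbb Q$-factorial dlt modification $\Aa_{Z'}$ of $\Aa_Y$ then has the following properties:
\begin{itemize}
\item its canonical class is a nef $\mathbb Q$-divisor, so it is LD;
\item it is semi-ample over $U^0$;
\item the image in $U$ of each of its lc centers meets $U^0$;
\item it has dimension at most $d-1$.
\end{itemize}
So Theorem \ref{thm: bir12 1.7 g LD} applies to $\Aa_{Z'}$ itself, and no bigness is needed.

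Your Steps 3(4)--(6) instead prove that $K_{\Aa_Z}$ is big, apply the inductive hypothesis only to boundary strata of $Z'$, glue with Theorem \ref{thm: semi-ample over U0 implies semi-ample over U}, and rerun Lemma \ref{lem: reduction to Nlc locus} on $Z'$. This is valid, and it uses the hypothesis only in dimension at most $d-2$. But it re-derives the big case of Theorem \ref{thm: bir12 1.7 g} on the base, when the induction already gives the conclusion for $\Aa_{Z'}$ directly.

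Three points need correcting.

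\textbf{Step 3(3) is false.} You claim every lc center not contained in $\Supp P$ dominates $Z$. Take $U$ a point, $g\colon X=\mathbb P^1\times\mathbb P^1\to Z=\mathbb P^1$ the first projection, $B$ the sum of two horizontal sections and three fibres $F_1,F_2,F_3$, and $P=F_1$. Then $K_X+B\sim g^*\mathcal O(1)$, every hypothesis of Theorem \ref{thm: bir12 1.7 g} holds, $K_X+B$ is not big, and $F_2$ is a vertical lc center outside $\Supp P$.
\begin{itemize}
\item Hence \cite[Theorem 2.3.2]{CHLX23}, which needs all lc centers to dominate the base, is not available. You must use the lc formula above.
\item With that formula, 3(3) is unnecessary. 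The lc-center control you flag as your expected obstacle is exactly part (4) of \cite[Theorem 11.4.4]{CHLX23}, so the vanishing-theorem fallback is not needed.
\item The $\mathbb Q$-coefficients of $K_{\Aa_Y}$ from that formula are also what justify your unproved claim that the strata restrictions are LD. They likewise supply the $\mathbb Q$-divisor hypothesis of Theorem \ref{thm: semi-ample over U0 implies semi-ample over U}.
\end{itemize}

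\textbf{Step 3(1) has the inclusion backwards.} Nefness of $K_{\Aa}$ gives the opposite inclusion to the one you state: curves null for $K_{\Aa}-t_1P$ are null for $K_{\Aa}-t_2P$. Constancy of $g$ on an open interval of $t$ holds because each $K_{\Aa}-tP$ there is an interior point of a segment of nef classes.

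\textbf{The linear equivalence in Step 3(1) is misstated.} The ample divisor on $Z$ depends on $t$, and the relation is $\sim_{\mathbb R,U}$, not $\sim_{\mathbb Q,U}$ with a fixed $A_Z$. Your conclusions $P\sim_{\mathbb R,Z}0$ and $K_{\Aa}\sim_{\mathbb R,Z}0$ remain correct.
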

\begin{proof}
When $K_{\Aa}$ is big$/U$, by (2-4) and Theorem \ref{thm: semi-ample over U0 implies semi-ample over U}, $K_{\Aa}|_{\lfloor B\rfloor}$ is semi-ample$/U$. By Lemma \ref{lem: reduction to Nlc locus}, $K_{\Aa}$ is semi-ample$/U$. Thus we may assume that there exists an $\mathbb R$-divisor $P\geq 0$ on $X$ such that $\Supp P\subset\Supp\lfloor B\rfloor$ and $K_{\Aa}-tP$ is semi-ample$/U$ for any $0<t\ll 1$, and $K_{\Aa}$ is not big$/U$.

We denote by $\Aa(t):=(\Aa,-tP)$ for any real number $t$. Fix $0<t_0\ll 1$, pick $t_1\in (0,t_0)$ general, and let $g: X\rightarrow Y$ be the ample model$/U$ of $K_{\Aa(t_1)}$. Since $t_1$ is general, we have $P\sim_{\mathbb R,Y}0$ and $K_{\Aa}\sim_{\mathbb R,Y}0$. Since $K_{\Aa}$ is not big$/U$, $K_{\Aa(t_1)}$ is not big$/U$, so $\dim Y<\dim X$. If $\dim Y=0$, then $K_{\Aa}\sim_{\mathbb R}0$ and we are done. Thus we may assume that $\dim Y>0$.

Since $K_{\Aa}$ is a $\mathbb Q$-divisor, by \cite[Lemma 11.4.2, Theorem 11.4.4]{CHLX23}, there exists an lc generalized pair $\Aa_Y/U$ induced by the canonical model formula of $g: \Aa\rightarrow Y$, i.e.
$$K_{\Aa}\sim_{\mathbb Q}g^*K_{\Aa_Y}.$$
Let $h: Z\rightarrow Y$ be a $\mathbb Q$-factorial dlt modification of $\Aa_Y$, $\Aa_Z:=h^*\Aa_Y$, and $\Aa_Z^0:=\Aa_Z\times_UU^0$. Since $K_{\Aa}$ is a nef$/U$ $\mathbb Q$-divisor, $K_{\Aa_Y}$ is a nef$/U$ $\mathbb Q$-divisor. Thus 
\begin{itemize}
    \item $K_{\Aa_Z}$ is a nef$/U$ $\mathbb Q$-divisor.
\end{itemize}
Since $K_{\Aa^0}$ is semi-ample$/U^0$, $K_{\Aa_Y^0}$ is semi-ample$/U^0$, where $\Aa_Y^0:=\Aa_Y\times_UU^0$, hence
\begin{itemize}
\item $K_{\Aa_Z^0}$ is semi-ample$/U^0$.
\end{itemize}
For any lc center $V$ of $\Aa_Z$, $h(V)$ is an lc center of $\Aa_Y$, hence the image of an lc center $W$ of $\Aa$ in $Y$ by \cite[Theorem 11.4.4(4)]{CHLX23}. Thus the image of $V$ in $U$ is the image of $W$ in $U$, which intersects $U^0$. Therefore, 
\begin{itemize}
    \item the image of any lc center of $\Aa_Z$ in $U$ intersects $U^0$.
\end{itemize}
By Theorem \ref{thm: bir12 1.7 g LD} in dimension $\leq d-1$, $K_{\Aa_Z}$ is semi-ample$/U$. Thus $K_{\Aa_Y}$ is semi-ample$/U$, hence $K_{\Aa}$ is semi-ample$/U$.
\end{proof}

\begin{proof}[Proof of Theorems \ref{thm: bir12 1.7 g LD} and \ref{thm: bir12 1.7 g}]
It is obvious that Theorem \ref{thm: bir12 1.7 g LD} holds in dimension $1$. The theorems now follow from Propositions \ref{prop: inductive approach to bir12 1.7} and \ref{prop: inductive approach to bir12 1.7 weak version}.
\end{proof}

\section{Proof of Theorem \ref{thm: hx13 1.1 nonnqc-g}}\label{sec: hx13 1.1}

The goal of this section is twofold: to prove Theorem \ref{thm: hx13 1.1 nonnqc-g}, and to prove Theorem \ref{thm: bir12 1.1 nonnqc-g} in the case when $(X,B,\Mm)$ is LD. Indeed, we shall prove the following theorem which can be seen as a more general version of the LD case of Theorem \ref{thm: bir12 1.1 nonnqc-g} (which corresponds to the case when $U^0=U$), and can be seen as an analogue of \cite[Theorem 4.1]{Has19}.

\begin{thm}\label{thm: bir12 1.1 3non}
Let $\Aa/U$ be an LD generalized pair with ambient variety $X$, $U^0\subset U$ a non-empty open subset, $A\geq 0$ an $\mathbb R$-Cartier $\mathbb R$-divisor on $X$, $\Aa^0:=\Aa\times_UU^0$, and $A^0:=A|_{X^0}$. Assume that
\begin{enumerate}
\item $(\Aa^0,A^0)$ is lc,
\item $K_{\Aa^0}+A^0\sim_{\mathbb R,U^0}0$, 
\item $K_{\Aa}$ is pseudo-effective$/U$, and
\item the image of any lc center of $\Aa$ in $U$ intersects $U^0$.
\end{enumerate}
Then $\Aa/U$ has a good log minimal model.
\end{thm}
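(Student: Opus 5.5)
We follow the strategy of \cite[Theorem~4.1]{Has19}, adapted to LD generalized pairs. We argue by induction on $\dim X$; in dimension $d$ we may assume Theorem~\ref{thm: bir12 1.1 3non}, and hence Theorem~\ref{thm: hx13 1.1 nonnqc-g}, in dimensions $\le d-1$.

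\emph{Reduction.} By Lemma~\ref{lem: LD under dlt model}, Lemma~\ref{lem: mm preserved under dlt model} and Lemma~\ref{lem: g-pair weak glc imply lmm}, we may replace $\Aa$ by a $\mathbb Q$-factorial dlt modification. After further passing to a proper LD log toroidal model (Lemma~\ref{lem: eolsm} and Lemma~\ref{lem: Bir12 2.8}), we may assume $\Aa$ is $\mathbb Q$-factorial LD dlt and $X$ is klt. Hypotheses (1)--(4) are preserved under these replacements, because lc places do not change and $A$ pulls back.

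\emph{Minimal model over $U^0$.} Over $U^0$ we have $K_{\Aa^0}\sim_{\mathbb R,U^0}-A^0$ with $(\Aa^0,A^0)$ lc. This is the potentially log Calabi--Yau situation. Using induction, Lemma~\ref{lem: has19 3.2 step 3 abu ver}, the Iitaka reduction of Section~\ref{sec: abundant gmm}, and Theorem~\ref{thm: bir12 1.7 g LD} to glue semi-ampleness, I would first show that $\Aa^0/U^0$ has a good minimal model. This amounts to the $U^0=U$ case, handled as in \cite{Has19}: run an MMP with scaling of an ample divisor $H$, so that $\lim\lambda_i=0$ (Lemma~\ref{lem: construct mmp with scaling to 0}, or Theorem~\ref{thm: eomm implies tof with scaling} applied to $(\Aa,tH)$, which has good minimal models for $t>0$ since those pairs are big after perturbation). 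Special termination, Theorem~\ref{thm: special termination}, then applies, because each lc center $S_i$ inherits via adjunction an LD structure (Lemma~\ref{lem: ld adjunction}) and the potentially Calabi--Yau condition, so induction gives bs-weak lc models on $S_i$. Near $\lfloor B\rfloor$ the MMP is then eventually an isomorphism. Away from $\lfloor B\rfloor$ we perturb to the klt pair $(\Aa,-t\lfloor B\rfloor)$ and apply Lemma~\ref{lem: canoincal model implies glmm} together with Lemma~\ref{lem: trivial mmp}. Together these force termination, yielding a minimal model $\Aa'$.

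\emph{Global step.} Next run a $K_{\Aa}$-MMP$/U$ with scaling of an ample divisor. By Lemma~\ref{lem: termination over open subset}, the restriction over $U^0$ terminates with a minimal model. Arranging $\lambda_i\to0$ globally, special termination (Theorem~\ref{thm: special termination}) applies again: the lc centers meet $X^0$ by (4), so their adjoint pairs satisfy the hypotheses of Theorem~\ref{thm: hx13 1.1 nonnqc-g} in lower dimension. Hence the MMP is eventually an isomorphism near $\lfloor B_i\rfloor$. To obtain actual termination and nefness globally, I would use the Hashizume trick: compare with $(\Aa,-t\lfloor B\rfloor)$, which is klt and, by a perturbation as in Lemma~\ref{lem: generalized lc to non-lc pair}, abundant. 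Then nefness of $K_{\Aa_n}$ over $U$ follows from vanishing of the scaling number together with Lemma~\ref{lem: ld nef impleis nqc}. Finally, $K_{\Aa_n}$ is nef$/U$ and LD (Lemma~\ref{lem: ld minimal model is ld}), semi-ample over $U^0$, and all lc centers meet $U^0$, so Theorem~\ref{thm: bir12 1.7 g LD} gives semi-ampleness over $U$. Hence $\Aa_n$ is a good log minimal model.

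\emph{Main obstacle.} The hardest step is forcing termination (equivalently $\lambda=0$ plus finiteness) globally away from $\lfloor B\rfloor$ without ACC. I would first try to show that the klt perturbation $K_{\Aa}-t\lfloor B\rfloor$ is abundant over $U$, via the Iitaka fibration of $K_{\Aa^0}+A^0\sim0$. Then Lemma~\ref{lem: canoincal model implies glmm} provides models for all small $t$, and a limiting argument using the LD finiteness of Lemma~\ref{lem: ld nef impleis nqc} finishes the proof.
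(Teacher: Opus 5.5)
\textbf{There is a genuine gap: you have no mechanism for termination away from $\lfloor B\rfloor$.} You flag this step yourself as the main obstacle, and it is the heart of the proof. Your proposed remedy is abundance of $K_{\Aa}-t\lfloor B\rfloor$ followed by Lemma~\ref{lem: canoincal model implies glmm}, and this fails in general. Since $K_{\Aa}$ is pseudo-effective$/U$ and $A\ge 0$, the divisor $A$ must be vertical over $U$, so $K_{\Aa}\sim_{\mathbb R}0$ over the generic point of $U$. If some component of $\lfloor B\rfloor$ dominates $U$, then $K_{\Aa}-t\lfloor B\rfloor$ is not even pseudo-effective$/U$. Lemma~\ref{lem: generalized lc to non-lc pair} cannot rescue this, because it needs bigness, which you do not have.

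Even granting good minimal models for some klt perturbation, the conclusion does not follow. An MMP with scaling of an ample $H$ that is eventually an isomorphism near $\lfloor B_i\rfloor$ is not an MMP with scaling for the perturbed pair. Nefness of $K_{\Aa_i}+\lambda_iH_i$ does not give nefness of $K_{\Aa_i}-t\lfloor B_i\rfloor+\lambda_iH_i$. Likewise, $\lambda_i\to 0$ together with Lemma~\ref{lem: ld nef impleis nqc} yields nothing without termination.

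So both your ``$U^0=U$'' step and your global step stop exactly where the work lies. Note also that the $U^0=U$ case is in dimension $d$, so it is not available by induction; it would need its own complete proof.

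\textbf{What the paper does instead.} It perturbs only by \emph{vertical} lc divisors and designs the scaling so that every step is forced into $\lfloor B\rfloor$.

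First, Hashizume-type dlt modifications (Lemmas~\ref{lem: special gdlt modification gpair} and~\ref{lem: special dlt model ii}) produce $B=B^h+B^v$ with the following properties:
\begin{itemize}
\item $B^v$ is reduced and vertical, and every lc center of $(\Aa,-tB^v)$ dominates $U$;
\item $K_{\Aa}\sim_{\mathbb R,U}E^h+E^v$ with $E^h,E^v\ge0$ vertical and $\Supp E^v=\Supp B^v$;
\item $(\Aa,tE^h)$ is dlt.
\end{itemize}
Then $K_{\Aa}-tE^v\sim_{\mathbb R,U}E^h+(1-t)E^v$ is effective and vertical, so $\kappa_\sigma=\kappa_\iota=0$ over $U$. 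Proposition~\ref{prop: abundant+lcc dominate imply gmm}, which tolerates the surviving horizontal lc centers, then gives good minimal models of the lc pairs $(\Aa,-tE^v)$.

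Second, Proposition~\ref{prop: main theorems for special dlt model} passes to a model on which two such pairs are semi-ample. It then runs a $K_{\Aa}$-MMP with scaling of $K_{(\Aa,\frac{t}{1-t}E^h)}$, with $\lambda_i\to0$ by Lemma~\ref{lem: construct mmp with scaling to 0}. Every contracted ray $R$ has $K_{\Aa}\cdot R<0$ and $E^h\cdot R>0$, hence $E^v\cdot R<0$. So each step lies inside $\Supp E^v\subset\lfloor B\rfloor$, and special termination alone gives genuine termination. Special termination is fed on lc centers by Theorem~\ref{thm: bir12 1.1 3non} in lower dimension, using the restriction of $A$. Semi-ampleness then comes from Theorem~\ref{thm: bir12 1.7 g LD}, after shrinking $U^0$ to $U^0\setminus\pi_Y(\Supp A_Y)$.

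\textbf{Smaller issues.}
\begin{itemize}
\item Passing to a proper log toroidal model destroys hypothesis (2), because $K_{\Aa'}=h^*K_{\Aa}+E$ with $E\neq0$ exceptional. A $\mathbb Q$-factorial dlt modification already makes $X$ klt, so this step is unnecessary.
\item Theorem~\ref{thm: hx13 1.1 nonnqc-g} is not a consequence of Theorem~\ref{thm: bir12 1.1 3non}. Either prove the two jointly, or apply Theorem~\ref{thm: bir12 1.1 3non} itself on lc centers.
\end{itemize}
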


The proofs of Theorems \ref{thm: hx13 1.1 nonnqc-g} and \ref{thm: bir12 1.1 3non} use ideas from \cite{Has19}. The proof is divided into two parts: first, we need to construct $\mathbb Q$-factorial dlt modifications satisfying extra properties, which allow us to essentially reduce the question to a special termination problem. The second part is to construct a model that is a good minimal model for several generalized pairs, run a special MMP with scaling, and then use special termination to deduce the result.

\subsection{Construction of a special dlt modification}

In this subsection, we construct $\mathbb Q$-factorial dlt modifications satisfying some extra properties. Ideas of constructing such $\mathbb Q$-factorial dlt modifications originated in \cite{Has18}.

\begin{lem}\label{lem: special proper log smooth model}
Let $\Aa/U$ be an lc generalized pair. Then there exists a proper log smooth model $h: \Aa'\rightarrow\Aa$ with $\Aa'=(X',B',\Mm)$, such that $B'=B^h+B^v$ satisfies the following.
\begin{enumerate}
\item $B^h\geq 0$ and $B^v$ is reduced.
\item $B^v$ is vertical$/U$.
\item For any real number $t>0$, any lc center of $(\Aa',-tB^v)$ dominates $U$.
\end{enumerate}
\end{lem}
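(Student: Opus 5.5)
The plan is to start from an arbitrary log resolution and then blow up the strata of the horizontal reduced boundary that fail to dominate $U$, one at a time. All divisors created this way are lc places that are vertical over $U$, so they can be put into $B^v$ with coefficient $1$ without affecting properness.

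\textbf{Step 1: an initial proper log smooth model.} By Lemma \ref{lem: eolsm}, take a proper log smooth model $h_0\colon \Aa_0=(X_0,B_0,\Mm)\rightarrow\Aa$. Write $\Aa_0=(h_0^*\Aa,E_0)$, so that $\Supp E_0\subset\Supp\{B_0\}$. The only coefficients of $B_0$ equal to $1$ occur on components of $h_{0,*}^{-1}\lfloor B\rfloor$ and on $h_0$-exceptional lc places of $\Aa$. In particular, $\Supp E_0$ contains no component of $B_0^{=1}$.

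\textbf{Step 2: definition of the splitting.} On any proper log smooth model $\Aa'=(X',B',\Mm)$ obtained below, set
\[
B^v:=\text{the sum of the components of } B'^{=1} \text{ that are vertical}/U,\qquad B^h:=B'-B^v .
\]
Then (1) and (2) hold by construction, since $B^h\geq 0$. Now fix $t>0$. Because $(X',\Supp B')$ is log smooth, $\Mm$ descends to $X'$, and every coefficient of $B'-tB^v$ is at most $1$, the lc centers of $(\Aa',-tB^v)$ are exactly the strata of the horizontal part $H:=B'^{=1}-B^v$. So (3) is equivalent to the following condition:
\[
(\ast)\qquad \text{every stratum of } H \text{ dominates } U.
\]

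\textbf{Step 3: forcing $(\ast)$ by blowups.} Suppose some stratum of $H$ does not dominate $U$. Choose such a stratum $V$, namely a component of $H_1\cap\dots\cap H_r$, and let $g\colon X''\rightarrow X'$ be the blowup of $V$, with exceptional divisor $F$. Since $V$ is a stratum of the reduced snc divisor $B'^{=1}$, the following hold.
\begin{itemize}
\item The pair $(X'',\Supp g^*B'\cup F)$ is log smooth.
\item $a(F,\Aa')=-1$, so $g^*\Aa'=(X'',g_*^{-1}B'+F,\Mm)$.
\item $F$ is an lc place of $\Aa$.
\end{itemize}
Hence $(g^*\Aa', g^*E_0)$, that is, $h_0\circ g$ with exceptional part $g^*E_0$, is again a proper log smooth model. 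In particular $g^*E_0$ has support in the fractional part of the boundary, because $V\not\subset\Supp E_0$. The divisor $F$ maps into the image of $V$, which is a proper closed subset of $U$, so $F$ is vertical and is added to $B^v$. The horizontal divisor $H$ is replaced by its strict transform $g^{-1}_*H$, and no new horizontal components appear.

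\textbf{Step 4: termination.} This is the main point to check. For snc blowups along a stratum, the intersection of the strict transforms of $H_J=\bigcap_{j\in J}H_j$ is the blowup of $H_J$ along $H_J\cap V$. Its components are therefore the strict transforms of the components of $H_J$ other than $V$, and the locus over $V$ is emptied exactly when $J\supseteq\{1,\dots,r\}$ and that component equals $V$. Dominance of a stratum is unchanged under strict transform. Consequently the number of non-dominant strata of $H$ drops by at least one, since $V$ disappears and nothing new is created. The process therefore stops after finitely many blowups, at a model satisfying $(\ast)$.

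To be safe, I would double-check in Step 4 that no new stratum of $H$ appears inside $F$. This holds because $F$ is not a component of $H$, and the strict transforms of $H_1,\dots,H_r$ have no common point over $V$.
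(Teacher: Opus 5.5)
Your proposal is correct and has the same structure as the paper's proof. Both pass to a proper log smooth model on which $\Mm$ descends. There the lc centers of $(\Aa',-tB^v)$ are just the strata of the coefficient-one part of the log smooth sub-pair $(X',B'-tB^v)$. The difference is in how the splitting $B'=B^h+B^v$ is produced. The paper gets it in one stroke by quoting \cite[Lemma 2.10]{Has18} for pairs, then only remarks that descent of $\Mm$ transfers the conclusion. You instead prove that pair-level statement directly: you blow up non-dominant strata of the horizontal reduced part $H$, and check at each step that log smoothness, properness, and the lc-place property of $F$ survive. This makes the lemma self-contained, at the cost of the termination count in your Step 4.

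A few points should be made explicit.
\begin{itemize}
\item The fact $V\not\subset\Supp E$ is needed both for $F$ being an lc place of $\Aa$ and for properness of the new model. It comes from the snc condition: a component of $\Supp B'$ other than $H_1,\dots,H_r$ containing $V$ would force $V$ to have codimension at least $r+1$. The same observation gives $a(F,\Aa')=r-1-r=-1$.
\item In Step 4, the strata of $H$ strictly contained in $V$ (index sets $J\supsetneq\{1,\dots,r\}$) also disappear, which only helps the count.
\item At later blowups the exceptional part of the model is the strict transform of the current exceptional divisor, not literally $g^*E_0$. This is purely notational.
\end{itemize}
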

\begin{proof}
Let $g: \Aa''\rightarrow\Aa$ be a proper log smooth model and write $\Aa'':=(X'',B'',\Mm)$. By \cite[Lemma 2.10]{Has18}, there exists a proper log smooth model $f: (X',B'=B^h+B^v)\rightarrow (X'',B'')$ such that 
\begin{itemize}
    \item $B^h\geq 0$ and  $B^v$ is reduced,
    \item $B^v$ is vertical$/U$, and
    \item For any real number $t>0$, any lc center of $(X',B'-tB^v)$ dominates $U$.
\end{itemize}
Let $h:=g\circ f$ and let $\Aa':=(X',B',\Mm)$. Then $h: \Aa'\rightarrow\Aa$ is a proper log smooth model of $\Aa$ by definition. Since $\Mm$ descends to $X'$, for any real number $t>0$, any lc center of $(\Aa,-tB^v)$ is an lc center of $(X',B'-tB^v)$, hence dominates $U$. Thus $h: \Aa'\rightarrow\Aa$ satisfies our requirements.
\end{proof}

\begin{lem}\label{lem: special gdlt modification gpair}
Let $\Aa/U$ be an lc generalized pair with ambient variety $X$. Then there exists a $\mathbb Q$-factorial dlt modification $h: X'\rightarrow X$ of $\Aa$ with $\Aa':=h^*\Aa:=(X',B',\Mm)$, such that $B'=B^h+B^v$ satisfies the following.
\begin{enumerate}
\item $B^h\geq 0$ and $B^v$ is reduced.
\item $B^v$ is vertical$/U$.
\item For any real number $t>0$, any lc center of $(\Aa',-tB^v)$ dominates $U$.
\end{enumerate}
\end{lem}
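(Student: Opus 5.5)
We start from the proper log smooth model $g: \Aa''=(X'',B'',\Mm)\rightarrow\Aa$ supplied by Lemma~\ref{lem: special proper log smooth model}, with $B''=B''^h+B''^v$ satisfying (1)--(3). Write $\Aa''=(g^*\Aa,E)$ with $E\geq 0$ exceptional$/X$ and $\Supp E\subset\Supp\{B''\}$. Then we run a $K_{\Aa''}$-MMP$/X$ with scaling of an ample$/X$ divisor. Since $X''$ is smooth and $K_{\Aa''}\sim_{\mathbb R,X}E$, the idea is to contract exactly $\Supp E$ and land on a model $h: X'\rightarrow X$ with $K_{\Aa'}=h^*K_{\Aa}$. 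Here $\Aa'$ is the image of $\Aa''$, $X'$ is $\mathbb Q$-factorial and $\Aa'$ is dlt, because $\Aa''$ is $\mathbb Q$-factorial dlt and dlt-ness is preserved under the MMP.

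The key step is to show that $E$ is very exceptional$/X$, or at least that the MMP$/X$ terminates with $E$ contracted. Following \cite{Has18} and the proof of Lemma~\ref{lem: Bir12 2.8}, I would apply Lemma~\ref{lem: foliation lsm has lmm}: the identity $X\rightarrow X$ gives $\Aa$ as a bs-weak lc model of $\Aa$ itself, and $X''\rightarrow X$ is a morphism. That lemma then says a $K_{\Aa''}$-MMP$/X$ with scaling terminates with a good minimal model $\Aa'=q^*\Aa$, where $q: X'\rightarrow X$. Equivalently, one can invoke Theorem~\ref{thm: very exceptional mmp}, since $E$ is $g$-exceptional, hence very exceptional$/X$. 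Consequently $\Aa'=h^*\Aa$ with $h:=q$. Every divisor extracted by $h$ is an lc place of $\Aa$, because surviving components of $E$ would contradict $K_{\Aa'}=h^*K_{\Aa}$ with $E\geq 0$ contracted. So $h$ is a $\mathbb Q$-factorial dlt modification.

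Next we transfer (1)--(3). Set $B^h$ and $B^v$ to be the pushforwards of $B''^h$ and $B''^v$. Properties (1) and (2) are preserved under pushforward. For (3), take $t>0$. The map $\phi: X''\dashrightarrow X'$ is $K_{\Aa''}$-negative and it is also $(K_{\Aa''}-tB''^v)$-non-positive for small $t$, and by convexity for all $t>0$ once we note that lc centers only shrink as $t$ grows. I would argue this via discrepancies. An lc place $F$ of $(\Aa',-tB^v)$ has $a(F,\Aa')=-1$ and its center is not contained in $\Supp B^v$. Since $a(F,\Aa'')\leq a(F,\Aa')$ by negativity, $F$ is an lc place of $\Aa''$. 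Its center on $X''$ cannot lie in $\Supp B''^v$: otherwise, by the negativity inequality applied to the non-positive map for $(K_{\Aa''}-tB''^v)$, we would get $a(F,(\Aa'',-tB''^v))\leq a(F,(\Aa',-tB^v))=-1$. So $F$ is an lc place of $(\Aa'',-tB''^v)$ and dominates $U$ by (3) for $\Aa''$.

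The main obstacle is this last comparison. The MMP is $K_{\Aa''}$-negative but need not be $(K_{\Aa''}-tB''^v)$-non-positive. To handle this, I would first try to use that $B''^v$ is vertical$/U$, so that lc places of $\Aa'$ whose centers meet $\Supp B^v$ are exactly the ones ruled out: an lc center of $\Aa'$ not dominating $U$ is, being an lc center of $\Aa''$ by crepancy, contained in $\Supp B''^v$ by (3) at $t\to0$, and its image is contained in $\Supp B^v$. Then $t>0$ strictly raises the discrepancy. This argument works directly with the $t\to 0^+$ characterization and avoids needing non-positivity for $K_{\Aa''}-tB''^v$.
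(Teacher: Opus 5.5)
Your proposal is correct. The construction is the paper's: take the proper log smooth model of Lemma~\ref{lem: special proper log smooth model}, run the $K_{\Aa''}$-MMP$/X$ of Lemma~\ref{lem: foliation lsm has lmm} to get $\Aa'=h^*\Aa$ $\mathbb Q$-factorial dlt, and push forward $B''^h,B''^v$. This gives (1) and (2) at once.

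One small correction on the modification property. The $h$-exceptional divisors are lc places of $\Aa$ because they are $g$-exceptional divisors not in $\Supp E$, i.e.\ by condition (3) in the definition of a log toroidal model. The reason you give does not establish this.

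For (3), your first sketch is exactly the paper's argument, and the ``main obstacle'' you then raise does not exist. The MMP$/X$ has finitely many steps, each contracting a $K_{\Aa''}$-negative extremal ray. So for $0<s\ll 1$ it is also a sequence of steps of a $(K_{\Aa''}-sB''^v)$-MMP$/X$. Moreover, for any $t>0$ the lc places of $(\Aa',-tB^v)$ and of $(\Aa',-stB^v)$ coincide: both are the lc places $F$ of $\Aa'$ with $\Center_{X'}F\not\subset\Supp B^v$. Hence non-positivity is only needed for the small parameter $st$, where it holds. Monotonicity of discrepancies for the perturbed MMP then makes $F$ an lc place of $(\Aa'',-stB''^v)$, whose center dominates $U$ by (3) for $\Aa''$.

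Your fallback argument with the unperturbed MMP is also valid, but the step ``its image is contained in $\Supp B^v$'' is unjustified as written. Knowing $\Center_{X''}F\subset\Supp B''^v$ tells you nothing about $\Center_{X'}F$ unless $\phi$ is an isomorphism near the generic point of $\Center_{X''}F$. That does hold here. Since $a(F,\Aa'')=a(F,\Aa')=-1$, discrepancies are equal at every step. By \cite[Lemma 3.38]{KM98}, the discrepancy strictly increases whenever the center lies in the exceptional locus of a step, so no step touches the generic point of the center.

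Also, $F$ is an lc place of $\Aa''$ because $a(F,\Aa'')\le a(F,\Aa')$ and $\Aa''$ is lc, not ``by crepancy''; $\Aa''$ is not crepant to $\Aa'$.

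With that line added, your route replaces the perturbation argument by a local-isomorphism argument. The paper's version is shorter.
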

\begin{proof}
By Lemma \ref{lem: special proper log smooth model}, there exists a proper log smooth model $g: \Aa_W\rightarrow\Aa$ with $\Aa_W=(W,B_W,\Mm)$, such that $B_W=B_W^h+B_W^v$ satisfies the following:
\begin{itemize}
    \item $B_W^h\geq 0$ and $B_W^v$ is reduced.
    \item $B_W^v$ is vertical$/U$.
    \item For any real number $t\in (0,1]$, any lc center of $(\Aa_W,-tB_W^v)$ dominates $U$.
\end{itemize}
By Lemma \ref{lem: foliation lsm has lmm}, we may run a $K_{\Aa_W}$-MMP$/X$ with scaling of an ample divisor which terminates with a good minimal model $\Aa'/X$ of $\Aa_W/X$ such that $K_{\Aa'}\sim_{\mathbb R,X}0$ with induced birational morphism $h: X'\rightarrow X$, such that $h$ is a $\mathbb Q$-factorial qdlt modification of $\Aa$ and $\Aa'=h^*\Aa$. Since $\Aa_W$ is dlt, $\Aa'$ is dlt, so $h$ is a $\mathbb Q$-factorial dlt modification of $\Aa$. We let $B',B^h$ and $B^v$ be the images of $B_W,B_W^h$ and $B_W^v$ on $X'$ respectively.

(1) and (2) immediately hold. For any $0<s\ll 1$, the induced birational map $\phi: W\dashrightarrow X'$ is also a sequence of steps of a $(K_{\Aa_W}-sB_W^v)$-MMP$/X$. Since $\Aa'$ is lc, for any $t>0$, any lc place of $(\Aa',-tB^v)$ is an lc place of $(\Aa',-stB^v)$ for $0<s\ll 1$, hence an lc place of $(\Aa_W,-stB_W^v)$, hence dominates $U$. This implies (3), so $h$ and $\Aa'$ satisfies our requirements.
\end{proof}

\begin{lem}\label{lem: special dlt model ii}
Let $\Aa/U=(X,B=B^h+B^v,\Mm)/U$ be an $\mathbb Q$-factorial dlt generalized pair and $\pi: X\rightarrow Z$ a contraction$/U$. Assume that:
\begin{itemize}
\item $B^h\geq 0$ and $B^v$ is reduced.
\item $B^v$ is vertical$/Z$.
\item For any real number $t>0$, any lc center of $(\Aa,-tB^v)$ dominates $Z$.
\item $K_{\Aa}\sim_{\mathbb R,Z}E\geq 0$.
\item $E$ is vertical$/Z$ and $\Supp B^v\subset\Supp E$.
\end{itemize}
Then there exists a $\mathbb Q$-factorial dlt modification $h: Y\rightarrow X$ of $\Aa$ satisfying the following.
\begin{enumerate}
    \item $\Aa_Y:=h^*\Aa:=(Y,B_Y,\Mm)$.
    \item $B_Y=B_Y^h+B_Y^v$ and $K_{\Aa_Y}\sim_{\mathbb R,Z}E_Y^h+E_Y^v$.
    \item $0\leq E_Y^h,E_Y^v$ are vertical$/Z$, $\Supp E_Y^v=\Supp B_Y^v$, and $B_Y^v$ is reduced.
    \item For any $0<t\ll 1$, $(\Aa_Y,tE_Y^h)$ is dlt.
    \item For any $0<t\ll 1$, any lc center of $(\Aa_Y,-tB_Y^v)$ dominate $Z$.
\end{enumerate}
\end{lem}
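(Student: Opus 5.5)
We follow the strategy of \cite[Lemma 3.5]{Has19} (cf.\ \cite{Has18}): pass to a log smooth model on which the horizontal and vertical parts of $E$ separate, then run a relative MMP back down to a dlt modification of $\Aa$.

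\textbf{Step 1 (log smooth model).} Take a log resolution $g: W\rightarrow X$ of $(X,\Supp B\cup\Supp E)$ on which $\Mm$ descends. Put $\Aa_W:=(g^{-1}_*\Aa,\Exc(g))$, with boundary $B_W=B_W^h+B_W^v$, where $B_W^v:=g^{-1}_*B^v+F$ and $F$ is the sum of those $g$-exceptional divisors that are lc places of $\Aa$ and have vertical$/Z$ image. By the third hypothesis, every lc center contained in $\Supp B^v$ is already a stratum of $B^v$, so such exceptional lc places map into $\Supp B^v$. Write
\[K_{\Aa_W}=g^*K_{\Aa}+G\sim_{\mathbb R,Z}g^*E+G,\]
where $G\geq 0$ is $g$-exceptional and supported on the non-lc places.

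\textbf{Step 2 (splitting $g^*E+G$).} Decompose $g^*E+G=E_W^v+E_W^h$ as follows. $E_W^v$ collects the components lying over $\Supp B^v$, rescaled so that its support is exactly $\Supp B_W^v$. This uses $\Supp B^v\subset\Supp E$ and the fact that each lc place over $\Supp B^v$ appears in $g^*E$ with positive coefficient. $E_W^h$ collects the remaining components; none of them is a component of $\lfloor B_W\rfloor$ off $B_W^v$, except possibly components of $\lfloor B^h\rfloor$. Both pieces are vertical$/Z$, since $E$ is vertical and exceptional divisors over vertical centers are vertical.

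\textbf{Step 3 (MMP over $X$).} Run a $K_{\Aa_W}$-MMP$/X$ with scaling of an ample divisor. By Lemma \ref{lem: foliation lsm has lmm} and the argument of Lemma \ref{lem: special gdlt modification gpair}, it terminates with a $\mathbb Q$-factorial dlt modification $h: Y\rightarrow X$, $\Aa_Y=h^*\Aa$, which contracts exactly $\Supp G$. Since the MMP is also a $(K_{\Aa_W}-sB_W^v)$-MMP and a $(K_{\Aa_W}+sE_W^h)$-MMP for $0<s\ll1$, pushing forward gives (1)–(3), and (5) follows verbatim as in Lemma \ref{lem: special gdlt modification gpair}.

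\textbf{Main obstacle: (4).} We need $(\Aa_Y,tE_Y^h)$ to be dlt, i.e.\ $E_Y^h$ contains no lc center of $\Aa_Y$. Lc centers inside $\Supp B_Y^v$ are harmless only if the corresponding coefficients were moved into $E_Y^v$. Centers not contained in $\Supp B_Y^v$ dominate $Z$ by (5), so they cannot lie in the vertical divisor $E_Y^h$. The delicate point is therefore the bookkeeping in Step 2: the split must put every component through a vertical lc center into $E^v$ while keeping $\Supp E^v=\Supp B^v$. If a direct split fails, we would first replace $E$ by $E+\sum_P a_P\,\pi^*P$, which stays $\sim_{\mathbb R,Z}$-equivalent; here the $P$ are prime divisors on $Z$ containing the images of $B^v$, and this is done after flattening over $Z$ so that the pullbacks are supported in $\Supp B^v$.
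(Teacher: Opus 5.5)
\textbf{Verdict: genuine gap.} Your architecture matches the paper's: a log smooth model, splitting the pullback of $E$ along $B_W^v$, the very exceptional $K_{\Aa_W}$-MMP$/X$, and pushing forward. But you never prove (4), which is the real content of the lemma. The fallback you offer in its place does not work: replacing $E$ by $E+\sum a_P\pi^*P$ ``after flattening'' is unjustified, since $\pi^*P$ need not be supported in $\Supp B^v$ on any model.

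\textbf{The missing observation.} Hypothesis 3 already forces a clean split. Let $D$ be a component of $g^*E$ that is also a component of $\lfloor B_W\rfloor$. Then $D$ is an lc place of $\Aa$, and $g(D)\subset\Supp E$ is vertical$/Z$. If $g(D)\not\subset\Supp B^v$, then $a(D,(\Aa,-tB^v))=-1$, so $g(D)$ would be a non-dominant lc center of $(\Aa,-tB^v)$, which is excluded. Hence $D\subset\Supp g^*B^v$, so $D$ lies in $B_W^v:=\lfloor B_W\wedge\Supp g^*B^v\rfloor$. Taking $E_W^v$ to be the part of $g^*E$ on $B_W^v$, this gives $E_W^h\wedge\lfloor B_W\rfloor=0$. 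The same argument shows $\lfloor B^h\rfloor$ has no vertical components, so your caveat ``except possibly components of $\lfloor B^h\rfloor$'' is vacuous.

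\textbf{Why you need a proper log smooth model.} The separation above only helps on a \emph{proper} log smooth model, where exceptional non-lc places carry coefficient $<1$; this is what the paper uses. With your choice $\Aa_W=(g^{-1}_*\Aa,\Exc(g))$, every exceptional non-lc place lying over $\Supp E$ is a component of $\lfloor B_W\rfloor$. Such divisors land in your $E_W^h$, so your Step 2 claim is false and $(\Aa_W,sE_W^h)$ is not even lc. Consequently ``the MMP is also a $(K_{\Aa_W}+sE_W^h)$-MMP'' yields nothing. The same defect breaks the ``verbatim'' proof of (5): $(\Aa_W,-tB_W^v)$ then has vertical lc centers. (Statement (5) can instead be checked directly on $Y$, using $\Supp B_Y^v=\Supp h^*B^v$.)

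\textbf{How (4) then closes.} On a proper log smooth model where $\Supp B_W\cup\Supp g^*E\cup\Exc(g)$ is snc, $E_W^h\wedge\lfloor B_W\rfloor=0$ means $B_W+tE_W^h$ has snc support and coefficients $\le 1$. So $(\Aa_W,tE_W^h)$ is dlt for $0<t\ll 1$. The map $W\dashrightarrow Y$ is a finite sequence of $K_{\Aa_W}$-negative steps, hence also $(K_{\Aa_W}+tE_W^h)$-negative for small $t$, and dltness passes to $(\Aa_Y,tE_Y^h)$.

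Your analysis on $Y$ is right as far as it goes: lc centers outside $\Supp B_Y^v$ dominate $Z$. But it cannot rule out a stratum inside $B_Y^v$ lying in a component of $E_Y^h$, because $Y$ is not log smooth. The snc separation on $W$, transported by the MMP, is what settles this case.
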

\begin{proof}
We let $g: \Aa_W\rightarrow\Aa$ be a proper log toroidal model of $\Aa$ and denote by $\Aa':=(W,B_W,\Mm)$. Let 
$$B_W^v:=\left\lfloor B_W\wedge\Supp\left(g^*B^v\right)\right\rfloor\quad \text{and}\quad B_W^h:=B_W-B_W^v.$$
Then $B_W^v$ is reduced, and we may write $g^*E=E_W^h+E_W^v$ such that $E_W^h,E_W^v\geq 0$, $\Supp E_W^v=\Supp B_W^v$, and $E_W^v\wedge E_W^h=0$. Since $E$ is vertical$/Z$, $E_W^h$ and $E_W^v$ are vertical$/Z$.

For any irreducible component $D$ of $g^*E$ that is also an irreducible component of $\lfloor B_W\rfloor$, we have
$$g(D)\subset\Supp g(E)\subset \Supp g(g^*E))=\Supp E,$$
hence $D$ is vertical$/Z$. Since $D$ is an irreducible component of $\lfloor B_W\rfloor$, $D$ is an lc place of $\Aa_W$, so $D$ is an lc place of $\Aa$. Since  any lc center of $(\Aa,-tB^v)$ dominate $Z$ for any $t>0$, $g(D)\subset\Supp B^v$, so $D\subset\Supp\left(g^*B^v\right)$. Thus $D$ is an irreducible component of $\Supp B_W^v$, hence $D$ is not an irreducible component of $\Supp B_W^h=\Supp E_W^h$. Thus $D$ is not an irreducible component of $E_W^h$. Therefore, $E_W^h\wedge \lfloor B_W\rfloor=0$. 

Since the coefficients of $B_W$ are $\leq 1$ and the $B_W^v\subset\lfloor B_W\rfloor$, $B_W^v\wedge B_W^h=0$, hence $E_W^v\wedge E_W^h=0$. By Lemma \ref{lem: foliation lsm has lmm}, we may run a $K_{\Aa_W}$-MMP$/X$ with scaling of an ample divisor which terminates with a good minimal model $\Aa_Y/X$ of $\Aa_W/X$ such that $K_{\Aa_Y}\sim_{\mathbb R,X}0$ with induced birational morphism $h: Y\rightarrow X$, such that $h$ is a $\mathbb Q$-factorial qdlt modification of $\Aa$ and $\Aa_Y=h^*\Aa$.  Since $\Aa_W$ is dlt, $\Aa_Y$ is dlt, so $h$ is a $\mathbb Q$-factorial dlt modification of $\Aa$. 

Since $E^h\wedge\lfloor B_W\rfloor=0$, for any $0<t\ll 1$, $(\Aa_W,tE_W^h)$ is dlt and the induced birational map $\phi: W\dashrightarrow Y$ is sequence of steps of a $(K_{\Aa_W}+tE_W^h)$-MMP$/X$ and a sequence of steps of a $(K_{\Aa_W}-tB_W^v)$-MMP$/X$. Let $B_Y^h,B_Y^v,E_Y^h,E_Y^v$ be the images of $B_W^h,B_W^v,E_W^h,E_W^v$ on $Y$. Then (1-4) hold, and (5) holds as any lc place of $(\Aa_Y,-tB_Y^v)$ is an lc place of $(\Aa_W,-tB_W^v)$.
\end{proof}

\subsection{Proof of the special case}

In this subsection, we prove a special case of Theorems \ref{thm: hx13 1.1 nonnqc-g} and \ref{thm: bir12 1.1 3non}, assuming these two theorems in lower dimensions. The general cases of Theorems \ref{thm: hx13 1.1 nonnqc-g} and \ref{thm: bir12 1.1 3non} will be reduced to this special case. The proof of the special case is very similar to \cite[Proof of Theorems 1.2 after Step 6, Proof of Theorem 4.1]{Has19} and some ideas were adopted later in \cite{Cas+25a,Cas+25b}.

\begin{prop}\label{prop: main theorems for special dlt model}
Assume that Theorems \ref{thm: hx13 1.1 nonnqc-g} and \ref{thm: bir12 1.1 3non} hold in dimension $\leq d-1$.

Let $\Aa/U:=(X,B,\Mm)/U$ be an LD generalized pair of dimension $d$ and $U^0\subset U$ a non-empty open subset. Let $\Aa^0:=\Aa\times_UU^0$. Assume that
    \begin{enumerate}
        \item $\Aa$ is $\mathbb Q$-factorial dlt,
        \item $B=B^h+B^v$ and $K_{\Aa}\sim_{\mathbb R,U}E^h+E^v$.
        \item $E^h,E^v\geq 0$, $\Supp E^v=\Supp B^v$, and $B^v$ is reduced.
        \item For any $0<t\ll 1$, $(\Aa,tE^h)$ is dlt.
        \item For any $0<t\ll 1$, $(\Aa,-tE^v)/U$ has a good minimal model.
        \item For any lc place $D$ of $\Aa$, the image of $D$ on $U$ intersects $U^0$.
        \item Let $X^0:=X\times_UU^0$ and let $A\geq 0$ is an $\mathbb R$-Cartier $\mathbb R$-divisor on $X$ with $A^0:=A|_{X^0}$, such that one of the following two cases hold:
        \begin{enumerate}
            \item[\textbf{\rm\textbf{(Case 1)}}] $\Aa^0/U^0$ is a good minimal model of itself.
            \item[\textbf{\rm\textbf{(Case 2)}}] $(\Aa^0,A^0)$ is lc and $K_{\Aa^0}+A^0\sim_{\mathbb R,U^0}0$.
        \end{enumerate}
    \end{enumerate}
    Then $\Aa/U$ has a good minimal model.
\end{prop}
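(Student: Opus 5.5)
We follow the strategy of \cite[Proof of Theorem 4.1]{Has19}: construct a single model that is simultaneously a good minimal model of $(\Aa,-tE^v)$ for all small $t$, then run a $K_{\Aa}$-MMP with scaling of $E^v$ from there and use special termination (Theorem \ref{thm: special termination}) together with the lower-dimensional cases of Theorems \ref{thm: hx13 1.1 nonnqc-g} and \ref{thm: bir12 1.1 3non} to conclude. Throughout, $\Aa(t):=(\Aa,-tE^v)$. By (3), $\Supp E^v=\Supp B^v\subset\lfloor B\rfloor$, so $\Aa(t)$ is dlt for $0<t\ll1$. By Lemma \ref{lem: combination of LD with lc g-pair} and a linear perturbation argument, $\Aa(t)$ is also LD for $0<t\ll1$.

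\textbf{Step 1 (a common good minimal model).} By (5) and Lemma \ref{lem: trivial mmp}, I fix $0<t_1\ll1$ and run a $K_{\Aa(t_1)}$-MMP$/U$ with scaling of an ample divisor. Theorem \ref{thm: bswlc imply mm} makes it terminate with a good minimal model $\Aa'(t_1)$. As in Case 2 of Proposition \ref{prop: inductive approach to bir12 1.7}, the same map $\phi:X\dashrightarrow X'$ is then a $K_{\Aa(t)}$-MMP for all $t\in(0,t_1]$, and Lemma \ref{lem: Bir12 2.7} makes $\Aa'(t)$ a good minimal model for each such $t$. By linearity in $t$, $K_{\Aa'}$ is nef$/U$, where $\Aa':=\phi_*\Aa$. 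Moreover $\Aa'$ is $\mathbb Q$-factorial dlt and LD by Lemma \ref{lem: LD preserve under mmp}, and $\phi$ is $K_{\Aa}$-non-positive. So $\Aa'/U$ is a minimal model of $\Aa/U$, and it remains to prove that $K_{\Aa'}$ is semi-ample$/U$.

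\textbf{Step 2 (semi-ampleness).} I want to apply Theorem \ref{thm: bir12 1.7 g LD} to $\Aa'$.
\begin{itemize}
\item Condition (1) of that theorem holds by Step 1.
\item Condition (3) follows from (6), since lc centres of $\Aa'$ are images of lc places of $\Aa$.
\item Condition (2), semi-ampleness of $K_{\Aa'^0}$ over $U^0$, is the main obstacle.
\end{itemize}
In \textbf{(Case 1)}, $\Aa^0$ is already a good minimal model of itself. Since $\phi$ is $K_{\Aa}$-non-positive and both models are minimal models over $U^0$, Lemma \ref{lem: Bir12 2.7} gives that $K_{\Aa'^0}$ is semi-ample$/U^0$.

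In \textbf{(Case 2)}, I first reduce to the situation where $K_{\Aa'^0}$ restricted to $\lfloor B'^0\rfloor$ is semi-ample, and then glue. For each component $S$ of $\lfloor B'\rfloor$, the restriction $\Aa'|_S$ is LD by Lemma \ref{lem: ld adjunction}. Over $U^0$ it satisfies the hypotheses of Theorem \ref{thm: bir12 1.1 3non} in dimension $d-1$, with $A|_S$ playing the role of $A$ by adjunction. Hence $\Aa'|_S$ has a good minimal model, and being nef it is itself good, using Lemma \ref{lem: Bir12 2.7}. Theorem \ref{thm: semi-ample over U0 implies semi-ample over U} then glues these to semi-ampleness on $\lfloor B'\rfloor$. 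Off $\lfloor B'\rfloor$, the $\mathbb Q$-Cartier pair $\Aa'(t)$ with $t>0$ has good minimal model $\Aa'(t)$ itself. I then invoke Theorem \ref{thm: bir12 1.7 g} with $P:=E'^v$: its $\Supp P\subset\lfloor B'\rfloor$ and $K_{\Aa'}-tP$ is semi-ample, which is exactly hypothesis (5) there. First, though, I reduce to $K_{\Aa'}$ being a $\mathbb Q$-divisor by Lemma \ref{lem: semi-ampleness ld} and Lemma \ref{lem: ld nef impleis nqc}, perturbing within the polytopal.

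\textbf{Step 3 (where special termination enters).} If $K_{\Aa'}$ fails to be nef because the perturbation in Step 1 does not pass to $t=0$, I will instead run the $K_{\Aa'}$-MMP$/U$ with scaling of $E'^v$. The scaling numbers satisfy $\lambda_i\le t_1$, and I expect to construct it, as in Lemma \ref{lem: construct mmp with scaling to 0}, with $\lambda_i\to0$. Since any lc centre of $(\Aa,-tB^v)$ dominates $U$ (condition (5)), the flipping loci eventually meet $\Supp E'^v\subset\lfloor B'\rfloor$. Theorem \ref{thm: special termination} then applies, its hypothesis on lc centres being supplied by the dimension $\le d-1$ cases of Theorems \ref{thm: hx13 1.1 nonnqc-g} and \ref{thm: bir12 1.1 3non}. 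It makes the MMP an isomorphism near $\lfloor B_i\rfloor\supset\Supp E^v_i$ for $i\gg0$. Since $K_{\Aa_i}+\lambda_iE^v_i$ is nef and the flipped curves are disjoint from $E^v_i$, the MMP terminates. This termination argument is the step I regard as most delicate.
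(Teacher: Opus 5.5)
Your Step 1 does not hold, and everything after it depends on it. Lemma \ref{lem: trivial mmp} requires $\Aa$ to be $\epsilon$-KNQC, in particular $K_{\Aa}$ nef$/U$. That is why it works in Case 2 of Proposition \ref{prop: inductive approach to bir12 1.7}, where $K_{\Aa}$ is a nef $\mathbb Q$-divisor. Here you only know $K_{\Aa}\sim_{\mathbb R,U}E^h+E^v\geq 0$. So nothing makes a $K_{\Aa(t_1)}$-MMP also a $K_{\Aa(t)}$-MMP for every $t\in(0,t_1]$, and nothing makes $K_{\Aa'(t)}$ nef for all such $t$. A single model on which $K_{\Aa'(s)}$ is nef for all $0\le s\ll 1$ is essentially the content of the proposition. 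Step 2 feeds ``$K_{\Aa'}-tE'^v$ semi-ample for $0<t\ll 1$'' into hypothesis (5) of Theorem \ref{thm: bir12 1.7 g}, so it is unsupported.

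The trick only works locally in $t$, and that is how the paper uses it. Once $K_{\Aa'(t)}$ is semi-ample, Lemma \ref{lem: trivial mmp} applied to $\Aa'(t)$ with $C=tE'^v$ makes a $K_{\Aa'(\mu)}$-MMP, $0<t-\mu\ll\mu$, $K_{\Aa'(t)}$-trivial. This gives $X''$ on which $K_{\Aa''(s)}$ is semi-ample for $s=\mu,t$ only.

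Your Case 2 argument is also circular. Theorems \ref{thm: semi-ample over U0 implies semi-ample over U} and \ref{thm: bir12 1.7 g} both assume $K_{\Aa'^0}$ semi-ample$/U^0$, which is what you set out to prove. The missing device works on a weak lc model $\Aa_Y$. Nefness of $K_{\Aa_Y}$ and $K_{\Aa_Y^0}+A_Y^0\sim_{\mathbb R,U^0}0$ make $-A_Y^0$ nef$/U^0$, and then $A_Y\geq 0$ forces $\Supp A_Y^0$ to be a union of fibres. Removing its image gives $U^1\subset U^0$ over which $K_{\Aa_Y}\sim_{\mathbb R}0$. Since $(\Aa_Y^0,A_Y^0)$ is lc, $U^1$ still meets the image of every lc centre. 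Theorem \ref{thm: bir12 1.7 g LD} then applies over $U^0$, and afterwards over $U$.

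Your fallback Step 3 is where the real proof lives, but as written it is not a usable MMP. The known nef classes are $K_{\Aa'}-sE'^v$; your ``$K_{\Aa_i}+\lambda_iE^v_i$ nef'' has the wrong sign. Moreover $-E'^v$ is not effective, while Lemmas \ref{lem: can run scaling} and \ref{lem: construct mmp with scaling to 0} and Theorem \ref{thm: special termination} are all stated for an effective scaling divisor $C$ with $(\Aa,C)$ lc or LD.

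The paper sets $\Cc(s):=(\Aa,sE^h)$ and uses $K_{\Aa(t)}\sim_{\mathbb R,U}(1-t)K_{\Cc(t/(1-t))}$. It then runs a $K_{\Aa''}$-MMP with scaling of the effective divisor $\frac{t}{1-t}E''^h$. Every contracted ray $R$ has $E_i^h\cdot R>0>K_{\Aa_i}\cdot R$, hence $E_i^v\cdot R<0$. So the flipping locus lies in $\Supp E_i^v\subset\lfloor B_i\rfloor$, and that is why special termination near $\lfloor B_i\rfloor$ forces termination. Domination of lc centres plays no role in this step, and it is not hypothesis (5) here.

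You would still have to verify three things.
(i) The hypotheses of Lemma \ref{lem: construct mmp with scaling to 0}, i.e.\ good minimal models of $\Cc''(s)$ for all $s\in(0,\frac{t}{1-t}]$. The paper gets these because every good minimal model of $\Aa(s)$ contracts exactly $F=\Supp N_{\sigma}(X/U,K_{\Aa(s)})$, hence is isomorphic in codimension one to $X''$.
(ii) The hypotheses of Theorem \ref{thm: special termination}. These need a dlt LD pair $\Cc''(\frac{\mu}{1-\mu})$ with nef canonical class, which is exactly what the second parameter $\mu$ supplies.
(iii) In Case 1, semi-ampleness over $U^0$ on every intermediate model, so that Theorem \ref{thm: hx13 1.1 nonnqc-g} applies to the restrictions to lc centres. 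The paper arranges this by taking $t\ll r$ so that all maps are $K_{\Aa^0}$-trivial over $U^0$.
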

\begin{proof}
We say a few words about the key ideas of the proof before we start proving the proposition. The key idea is to use the fact that $(\Aa,-tE^v)/U$ is a good minimal model for any $0<t\ll 1$: If we can find $t_1<t_2$ such that $(\Aa,-t_1E^v)/U$ and $(\Aa,-t_2E^v)/U$ are good minimal model of themselves, then we may run a $K_{\Aa}$-MMP$/U$ with scaling of $-E^v$, and the MMP will terminate by special termination and induction hypothesis. For the general case we need to run MMP to obtain models so that the birational transformations of $(\Aa,-t_1E^v)/U$ and $(\Aa,-t_2E^v)/U$ are good minimal models of themselves and apply similar but more delicate arguments.

\medskip

\noindent\textbf{Step 1.} In this step we introduce some basic notations and fix real number $r>0$. We let $\Aa(t):=(\Aa,-tE^v)$, and $\Cc(t):=(\Aa,tE^h)$ for any $t\in\mathbb R$. Then we have $K_{\Aa(0)}=K_{\Cc(0)}=K_{\Aa}$ and
$$K_{\Aa(t)}\sim_{\mathbb R,U}(1-t)K_{\Cc\left(\frac{t}{1-t}\right)}$$
for any $t\in [0,1)$. By condition (2), $K_{\Aa(t)}$ is pseudo-effective$/U$ for any $0\leq t\ll 1$. We let $N_t:=N_{\sigma}(X/U,K_{\Aa(t)})$ for any $t\in\mathbb R$. By condition (7) and \cite[Lemma 3.23]{Cas+25a}, for any $0<t\ll 1$, $\Supp N_t$ is a fixed divisor $F$ and $\Supp N_0\subset F$.

Since $\Aa(t)$ and $\Cc(t)$ are dlt for any $0\leq t\ll 1$, if we are in \textbf{Case 1}, then by Lemma \ref{lem: trivial mmp}, there exists a real number $r$ such that $0<r\ll 1$ and satisfies the following: for any $0<t\leq r$ and any sequence of steps of a $K_{\Aa(t)}$-MMP$/U$ or $K_{\Cc(t)}$-MMP$/U$ $\xi$, $\xi|_{X^0}$ is $K_{\Aa^0}$-trivial. If we are in \textbf{Case 2} then we set $r:=1$.

\medskip

\noindent\textbf{Step 2.} In this step we fix a real number $0<t\ll r$ and construct a birational model $X'$ of $X$ associated with birational map $\phi: X\dashrightarrow X'$ so that $\phi_*\Aa(t)$ is semi-ample$/U$.

Fix $0<t\ll r$. Since $\Aa(t)/U$ has a good minimal model and is $\mathbb Q$-factorial dlt, by Theorem \ref{thm: bswlc imply mm}, we may run a $K_{\Aa(t)}$-MMP$/U$ with scaling of an ample divisor which terminates with a good minimal model $\Aa'(t)/U$ of $\Aa(t)/U$. Let $X'$ be the ambient variety of $\Aa'(t)$, $\phi: X\dashrightarrow X'$ the induced birational map, $\Aa':=\phi_*\Aa,\Aa'(s):=\phi_*\Aa(s)$, and $\Cc'(s):=\phi_*\Cc(s)$ for any $s\in\mathbb R$. Then the reduced divisor contracted by $\phi$ is exactly $F$. By \cite[Lemma 3.21]{Cas+25a}, $K_{\Aa'(s)}$ is movable$/U$ for any $0\leq s\ll 1$. Since $\phi$ is also a sequence of steps of a $K_{\Cc\left(\frac{t}{1-t}\right)}$-MMP$/U$, $\Cc'\left(\frac{t}{1-t}\right)$ is $\mathbb Q$-factorial dlt. Thus $\Cc'(0)=\Aa'(0)=\Aa'$ is $\mathbb Q$-factorial dlt.  Since $0<t\ll r$, any lc place of $\Cc(\frac{t}{1-t})$ is an lc place of $\Aa$, and if we are in \textbf{Case 1}, then by the length of extremal rays, $\phi|_{X^0}$ is $K_{\Aa^0}$-trivial.

\medskip

\noindent\textbf{Step 3.} In this step we fix a real number $0<t-\mu\ll \mu$ and construct a birational model $X''$ of $X$ associated with birational map $\psi: X'\dashrightarrow X''$ so that $\psi_*\Aa'(\mu)$ is semi-ample$/U$.

Fix $\mu<t$ such that $0<t-\mu\ll\mu$. Then $\phi$ is also a sequence of steps of a $K_{\Aa(\mu)}$-MMP$/U$. Since $\Aa(\mu)/U$ has a good minimal model, by Lemma \ref{lem: minimal model same after running mmp}, $\Aa'(\mu)/U$ has a good minimal model. Thus we may run a $K_{\Aa'(\mu)}$-MMP$/U$ with scaling of an ample divisor which terminates with a good minimal model $\Aa''(\mu)/U$ of $\Aa'(\mu)/U$. Let $X''$ be the ambient variety of $\Aa''(\mu)$ and let $\psi: X'\dashrightarrow X''$ be the induced birational map. Since
$$\frac{t}{t-\mu}K_{\Aa'(\mu)}=K_{\Aa'(0)}+\frac{\mu}{t-\mu}K_{\Aa'(t)},$$
$\psi$ is also a sequence of steps of a $\left(K_{\Aa'(0)}+\frac{\mu}{t-\mu}K_{\Aa'(t)}\right)$-MMP$/U$. Since $K_{\Aa'(t)}$ is semi-ample$/U$ and $0<t-\mu\ll\mu$, by Lemma \ref{lem: trivial mmp}, $\psi$ is $K_{\Aa'(t)}$-trivial. Let $\Aa'':=\psi_*\Aa',\Aa''(s):=\psi_*\Aa'(s)$, and $\Cc''(s):=\psi_*\Cc'(s)$ for any $s\in\mathbb R$. Since $K_{\Aa'(\mu)}$ is movable$/U$, $\psi$ is small. Since $\Cc'(\frac{t}{1-t})$ is $\mathbb Q$-factorial dlt, $\Cc'(\frac{\mu}{1-\mu})$ is $\mathbb Q$-factorial dlt as $\mu<t$. Since $\psi$ is also a sequence of steps of a $K_{\Cc'(\frac{\mu}{1-\mu})}$-MMP$/U$, $\Cc''(\frac{\mu}{1-\mu})$ is $\mathbb Q$-factorial dlt. In particular, $\Cc''(0)=\Aa''(0)=\Aa''$ is $\mathbb Q$-factorial dlt. Since $\psi$ is $K_{\Cc'(\frac{t}{1-t})}$-trivial, $\Cc''(\frac{t}{1-t})$ is lc. Since $\mu<t\ll r$, by the construction of $r$, if we are in \textbf{Case 1}, then $\psi|_{X'^0}$ is $K_{\Aa'^0}$-trivial, where $X'^0:=X'\times_UU^0$ and $\Aa'^0:=\Aa'\times_UU^0$.

Write $\Aa''=(X'',B'',\Mm)$ and let $B''^h,B''^v,E''^h,E''^v,A''$ be the images of $B^h,B^v,E^h,E^v$ on $X''$ respectively. 

\medskip

\noindent\textbf{Step 4.} In this step we construct a $K_{\Aa''}$-MMP$/U$ with scaling of $K_{\Aa''(t)}$.

For any $s\in (0,t]$, let $\Aa_{Y_s}(s)/U$ be a good minimal model of $\Aa(s)/U$ associated with birational map $\alpha_s: X\dashrightarrow Y_s$. Then the divisors contracted by $\alpha_s$ are exactly $F$. Thus the associated birational map $\beta_s: X''\dashrightarrow Y_s$ is small. Therefore, $\Aa_{Y_s}(s)/U$ is a good minimal model of $\Aa''(s)/U$ for any $s\in (0,t]$. In particular, $\Aa''(s)/U$ has a good minimal model for any $s\in (0,t]$, so $\Cc''(s)/U$ has a good minimal model for any $s\in\left(0,\frac{t}{1-t}\right]$. Since $K_{\Cc''\left(\frac{t}{1-t}\right)}$ and $K_{\Cc''\left(\frac{\mu}{1-\mu}\right)}$ are semi-ample$/U$, by Lemma \ref{lem: construct mmp with scaling to 0}, we may run a $K_{\Cc''(0)}$-MMP$/U$ with scaling of $K_{\Cc''\left(\frac{t}{1-t}\right)}$ with scaling numbers $\lambda_i$
$$\phi_i: X_i\dashrightarrow X_{i+1},\quad  X_1:=X''.$$
such that either this MMP terminates, or $\lim_{i\rightarrow+\infty}\lambda_i=0$. For any $i$, we let $\Aa_i,\Aa_i(s)$, $\Cc_i(s)$ be the images of $\Aa'',\Aa''(s),\Cc''(s)$ on $X_i$ respectively for any $s\in\mathbb R$, let $B_i^h,B_i^v,E_i^h,E_i^v,A_i$ be the images of $B''^h,B''^v,E''^h,E''^v,A''$ on $X_i$ respectively, $X_i^0:=X_i\times_UU^0$, and $A_i^0:=A_i|_{X^0}$.

Since $\Aa$ is LD, by Lemma \ref{lem: ld and irrationality}(1), $\Supp\Mm_{X}^{\irr}\subset\Supp\{B\}$. Since $0<\mu<t\ll 1$, $$\Supp\Mm_{X}^{\irr}\subset\Supp\left\{B+\frac{\mu}{1-\mu}B^h\right\},$$
so 
$$\Supp\Mm_{X''}^{\irr}\subset\Supp\left\{B''+\frac{\mu}{1-\mu}B''^h\right\}\quad \text{and}\quad \Supp\Mm_{X''}^{\irr}\subset\Supp\{B''\}.$$
Since $\Cc''\left(\frac{\mu}{1-\mu}\right)$ is $\mathbb Q$-factorial dlt, $\Cc''(0)=\Aa''$ is $\mathbb Q$-factorial dlt. By Lemma \ref{lem: ld and irrationality}(2), $\Cc''\left(\frac{\mu}{1-\mu}\right)$ and $\Cc''(0)$ are LD. By Theorem \ref{thm: eomm implies tof with scaling}, $\Cc_i(0)=\Aa_i$ is LD for any $i$. 

\medskip

\noindent\textbf{Step 5.} We show that for any $i$ and any non-trivial lc center $S_i$ of $\Aa_i$, either $\left(\Aa|_{S_i}\right)/U$ has a good minimal model or $K_{\Aa|_{S_i}}$ is not pseudo-effective$/U$. The proof uses induction hypothesis.

Pick a non-trivial lc center $S_i$ of $\Aa_i$ for some $i\gg 0$. Then there exists a non-trivial lc center $S$ of $\Aa''$ such that the induced birational map $\psi_i: X''\dashrightarrow X_i$ is an isomorphism near the generic point of $S$. We let $\Aa_{S_i}:=\Aa_i|_{S_i}$. Since $\Aa_i$ is LD, $\Aa_{S_i}$ is LD. Let $\Aa_{S_i}^0:=\Aa_{S_i}\times_UU^0$ and $S_i^0:=S_i\times_UU^0$.

\medskip

\noindent\textbf{Step 5.1.} We first show that the image of any lc center of $\Aa_{S_i}$ in $U$ intersects $U^0$. Let $V$ be an lc center of $\Aa_{S_i}$. We may assume that $V$ is a non-trivial lc center of $\Aa_{S_i}$. Since $\Aa_i$ is dlt, $V$ is an lc center of $\Aa_i$ that is contained in $S_i$. Let $D$ be an lc place of $\Aa''$ such that $\Center_{X_i}D=V$. Since the induced birational map $X''\dashrightarrow X_i$ is $K_{\Aa''}$-negative, $D$ is also an lc place of $\Aa''$. Thus $D$ is also an lc place of $\Cc''\left(\frac{t}{1-t}\right)$. Since $\psi\circ\phi$ is $K_{\Cc\left(\frac{t}{1-t}\right)}$-non-positive, $D$ is also an lc place of $\Cc\left(\frac{t}{1-t}\right)$. Thus $D$ is also an lc place of $\Aa$. Thus the image of $D$ in $U$ intersects $U^0$, hence the image of $V$ in $U$ intersects $U^0$.

\medskip

\noindent\textbf{Step 5.2.} We deal with \textbf{Case 1}. Suppose that we are in \textbf{Case 1}. Since $K_{\Aa^0}$ is semi-ample$/U^0$ and $(\psi\circ\phi)|_{X^0}$ is $K_{\Aa^0}$-trivial, we have that $K_{\Aa''^0}$ is semi-ample$/U^0$, where $\Aa''^0:=\Aa''\times_UU^0$. Therefore, $\psi_i|_{X''^0}$ is $K_{\Aa''^0}$-trivial, where $X''^0:=X''\times_UU^0$, and $K_{\Aa_i^0}$ is semi-ample$/U^0$, where $\Aa_i^0:=\Aa_i\times_UU^0$. Therefore,
$K_{\Aa_{S_i}^0}=K_{\Aa_i^0}|_{S_i^0}$
is semi-ample$/U^0$. By Theorem \ref{thm: hx13 1.1 nonnqc-g} in dimension $\leq d-1$, $\Aa_{S_i}/U$ has a good minimal model.

\medskip

\noindent\textbf{Step 5.3.} We deal with \textbf{Case 2}. Suppose that we are in \textbf{Case 2}. We may assume that $K_{\Aa_{S_i}}$ is pseudo-effective$/U$. Let $\tau: X\dashrightarrow X_i$ be the induced birational map and $\tau^0:=\tau|_{X^0}$. Since $(\Aa^0,A^0)$ is lc and $K_{\Aa^0}+A^0\sim_{\mathbb R,U^0}0$, $\tau^0_*(\Aa^0,A^0)=(\Aa_i^0,A_i^0)$ is lc and $K_{\Aa_i^0}+A_i^0\sim_{\mathbb R,U^0}0$. Let $A_{S_i}:=A_i|_{S_i}$ and $A_{S_i}^0:=A_{S_i}|_{S_i^0}$, then 
$$(\Aa_{S_i}^0,A_{S_i}^0)=(\Aa_i^0,A_i^0)|_{S_i^0}$$
is lc and
$$K_{\Aa_{S_i}^0}+A_{S_i}^0=\left(K_{\Aa_i^0}+A_i^0\right)\Big|_{S_i^0}\sim_{\mathbb R,U^0}0.$$
By Theorem \ref{thm: bir12 1.1 3non} hold in dimension $\leq d-1$, $\Aa_{S_i}/U$ has a good minimal model.

\medskip

\noindent\textbf{Step 6.} We show that $\{\phi_i\}$ terminates with a weak lc model of $\Aa/U$.

Since $\{\phi_i\}$ is also a sequence of steps of a $K_{\Aa''}$-MMP$/U$ with scaling of $K_{\Cc''(\frac{\mu}{1-\mu})}-K_{\Aa''}$, by Theorem \ref{thm: special termination},  $\{\phi_i\}$ terminates near $\lfloor B''\rfloor$. Since $B''^v$ is reduced, $\{\phi_i\}$ terminates $B''^v$. Since $\phi_i$ is also a sequence of steps of a $K_{\Aa''}$-MMP$/U$ with scaling of $-E''^v$ and $\Supp E''^v=\Supp B''^v$, then MMP terminates.

We let
$$\beta: X''\dashrightarrow Y$$
be this MMP and let $\Aa_Y,\Aa_Y(s),\Cc_Y(s)$ be the images of $\Aa'',\Aa''(s),\Cc''(s)$ on $Y$ respectively for any $s\in\mathbb R$. Let $\lambda$ be the last non-zero scaling number of this MMP, then $K_{\Cc_Y(s)}$ is nef for any $0\leq s\leq\lambda$. In particular, $K_{\Aa_Y(s)}$ is nef for any $0\leq s\ll 1$.

Recall (in \textbf{Step 4}) that $\Aa_{Y_s}(s)/U$ is a good minimal model of $\Aa(s)/U$ for any $0<s\ll 1$. Since $K_{\Aa'(\mu)}$ is movable$/U$, $\psi$ is small. Since $K_{\Aa'}$ is movable$/U$, $K_{\Aa''}$ is movable$/U$, so $\beta$ is small. Then the divisors contracted by the induced birational maps $X\dashrightarrow Y$ and $X\dashrightarrow Y_s$ are both $F$, for any $0<s\ll 1$. We let $p_s: W_s\rightarrow Y_s$ and $q_s: W_s\rightarrow Y$ is a resolution of indeterminacy of the induced birational map $Y_s\dashrightarrow Y$. Then we have that $$p_s^*K_{\Aa_{Y_s}(s)}=q_s^*K_{\Aa_{Y}(s)}+G_s$$
for some $G_s$ that is exceptional over $Y_s$ and $Y$. By applying the negativity lemma twice, we have that $G_s=0$. Thus $\Aa_Y(s)/U$ is a bs-semi-ample model of $\Aa(s)/U$ for any $0<s\ll 1$. Let $p: W\rightarrow X$ and $q: W\rightarrow Y$ be a resolution of indeterminacy of $\beta\circ\psi\circ\phi: X\dashrightarrow Y$, then we have
$$p^*K_{\Aa(s)}=q^*K_{\Aa_Y(s)}+L_s$$
for some $L_s\geq 0$ that is exceptional$/Y$. By linearity of discrepancies, we have that
$$p^*K_{\Aa}=q^*K_{\Aa_Y}+L$$
where $L:=\lim_{s\rightarrow 0}L_s\geq 0$. Therefore, $\Aa_Y/U$ is a weak lc model of $\Aa/U$.

\medskip

\noindent\textbf{Step 7.} We conclude the proof in this step.

If we are in \textbf{Case 1}, then by Theorem \ref{thm: bir12 1.7 g LD}, $\Aa_Y/U$ is a semi-ample model of $\Aa/U$. By Theorem \ref{thm: bswlc imply mm}, $\Aa/U$ has a good minimal model.

If we are in \textbf{Case 2}, then we let $A_Y$ be the image of $A$ on $Y$, $\pi_Y: Y\rightarrow U$ the associated projective morphism, $Y^0:=Y\times_UU^0$, $A_Y^0:=A_Y|_{Y^0}$, $\Aa_Y^0:=\Aa_Y\times_YY^0$, $U^1:=U^0\backslash\pi_Y(\Supp A_Y)$, and $Y_1:=Y\times_UU^1$. Since $K_{\Aa_Y}$ is nef$/U$, $K_{\Aa_Y^0}$ is nef$/U^0$. Since $(K_{\Aa_Y}+A_Y)|_{Y^0}\sim_{\mathbb R,U^0}0$, $-A_Y^0$ is nef$/U^0$. Thus $\Supp A=\pi_Y^{-1}\pi_Y(\Supp A)$. Since $(\Aa_Y^0,A_Y^0)$ is lc, $\Supp A_Y^0$ does not contain any lc center of $\Aa_Y^0$. Therefore, the image of any lc center of $\Aa_Y^0$ in $U^0$ intersects $U^1$. Let $\Aa_Y^1:=\Aa_Y^0\times_{U^0}U^1$, then $\Aa_Y^1/U^1$ is a good minimal model of itself as $\Aa_Y^1$ is $\mathbb Q$-factorial qdlt and $K_{\Aa_Y^1}\sim_{\mathbb R,U^1}0$. By \textbf{Case 1}, $\Aa_Y^0/U^0$ has a good minimal model, hence $\Aa_Y^0/U^0$ is a good minimal model of itself. By Theorem \ref{thm: bir12 1.7 g LD}, $\Aa_Y/U$ is a semi-ample model of $\Aa/U$. By Theorem \ref{thm: bswlc imply mm}, $\Aa/U$ has a good minimal model.
\end{proof}

\subsection{Proof of the general case}

In this subsection, we prove Theorems \ref{thm: hx13 1.1 nonnqc-g} and \ref{thm: bir12 1.1 3non}.

\begin{prop}\label{prop: induction to hx13 1.1}
Assume that Theorems \ref{thm: hx13 1.1 nonnqc-g} and \ref{thm: bir12 1.1 3non} hold in dimension $\leq d-1$. Then Theorem \ref{thm: hx13 1.1 nonnqc-g} holds in dimension $d$.
\end{prop}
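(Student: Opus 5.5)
Given an LD pair $\Aa/U$ of dimension $d$ such that $\Aa^0/U^0$ has a good minimal model and every lc center meets $X^0$, I will reduce to Proposition \ref{prop: main theorems for special dlt model} in \textbf{Case 1}. The reduction has two parts. First, replace $\Aa$ by a suitable $\mathbb Q$-factorial dlt model adapted to a fibration. Second, run an MMP over $U^0$ so that $\Aa^0$ becomes its own good minimal model.

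\textbf{Step 1 (reduction over $U^0$).} Replace $\Aa$ by a $\mathbb Q$-factorial dlt modification. This preserves LD by Lemma \ref{lem: LD under dlt model}, preserves the lc-center hypothesis, and preserves the existence of good minimal models over $U^0$ by Lemma \ref{lem: mm preserved under dlt model}. By Lemma \ref{lem: termination over open subset}, a $K_{\Aa}$-MMP$/U$ with scaling of an ample divisor terminates with $\Aa_n$ whose restriction $\Aa_n^0/U^0$ is a $\mathbb Q$-factorial qdlt minimal model of $\Aa^0/U^0$. This restriction is good by Lemma \ref{lem: Bir12 2.7}. The pair $\Aa_n$ is still LD by Lemma \ref{lem: LD preserve under mmp}. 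Lc places of $\Aa_n$ are lc places of $\Aa$, so the lc-center condition persists. By Lemma \ref{lem: minimal model same after running mmp}, it suffices to treat $\Aa_n$. After again taking a dlt modification, I may therefore assume $K_{\Aa^0}$ is semi-ample$/U^0$, i.e.\ $\Aa^0/U^0$ is a good minimal model of itself. If $K_{\Aa}$ is not pseudo-effective there is nothing to do, since the generic fiber already forces pseudo-effectivity.

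\textbf{Step 2 (the special model).} Let $Z^0$ be the ample model of $K_{\Aa^0}$ over $U^0$, compactify to $Z$ over $U$, and let $\pi\colon X\dashrightarrow Z$ be the resulting map. Take a log toroidal modification with respect to $\pi$, so that $X\to Z$ becomes a morphism, and then apply Lemma \ref{lem: special gdlt modification gpair}/Lemma \ref{lem: special dlt model ii} with $Z$ playing the role of the base. This produces a $\mathbb Q$-factorial dlt LD model with the following properties:
\begin{itemize}
\item a decomposition $B=B^h+B^v$;
\item $K_{\Aa}\sim_{\mathbb R,Z}E^h+E^v$ with $E^h,E^v$ vertical, since $K_{\Aa^0}\sim_{\mathbb R,Z^0}0$ and the discrepancy over $U\setminus U^0$ is absorbed vertically;
\item $\Supp E^v=\Supp B^v$ and $(\Aa,tE^h)$ dlt for small $t$;
\item lc centers of $(\Aa,-tB^v)$ dominating $Z$.
\end{itemize}
The assertion that $K_{\Aa}\sim_{\mathbb R,Z}E\ge0$ globally requires care. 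I would get it by the very exceptional MMP (Theorem \ref{thm: very exceptional mmp}) over $Z$ after replacing $U$ by $Z$ generically. This gives conditions (1)–(4) and (6) of Proposition \ref{prop: main theorems for special dlt model}.

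\textbf{Step 3 (condition (5) — main obstacle).} I must show $(\Aa,-tE^v)/U$ has a good minimal model for $0<t\ll1$. Here $(\Aa,-tE^v)$ has all lc centers dominating $Z$, and its canonical class is $\mathbb R$-linearly equivalent over $Z$ to an effective vertical divisor. Over $Z$, I would run the MMP with scaling to kill the vertical part, using the very exceptional MMP or Lemma \ref{lem: has19 3.2 step 3 abu ver} after passing to an equidimensional toroidal model. The result is $\sim_{\mathbb R,Z}0$. Then I would apply Proposition \ref{prop: abundant+lcc dominate imply gmm}: the relative Iitaka and numerical dimensions over $Z$ are $0$, and $\kappa_\sigma(X/U,K)=\dim Z-\dim U$ because $K_{\Aa^0}$ descends to an ample divisor on $Z^0$ and the perturbation is small. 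The delicate point is controlling $\kappa_\sigma$ and $\kappa_\iota$ of the perturbed class via Lemma \ref{lem: property of numerical and Iitaka dimension} and Lemma \ref{lem: an abudant lemma}.

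Once condition (5) holds, Proposition \ref{prop: main theorems for special dlt model} in \textbf{Case 1} gives a good minimal model of the special model. Lemmas \ref{lem: Bir12 2.8} and \ref{lem: g-pair weak glc imply lmm} then transfer this to a good log minimal model of the original $\Aa/U$.
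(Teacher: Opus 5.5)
Your target, Proposition~\ref{prop: main theorems for special dlt model} in \textbf{Case 1}, is the right one. There are two gaps: Step~2 does not produce the structure that proposition needs, and doing Step~1 first loses \textbf{Case 1}.

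Write $f\colon X'\to Z'$, with $Z'\to Z$ birational, for the contraction attached to the log toroidal modification $h\colon X'\to X$. The proper log toroidal model is $\Aa'=(h^*\Aa,E)$ with $E\geq 0$ exceptional$/X$. In general $h$ is not an isomorphism over $X^0$: it blows up where $(X,B)$ is not log smooth or where $\Mm$ does not descend. The resulting non-lc-place divisors carry positive coefficients in $E$ and may dominate $Z'$. So you only get $K_{\Aa'}\sim_{\mathbb R,Z'}D+E$ with $D$ vertical and $E$ partly horizontal, and the hypothesis ``$K\sim_{\mathbb R,Z'}E\geq 0$ with $E$ vertical'' of Lemma~\ref{lem: special dlt model ii} fails. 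Its proof uses verticality to keep $E^h$ away from $\lfloor B\rfloor$, which is where condition (4) comes from. None of the obvious repairs work:
\begin{itemize}
\item Theorem~\ref{thm: very exceptional mmp} cannot help, because a horizontal divisor is never very exceptional$/Z'$.
\item The crepant pullback $h^*\Aa$ is only sub-lc.
\item A dlt modification of $\Aa$ itself need not map to $Z$, since resolving $X\dashrightarrow Z$ may require extracting non-lc places.
\end{itemize}
The missing step is an MMP over $Z'$ done \emph{before} the special dlt modifications. By Lemma~\ref{lem: iitaka fibration numerical abundant divisor gpair dimension}, $\kappa_\sigma(X'/Z',K_{\Aa'})=\kappa_\iota(X'/Z',K_{\Aa'})=0$. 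Then Lemma~\ref{lem: has19 3.2 step 3 abu ver} (using that $X'\to Z'$ is equidimensional toroidal over a smooth base) together with Theorem~\ref{thm: bswlc imply mm} gives a $K_{\Aa'}$-MMP$/Z'$ ending with $\Aa''$ such that $K_{\Aa''}\sim_{\mathbb R,Z'}0$. Hence $K_{\Aa''}\sim_{\mathbb R,U}f''^*G$ with $G\geq 0$ big$/U$. Only then are Lemmas~\ref{lem: special gdlt modification gpair} and~\ref{lem: special dlt model ii} applied, as modifications over $X''$. The morphism to $Z'$ survives, and $K\sim_{\mathbb R,U}E^h+E^v$ with both parts vertical$/Z'$. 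Condition (2) needs $\sim_{\mathbb R,U}$, not $\sim_{\mathbb R,Z}$ as you wrote. This is also what makes your Step~3 computation of $\kappa_\sigma(\cdot/U)$ work via Lemma~\ref{lem: property of numerical and Iitaka dimension}(2); Lemma~\ref{lem: an abudant lemma} is not needed. You invoke Lemma~\ref{lem: has19 3.2 step 3 abu ver} only in Step~3, after the structure it is needed to create. To keep the LD property you must also use the proper \emph{LD} log toroidal model of Definition-Lemma~\ref{deflem: eoltm}.

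Second, the semi-ampleness of $K_{\Aa^0}$ over $U^0$ that you arrange in Step~1 does not survive Step~2. The log toroidal model adds the effective exceptional divisor $E|_{X'^0}$. The MMP over $Z'$ only achieves $K\sim_{\mathbb R,Z'}0$. If $K_{\Aa^0}\sim_{\mathbb R}\pi^*L_0$ with $L_0$ ample$/U^0$, one only gets $K_{\Aa''^0}\sim_{\mathbb R}f''^*(\nu^*L_0+M)$, where $\nu\colon Z'\to Z$ and $M\geq 0$ is $\nu$-exceptional over $Z^0$. Since $M$ need not vanish, $K$ need not be nef$/U^0$. So hypothesis (7)\textbf{(Case 1)} is not available for the model you feed into Proposition~\ref{prop: main theorems for special dlt model}.

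The paper does the reduction over $U^0$ last. It first records only that the special model (called $\Aa_W$ there) has a good log minimal model over $U^0$, using Lemmas~\ref{lem: minimal model same after running mmp} and~\ref{lem: Cas+25b 3.13}. It then runs the MMP of Lemma~\ref{lem: termination over open subset} on $\Aa_W$. Finally it checks that conditions (2)--(6) persist, because this finite MMP is also a $(K_{\Aa_W}+tE_W^h)$-MMP and a $(K_{\Aa_W}-tB_W^v)$-MMP$/U$ for $0<t\ll 1$, and lc places of the output are lc places of $\Aa$. If you move Step~1 to the end in this way and insert the MMP over $Z'$, your outline becomes the paper's proof.
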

\begin{proof}
    Notations and conditions as in Theorem \ref{thm: hx13 1.1 nonnqc-g}. Let $\Aa:=(X,B,\Mm)$, $\Aa^0:=\Aa\times_UU^0$, and let $\pi: X\rightarrow U$ be the associated morphism. Let $\phi: X\dashrightarrow Z_1$ be an invariant Iitaka fibration$/U$ of $K_{\Aa}$. Let $h: \Aa'\rightarrow\Aa$ be a proper LD log toroidal model with respect to $\phi$ associated with $f: X'\rightarrow Z$ whose existence is guaranteed by Definition-Lemma \ref{deflem: eoltm}, and let $\Aa'^0:=\Aa'\times_UU^0$, $X'^0:=X'\times_UU^0$. Let $h^0:=h|_{X'^0}$, then $h^0:\Aa'^0\rightarrow\Aa^0$ is a proper log toroidal model of $\Aa^0$. Since $\Aa^0/U^0$ has a bs-semi-ample model, by Lemma \ref{lem: Cas+25b 3.13}, $\Aa'^0/U^0$ has a good log minimal model, and $K_{\Aa}$ is abundant$/U$ and pseudo-effective$/U$. By Lemma \ref{lem: iitaka fibration numerical abundant divisor gpair dimension},
$$\kappa_{\sigma}(X'/U,K_{\Aa'})=\kappa_{\iota}(X'/U,K_{\Aa'})=\dim Z-\dim U,\ \kappa_{\sigma}(X'/Z,K_{\Aa'})=\kappa_{\iota}(X'/Z,K_{\Aa'})=0.$$
By Lemma \ref{lem: has19 3.2 step 3 abu ver} and Theorem \ref{thm: bswlc imply mm}, we may run a $K_{\Aa'}$-MMP$/Z$ with scaling of an ample divisor which terminates with a good minimal model $\Aa''/Z$ of $\Aa'/Z$ associated with birational map$/Z$ $\psi: X'\dashrightarrow X''$, such that $K_{\Aa''}\sim_{\mathbb R,Z}0$. Let $\psi^0:=\psi|_{X'^0}$ and $\Aa''^0:=\Aa''\times_UU^0$, then $\psi^0$ is $K_{\Aa'^0}$-negative. By Lemma \ref{lem: minimal model same after running mmp}, $\Aa''^0/U^0$ has a bs-good minimal model. Since $\Aa'$ is LD, by Lemma \ref{lem: LD preserve under mmp}, $\Aa''$ is LD.

Since $K_{\Aa''}\sim_{\mathbb R,Z}0$ and $\kappa_{\sigma}(X''/U,K_{\Aa''})=\dim Z-\dim U$, by Lemma \ref{lem: property of numerical and Iitaka dimension}(1)(4), we have $K_{\Aa''}\sim_{\mathbb R,U}f''^*G_Z$ for some $G_Z\geq 0$ that is big$/U$, where $f'': X''\rightarrow Z$ is the associated contraction. By Lemma \ref{lem: special gdlt modification gpair}, there exists a $\mathbb Q$-factorial dlt modification $g: Y\rightarrow X''$ of $\Aa''$ with 
$$\Aa_Y:=g^*\Aa'':=(Y,B_Y=B_Y^h+B_Y^v,\Mm),$$ 
such that
\begin{itemize}
    \item $B^h_Y\geq 0$ and $B_Y^v$ is reduced,
    \item $B_Y^v$ is vertical$/Z$, and
    \item for any real number $t>0$, any lc center of $(\Aa_Y,-tB_Y^v)$ dominates $Z$.
\end{itemize}
Since $G_Z$ is big$/U$, possibly replacing $G_Z$, we may assume that $\pi''\circ f(\Supp B_Y^v)\subset\Supp G_Z$. Therefore,
\begin{itemize}
    \item $K_{\Aa_Y}\sim_{\mathbb R,U}E:=\left(\pi''\circ f\right)^*G_Z\geq 0$, and
    \item $E$ is vertical$/Z$ and $\Supp B^v\subset\Supp E$.
\end{itemize}
By Lemma \ref{lem: special dlt model ii}, there exists a $\mathbb Q$-factorial dlt modification $p: W\rightarrow Y$ of $\Aa_Y$ satisfying the following.
\begin{itemize}
    \item $\Aa_W:=p^*\Aa_Y:=(W,B_W,\Mm)$.
    \item $B_W=B_W^h+B_W^v$ and $K_{\Aa_W}\sim_{\mathbb R,Z}E_W^h+E_W^v$.
    \item $0\leq E_W^h,E_W^v$ are vertical$/Z$, $\Supp E_W^v=\Supp B_W^v$, and $B_W^v$ is reduced.
    \item For any $0<t\ll 1$, $(\Aa_W,tE_W^h)$ is dlt.
    \item For any $0<t\ll 1$, any lc center of $(\Aa_W,-tB_W^v)$ dominate $Z$.
\end{itemize}
For any $0<t\ll 1$, we have
$$0\leq E_W^h+E_W^v-tE_W^v\leq E_W^h+E_W^v\quad \text{and}\quad \Supp(E_W^h+E_W^v-tE_W^v)=\Supp(E_W^h+E_W^v).$$
By Lemma \ref{lem: property of numerical and Iitaka dimension}(2),
\begin{align*}
&\kappa_{\sigma}(W/U,K_{\Aa_W}-tE_W^v)=\kappa_{\sigma}(W/U,E_W^h+E_W^v-tE_W^v)=\kappa_{\sigma}(W/U,E_W^h+E_W^v)\\
=&\kappa_{\sigma}(W/U,K_{\Aa_W})=\kappa_{\sigma}(X''/U,K_{\Aa''})=\kappa_{\sigma}(X'/U,K_{\Aa'})=\dim Z-\dim U.
\end{align*}
Since $E_W^h,E_W^v$ are vertical$/Z$, we have
$$\kappa_{\sigma}(W/Z,K_{\Aa_W}-tE_W^v)=\kappa_{\sigma}(W/Z,E_W^h+E_W^v-tE_W^v)=0$$
and
$$\kappa_{\iota}(W/Z,K_{\Aa_W}-tE_W^v)=\kappa_{\iota}(W/Z,E_W^h+E_W^v-tE_W^v)=0.$$
Since any lc center of $(\Aa_W,-tB_W^v)$ dominates $Z$ for any $t>0$ and $\Supp B_W^v=\Supp E_W^v$, any lc center of $(\Aa_W,-tE_W^v)$ dominates $Z$ for any $t>0$.

By Proposition \ref{prop: abundant+lcc dominate imply gmm}, $(\Aa_W,-tE_W^v)/U$ has a good minimal model for any $0<t\ll 1$.

Since $\Aa''^0/U^0$ has a bs-good minimal model, by Lemma \ref{lem: Cas+25b 3.13}, $$\Aa_W^0/U^0:=\left(\Aa_W\times_UU^0\right)/U^0$$ has a good log minimal model. Since $\Aa''$ is LD, by Lemma \ref{lem: LD under dlt model}, $\Aa_W$ is LD. By Lemma \ref{lem: termination over open subset}, we may run a sequence of steps of a $K_{\Aa_W}$-MMP$/U$ with scaling of an ample divisor
$$\phi_i: \Aa_i\dashrightarrow \Aa_{i+1},\Aa_1:=\Aa_W, 1\leq i\leq n-1$$
such that $\Aa_n^0/U^0:=\left(\Aa_n\times_UU^0\right)/U^0$ is a $\mathbb Q$-factorial good minimal model of $\Aa_W^0/U^0$. Let $X_n$ be the ambient variety of $\Aa_n$ and let $B_n,B_n^h,B_n^v,E_n^h,E_n^v$ be the images of $B_W,B_W^h,B_W^v,E_W^h,E_W^v$ on $X_n$ respectively. Then:
\begin{itemize}
    \item By Lemma \ref{lem: LD preserve under mmp} and since $\Aa_W$ is $\mathbb Q$-factorial LD dlt, $\Aa_n=(X_n,B_n,\Mm)$ is $\mathbb Q$-factorial LD dlt.
    \item $B_n=B_n^h+B_n^v$ and $K_{\Aa_n}\sim_{\mathbb R,U}E_n^h+E_n^v$.
    \item $E_n^h,E_n^v\geq 0$, $\Supp E_n^v=\Supp B_n^v$, an $B_n^v$ is reduced.
    \item Since $\{\phi_i\}_{i=1}^{n-1}$ is also a sequence of steps of a $(K_{\Aa_W}+tE_W^h)$-MMP$/U$ for any $0<t\ll 1$, $(\Aa_n,tE_1^h)$ is dlt for any $0<t\ll 1$.
    \item Since $(\Aa_W,-tB_W^v)/U$ has a good minimal model, and $\{\phi_i\}_{i=1}^{n-1}$ is also a sequence of steps of a $(K_{\Aa_W}-tB_W^v)$-MMP$/U$ for any $0<t\ll 1$, by Lemma \ref{lem: minimal model same after running mmp}, $(\Aa_n,-tB_n^v)/U$ has a bs-good minimal model for any $0<t\ll 1$.
    \item For any lc place $D$ of $\Aa_n$, 
    $$0=a(D,\Aa_n)\geq a(D,\Aa_W)=a(D,\Aa'')\geq a(D,\Aa'),$$
    so $D$ is  lc place of $\Aa'$, hence $D$ is an lc place of $\Aa$. Thus the image of $D$ in $U$ intersects $U^0$.
    \item $\Aa_n^0/U^0$ is a good minimal model of itself.
\end{itemize}
By Proposition \ref{prop: main theorems for special dlt model}, $\Aa_n/U$ has a good minimal model. By Lemma \ref{lem: minimal model same after running mmp}, $\Aa_W/U$ has a good minimal model. By Lemma \ref{lem: mm preserved under dlt model}, $\Aa''/U$ has a bs-good minimal model. By Lemma \ref{lem: minimal model same after running mmp}, $\Aa'/U$ has a bs-good minimal model. By Lemma \ref{lem: Bir12 2.8}, $\Aa/U$ has a bs-good minimal model. By Lemma \ref{lem: g-pair weak glc imply lmm}, $\Aa/U$ has a good log minimal model.
\end{proof}

\begin{prop}\label{prop: induction to bir12 1.1}
Assume that Theorems \ref{thm: hx13 1.1 nonnqc-g} and \ref{thm: bir12 1.1 3non} hold in dimension $\leq d-1$. Then Theorem \ref{thm: bir12 1.1 3non} holds in dimension $d$.
\end{prop}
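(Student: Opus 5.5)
We will follow the pattern of the proof of Proposition \ref{prop: induction to hx13 1.1}, with the input ``$\Aa^0/U^0$ has a good minimal model'' replaced by the hypotheses (1)--(2) of Theorem \ref{thm: bir12 1.1 3non}. The target is \textbf{Case 2} of Proposition \ref{prop: main theorems for special dlt model} rather than \textbf{Case 1}.

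\textbf{Step 1: abundance.} Since $K_{\Aa^0}\sim_{\mathbb R,U^0}-A^0$ with $A^0\geq 0$ and $K_{\Aa}$ is pseudo-effective$/U$, the divisor $A^0$ is $\mathbb R$-linearly trivial over the generic point of $U$, so $K_{\Aa}\sim_{\mathbb R,U}0$ generically over $U$. This should make $K_{\Aa}$ abundant$/U$; concretely, one may expect $\kappa_\sigma(X/U,K_{\Aa})=\kappa_\iota(X/U,K_{\Aa})$ to be computed via Lemma \ref{lem: property of numerical and Iitaka dimension}. I expect this to be the main obstacle. I would first try to check it on a log smooth model over the generic point of $U$: there $K+A\sim 0$ with $A$ effective and pseudo-effectivity force $A|_{X_\eta}$ to be $\sim_{\mathbb R}0$, hence $A|_{X_\eta}=0$, and then $K_{\Aa}$ is $\mathbb R$-linearly equivalent over $U$ to a divisor supported in fibers over $U\setminus U^0$.

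If this works, then in fact $K_{\Aa}\sim_{\mathbb R,U}G$ with $G\geq 0$ vertical over $U$. This vertical situation is exactly what Lemmas \ref{lem: special proper log smooth model} and \ref{lem: special gdlt modification gpair} are designed for.

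\textbf{Step 2: reduction to the special setup.} Given abundance, take an invariant Iitaka fibration $\phi\colon X\dashrightarrow Z_1$ and a proper LD log toroidal model $h\colon\Aa'\to\Aa$ with respect to $\phi$, via Definition-Lemma \ref{deflem: eoltm}. By Lemma \ref{lem: iitaka fibration numerical abundant divisor gpair dimension}, Lemma \ref{lem: has19 3.2 step 3 abu ver} and Theorem \ref{thm: bswlc imply mm}, run a $K_{\Aa'}$-MMP$/Z$ ending with $\Aa''$ satisfying $K_{\Aa''}\sim_{\mathbb R,Z}0$; then $\Aa''$ is LD by Lemma \ref{lem: LD preserve under mmp}. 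Next apply Lemmas \ref{lem: special gdlt modification gpair} and \ref{lem: special dlt model ii} to obtain a $\mathbb Q$-factorial LD dlt $\Aa_W$ with decompositions $B_W=B_W^h+B_W^v$ and $K_{\Aa_W}\sim E_W^h+E_W^v$. These have exactly the properties (2)--(4) of Proposition \ref{prop: main theorems for special dlt model}. Proposition \ref{prop: abundant+lcc dominate imply gmm} gives property (5). Property (6) holds because every lc place of $\Aa_W$ is an lc place of $\Aa$.

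\textbf{Step 3: Case 2 condition.} For \textbf{Case 2} we need an $A_W\geq 0$ with $(\Aa_W^0,A_W^0)$ lc and $K_{\Aa_W^0}+A_W^0\sim_{\mathbb R,U^0}0$. We take $A_W$ to be the pullback/strict transform of $A$ together with the exceptional correction. Over $U^0$ the pair $(\Aa^0,A^0)$ is lc and $U^0$-trivial, so every step of the construction is $(K+A)$-trivial over $U^0$. Hence the correction terms that appear there are lc places of $(\Aa^0,A^0)$, and the property is preserved. Should this fail for the extracted divisors, I would instead shrink $U^0$ to $U^1=U^0\setminus\pi(\Supp A)$, as in Step 7 of Proposition \ref{prop: main theorems for special dlt model}, and use \textbf{Case 1}.

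\textbf{Step 4: conclusion.} Proposition \ref{prop: main theorems for special dlt model} then yields a good minimal model of $\Aa_W/U$. Descend back to $\Aa$ by the same chain of lemmas used at the end of Proposition \ref{prop: induction to hx13 1.1}: Lemmas \ref{lem: mm preserved under dlt model}, \ref{lem: minimal model same after running mmp}, \ref{lem: Bir12 2.8} and \ref{lem: g-pair weak glc imply lmm}.
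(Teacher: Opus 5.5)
There is a genuine gap in your Step 3, and the detour in Step 2 causes it. The proper LD log toroidal model of Definition-Lemma \ref{deflem: eoltm} is not crepant. It has the form $\Aa'=(h^*\Aa,E)$, where $E\geq 0$ is supported exactly on the $h$-exceptional divisors that are \emph{not} lc places of $\Aa$. The subsequent $K_{\Aa'}$-MMP$/Z$ is $K_{\Aa'}$-negative rather than $(K+A)$-trivial. So the only transform of $A$ that keeps $K+A\sim_{\mathbb R,U^0}0$ is $h^*A-E$, pushed forward, and this is not effective.

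Your justification, that the correction terms are lc places of $(\Aa^0,A^0)$, is false in general. If the center of a component $F$ of $E$ is not contained in $\Supp A$, then $a(F,(\Aa,A))=a(F,\Aa)>-1$. Hence you have no $A_W\geq 0$ with $(\Aa_W^0,A_W^0)$ lc and $K_{\Aa_W^0}+A_W^0\sim_{\mathbb R,U^0}0$, and \textbf{Case 2} of Proposition \ref{prop: main theorems for special dlt model} is not available.

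The fallback to $U^1=U^0\setminus\pi(\Supp A)$ also fails, because hypothesis (6) can break. Take $X=\mathrm{Bl}_0\mathbb A^2\to U=\mathbb A^2$, $\Aa=(X,\mathcal E)$, and $A=\widetilde L_1+\widetilde L_2$ for two lines through $0$. Then $K_{\Aa}+A\sim_U 0$, but the lc center $\mathcal E$ lies over $\pi(\Supp A)$. Step 7 of that proposition avoids this only because $K$ is already nef there, so $-A_Y^0$ is nef$/U^0$ and $\Supp A_Y^0$ is saturated. Moreover, \textbf{Case 1} would still require $\Aa_W\times_UU^1$ to be a good minimal model of itself, which the same non-crepant correction prevents unless you run a further MMP.

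The fix is to drop the Iitaka fibration and log toroidal model entirely. Your Step 1 is easy, not the main obstacle: it shows that $A$ is vertical$/U$ and $K_{\Aa}\sim_{\mathbb R,U}G\geq 0$ with $G$ vertical. So after Stein factorization you can take $Z=U$ and use only crepant modifications.

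First, take the crepant $\mathbb Q$-factorial dlt modification $h\colon X'\to X$ of Lemma \ref{lem: special gdlt modification gpair}, relative to $U$. Add the pullback of an ample divisor on $U$ to the vertical effective divisor so that $\Supp B^v\subset\Supp E$. Then apply Lemma \ref{lem: special dlt model ii} with $Z=U$ to obtain a crepant $g\colon Y\to X'$.

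Now $A_Y:=(h\circ g)^*A\geq 0$, and $(\Aa_Y^0,A_Y^0)$ is the crepant pullback of $(\Aa^0,A^0)$, so the \textbf{Case 2} hypotheses hold verbatim. Since $K_{\Aa_Y}-tE_Y^v\sim_{\mathbb R,U}E_Y^h+(1-t)E_Y^v$ is effective and vertical, $\kappa_\sigma=\kappa_\iota=0$ over $U$. Proposition \ref{prop: abundant+lcc dominate imply gmm} with $Z=U$ then gives hypothesis (5). Hypothesis (6) holds because lc places of $\Aa_Y$ are lc places of $\Aa$. Proposition \ref{prop: main theorems for special dlt model}, followed by Lemmas \ref{lem: mm preserved under dlt model} and \ref{lem: g-pair weak glc imply lmm}, finishes the proof; this is the paper's argument.
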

\begin{proof}
Notations and conditions as in Theorem \ref{thm: bir12 1.1 3non}. Let $\pi: X\rightarrow U$ be the associated morphism and $X\rightarrow U'\rightarrow U$ the Stein factorization of $\pi$. Possibly replacing $U$ with $U'$ and $U^0$ with $U'\times_UU^0$, we may assume that $\pi$ is a contraction. 

By Lemma \ref{lem: special gdlt modification gpair}, there exists a $\mathbb Q$-factorial dlt modification $h: X'\rightarrow X$ of $\Aa$ with $\Aa':=h^*\Aa:=(X',B',\Mm)$, such that $B'=B^h+B^v$ satisfies the following.
\begin{itemize}
\item $B^h\geq 0$ and $B^v$ is reduced.
\item $B^v$ is vertical$/U$.
\item For any real number $t>0$, any lc center of $(\Aa',-tB^v)$ dominates $U$.
\end{itemize}
Let $A':=h^*A$, $X'^0:=X'\times_UU^0$, and $A'^0:=A'|_{X^0}$. Then $(\Aa'^0,A'^0)$ is lc and $K_{\Aa'^0}+A'^0\sim_{\mathbb R,U^0}0$. Since $A\geq 0$, $A'\geq 0$. Since $K_{\Aa'}=h^*K_{\Aa}$ is pseudo-effective$/U$, $A'$ is vertical$/U$. Therefore, $K_{\Aa'}\sim_{\mathbb R}0$ over the generic point of $U$, and we may write 
\begin{itemize}
    \item $K_{\Aa'}\sim_{\mathbb R,U}E\geq 0$
\end{itemize}
for some $E$ that is vertical$/U$ (cf. \cite[Lemma 3.2.1]{BCHM10}, \cite[Lemma 2.3]{HL23}). Let $H\geq 0$ be an ample divisor on $U$ such that $(\pi\circ h)(\Supp B^v)\subset\Supp H$. Possibly replacing $E$ with $E+(\pi\circ h)^*H$, we may assume that 
\begin{itemize}
    \item $E$ that is vertical$/U$ and $\Supp B^v\subset\Supp E$.
\end{itemize}
By Lemma \ref{lem: special dlt model ii}, there exists a $\mathbb Q$-factorial dlt modification $g: Y\rightarrow X'$ of $\Aa'$ satisfying the following.
\begin{itemize}
    \item $\Aa_Y:=g^*\Aa':=(Y,B_Y,\Mm)$. Since $\Aa$ is LD, by Lemma \ref{lem: LD under dlt model}, $\Aa_Y$ is $\mathbb Q$-factorial LD dlt.
    \item $B_Y=B_Y^h+B_Y^v$ and $K_{\Aa_Y}\sim_{\mathbb R,U}E_Y^h+E_Y^v$.
    \item $0\leq E_Y^h,E_Y^v$ are vertical$/U$, $\Supp E_Y^v=\Supp B_Y^v$, and $B_Y^v$ is reduced.
    \item For any $0<t\ll 1$, $(\Aa_Y,tE_Y^h)$ is dlt.
    \item For any $0<t\ll 1$, any lc center of $(\Aa_Y,-tB_Y^v)$ dominates $U$.
\end{itemize}
Since $\Supp B_Y^v=\Supp E_Y^v$, for any $0<t\ll 1$, any lc center of $(\Aa_Y,-tE_Y^v)$ dominates $U$. Since
$$K_{\Aa_Y}-tE_Y^v\sim_{\mathbb R,U}E_Y^h+(1-t)E_Y^v$$
is $\geq 0$ and is vertical$/U$, we have
$$\kappa_{\sigma}(Y/U,K_{\Aa_Y}-tE_Y^v)=\kappa_{\iota}(Y/U,K_{\Aa_Y}-tE_Y^v)=0.$$
By Proposition \ref{prop: abundant+lcc dominate imply gmm}, 
\begin{itemize}
    \item $(\Aa_Y,-tE_Y^v)/U$ has a good log minimal model for any $0<t\ll 1$.
\end{itemize}
Moreover:
\begin{itemize}
\item For any lc place $D$ of $\Aa_Y$, $D$ is also an lc place of $\Aa$, so the image of $D$ on $U$ intersects $U^0$.
\item Let $A_Y:=g^*A'$, $Y^0:=Y\times_UU^0$, $\Aa_Y^0:=\Aa_Y\times_UU^0$, and $A_Y^0:=A_Y|_{Y^0}$. Then
$$\left(\Aa_Y^0,A_Y^0\right)=(h\circ g)^*(\Aa_Y,A)|_{Y^0}=\left((h\circ g)|_{Y^0}\right)^*\left(\Aa^0,A^0\right),$$
so $\left(\Aa_Y^0,A_Y^0\right)$ is lc and $K_{\Aa_Y^0}+A_Y^0\sim_{\mathbb R,U^0}0$.
\end{itemize}
By Proposition \ref{prop: main theorems for special dlt model}, $\Aa_Y/U$ has a good minimal model. By Lemma \ref{lem: mm preserved under dlt model}, $\Aa/U$ has a bs-good minimal model. By Lemma \ref{lem: g-pair weak glc imply lmm}, $\Aa/U$ has a good log minimal model.
\end{proof}

\begin{proof}[Proof of Theorems \ref{thm: hx13 1.1 nonnqc-g} and \ref{thm: bir12 1.1 3non}]
Theorems \ref{thm: hx13 1.1 nonnqc-g} and \ref{thm: bir12 1.1 3non} hold in dimension $1$ trivially. Theorems \ref{thm: hx13 1.1 nonnqc-g} and \ref{thm: bir12 1.1 3non} hold in general by Propositions \ref{prop: main theorems for special dlt model}, \ref{prop: induction to hx13 1.1}, and \ref{prop: induction to bir12 1.1} and induction on dimensions.
\end{proof}

\section{Existence of the minimal model program}\label{sec: eommp}

The goal of this section is to prove Theorems \ref{thm: flip nonnqc-g}, \ref{thm: eommp nonnqc-g}, and \ref{thm: bir12 1.1 nonnqc-g}.

We begin with a variation statement (Theorem~\ref{thm: bir12 1.1 variation1}), which reduces the general case to the LD case established in Section~\ref{sec: hx13 1.1}.  The remaining theorems are then deduced by applying this statement to suitable (q)dlt modifications and running the MMP with scaling.

\begin{thm}\label{thm: bir12 1.1 variation1}
Let $\Aa/U$ be a generalized pair and let $A\geq 0$ be an $\Rr$-divisor such that $(\Aa,A)/U$ is lc and $K_{\Aa}+A\sim_{\mathbb R,U}0$ and $K_{\Aa}$ is pseudo-effective$/U$. Then $\Aa/U$ has a good log minimal model.
\end{thm}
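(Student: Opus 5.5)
We reduce Theorem \ref{thm: bir12 1.1 variation1} to the LD statement Theorem \ref{thm: bir12 1.1 3non} with $U^0=U$, by changing the nef part so that the new pair is LD while its canonical class stays a positive multiple of $K_{\Aa}$ modulo $\sim_{\mathbb R,U}$. Throughout, the relation $K_{\Aa}\sim_{\mathbb R,U}-A$ is the tool that makes such a change harmless.

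\emph{Dlt reduction.} Let $h\colon X'\rightarrow X$ be a $\mathbb Q$-factorial dlt modification of $(\Aa,A)$. Put $\Aa'':=h^*(\Aa,A)$; it is lc, and $K_{\Aa''}\sim_{\mathbb R,U}0$. Write $\Aa''=(\Aa',A')$, where $A'$ is the strict transform of $A$ together with whatever part of the reduced exceptional divisor is not already accounted for in the pullback of $\Aa$. Then $K_{\Aa'}=h^*K_{\Aa}+E'$ with $E'\ge0$ exceptional, so $\Aa'$ is an lc generalized pair extracting only lc places of $(\Aa,A)$. By Lemmas \ref{lem: Cas+25b 3.13} and \ref{lem: Bir12 2.8}, any bs-semi-ample model of $\Aa'/U$ gives a bs-semi-ample model of $\Aa/U$, and Lemma \ref{lem: g-pair weak glc imply lmm} then yields a good log minimal model. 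Pseudo-effectivity of $K_{\Aa'}$ is preserved. We may therefore assume $\Aa$ and $(\Aa,A)$ are $\mathbb Q$-factorial dlt, and in particular that $X$ is klt.

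\emph{Making the pair LD.} Fix a small $\epsilon\in(0,1)$. Since
\[
K_{\Aa}\sim_{\mathbb R,U}(1-\epsilon)K_{\Aa}-\epsilon A=K_X+B-\epsilon A+(1-\epsilon)\Mm_X+\dots,
\]
this formula suggests replacing $\Mm$ by $(1-\epsilon)\Mm$ at the cost of a possibly negative boundary. To avoid that, we instead try to choose an $\mathbb R$-divisor $G\ge0$ with
\[
K_X+B+G+(1-\epsilon)\Mm_X\sim_{\mathbb R,U}(1-\epsilon)K_{\Aa}.
\]
The choice $G=\epsilon B$ does this modulo an error; more cleanly, we use $(1-\epsilon)(\Aa,A)+\epsilon(X,B,\bm 0)$, whose canonical class is $(1-\epsilon)(K_{\Aa}+A)+\epsilon(K_X+B)\sim_{\mathbb R,U}\epsilon(K_X+B)$. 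This class is not a multiple of $K_{\Aa}$, so the idea fails as stated. We settle instead on the approach described in the introduction: \emph{replace $\Mm$ so that $K_X+B+A+\Mm_X\sim_{\mathbb R,U}0$ still holds and the irrational part of $\Mm_X$ lies in $\Supp\{B\}$}.

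Concretely, since $K_{\Aa}+A\sim_{\mathbb R,U}0$ and $\Mm$ is nef$/U$, choose a decomposition $\Mm_X\sim_{\mathbb R,U}-(K_X+B+A)$. On a model where $\Mm$ descends, perturb $\Mm$ within its $\mathbb R$-linear class to a nef$/U$ $\bb$-divisor $\Mm'$ whose trace $\Mm'_X$ has irrational part supported on $\Supp\{B\}$. We also add a small fraction of $A$ into $B$ so that $\{B\}$ has large support, keeping lc by dlt-ness and $\mathbb Q$-factoriality. Then Lemma \ref{lem: ld and irrationality}(2) shows the new pair $\Aa^\sharp$ is LD. Moreover $K_{\Aa^\sharp}\sim_{\mathbb R,U}K_{\Aa}$, or at least $K_{\Aa^\sharp}\sim_{\mathbb R,U}rK_{\Aa}$ for some $r>0$. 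By construction $\Aa^\sharp$ and $\Aa$ have the same lc places, because the perturbation is small away from $\lfloor B\rfloor$. Lemma \ref{lem: Cas+25b 3.5} then transfers good minimal models between $\Aa^\sharp$ and $\Aa$.

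\emph{Conclusion.} Apply Theorem \ref{thm: bir12 1.1 3non} to $\Aa^\sharp$ with $U^0=U$ and boundary $A^\sharp\ge0$. Hypotheses (1), (2) and (4) hold trivially and (3) holds by assumption, so $\Aa^\sharp/U$ has a good log minimal model. Transport it back to $\Aa$ via Lemma \ref{lem: Cas+25b 3.5}, using the fact that the MMP for $\Aa^\sharp$ is an MMP for $\Aa$ because the canonical classes are proportional. The main obstacle is the LD-ization step: producing $\Mm'$ nef$/U$, $\mathbb R$-linearly equivalent data, with controlled irrational support, while keeping $K_{\Aa^\sharp}$ proportional to $K_{\Aa}$ and $(\Aa^\sharp,A^\sharp)$ lc. For this we would first try an argument in the spirit of Lemma \ref{lem: +ample is ld}, using the big/vertical structure coming from $K_{\Aa}\sim_{\mathbb R,U}-A$.
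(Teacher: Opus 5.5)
There is a genuine gap. The step you flag as ``the main obstacle'' is the whole content of this reduction, and you do not carry it out. That step is to produce an LD generalized pair whose canonical class is a positive multiple of $K_{\Aa}$ modulo $\sim_{\mathbb R,U}$, together with an $A^\sharp$ satisfying hypotheses (1)--(2) of Theorem~\ref{thm: bir12 1.1 3non}. Your instruction to ``perturb $\Mm$ within its $\mathbb R$-linear class so that the irrational part of $\Mm'_X$ lies in $\Supp\{B\}$'' gives no construction of $\Mm'$. It also gives no check that nefness, the discrepancies, and the proportionality $K_{\Aa^\sharp}\sim_{\mathbb R,U}rK_{\Aa}$ survive. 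Your suggested fallback, Lemma~\ref{lem: +ample is ld}, needs $A$ ample$/U$, which is not assumed here. Since $\Aa^\sharp$ and $A^\sharp$ are never specified, the hypotheses you call trivial in your last paragraph cannot be verified.

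The missing idea is to normalize the nef part.
\begin{itemize}
\item Set $D:=K_{\Aa}+A$. Since $D\sim_{\mathbb R,U}0$, it is $\mathbb R$-Cartier and $\equiv_U0$, so $\Mm-\overline{D}$ is still nef$/U$.
\item Replacing $\Mm$ by $\Mm-\overline{D}$ changes no discrepancy and changes $K_{\Aa}$ only by $-D\sim_{\mathbb R,U}0$; Lemma~\ref{lem: Cas+25b 3.5} lets you pass between the two pairs. It makes $K_{\Aa}+A=0$ on the nose.
\item Then $(\Aa,A)$ is LD for trivial reasons: apply Lemma~\ref{lem: equivalent definition of LD}(3) with a single summand, whose canonical class $0$ is a $\mathbb Q$-divisor.
\item Lemma~\ref{lem: combination of LD with lc g-pair} now gives that $(\Aa,\tfrac12A)$ is LD.
\end{itemize}
Your instinct to move a fraction of $A$ into the boundary is exactly right, but it only works after this normalization. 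Then $\Mm_X^{\irr}=-(B+A)^{\irr}$, which lies in $\Supp\{B+sA\}$ for $s\in(0,1)$; this is what Lemma~\ref{lem: combination of LD with lc g-pair} exploits on a dlt model.

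To finish, apply Theorem~\ref{thm: bir12 1.1 3non} with $U^0=U$ to $(\Aa,\tfrac12A)$, with $\tfrac12A$ in the role of $A$. The hypotheses hold because $((\Aa,\tfrac12A),\tfrac12A)=(\Aa,A)$ is lc with canonical class $0$, and $K_{\Aa}+\tfrac12A\sim_{\mathbb R,U}\tfrac12K_{\Aa}$ is pseudo-effective$/U$. Lemma~\ref{lem: Cas+25b 3.5} is stated for non-bs models, so you cannot transfer the output directly. First upgrade the good log minimal model of $(\Aa,\tfrac12A)/U$ to a $\mathbb Q$-factorial good minimal model via Theorem~\ref{thm: bswlc imply mm}; this applies because the ambient variety is $\mathbb Q$-factorial klt after the dlt reduction. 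Then transfer it to $\Aa/U$ with Lemma~\ref{lem: Cas+25b 3.5}.

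A smaller citation point: for the dlt reduction, take a $\mathbb Q$-factorial dlt modification of $\Aa$ itself, pull back $A$, and use Lemma~\ref{lem: mm preserved under dlt model}. Citing Lemma~\ref{lem: Bir12 2.8} for a dlt modification of $(\Aa,A)$ does not fit, since that lemma is about log toroidal models.
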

\begin{proof}
Let $h: X'\rightarrow X$ be a $\mathbb Q$-factorial dlt modification of $\Aa$. By Lemma \ref{lem: mm preserved under dlt model}, possibly replacing $\Aa$ with $h^*\Aa$ and $A$ with $h^*A$, we may assume that $\Aa$ is $\mathbb Q$-factorial dlt. 

Write $\Aa=(X,B,\Mm)$. By Lemma \ref{lem: Cas+25b 3.5}, possibly replacing $\Mm$, we may assume that $K_{\Aa}+A=0$. In particular, $(\Aa,A)$ is LD. By Lemma \ref{lem: combination of LD with lc g-pair}, $\left(\Aa,\frac{1}{2}A\right)$ is LD. By Theorem \ref{thm: bir12 1.1 3non}, $\left(\Aa,\frac{1}{2}A\right)/U$ has a good log minimal model. By Theorem \ref{thm: bswlc imply mm},  $\left(\Aa,\frac{1}{2}A\right)/U$ has a $\mathbb Q$-factorial good minimal model. Since
$$K_{\Aa}+\frac{1}{2}A\sim_{\mathbb R,U}\frac{1}{2}K_{\Aa},$$
by Lemma \ref{lem: Cas+25b 3.5}, $\Aa/U$ has a good minimal model. The theorem follows.
\end{proof}

\begin{thm}\label{thm: extremal contraction nonnqc-g}
    Let $\Aa/U$ be a log canonical generalized pair with ambient variety $X$ and let $f: X\rightarrow Z$ be a $K_{\Aa}$-negative extremal contraction. If $\dim X=\dim Z$, then the ample model$/Z$ $\phi: X\dashrightarrow X^+$ of $K_{\Aa}$  exists.
\end{thm}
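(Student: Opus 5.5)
We reduce the existence of the ample model over $Z$ to the existence of a good log minimal model over $Z$. Theorem \ref{thm: bir12 1.1 variation1}, applied with $U$ replaced by $Z$, provides exactly this. Since $f$ is a $K_{\Aa}$-negative extremal contraction with $\dim X=\dim Z$, the map $f$ is birational. Moreover $-K_{\Aa}$ is ample$/Z$, and $K_{\Aa}$ is big$/Z$, hence pseudo-effective$/Z$, because every $\mathbb R$-divisor is big over $Z$ for a birational morphism.

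\textbf{Producing $A$.} Since $-K_{\Aa}$ is ample$/Z$, write $-K_{\Aa}\sim_{\mathbb R,Z}H$ with $H$ ample$/Z$. Take $A\geq 0$ to be a general member of the $\mathbb R$-linear system of $H$ over $Z$. Concretely, write $H=\sum r_iH_i$ with $r_i\in(0,1)$ small and each $H_i$ globally generated$/Z$, then choose general members $G_i\in|H_i|_Z$ and set $A:=\sum r_iG_i$. This gives $K_{\Aa}+A\sim_{\mathbb R,Z}0$, and $(\Aa,A)$ is lc by generality: the $G_i$ avoid the generic points of all lc centers, and we may work on a log resolution on which $\Mm$ descends. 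Theorem \ref{thm: bir12 1.1 variation1} then says that $\Aa/Z$ has a good log minimal model $\Aa'/Z$, with ambient variety $X'$ and induced map $X\dashrightarrow X'$.

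\textbf{From the good log minimal model to $X^+$.} The divisor $K_{\Aa'}$ is semi-ample$/Z$, so it defines an ample model $X'\to X^+$ over $Z$. We set $X^+:=\operatorname{Proj}_Z\bigoplus_m f_*\mathcal O_X(\lfloor mK_{\Aa}\rfloor)$, interpreted for $\mathbb R$-divisors through the ample model of the semi-ample class. Two checks remain.
\begin{enumerate}
\item The composite $X\dashrightarrow X^+$ is the ample model of $K_{\Aa}$ over $Z$. This follows from the negativity comparison: $p^*K_{\Aa}=q^*K_{\Aa'}+E$ with $E\geq 0$ exceptional$/X'$, as in Lemma \ref{lem: Bir12 2.6}, and $K_{\Aa'}$ is the pullback of an ample$/Z$ class on $X^+$.
\item $X\dashrightarrow X^+$ does not extract divisors, as the definition of ample model requires. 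A divisor $D$ extracted by $\phi$ would be exceptional$/X$ and hence exceptional$/Z$. But the push-forward of $K_{\Aa'}$ to $X^+$ is ample$/Z$, and for the birational morphism $X^+\to Z$ the negativity lemma forces $X^+\to Z$ to be small over the locus it contracts. Exceptional divisors of $X'$ over $X$ are lc places, and the lc places on which $K_{\Aa'}$ is trivial over $Z$ get contracted by $X'\to X^+$.
\end{enumerate}

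\textbf{Expected obstacle.} The delicate point is the non-$\mathbb Q$-factorial setting. There we must ensure that $K_{\Aa^+}$ is $\mathbb R$-Cartier and that the ample model is unique, so that the constructions above give a genuine ample model rather than merely a bs-semi-ample model. For uniqueness we will use Lemma \ref{lem: Bir12 2.7}(3), which shows that any two ample models are isomorphic over $Z$. If needed, we will first pass to a $\mathbb Q$-factorial dlt modification: Lemma \ref{lem: mm preserved under dlt model} guarantees that this does not change the models, and the ample model of $K_{\Aa}$ over $Z$ is unaffected by crepant pullback.
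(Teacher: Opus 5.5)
Your proposal is correct and is essentially the paper's proof. Choose a general $0\le A\sim_{\mathbb R,Z}-K_{\Aa}$ with $(\Aa,A)$ lc, apply Theorem~\ref{thm: bir12 1.1 variation1} over $Z$ to get a good log minimal model, and take the ample model$/Z$ of its semi-ample canonical class.

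Your check (2) is not needed for the statement as posed, since an ample model in the sense of \cite[Definition 3.6.5]{BCHM10} need not be a birational contraction. Its justification is also loose: negativity on $X^+\to Z$ alone gives nothing when $f_*K_{\Aa}$ is not $\mathbb R$-Cartier. If you want smallness of $X^+\to Z$ for the flip application, work on a common resolution $W$ of $X$ and $X^+$ and apply the strong form of the negativity lemma (cf.\ \cite[Lemma 3.39]{KM98}) to $p^*K_{\Aa}-r^*K_{\Aa^+}$, which is effective, anti-nef over $Z$, and pushes forward to $0$ on $Z$.
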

\begin{proof}
We have that $K_{\Aa}$ is pseudo-effective$/Z$ and $-K_{\Aa}$ is ample$/Z$, so we may choose an ample$/Z$ $\mathbb R$-divisor $0\leq A\sim_{\mathbb R}-K_{\Aa}$ on $X$ such that $(\Aa,A)$ is lc. By Theorem \ref{thm: bir12 1.1 variation1}, $\Aa/Z$ has a good minimal model. Thus we may let $\phi: X\dashrightarrow X^+$ be the ample model$/Z$ of $K_{\Aa}$, and the theorem follows.
\end{proof}

\begin{proof}[Proof of Theorem \ref{thm: flip nonnqc-g}]
It is a special case of Theorem~\ref{thm: extremal contraction nonnqc-g} by considering the case when $\Aa=(X,B,\Mm)$ and $f$ is a $K_{\Aa}$-flipping contraction.
\end{proof}

\begin{proof}[Proof of Theorem \ref{thm: eommp nonnqc-g}]
By the cone theorem and the contraction theorem \cite[Theorem 2.2.1]{CHLX23}, we know that there exists a $(K_X+B+\Mm_X)$-negative extremal ray $R$ and a contraction $X\rightarrow Z$ of $R$ whenever $K_X+B+\Mm_X$ is not nef. If $\dim X>\dim Z$ then we obtain a $(K_X+B+\Mm_X)$-Mori fiber space and we are done. Otherwise, by Theorem \ref{thm: extremal contraction nonnqc-g} we may run a step of the MMP corresponding to the contraction of $R$. By repeating this process, the theorem follows.
\end{proof}

\begin{proof}[Proof of Theorem \ref{thm: bir12 1.1 nonnqc-g}]
Let $\Aa:=(X,B,\Mm)$. If $K_{\Aa}$ is pseudo-effective$/U$, then we are done by Theorem \ref{thm: bir12 1.1 variation1}. If $K_{\Aa}$ is not pseudo-effective$/U$, then we may let $h: X'\rightarrow X$ be a $\mathbb Q$-factorial dlt modification of $\Aa$ and let $\Aa':=h^*\Aa$. We may run a $K_{\Aa'}$-MMP$/U$ with scaling of an ample divisor which terminates with a Mori fiber space $f: X''\rightarrow Z$ associated with birational map $\phi: X'\dashrightarrow X''$. Let $\Aa'':=\phi_*\Aa'$, then $f: \Aa''\rightarrow Z$ is a log Mori fiber space of $\Aa/U$.
\end{proof}

\section{Structure of \texorpdfstring{$\mathbb Q$}{}-factorial generalized pairs}\label{sec: structure theorem}

The goal of this section is to prove Theorem~\ref{thm: structure of q-factorial gpair} and Corollary~\ref{cor: potentially lc base}, and thus complete the proofs of all main results stated in the introduction. We begin with a $P$-trivial MMP statement (Lemma~\ref{lem: P-trivial}) and then follow the strategy of \cite{HL23}, replacing NQC decompositions by LD ones, to deduce the structure theorem and its corollary.

\begin{defn}
		Let $\pi: X\rightarrow U$ be a projective morphism between normal quasi-projective varieties. An \emph{extremal curve of minimal length} in $\overline{NE}(X/U)$ is a curve $C$ on $X$ such that $[C]$ spans an extremal ray $R$ in $\overline{NE}(X/U)$, and for any curve $C'$ on $X$ such that $[C']$ spans $R$, $[C]\equiv\lambda [C']$ for some $\lambda\leq 1$.
\end{defn}
    
\begin{lem}[{cf. \cite[Lemma 3.8]{Hu25}}]\label{lem: P-trivial}
		Let $\Aa/U=(X,B,\Mm)/U$ be a $\mathbb Q$-factorial qdlt generalized pair and let $P$ be a nef$/U$ $\Rr$-divisor on $X$. Suppose that $\Supp\{P\}\subset\Supp\{B\}$. Then for any $\alpha\gg 0$, any sequence of steps of a $(K_{\Aa}+\alpha P)$-MMP$/U$ is $P$-trivial.
\end{lem}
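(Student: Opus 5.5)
We argue by a length-of-extremal-rays estimate combined with a discreteness statement for $P\cdot C$ on curves of minimal length, in the spirit of Lemma \ref{lem: trivial mmp}. First, since $X$ is $\mathbb Q$-factorial klt (Lemma \ref{lem: qdlt is potentially klt}), write $P=P^{\Qq}+P^{\irr}$ where $P^{\Qq}$ is a $\mathbb Q$-divisor and $\Supp P^{\irr}=\Supp\{P\}^{\irr}\subset\Supp\{B\}$. The key preliminary step is to decompose $P$ as an NQC-type combination. Let $V$ be the rational envelope of the coefficient vector of $P$. By \cite[Lemma 5.3]{HLS24} (applied as in Lemma \ref{lem: ld preserves trivialness}), every $P(\bm v)$, $\bm v\in V$, is $\mathbb R$-Cartier and has zero intersection with any curve $C$ for which $P\cdot C=0$. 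Since $\Supp\{P\}\subset\Supp\{B\}$ and $\Aa$ is $\mathbb Q$-factorial qdlt, for $\bm v$ close to the coefficient vector of $P$ the perturbation $B+(P(\bm v)-P)$ remains a boundary with $(X,B+P(\bm v)-P,\Mm)$ qdlt. This is the same argument as in Lemma \ref{lem: ld and irrationality}(2).

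Second, we reduce to a single step: a sequence of $P$-trivial steps preserves all hypotheses. Nefness of $P$ and the inclusion $\Supp\{P\}\subset\Supp\{B\}$ pass to the image, because $P$-trivial contractions descend $P$ and pushforward preserves supports. The bound on $\alpha$ must therefore be uniform along the MMP, and we arrange this by making it depend only on the Cartier index data and $\dim X$, which are preserved.

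Third, let $R$ be a $(K_{\Aa}+\alpha P)$-negative extremal ray and suppose $P\cdot R>0$. Choose rational $\bm v_1,\dots,\bm v_n$ near $P$ as in Lemma \ref{lem: ld nef impleis nqc}, with $P=\sum a_jP(\bm v_j)$, and fix $I$ such that each $IP(\bm v_j)$ is Cartier. The difficulty is that $P(\bm v_j)$ need not be nef, so $P(\bm v_j)\cdot C$ could be negative. To handle this, apply the cone theorem \cite[Theorem 2.2.1]{CHLX23} to the generalized pair $(X,B+\epsilon(P(\bm v_j)-P),\Mm)$ with ample perturbations, or better to $(\Aa,\text{small multiples})$, and use the $\alpha$-large hypothesis. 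Concretely, $R$ is also negative for $K_{\Aa_j}+\alpha P$ for suitable lc pairs $\Aa_j$, and $R$ is spanned by a rational curve $C$ with $K_{\Aa}\cdot C\ge -2\dim X$. Then $-2\dim X+\alpha P\cdot C\le (K_{\Aa}+\alpha P)\cdot C<0$, so $P\cdot C<2\dim X/\alpha$.

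Finally, we need a positive lower bound $\gamma_0$ for $P\cdot C$ whenever this quantity is positive. Here $P\cdot C=\sum a_j P(\bm v_j)\cdot C$, with each $IP(\bm v_j)\cdot C\in\mathbb Z$. The main obstacle is bounding $P(\bm v_j)\cdot C$ from below. I would first try to show $P(\bm v_j)\cdot C\ge -2\dim X$-type bounds by writing $P(\bm v_j)=P+(P(\bm v_j)-P)$, where $P(\bm v_j)-P$ is a small multiple of a combination of components of $\{B\}$, so that it equals $K_{\Aa'_j}-K_{\Aa}$ for nearby qdlt generalized pairs $\Aa'_j$, and then use length bounds for both. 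Once the values $P\cdot C$ lie in a DCC-type set $\{\sum a_j\gamma_j/I:\gamma_j\ge -N\}\cap(0,\infty)$ with infimum $\gamma_0>0$, taking $\alpha>2\dim X/\gamma_0$ forces $P\cdot R=0$, which proves the claim.
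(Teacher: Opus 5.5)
Your proposal is correct and follows essentially the same route as the paper. You take rational approximations $P(\bm v_j)$ of $P$ and absorb $P(\bm v_j)-P$ into the boundary to get nearby $\mathbb Q$-factorial qdlt pairs $\Aa'_j$; you get the bound $P(\bm v_j)\cdot C=P\cdot C+K_{\Aa'_j}\cdot C-K_{\Aa}\cdot C>-2\dim X$ on a curve $C$ of minimal length in the ray (this uses $K_{\Aa}\cdot C<0$ and nefness of $P$, not a length bound for $\Aa$); you use discreteness of $\sum a_j\frac{1}{I}\mathbb Z_{\ge -N}$ to produce $\gamma_0$; and you preserve this data along $P$-trivial steps. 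The only cosmetic difference is that you use rational-envelope triviality ($P\cdot C=0\Rightarrow P(\bm v)\cdot C=0$) where the paper uses $\mathbb Q$-linear independence of the weights $\beta_j$ to see that each $P_j$ stays Cartier of index $I$ after a $P$-trivial step.
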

\begin{proof}
Write $\{P\}=\sum_{i=1}^m d_iD_i$ where $D_i$ are prime divisors. Let $Q(\bm{v}):=\sum_{i=1}^mv_iD_i$ for any $\bm{v}=(v_1,\dots,v_m)\in\mathbb R^m$, $\bm{d}:=(d_1,\dots,d_m)$, and $V\ni\bm{d}$ the rational envelope of $\bm{d}$ in $\mathbb R^m$. Let $c:=\dim V+1$, $\bm{v}_1,\dots,\bm{v}_c\in V\cap\mathbb Q^m$ vectors such that $\bm{d}$ is contained in the interior of the convex hull spanned by $\bm{v}_1,\dots,\bm{v}_c$ and $||\bm{v}_j-\bm{d}||_{\infty}\ll 1$ for any $j$. Then there exist unique real numbers $\beta_1,\dots,\beta_c\in (0,1]$ such that $\sum_{j=1}^c\beta_j=1$, $\sum_{j=1}^c\beta_j\bm{v}_j=\bm{d}$, and $\beta_1,\dots,\beta_c$ are $\mathbb Q$-linearly independent.

Let $Q_j:=Q(\bm{v}_j)$, $P_j:=\lfloor P\rfloor+Q_j$, $B_j:=B+P-P_j$, and $\Aa_j:=(X,B_j,\Mm)$ for any $j$. Since $\Aa$ is $\mathbb Q$-factorial qdlt,
$$\Supp(P_j-P)\subset\Supp\{P\}\subset\Supp\{B\},$$
and $||\bm{v}_j-\bm{d}||\ll 1$, we have that $\Aa_j$ is $\mathbb Q$-factorial qdlt for any $j$. Then for any $K_{\Aa}$-negative extremal curve$/U$ $C$, by the length of extremal rays \cite[Theorem 2.2.1(2)]{CHLX23}, we have
$$-2\dim X<K_{\Aa_j}\cdot C=K_{\Aa}\cdot C+(P_j-P)\cdot C<P_j\cdot C.$$
Let $I$ be a positive integer such that $IP_j$ is Cartier for any $j$ and let 
$$\Ii:=\min\left\{\sum\beta_jp_j\middle| Ip_j\in [-2I\dim X,+\infty)\cap\mathbb Z\right\}.$$
		Then $\Ii$ is a discrete set depending only on $I$, hence we may define $$\gamma_0:=\min\{\gamma\in\Ii\mid \gamma>0\}.$$
		Since $P$ is nef$/U$ and $P=\sum_{j=1}^c\beta_jP_j$, we have that
        $$P\cdot C=0\quad  \text{or}\quad  P\cdot C\geq\gamma_0$$ 
        for any $K_{\Aa}$-negative extremal curve of minimal length in $\overline{NE}(X/U)$. By the length of extremal rays, a step of a $(K_{\Aa}+\alpha P)$-MMP$/U$ is $P$-trivial for any $\alpha\geq\alpha_0:=\frac{2\dim X+1}{\gamma_0}$. 

        Since $\beta_1,\dots,\beta_c$ are $\mathbb Q$-linearly independent, for any sequence of steps of a $(K_{\Aa}+\alpha P)$-MMP$/U$ $\phi: X\dashrightarrow X'$ that is $P$-trivial, $\phi$ is $P_j$-trivial for any $j$, hence $\phi$ is also a sequence of steps of a $K_{\Aa_j}$-MMP$/U$. 

        Note that $\alpha_0$ depends only on $\dim X$, $\beta_1,\dots,\beta_c$, and $I$. Let $P':=\phi_*P$ and $P_j':=\phi_*P_j$. Then $\dim X'=\dim X$, $IP_j'$ is Cartier for any $j$ by the contraction theorem \cite[Theorem 2.2.1(4)]{CHLX23}, $P'=\sum_j \beta_jP_j'$, and $\phi_*\Aa_j$ is $\mathbb Q$-factorial qdlt for any $j$. So we may replace $\Aa,P,P_j$ with $\phi_*\Aa,P',P_j'$ and continue the process.
	\end{proof}

\begin{lem}\label{lem: special extraction}
		Let $\Aa/U:=(X,B,\Mm)/U$ be an lc generalized pair such that $\Mm_X$ is $\Rr$-Cartier. Then there exists a projective birational morphism $f: Y\rightarrow X$ and an $\mathbb R$-divisor $L\geq 0$ on $Y$ satisfying the following.
		\begin{enumerate}
			\item $\Mm$ descends to $Y$.
			\item $\Supp(f^*\Mm_X-\Mm_Y)=\Exc(f)$.
			\item $\Supp L=\Exc(f)$ and $-L$ is ample$/X$.
		\end{enumerate}
	\end{lem}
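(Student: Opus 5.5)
We construct $f$ by resolving, then extracting from $X$ an explicit $f$-anti-ample exceptional divisor. Since $\Mm$ is a nef$/U$ $\bb$-divisor, there is a projective birational morphism $g: W\rightarrow X$ to which $\Mm$ descends. Because $\Mm_X$ is $\Rr$-Cartier and $\Mm_W$ is nef$/X$, the negativity lemma gives $g^*\Mm_X-\Mm_W=:F\geq 0$, with $F$ $g$-exceptional. Choosing a different $W$ does not fix the mismatch between $\Supp F$ and $\Exc(g)$, so we replace $W$ by a better model.

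First, I would take $W$ to be a log resolution, so that $W$ is smooth and $\Exc(g)$ has simple normal crossings. I would further arrange, by blowing up an ideal cosupported on the non-isomorphism locus, that $g$ is the blow-up of $X$ along an ideal and so carries a $g$-exceptional divisor $L_0\geq 0$ with $-L_0$ ample$/X$ and $\Supp L_0=\Exc(g)$. Blowing up further preserves both properties after adding small multiples of pullbacks. Then $\Mm$ still descends to $W$.

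Second, the key step is to arrange $\Supp F=\Exc(g)$. Let $E_1,\dots,E_k$ be the components of $\Exc(g)$ not in $\Supp F$. I would run a $(K_W+\Delta_W)$-MMP$/X$ for a suitable klt $\Delta_W$ to contract exactly those $E_i$ while keeping everything else. A cleaner route is to work directly with the divisor $-(\epsilon L_0)+F$ or with $F$ itself. Since $-F=\Mm_W-g^*\Mm_X$ is nef$/X$, take $Y:=\operatorname{Proj}_X\bigoplus_m g_*\mathcal{O}_W(\lfloor -m(F-\epsilon L_0)\rfloor)$; this should be the relative ample model of $-F+\epsilon L_0$... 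The easier and more standard version is to run a $(K_W+\Delta_W-\delta F)$-type MMP$/X$ (with $W$ klt, using \cite{BCHM10}) to contract precisely the divisors where $F$ vanishes, using that $F$ is $g$-exceptional and $\Mm$ is nef$/X$, so its pushforward still descends. Concretely, I would pick $\Delta_W=\sum(1-a)E_i$ plus a general ample, scaled so that $K_W+\Delta_W\sim_{\Rr,X}\sum c_iE_i$ with $c_i>0$ exactly on the $E_i$ not in $\Supp F$ and $c_i\le 0$ elsewhere. The MMP$/X$ then contracts exactly $\bigcup E_i$ by the negativity lemma, as in Theorem~\ref{thm: very exceptional mmp}.

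Third, on the output $f: Y\rightarrow X$, which is $\Qq$-factorial klt, $\Mm$ still descends because the MMP is $\Mm_W$-trivial. I would force this by perturbing with $F$, since $F$ is trivial on the contracted rays only after the right choice. Then $f^*\Mm_X-\Mm_Y$ is the pushforward of $F$, whose support is now all of $\Exc(f)$, giving (2). For (3), on a $\Qq$-factorial $Y$ projective over $X$, take $H$ ample$/X$ and set $L:=f^*f_*H-H\geq 0$ by negativity. Adding a small multiple of the pushforward of $F$ if needed, which is nonzero on every exceptional component, makes $\Supp L=\Exc(f)$ while keeping $-L$ ample$/X$.

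I expect the main obstacle to be the second step: contracting exactly the components outside $\Supp F$ while keeping $\Mm$ descended. The fallback is to handle (2) via $F$ itself: replace the plan by extracting only divisors $E$ over $X$ with $\mult_E(g^*\Mm_X-\Mm_W)>0$. These are finitely many, and the difference $g^*\Mm_X-\Mm_W$ does not depend on the model once $\Mm$ descends. I would then check that $\Mm$ descends to that extraction using the nefness of $\Mm$ and the negativity lemma on a common resolution.
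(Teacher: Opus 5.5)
There is a genuine gap, and it is at the step you flag as the main obstacle. The content of the lemma is to contract the $g$-exceptional divisors outside $\Supp F$ by a map $W\dashrightarrow Y$ over $X$ that is $F$-trivial (equivalently $\Mm_W$-trivial), since that is exactly what makes $\Mm$ descend to $Y$. Your plan never produces this. A $(K_W+\Delta_W-\delta F)$-MMP with $0<\delta\ll 1$ has no reason to be $F$-trivial, and ``perturbing with $F$'' is not an argument.

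The missing ingredient is Lemma~\ref{lem: P-trivial}, applied with a \emph{large} coefficient to the nef$/X$ divisor $-F\equiv_X\Mm_W$, whose support can be put inside $\Supp\{\Delta_W\}$. A rational decomposition together with the length of extremal rays gives a uniform gap $-F\cdot C\in\{0\}\cup[\gamma_0,+\infty)$, so for $t\gg0$ every step of a $(K_W+\Delta_W+t\Mm_W)$-MMP$/X$ is $\Mm_W$-trivial.

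The large multiple is also what produces the right signs on $\Supp F$. Your $c_i$ cannot be prescribed. First, $K_W+\Delta_W\sim_{\Rr,X}\sum c_iE_i$ requires $K_X+g_*\Delta_W$ to be $\Rr$-Cartier; a general ample on $W$ does not ensure this, and you have to use that $K_X+B$ is $\Rr$-Cartier. Then $c_i=a(E_i,X,g_*\Delta_W)+\mult_{E_i}\Delta_W$ with $\mult_{E_i}\Delta_W\in[0,1)$. Off $\Supp F$ the opposite problem occurs. If $E_i$ lies over a non-potentially-klt locus of $X$ where $\Mm$ already descends (e.g.\ the vertex of a cone over an elliptic curve), then $a(E_i,X,\Delta)\le -1$ for every boundary $\Delta$. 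Hence $c_i<0$ for every klt $\Delta_W$, and your required sign $c_i>0$ is impossible there.

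This is why the paper uses the dlt boundary $\Delta_W=B_W(1/2)\vee(1-\delta)\Exc(g)$. It gives $K_W+\Delta_W+t\Mm_W\sim_{\Rr,X}G-(t-\tfrac12)F$ with $G\geq 0$ exceptional and $G$ vanishing on such lc places. The price is that BCHM no longer applies: good minimal models over $X$ come from Theorem~\ref{thm: hx13 1.1 nonnqc-g}, after an $F$-trivial MMP makes the class $\Rr$-trivial over the complement of the image of $\Supp F$. Finally, with mixed signs the negativity lemma only forces the positive part to be contracted. It does not stop the MMP from contracting components of the negative part, so ``contracts exactly $\bigcup E_i$'' does not follow. (Theorem~\ref{thm: very exceptional mmp} needs an effective very exceptional divisor.)

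Your argument for (3) also breaks. $L:=f^*f_*H-H$ requires $f_*H$ to be $\Rr$-Cartier, which fails when $X$ is not $\Qq$-factorial. Moreover, an MMP output over $X$ may have small exceptional locus, which supports no anti-ample exceptional divisor.

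The paper instead takes $Y$ to be the ample model over $X$ of $K_{V}+\Delta_V+s\Mm_V$ on the MMP output $V$, for $s\gg0$ general. This class is $\Rr$-linearly equivalent over $X$ to the exceptional divisor $G_V-(s-\tfrac12)F_V$, so $L:=(s-\tfrac12)F_Y-G_Y$ has $-L$ ample$/X$. The negativity lemma then gives $L\geq0$ and $\Supp L=\Exc(f)$. Since $G_Y\geq 0$ and $s$ is general, also $\Supp L=\Supp F_Y$, which is (2). Generality of $s$ also keeps this last contraction $\Mm$-trivial.

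Your fallback does not rescue the argument either. The divisors over $X$ with positive multiplicity in $g^*\Mm_X-\Mm_{W'}$ are infinitely many: every divisor over $W$ with centre in $\Supp F$ qualifies. And descent of $\Mm$ to an intermediate model $W\rightarrow Y\rightarrow X$ is equivalent to $W\rightarrow Y$ being $F$-trivial, which is precisely what has to be proved.
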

    \begin{proof}
We may replace $U$ by $X$ and assume that the associated morphism $X\rightarrow U$ is the identity morphism. Let $h: W\rightarrow X$ be a log resolution of $(X,B,\Mm)$ and let $E:=h^*\Mm_X-\Mm_W$. Then $-E\sim_{\mathbb R,X}\Mm_W$ is nef$/X$ and exceptional$/X$. By the negativity lemma, $E\geq 0$. Possibly replacing $\Mm$ with $-\overline{E}$, we may assume that $\Mm_W=-E\leq 0$. In particular, $\Mm$ is exceptional$/X$ and $\Mm_X=0$. We may write
$$K_W+B_W(t)+t\Mm_W=K_W+B_W(t)-tE=h^*(K_X+B)=h^*(K_X+B+t\Mm_X)$$
for any $t\in [0,1]$. Pick $0<\delta\ll 1$. Let
$$\Delta_W:=B_W\left(1/2\right)\vee (1-\delta)\Exc(h)$$
and let
$$\Aa_W(t):=(W,\Delta_W,t\Mm)$$
for any real number $t\geq 0$. Then $\Aa_W(t)$ is $\mathbb Q$-factorial dlt log smooth for any $t\geq 0$. Moreover, for any prime divisor $D$ on $W$ that is exceptional$/X$, either $\mult_D\Delta_W=\mult_DB_W(1/2)=1$ or $\mult_D\Delta_W>\mult_DB_W(1/2)$. In particular, we have
$$K_{\Aa_W(t)}\sim_{\mathbb R,X}(t-1/2)\Mm_W+F=F-(t-1/2)E$$
for some $F\geq 0$ that is exceptional$/X$, such that $\Supp F$ is exactly the union of all $h$-exceptional prime divisors $D$ such that $\mult_DB_W(1/2)<1$.

By our construction, 
$$\Supp\Mm_X=\Supp E\subset\Exc(h)\subset\Supp\Delta_W.$$ 
Moreover, for any irreducible component $D$ of $E$, we have that
$$\mult_DB_W(1/2)=\mult_DB_W(1)-t\mult_DE<1,$$ 
so $\mult_D\Delta_W<1$. Since $\Delta_W\geq 0$, we have
$$\Supp\Mm_X=\Supp E\subset\Supp\{\Delta_W\}.$$
Therefore, $\Aa_W(t)$ is $\mathbb Q$-factorial dlt LD log smooth for any real number $t\geq 0$. 

Fix an integer $\alpha\gg 0$. By Lemma \ref{lem: P-trivial}, we may run a $K_{\Aa_W(\alpha)}$-MMP$/X$ with scaling of an ample divisor
$$\phi_i: \Aa_i\dashrightarrow \Aa_{i+1}, \Aa_1:=\Aa_W(\alpha)$$
and this MMP is $E$-trivial. Let $X_i$ be the ambient variety of $\Aa_i$ for each $i$ and let $\Aa_i(t),F_i,E_i$ be the images of $\Aa_W(t),F,E$ on $X_i$ for each $i$. Then there exists $i>0$ such that $K_{\Aa_i}$ is movable$/X$. We have
$$K_{\Aa_i}\sim_{\mathbb R,X}F_i-(\alpha-1/2)E_i.$$
By \cite[Lemma 3.3]{Bir12}, $F_i-(\alpha-1/2)E_i\geq 0$. Let $h_i: X_i\rightarrow X$ be the induced birational morphism. Since the MMP is $E$-trivial, $-E_i$ is nef$/X$. Thus for any closed point $x\in X$ and any irreducible curve $C\subset h_i^{-1}(x)$, we have $-E_i\cdot C\geq 0$, hence either $C$ does not intersect $\Supp E_i$ or $C\subset\Supp E_i$. Since $h_i$ has connected fibers, we have $\Supp E_i=h_i^{-1}(h_i(\Supp E))$. 

We let $X^0:=X\backslash h_i(\Supp E_i)$, $X_i^0:=X_i\times_XX^0$, $\Aa_i^0:=\Aa_i\times_XX^0$, and $\Aa_i^0(t):=\Aa_i(t)\times_XX^0$ for any $t\geq 0$. Write
$$\Aa_i(t):=(X_i,\Delta_i,t\Mm)$$
for any $t\geq 0$. Since $\Supp E$ does not contain any stratum of $\lfloor\Delta_W\rfloor$, $\Supp E_i$ does not contain any stratum of $\lfloor\Delta_i\rfloor$. By Lemma \ref{lem: LD preserve under mmp}, $\Aa_i$ is $\mathbb Q$-factorial qdlt LD, so $\Supp E_i$ does not contain any lc center of $\Aa_i$. Since $\Mm$ descends to $X_i$, $\Aa_i(t)$ is $\mathbb Q$-factorial qdlt for any $t\geq 0$. Thus $\Supp E_i$ does not contain any lc center of $\Aa_i(t)$ for any $t\geq 0$. Therefore, $X_i^0$ intersects all lc centers of $\Aa_i(t)$ for any $t\geq 0$. Moreover, we have
$$\Supp\left(F_i-(\alpha-1/2)E_i\right)=\Supp\left(F_i-(\alpha-1/2)E_i\right)^{\leq 0}=\Supp E_i,$$
so $\Supp F_i\subset\Supp E_i$. We have
$$K_{\Aa_i(t)}\sim_{\mathbb R,X}F_i-(t-1/2)E_i$$
for any $t\geq 0$, so
$$K_{\Aa_i^0(t)}\sim_{\mathbb R,X^0}0,$$
and so $\Aa_i^0(t)/X^0$ is a good minimal model of itself for any $t\geq 0$.

Since $\phi_i$ is $E$-trivial, $\phi_i$ is a sequence of steps of a $K_{\Aa_W(t)}$-MMP$/X$ for any $t\geq 0$, by Lemma \ref{lem: LD preserve under mmp}, $\Aa_i(t)$ is $\mathbb Q$-factorial qdlt LD for any $t\geq 0$. By Theorem \ref{thm: hx13 1.1 nonnqc-g}, $\Aa_i(t)/X$ has a good minimal model for any $t\geq 0$. By Lemma \ref{lem: minimal model same after running mmp}, $\Aa_W(t)/X$ has a good log minimal model for any $t\geq 0$.

We pick a real number $s\gg 0$ that is general in $\mathbb R$ and run a $K_{\Aa_W(s)}$-MMP$/X$ with scaling of an ample divisor. By Theorem \ref{thm: eomm implies tof with scaling}, the MMP terminates with a good minimal model $\Aa_V(s)/X$ of $\Aa_W(s)/X$ associated with birational map$/X$ $\psi: W\dashrightarrow V$. By Lemma \ref{lem: P-trivial}, $\psi$ is $\Mm_W$-trivial. Let $g: V\rightarrow Y$ be the ample model$/X$ of $K_{\Aa_V(s)}$, $f: Y\rightarrow X$ the associated contraction, and let $\Aa_Y(s):=g_*\Aa_V(s)$. Since $s$ is general in $\mathbb R$, $g$ is $\Mm_V$-trivial, so $\Mm$ descends to $Y$. Let $F_Y$ and $E_Y$ be the images of $F,E$ on $Y$, then
$$K_{\Aa_Y(s)}\sim_{\mathbb R,X}F_Y-(s-1/2)E_Y$$
is ample$/X$. Let $L:=(s-1/2)E_Y-F_Y$. Since $L$ is exceptional$/X$, by the negativity lemma, we have that $L\geq 0$ and $$\Exc(f)=\Supp L.$$
Since $s\gg 0$,
$$\Supp L=\Supp E_Y=\Supp\Mm_Y=\Supp(f^*\Mm_X-\Mm_Y).$$
Therefore, $Y$ and $L$ satisfy our requirements.
\end{proof}

\begin{thm}\label{thm: structure of mx rcartier gpair}
    Let $(X,B,\Mm)/U$ be an lc generalized pair such that $\Mm_X$ is $\mathbb R$-Cartier and $A$ an ample$/U$ $\mathbb R$-divisor on $X$. Then there exists an lc pair $(X,\Delta)$ such that
    $$K_X+\Delta\sim_{\mathbb R,U}K_X+B+\Mm_X+A.$$
\end{thm}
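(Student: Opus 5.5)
We use Lemma~\ref{lem: special extraction} to pass to a model where $\Mm$ descends, and then push a suitable lc pair structure back down to $X$, following \cite{HL23}. Let $f\colon Y\rightarrow X$ and $L\geq 0$ be as in Lemma~\ref{lem: special extraction}. Then $\Mm$ descends to $Y$, $-L$ is ample$/X$ with $\Supp L=\Exc(f)$, and $G:=f^*\Mm_X-\Mm_Y\geq 0$ has $\Supp G=\Exc(f)$. Here $G\geq 0$ follows from the negativity lemma.

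Write $f^*(K_X+B+\Mm_X)=K_Y+B_Y+\Mm_Y$, so that $(Y,B_Y,\Mm)$ is sub-lc. The exceptional coefficients of $B_Y$ are the coefficients of $f^*(K_X+B)-K_Y$ minus the coefficients of $G$. Since $\Supp G=\Exc(f)$, every exceptional coefficient of $B_Y$ is strictly smaller than the corresponding coefficient of the lc-type boundary $f^*(K_X+B)-K_Y$.

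The plan is to absorb $\Mm_Y$, which is nef$/U$ and $\mathbb R$-Cartier on $Y$, into an ample perturbation upstairs. Choose $0<\epsilon\ll 1$ such that $H_Y:=f^*A-\epsilon L$ is ample$/U$ on $Y$. Then $\Mm_Y+H_Y$ is ample$/U$, so we may write $\Mm_Y+H_Y\sim_{\mathbb R,U}H'$ for a general effective $H'$ with small coefficients. This gives
$$K_Y+B_Y+\epsilon L+H'\sim_{\mathbb R,U}f^*(K_X+B+\Mm_X+A).$$
We then set $\Delta:=f_*(B_Y+\epsilon L+H')=B+f_*H'$.

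It remains to show that $(X,\Delta)$ is lc. We have $K_Y+B_Y+\epsilon L+H'\sim_{\mathbb R,X}0$, because it is $\mathbb R$-linearly equivalent over $X$ to $f^*(\cdot)$. Hence this divisor equals $f^*(K_X+\Delta)$ up to adjustment over $X$, and $K_X+\Delta$ is $\mathbb R$-Cartier. To make the equality on the nose, choose $H'$ as $f^*$ of a general member on $X$ plus a correction; equivalently, take a general $D\sim_{\mathbb R,U}\Mm_X+A$ on $X$ and compare $f^*D$ with $\Mm_Y+H_Y+\epsilon L+G$. The lc property of $(X,\Delta)$ is then equivalent to $(Y,B_Y+\epsilon L+H')$ being sub-lc. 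Over the exceptional divisors, the coefficient of $B_Y+\epsilon L$ is at most (the lc-type coefficient $-\mult G$) $+\,\epsilon\mult L$, and we want this to be $\leq 1$. This is where the choice of $\epsilon$ relative to $G$ enters: since $\Supp L=\Supp G$, a small $\epsilon$ gives $\epsilon L\leq G$. That inequality is exactly the role of Lemma~\ref{lem: special extraction}(2)(3). Away from the exceptional locus, $(Y,B_Y)$ coincides with $(X,B)$ in codimension one, and on $Y$ the pair $(Y,B_Y+\epsilon L)$ has boundary bounded by the pullback of an lc pair at every place.

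The main obstacle is this last bookkeeping. We must check lc-ness at all places over $X$, not only on $Y$, and compare $(Y,B_Y+\epsilon L)$ with $(Y, f^*(K_X+B)-K_Y)$; the latter may fail to be lc, since $(X,B)$ itself need not be lc. The first thing to try is to pass to a log resolution $W\rightarrow Y$ on which $\Mm$ also descends. On $W$, the generalized lc condition says that $B_W\leq 1$ everywhere, where $B_W$ already includes the $-G_W$ correction. Adding $\epsilon L_W$ with $\epsilon L_W\leq G_W$ keeps coefficients $\leq 1$ on all divisors over $Y$, because $L$ is $f$-exceptional with the same support as $G$ and the negativity lemma controls their pullbacks. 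For components of $G_W$ not exceptional over $Y$ but whose center lies in $\Exc(f)$, one needs $\epsilon\,\mathrm{mult}\,L_W\leq \mathrm{mult}\,G_W$; this holds for small $\epsilon$ because the finitely many such divisors on a fixed resolution have positive $G_W$-multiplicity there. Genericity of $H'$ handles all remaining places.
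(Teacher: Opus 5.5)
There is a genuine gap, and it comes from a sign error that your lc verification depends on. From $K_Y+B_Y+\Mm_Y=f^*(K_X+B)+f^*\Mm_X$ you get $B_Y=\bigl(f^*(K_X+B)-K_Y\bigr)+G$, not minus $G$. So along $\Exc(f)$ the generalized boundary is strictly \emph{larger} than the pair boundary; incidentally, this shows $(X,B)$ is lc, contrary to your last paragraph. The log pullback of your $(X,\Delta)$ is exactly $K_Y+B_Y+\epsilon L+H'$ (by the negativity lemma, with no correction needed), and this $B_Y$ already contains all of $G$. The inequality $\epsilon L\le G$ therefore buys nothing. If some lc place of $(X,B,\Mm)$ has center on $Y$ contained in $\Exc(f)=\Supp L$, then $B_Y+\epsilon L$ fails to be sub-lc for every $\epsilon>0$, and this does happen. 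Take $U=X$ a smooth surface, $B=0$, $f\colon Y\to X$ the blow-up of a point with exceptional curve $C$, and $\Mm:=-2\overline{C}$, which is nef$/X$ with $\Mm_X=0$. Then $G=2C$ and $B_Y=C$. Any model to which $\Mm$ descends must extract $C$, so for any $f,L$ as in Lemma \ref{lem: special extraction} the coefficient of $C$ in $B_Y+\epsilon L$ is $1+\epsilon\mult_C L>1$. Hence your $(X,\Delta)$ is not lc at the blown-up point.

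What is missing is the reduction the paper makes before invoking Lemma \ref{lem: special extraction}. Since $\Mm_X$ is $\mathbb R$-Cartier and $(X,B)$ is lc, replace $\Mm$ by $(1-t)\Mm$ and $A$ by $A+t\Mm_X$ for $0<t\ll 1$. This leaves $K_X+B+\Mm_X+A$ unchanged and keeps $A+t\Mm_X$ ample$/U$. Because $a(P,X,B,(1-t)\Mm)=(1-t)a(P,X,B,\Mm)+t\,a(P,X,B)$, every lc place of the new generalized pair is an lc place of both $(X,B,\Mm)$ and $(X,B)$. The pullback of $G$ therefore has multiplicity $0$ along it, and since $\Supp G=\Exc(f)$, no lc center of the crepant pullback on $Y$ lies in $\Supp L$. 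Only then is $(Y,B_Y+\epsilon L)$ sub-lc for $0<\epsilon\ll 1$, which you can check on a log resolution of $(Y,\Supp B_Y\cup\Exc(f))$. From there, your choice of a general $H'\sim_{\mathbb R,U}\Mm_Y+f^*A-\epsilon L$ and $\Delta=B+f_*H'$ is exactly the paper's construction and goes through.
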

\begin{proof}
Possibly replacing $\Mm$ with $(1-\epsilon)\Mm$ and $A$ with $A+\epsilon\Mm_X$ for some $0<\epsilon\ll 1$, we may assume that $\Nklt(X,B,\Mm)=\Nklt(X,B)$. By Lemma \ref{lem: special extraction}, there exists a birational morphism $f: Y\rightarrow X$ an anti-ample$/X$ effective $\Rr$-divisor $E\geq 0$, such that $\Mm$ descends to $Y$ and 
		$$\Supp(f^*\Mm_X-\Mm_Y)=\Exc(f)=\Supp E.$$
		In particular, $\Exc(f)=\Supp E$ does not contain any lc center of $(X,B,\Mm)$.  Let
        $$K_Y+B_Y:=f^*(K_X+B),$$ 
        then we may find $0<\delta\ll 1$ such that $(Y,B_Y+\delta E)$ is sub-lc and $f^*A-\delta E$ is ample$/U$. In particular, there exists an ample$/U$ $\Rr$-divisor $0\leq H_Y\sim_{\mathbb R,U}\Mm_Y+f^*A-\delta E$ such that $(Y,B_Y+H_Y+\delta E)$ is lc. We may take $\Delta:=B+f_*H_Y$.
\end{proof}

\begin{proof}[Proof of Theorem \ref{thm: structure of q-factorial gpair}]
   It is a special case of Theorem~\ref{thm: structure of mx rcartier gpair}. 
\end{proof}

\begin{proof}[Proof of Corollary \ref{cor: potentially lc base}]
Let $A:=-(K_X+B+\Mm_X)$. By Theorem \ref{thm: structure of q-factorial gpair}, there exists an lc pair $(X,\Delta)$ such that
$$K_X+\Delta\sim_{\mathbb R,Z}K_X+B+\Mm_X+A\sim_{\mathbb R,Z}0.$$
The corollary follows from \cite[Theorem 7.6 and 7.2.3]{BFMT25}.
\end{proof}

\end{document}